\DeclareMathAlphabet{\mathpzc}{OT1}{pzc}{m}{it}
\numberwithin{equation}{section}
\newtheorem{thm}{Theorem}[section]
\newtheorem{lem}{Lemma}[section]
\newtheorem{cor}{Corollary}[section]
\newtheorem{prop}{Proposition}[section]
\theoremstyle{definition}
\newtheorem{rem}{Remark}[section]
\title{Primitive bound of a 2-structure}
\author{Abderrahim Boussa\"{\i}ri\thanks{Facult\'e des Sciences A\"{\i}n Chock, 
D\'epartement de Math\'ematiques et Informatique, Km 8 route d'El Jadida, 
BP 5366 Maarif, Casablanca, Maroc; {\tt aboussairi@hotmail.com}.}
\and Pierre Ille\thanks{Aix Marseille Universit\'e, CNRS, Centrale Marseille, I2M, UMR 7373, 13453 Marseille, France; {\tt pierre.ille@univ-amu.fr}.}
\thanks{Centre de recherches math\'ematiques, Universit\'e de Montr\'eal, Case postale 6128, 
Succursale Centre-ville, Montr\'eal, Qu\'ebec, Canada H3C 3J7.}\and Robert E. Woodrow\thanks
   {Department of Mathematics and Statistics, The University of Calgary, 2500 University Drive, Calgary, Alberta, Canada T2N 1N4;
   {\tt woodrow@ucalgary.ca}.}}
\begin{document}

\maketitle

\begin{abstract}
A 2-structure on a set $S$ is given by an equivalence relation on the set of ordered pairs of distinct elements of $S$.  A subset $C$ of $S$, any two elements of which appear the same from the perspective of each element of the complement of $C$, is called a clan.  The number of elements that must be added in order to obtain a 2-structure the only clans of which are trivial is called the primitive bound of the 2-structure.  The primitive bound is determined for arbitrary 2-structures of any cardinality.  This generalizes the classical results of Erd\H{o}s et al. and Moon for tournaments, as well as the result of Brignall et al. for finite graphs, and the precise results of Boussa\"{\i}ri and Ille for finite graphs, providing new proofs which avoid extensive use of induction in the finite case.
\end{abstract}

\medskip

\noindent {\bf Mathematics Subject Classifications (1991):}
05C70, 05C69, 05C63

\medskip

\noindent {\bf Key words:} 2-structure, clan; primitive 2-structure; primitive extension; 
primitive bound; clan completeness 

\section{Introduction}\label{intro}

The notion of a primitive \cite{EHR99,F53} (also called indecomposable \cite{F84,I97}, prime \cite{CH93,G97} or simple \cite{B07,BRV11,EFHM72,EHM72,FL71,M72}) structure has been studied in the context of graphs, tournaments, more general structures 
derived from binary relations, and in general for relational structures by Fra\"{\i}ss\'e \cite{F53,F84}. 
Key is the idea of a subset the elements of which look the same from the perspective of each element of the complement, called autonomous set 
\cite{G67,K85,MP01}, clan \cite{EHR99}, convex set \cite{EFHM72,EHM72}, homogeneous set \cite{CH93}, interval \cite{F84,I97,ST93}, module \cite{BE65,G97,S92} or partitive set \cite{S73}. 
An indecomposable structure is one for which all such subsets are trivial. 

Given a structure, it is natural to ask about embedding it into an indecomposable structure and to seek to mimimize the number of elements one must add. 
In the early 70's this was done by Sumner \cite{S71} for finite complete graphs, by Moon~\cite{M72} for finite tournaments and by Erd\H{o}s et al.~\cite{EHM72} for arbitrary tournaments. 
More recently the question was revived by Brignall~\cite{B07} and Brignall et al. \cite{BRV11} for finite graphs and other finite combinatorial structures. 
Boussairi and Ille~\cite{BI10} provided a detailed analysis and identified the precise parameter to describe the situation for finite graphs. 
In this work we yield a unified approach by studying the more general situation of 2-structures \cite{EHR99}. 
The techniques introduced permit us to generalize the parameter obtained by 
Boussairi and Ille~\cite{BI10} to the arbitrary setting. 

Resolution of the problem for tournaments by Erd\H{o}s et al.~\cite{EFHM72} employed a linearization of the tournament and the Bernstein Property. 
We introduce the notion of a traverse of an arbitrary 2-structure which respects key clans of the 2-structure and the notion of a dense bicoloration mirroring the Bernstein Property. 
We also introduce the notion of inclusive clans and develop their structural properties. 
These tools permit us to provide precise bounds and to present a new proof of the now classical result of 
Erd\H{o}s et al.~\cite{EHM72} that adds structural understanding and does not turn on induction to handle the finite case. 

The notion of traverse plays an important role in the algorithmics for the finite graphs as well.  
There a traverse induces a permutation of the vertex set called a factorizing permutation \cite{C97}.  Factorizing permutations are used to find efficient algorithms which compute the clan tree. 
Given a finite graph, the first step consists in calculating a factorizing permutation \cite{HPV99} and the second in determining the clan tree from a factorizing permutation \cite{CHM02}. 

At present, we formalize our presentation. 
A {\em 2-structure} $\sigma$ consists of an infinite or finite {\em vertex set} (or {\em domain}~\cite{EHR99}) $V(\sigma)$ and of an equivalence relation $\equiv_\sigma$ defined on $(V(\sigma)\times V(\sigma))\setminus\{(v,v):v\in V(\sigma)\}$. 
Set $\nu(\sigma)=|V(\sigma)|$ where $|V(\sigma)|$ denotes the cardinality of $V(\sigma)$. 
The family of the equivalence classes of $\equiv_\sigma$ is denoted by 
$E(\sigma)$. 
Set $\varepsilon(\sigma)=|E(\sigma)|$. 
Given a 2-structure $\sigma$, 
with each $W\subseteq V(\sigma)$ associate the {\em 2-substructure} $\sigma[W]$ of $\sigma$ induced by $W$ defined on $V(\sigma[W])=W$ such that 
$$(\equiv_{\sigma[W]})\ =\ (\equiv_{\sigma})_{\restriction (W\times W)\setminus\{(w,w):w\in W\}}.$$
Given $W\subseteq V(\sigma)$, 
$\sigma[V(\sigma)\setminus W]$ is denoted by $\sigma-W$ and by $\sigma-w$ when $W=\{w\}$.

With each 2-structure $\sigma$ associate the 2-structure $\sigma^{\star}$ defined on $V(\sigma^{\star})=V(\sigma)$ by 
$$(u,v)\equiv_{\sigma^{\star}}(x,y)\quad\text{if}\quad
(v,u)\equiv_{\sigma}(y,x)$$
for any $u\neq v\in V(\sigma^{\star})$ and $x\neq y\in V(\sigma^{\star})$. 
A 2-structure $\sigma$ is {\em reversible} if $\sigma=\sigma^{\star}$. 
Let $\sigma$ be a reversible 2-structure. 
For each $e\in E(\sigma)$, $e^{\star}=\{(u,v):(v,u)\in e\}\in E(\sigma)$ and we have either $e=e^{\star}$ or $e\cap e^{\star}=\emptyset$. 
In the first instance, $e$ is said to be {\em symmetric}. 
It is called {\em asymmetric} in the second. 
The family of the asymmetric classes of $\equiv_\sigma$ is denoted by 
$E_a(\sigma)$ and that of the symmetric ones by $E_s(\sigma)$. 
Set $\varepsilon_a(\sigma)=|E_a(\sigma)|$ and 
$\varepsilon_s(\sigma)=|E_s(\sigma)|$. 
A reversible 2-structure $\sigma$ is {\em symmetric} when $E(\sigma)=E_s(\sigma)$, it is {\em asymmetric} when 
$E(\sigma)=E_a(\sigma)$. 

A graph $\Gamma=(V(\Gamma),E(\Gamma))$ is identified with the symmetric 2-structure $\sigma(\Gamma)$ defined on 
$V(\sigma(\Gamma))=V(\Gamma)$ as follows. 
For any $u\neq v\in V(\Gamma)$ and $x\neq y\in V(\Gamma)$, 
$(u,v)\equiv_{\sigma(\Gamma)}(x,y)$ if either 
$\{u,v\},\{x,y\}\in E(\Gamma)$ or $\{u,v\},\{x,y\}\not\in E(\Gamma)$. 
Notice that 
$\sigma(\Gamma)=\sigma(\overline{\Gamma})$ where 
$\overline{\Gamma}=(V(\Gamma),\binom{V(\Gamma)}{2}\setminus E(\Gamma))$ is the {\em complement} of $\Gamma$. 
Similarly, 
a tournament $T=(V(T),A(T))$ is identified with the asymmetric 2-structure $\sigma(T)$ defined on 
$V(\sigma(T))=V(T)$ as follows. 
For any $u\neq v\in V(T)$ and $x\neq y\in V(T)$, 
$(u,v)\equiv_{\sigma(T)}(x,y)$ if either $(u,v),(x,y)\in A(T)$ or $(u,v),(x,y)\not\in A(T)$. 
Notice that 
$\sigma(T)=\sigma(T^{\star})$ where 
$T^{\star}=(V(T),A(T)^{\star})$ is the {\em dual} of $T$.

Let $\sigma$ be a 2-structure. 
A subset $C$ of $V(\sigma)$ is a {\em clan} \cite{EHR99} of $\sigma$ if for any $c,d\in C$ and 
$v\in V(\sigma)\setminus C$, we have 
$$(c,v)\equiv_{\sigma}(d,v)\ \text{and}\ (v,c)\equiv_{\tau}(v,d).$$
For instance, 
$\emptyset$, $V(\sigma)$ and $\{v\}$, for $v\in V(\sigma)$, are clans of $\sigma$ called {\em trivial} clans of $\sigma$. 
A 2-structure $\sigma$ is {\em primitive} \cite{EHR99} if $\nu(\sigma)\geq 3$ and if all of the clans of $\sigma$ are trivial. Otherwise $\sigma$ is said to be {\em imprimitive} \cite{EHR99}. 

Given a set $S$ with $|S|\geq 2$, 
Sumner~\cite[Theorem~2.45]{S71} observed that 
the complete graph $K_S$ admits a primitive graph extension $G$ such that 
$|V(G)\setminus S|=\lceil\log_2(|S|+1)\rceil$. 
This is extended to any graph in \cite[Theorem 3.7]{B07} and \cite[Theorem 3.2]{BRV11} as follows. 
A graph $G$, with $|V(G)|\geq 2$, 
admits a primitive graph extension $H$ such that 
$|V(H)\setminus V(G)|=\lceil\log_2(|V(G)|+1)\rceil$. 

A 2-structure $\tau$ is an {\em extension} of a 2-structure $\sigma$ if 
$V(\tau)\supseteq V(\sigma)$ and $\tau[V(\sigma)]=\sigma$. 
Given a cardinal $\kappa$, a $\kappa$-extension of a 2-structure $\sigma$ is an extension $\tau$ of $\sigma$ such that $|V(\tau)\setminus V(\sigma)|=\kappa$. 
Let $\tau$ be an extension of $\sigma$. 
Consider the function $\sigma\hookrightarrow\tau:E(\sigma)\longrightarrow E(\tau)$ satisfying 
$(\sigma\hookrightarrow\tau)(e)\supseteq e$ for every $e\in E(\sigma)$. 
Clearly $\sigma\hookrightarrow\tau$ is injective and 
we can identify $(\sigma\hookrightarrow\tau)(e)$ with $e$ for every 
$e\in E(\sigma)$. 
We say that $\tau$ is a {\em faithful} extension of $\sigma$ if 
\begin{equation}\label{E1faith}
\text{$\sigma\hookrightarrow\tau$ is bijective}
\end{equation}
and for any $e,f\in E(\sigma)$, 
\begin{equation}\label{E2faith}
(\sigma\hookrightarrow\tau)(e)\cap ((\sigma\hookrightarrow\tau)(f))^\star\neq\emptyset\ \Longrightarrow\ e\cap f^\star\neq\emptyset.
\end{equation}
The necessity of Conditions~\eqref{E1faith} and \eqref{E2faith} is discussed in Section~\ref{per2s}. 
In Sumner's result, the primitive graph extension is not a faithful extension of the complete graph. 
A 2-structure $\sigma$ is {\em complete} \cite{EHR99} if either $\nu(\sigma)\leq 1$ or $\nu(\sigma)\geq 2$ and $\varepsilon(\sigma)=1$. 
A subset $W$ of $V(\sigma)$ is {\em complete} if $\sigma[W]$ is complete. 
Clearly complete subsets of 2-structures correspond to cliques and stables sets in graphs. As already observed in  Sumner's result, to obtain primitive extensions of a complete 2-structures, we consider symmetric 2-structures $\sigma$ with $\varepsilon(\sigma)= 2$. 

Let $\sigma$ be a 2-structure admitting a primitive and faithful extension. 
The {\em primitive bound} $p(\sigma)$ of $\sigma$ is the smallest cardinal $\kappa$ such that $\sigma$ possesses a primitive and faithful $\kappa$-extension. 
In these terms, 
Brignall et al. \cite{BRV11} obtained that $$p(G)\leq\lceil\log_2(|V(G)|+1)\rceil$$ for any finite graph $G$. 
In his Ph.D. Thesis, 
Brignall \cite{B07} conjectured that $$p(G)\leq\lceil\log_2(\max(\alpha(G),\omega(G))+1)\rceil.$$
Boussa\"{\i}ri and Ille \cite{BI10} identified the correct parameter 
$$c(G)=\max(\{|C|:C\ \text{is a clan of $G$ which is a stable set or a clique\})}$$ and proved for every finite graph $G$ that 
$$\lceil\log_2(c(G))\rceil\leq p(G)\leq\lceil\log_2(c(G)+1)\rceil.$$
Thus $p(G)=\lceil\log_2(c(G))\rceil$ when $\log_2(c(G))\not\in\mathbb{N}$. 
When $c(G)=2^k$ with $k\geq 1$, $p(G)=k$ or $k+1$ and Boussa\"{\i}ri and Ille \cite{BI10} showed that $p(G)=k+1$ if and only if $G$ (or $\overline{G}$) admits $2^k$ isolated vertices. 

For a 2-structure $\sigma$, we introduce the {\em clan completness} $c(\sigma)$ of $\sigma$ as being the supremum of cardinalities of complete clans of $\sigma$. 
Furthermore consider a reversible 2-structure $\sigma$. 
Given $e\in E_s(\sigma)$, 
a vertex $v$ of $\sigma$ is {\em $e$-isolated} if 
$(v,w)\in e$ for every $w\in V(\sigma)\setminus\{w\}$. 
The family of $e$-isolated vertices is denoted by $\odot_e(\sigma)$. 

Given a reversible 2-structure $\sigma$ such that $2\leq\varepsilon(\sigma)\leq c(\sigma)<\aleph_0$, we prove in Corollary~\ref{C1bound} that 
$$\lceil\log_{\varepsilon(\sigma)}(c(\sigma))\rceil\leq p(\sigma)\leq\lceil\log_{\varepsilon(\sigma)}(c(\sigma)+1)\rceil.$$ 
Moreover, when $c(\sigma)=\varepsilon(\sigma)^{k}$ where $k\geq 1$, we show in Theorem~\ref{T3Abound} that 
$$p(\sigma)=k+1\ \iff\ \text{there is $e\in E_s(\sigma)$ such that $|\odot_e(\sigma)|=\varepsilon(\sigma)^{k}$.}$$

The cardinal logarithm is defined as follows. 
Given cardinals $\mu$ and $\nu$, $$\mathfrak{log}_\mu(\nu)=\min(\{\kappa:\mu^\kappa\geq\nu\}).$$ 
If $\mu$ and $\nu$ are finite, then $\mathfrak{log}_\mu(\nu)=\lceil \log_\mu(\nu)\rceil$. 
Given a reversible 2-structure $\sigma$ such that $\varepsilon(\sigma)\geq 2$, we establish in 
Theorem~\ref{T1bound} that 
$$\mathfrak{log}_{\varepsilon(\sigma)}(c(\sigma))\geq\aleph_0\ \Longrightarrow\ 
p(\sigma)=\mathfrak{log}_{\varepsilon(\sigma)}(c(\sigma)).$$

For an infinite or finite tournament $T$, Erd\H{o}s et al. \cite{EFHM72} established that $p(T)\leq 2$. 
Then Moon \cite{M72} proved for a finite tournament $T$ such that $|V(T)|\geq 4$ that 
$p(T)=2$ if and only if $T$ is an odd linear order. 
Erd\H{o}s et al. \cite{EHM72} extended this result to arbitrary tournament. 
Using dense bicolorations of traverses (see Section~\ref{sectiontrav}) and inclusive clans 
(see Section~\ref{sectioninclusive}), we provide an elegant proof of \cite{EHM72} 
(see Theorem~\ref{T4bound}). 

We determine the primitive bounds of the other reversible 2-structures in Theorem~\ref{T2bound}. 
In particular, we obtain the following for a reversible 2-structure $\sigma$ such that $c(\sigma)=1$. 
If $\varepsilon(\sigma)\geq 3$ or if $\varepsilon(\sigma)=\varepsilon_s(\sigma)=2$, then 
$p(\sigma)\leq 1$. 
For non reversible 2-structures, we proceed as follows. 
With 2-structures $\sigma$ and $\tau$ such that $V(\sigma)=V(\tau)$ associate the 2-structure 
$\sigma\land\tau$ defined on $V(\sigma\land\tau)=V(\sigma)$ by 
$$E(\sigma\land\tau)=\{e\cap f:e\in E(\sigma),f\in E(\tau)\ \text{and}\ e\cap f\neq\emptyset\}.$$
For an arbitrary 2-structure $\sigma$, $\sigma\land\sigma^{\star}$ is reversible and we prove in 
Theorem~\ref{T1nonrev} that $p(\sigma)=p(\sigma\land\sigma^\star)$.

\section{Clan tree}

Given a 2-structure $\sigma$, we use the following notation. 
For any $u\neq v\in V(\sigma)$, the equivalence class of $\equiv_\sigma$ containing $(u,v)$ is denoted by $(u,v)_\sigma$. 
We define the function $\overrightarrow{\sigma}$ on $V(\sigma)$ as follows. 
For each $v\in V(\sigma)$, 
$\overrightarrow{\sigma}(v):V(\sigma)\setminus\{v\}\longrightarrow E(\sigma)$ with 
$w\longmapsto (v,w)_\sigma$ for every $w\in V(\sigma)\setminus\{v\}$. 
With $e\in E(\sigma)$ 
associate the function $\bar{e}:V(\sigma)\longrightarrow E(\sigma)$ defined by $v\longmapsto e$ for every $v\in V(\sigma)$. 

For $W,W'\subseteq V(\sigma)$, with $W\cap W'=\emptyset$, $W\longleftrightarrow_\sigma W'$ signifies that $(v,v')\equiv_\sigma(w,w')$ and $(v',v)\equiv_\sigma(w',w)$ for any 
$v,w\in W$ and $v',w'\in W'$. 
Given $v\in V(\sigma)$ and $W\subseteq V(\sigma)\setminus\{v\}$, 
$\{v\}\longleftrightarrow_\sigma W$ is also denoted by 
$v\longleftrightarrow_\sigma W$. 
The negation is denoted by $v\not\longleftrightarrow_\sigma W$. 
Let $W,W'\subseteq V(\sigma)$ such that $W\cap W'=\emptyset$ and 
$W\longleftrightarrow_\sigma W'$. 
The equivalence class of $(w,w')$, where $w\in W$ and $w'\in W'$, is denoted by 
$(W,W')_\sigma$. 
Given $W\subsetneq V(\sigma)$ and $v\in V(\sigma)\setminus W$ such that 
$v\longleftrightarrow_\sigma W$, 
$(\{v\},W)_\sigma$ is also denoted by $(v,W)_\sigma$. 

The family of the clans of a 2-structure $\sigma$ is denoted by $\mathbb{C}(\sigma)$. 
Furthermore set 
$\mathbb{C}_{\geq 2}(\sigma)=\{C\in\mathbb{C}(\sigma):|C|\geq 2\}$. 

Given a 2-structure $\sigma$, a partition $\mathbb{F}$ of $V(\sigma)$ is a {\em factorization} \cite{EHR99} of $\sigma$ if $\mathbb{F}\subseteq\mathbb{C}(\sigma)$. 
Let $\mathbb{F}$ be a factorization of $\sigma$.
Given $X,Y\in\mathbb{F}$, 
we have $X\longleftrightarrow_\sigma Y$ because $X\cap Y=\emptyset$. 
Thus there is $e\in E(\sigma)$ such that $(X,Y)_\sigma=e$. 
This justifies the following definition. 
The {\em quotient} of $\sigma$ by $\mathbb{F}$ is the 2-structure $\sigma/\mathbb{F}$ defined on $V(\sigma/\mathbb{F})=\mathbb{F}$ as follows. 
For any $X\neq Y\in\mathbb{F}$ and $X'\neq Y'\in\mathbb{F}$, 
$$(X,Y)\equiv_{\sigma/\mathbb{F}}(X',Y')\ \text{if}\ (X,Y)_\sigma=(X',Y')_\sigma.$$ 

The following strengthening of the notion of clan is useful to present the clan decomposition theorem. 
Given a 2-structure $\sigma$, a clan $C$ of $\sigma$ is said to be {\em prime} \cite{EHR99} provided that for every clan $D$ of $\sigma$, we have: 
$$\text{if $C\cap D\neq\emptyset$, then $C\subseteq D$ or $D\subseteq C$.}$$
The family of prime clans of $\sigma$ is denoted by $\mathbb{P}(\sigma)$. 
Furthermore set $\mathbb{P}_{\geq 2}(\sigma)=\{C\in\mathbb{P}(\sigma):|C|\geq 2\}$. 

We associate with a 2-structure $\sigma$ the Gallai family $\mathbb{G}(\sigma)$ of the maximal elements under inclusion of $\mathbb{P}(\sigma)\setminus\{\emptyset,V(\sigma)\}$. 
Set 
$\mathbb{G}_1(\sigma)=\{X\in\mathbb{G}(\sigma):|X|=1\}$ and 
$\mathbb{G}_{\geq 2}(\sigma)=\mathbb{G}(\sigma)\setminus\mathbb{G}_1(\sigma)$. 
The clan decomposition theorem is stated as follows. It is attributable to Gallai \cite{G67,MP01} for finite graphs (see \cite[Theorem 5.5]{EHR99} for finite 2-structures and \cite[Theorem 4.2]{HR94} for infinite ones). 
Recall that an asymmetric 2-structure $\sigma$, with $\nu(\sigma)\geq 2$, is {\em linear} \cite{EHR99} 
if there is $e\in E(\sigma)$ such that $(V(\sigma),e)$ is a linear order. 

\begin{thm}\label{Tgallai}
For a 2-structure $\sigma$ such that $\mathbb{G}(\sigma)\neq\emptyset$, the family 
$\mathbb{G}(\sigma)$ realizes a factorization of $\sigma$. 
Moreover, the corresponding quotient $\sigma/\mathbb{G}(\sigma)$ is complete, linear or primitive. 
\end{thm}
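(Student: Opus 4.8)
\noindent\emph{Proof proposal.} The plan is to prove the two assertions separately, after setting up a little ``clan calculus'' that I would establish (or invoke as standard) first: the intersection of two clans is a clan; the union of two clans that meet is a clan; and if two clans overlap (meet with neither containing the other) then their pairwise differences are clans as well. I also use two facts about prime clans that follow at once from the definition of \emph{prime}: any two prime clans sharing a vertex are comparable under inclusion, so the prime clans containing a fixed $v$ form a chain; and the union of a chain of prime clans is again a prime clan (the clan condition is checked on a member of the chain containing the relevant vertex, and primeness is propagated along the chain, since a clan $D$ meeting the union already meets some member). Finally I use the standard quotient correspondence for a factorization $\mathbb{F}$: a subfamily $\mathcal{X}\subseteq\mathbb{F}$ is a clan of $\sigma/\mathbb{F}$ if and only if $\bigcup\mathcal{X}$ is a clan of $\sigma$.

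\smallskip
\noindent\emph{Step 1: $\mathbb{G}(\sigma)$ is a factorization.} For $v\in V(\sigma)$ let $C_v$ be the union of all prime clans $C$ with $v\in C\subsetneq V(\sigma)$. By the chain property these form a chain, so $C_v$ is a prime clan containing $v$; when $C_v$ is proper it is the largest proper prime clan containing $v$ and hence lies in $\mathbb{G}(\sigma)$. The crux is to show, using $\mathbb{G}(\sigma)\neq\emptyset$, that $C_v\subsetneq V(\sigma)$ for every $v$. Assuming $C_v=V(\sigma)$, I first note $v$ can lie in no $X\in\mathbb{G}(\sigma)$ (otherwise comparability at $v$ forces every proper prime clan through $v$ into $X$, giving $C_v\subseteq X\subsetneq V(\sigma)$). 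Then, fixing any $X\in\mathbb{G}(\sigma)$ and $x\in X$, since the defining chain of $C_v$ has union $V(\sigma)\ni x$, some member $C$ contains both $v$ and $x$; comparability of the prime clans $C$ and $X$ at $x$ contradicts either the maximality of $X$ or the fact that $v\notin X$. Hence every $v$ lies in $C_v\in\mathbb{G}(\sigma)$, so $\mathbb{G}(\sigma)$ covers $V(\sigma)$. Disjointness is immediate: distinct maximal proper prime clans that met would be comparable, violating maximality. Thus $\mathbb{G}(\sigma)$ partitions $V(\sigma)$ into clans.

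\smallskip
\noindent\emph{Step 2: the quotient has no nontrivial prime clan.} Write $Q=\sigma/\mathbb{G}(\sigma)$ and suppose $Q$ had a prime clan $\mathcal{C}$ with $2\le|\mathcal{C}|<|\mathbb{G}(\sigma)|$. Put $M=\bigcup\mathcal{C}$, a proper clan of $\sigma$ by the correspondence. I claim $M$ is prime in $\sigma$. Let $D$ be a clan of $\sigma$ meeting $M$. Each $d\in D$ lies in a part which, being prime, is contained in $D$ or contains $D$; so either some part contains $D$, or $D$ is a union of parts. In the former case that part meets $M$, hence equals a member of $\mathcal{C}$ and lies in $M$, giving $D\subseteq M$. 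In the latter, $D$ corresponds to a clan $\mathcal{D}$ of $Q$ with $\mathcal{C}\cap\mathcal{D}\neq\emptyset$, and primeness of $\mathcal{C}$ yields $M\subseteq D$ or $D\subseteq M$. So $M$ is a proper prime clan of $\sigma$ strictly containing each $X\in\mathcal{C}$, contradicting the maximality of $X$. Hence all prime clans of $Q$ are trivial.

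\smallskip
\noindent\emph{Step 3: the dichotomy.} It remains to prove the self-contained statement: a 2-structure $Q$ with $\nu(Q)\ge 2$ whose prime clans are all trivial is complete, linear, or primitive. If $Q$ is primitive we are done; otherwise $Q$ has a nontrivial clan, which, not being prime, overlaps some clan. From one overlapping pair $A,B$ one reads off local uniformity: with $a\in A\setminus B$, $p\in A\cap B$, $b\in B\setminus A$, the clan conditions for $A$ (with $b$ outside) and for $B$ (with $a$ outside) force $(a,p)_\sigma=(p,b)_\sigma=(a,b)_\sigma$ and $(p,a)_\sigma=(b,p)_\sigma=(b,a)_\sigma$, so these vertices carry a single forward and a single backward class. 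The main obstacle of the whole argument is to \emph{propagate} this to all of $V(Q)$: merging overlapping clans via the clan calculus and running a connectivity argument over the overlap relation, I would show that a symmetric seed forces a single class on all pairs, so $\varepsilon(Q)=1$ and $Q$ is complete, whereas an asymmetric seed forces $Q$ asymmetric with its governing class a linear order, so $Q$ is linear; mixed behaviour is excluded because it would manufacture a nontrivial prime clan. The cases $\nu(Q)\le 2$ are checked directly. Combining Steps~1--3 gives the theorem.
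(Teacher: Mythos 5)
Your Steps 1 and 2 are correct and essentially complete: the chain arguments (prime clans through a fixed vertex form a chain; the union of such a chain is again prime), the properness of $C_v$ obtained by playing a fixed $X\in\mathbb{G}(\sigma)$ against a member of the chain through $v$, disjointness of $\mathbb{G}(\sigma)$ by maximality, and the lifting in Step 2 of a nontrivial prime clan $\mathcal{C}$ of the quotient to a proper prime clan $\bigcup\mathcal{C}$ of $\sigma$ strictly containing a maximal element of $\mathbb{G}(\sigma)$ are all sound. For calibration: the paper gives no proof of this theorem at all --- it attributes it to Gallai and cites \cite{EHR99} (finite case) and \cite{HR94} (infinite case) --- so there is no in-paper argument to match; your reduction of the theorem to the statement ``only trivial prime clans $\Rightarrow$ complete, linear or primitive'' is a legitimate and clean skeleton.

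The genuine gap is Step 3, and it sits exactly where the content of Gallai's theorem lives. What you actually prove there is only the local uniformity on a single overlapping triple $a,p,b$; everything after ``The main obstacle \dots\ I would show'' is assertion. The missing pieces are each nontrivial. First, you never show the overlap-connected family of nontrivial clans covers $V(Q)$; the correct move is to prove that the union $U$ of an overlap-connected component of nontrivial clans is itself a \emph{prime} clan (if a clan $D$ overlaps $U$, walking a finite overlap chain from a member contained in $D$ to a member not contained in $D$ produces a member that $D$ overlaps, forcing $D\subseteq U$), whence $U=V(Q)$ by triviality of prime clans --- but this step is absent. Second, you must show the seed classes $e,f$ agree across \emph{all} overlapping pairs of the component, and --- harder --- that every ordered pair of distinct vertices, including pairs inside a block such as $A\cap B$, lies in $e\cup f$; this requires separating an arbitrary pair $u\neq v$ by clans, e.g.\ via the largest clan containing $u$ and avoiding $v$ (which exists since the union of all clans through $u$ avoiding $v$ is a clan), and no such device appears in your sketch. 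Third, in the asymmetric case you still owe totality and transitivity, i.e.\ that $(V(Q),e)$ is a linear order rather than merely an asymmetric pattern; and ``mixed behaviour is excluded because it would manufacture a nontrivial prime clan'' is precisely the claim needing proof, not a reason. Since Steps 1--2 are formal reductions, as written the proposal reduces the theorem to its hard kernel and then skips the kernel; to complete it you should either carry out the propagation along the lines above or invoke the quoted results of \cite{EHR99} and \cite{HR94} explicitly, as the paper itself does.
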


Let $\sigma$ be a 2-structure. 
Given $C\in\mathbb{P}(\sigma)$, we have $\mathbb{G}(\sigma[C])\subseteq\mathbb{P}(\sigma)$. 
An element $C$ of $\mathbb{P}(\sigma)$ is a {\em limit} of $\sigma$ if 
$\mathbb{G}(\sigma[C])=\emptyset$. 
The family of the limits of $\sigma$ is denoted by $\mathbb{L}(\sigma)$. 
Now consider 
\begin{equation*}
\mathbb{T}(\sigma)=\{\{v\}:v\in V(\sigma)\}\cup (\bigcup_{C\in\mathbb{P}(\sigma)\setminus\mathbb{L}(\sigma)}
\mathbb{G}(\sigma[C])\cup\{C\}).
\end{equation*}
As a direct consequence of the definition of a prime clan, we obtain that 
the family $\mathbb{T}(\sigma)$ endowed with inclusion, denoted by 
$(\mathbb{T}(\sigma),\subseteq)$, is a tree called the {\em clan tree} of $\sigma$. 
For clan trees of finite digraphs, see \cite{CM05}. 
For infinite 2-structures or more generally for weakly partitive families on infinite sets, see \cite{IW09}. 
When $V(\sigma)$ is finite, we have 
$$\mathbb{T}(\sigma)=\bigcup_{C\in\mathbb{P}_{\geq 2}(\sigma)}
\mathbb{G}(\sigma[C])\cup\{C\}=\mathbb{P}(\sigma)\setminus\{\emptyset\}.$$

Let $\sigma$ be a 2-structure. 
Given $\emptyset\neq W\subseteq V(\sigma)$, $\bigcap\{C\in\mathbb{P}(\sigma):C\supseteq W\}$ is a prime clan of $\sigma$. 
It is denoted by $\widetilde{W}$. 
Similarly, given $\emptyset\neq W\subsetneq V(\sigma)$, $\bigcap\{C\in\mathbb{P}(\sigma):C\supsetneq W\}$ is a prime clan of $\sigma$ denoted by $\widehat{W}$. 
Since the proofs of the next three lemmas are easy, we omit them. 

\begin{lem}\label{L1tilde}
Let $\sigma$ be a 2-structure. 
For any $v\neq w\in V(\sigma)$, $\widetilde{\{v,w\}}\in\mathbb{P}(\sigma)\setminus\mathbb{L}(\sigma)$ and there are 
$X_v\neq X_w\in\mathbb{G}(\sigma[\widetilde{\{v,w\}}])$ such that $v\in X_v$ and $w\in X_w$.
\end{lem}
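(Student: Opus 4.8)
Write $C:=\widetilde{\{v,w\}}$. The remark preceding the statement already gives $C\in\mathbb{P}(\sigma)$, so the two things left to establish are that $C\notin\mathbb{L}(\sigma)$, i.e.\ $\mathbb{G}(\sigma[C])\neq\emptyset$, and the existence of the two prescribed blocks $X_v\neq X_w$. The plan rests on a single structural feature of $C$: being the intersection of all prime clans containing $\{v,w\}$, it is the \emph{smallest} prime clan containing both $v$ and $w$, so every prime clan $P\subsetneq C$ omits at least one of $v,w$. I would first record two elementary facts to be used repeatedly: (i) the prime clans of $\sigma$ containing a fixed vertex form a chain under inclusion (if $v\in P\cap Q$ then $P\cap Q\neq\emptyset$, and primality forces comparability); and (ii) the union of a chain of prime clans is again a prime clan (the union is a clan, and for any clan $E$ meeting it some chain member $P$ already meets $E$, so $P\subseteq E$ or $E\subseteq P$, after which a short walk up and down the chain yields $\bigcup\subseteq E$ or $E\subseteq\bigcup$). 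I would also note the identification $\mathbb{P}(\sigma[C])=\{P\in\mathbb{P}(\sigma):P\subseteq C\}$, valid because $C$ is prime: clans of $\sigma[C]$ are exactly clans of $\sigma$ inside $C$, and primality transfers in both directions using the primality of $C$ itself.

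The heart of the argument is the construction of $X_v$. I would set
\[
X_v:=\bigcup\{P\in\mathbb{P}(\sigma):v\in P\subsetneq C\}.
\]
By fact (i) this is a union of a chain, so by fact (ii) $X_v\in\mathbb{P}(\sigma)$, and plainly $v\in X_v\subseteq C$. The crucial point---where the minimality of $C$ enters---is that every $P$ in the union omits $w$, whence $w\notin X_v$ and therefore $X_v\subsetneq C$. Thus $X_v$ is the \emph{largest} proper prime clan of $\sigma$ inside $C$ containing $v$. Its Gallai-maximality is then immediate: were there $Q\in\mathbb{P}(\sigma[C])\setminus\{\emptyset,C\}$ with $X_v\subsetneq Q$, then $v\in Q\subsetneq C$ would make $Q$ one of the sets in the union defining $X_v$, forcing $Q\subseteq X_v$---a contradiction. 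Hence $X_v\in\mathbb{G}(\sigma[C])$, which already gives $\mathbb{G}(\sigma[C])\neq\emptyset$ and so $C\notin\mathbb{L}(\sigma)$.

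To finish I would run the same construction symmetrically, obtaining $X_w:=\bigcup\{P\in\mathbb{P}(\sigma):w\in P\subsetneq C\}\in\mathbb{G}(\sigma[C])$ with $w\in X_w$. Since $w\in X_w$ while $w\notin X_v$ (established above), the two blocks are distinct, $X_v\neq X_w$, which completes the statement. Note that this route deduces $X_v\neq X_w$ directly from the minimality of $C$ and does not require the partition/quotient part of Theorem~\ref{Tgallai}.

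I expect the only genuine obstacle to be guaranteeing $\mathbb{G}(\sigma[C])\neq\emptyset$ in the infinite case, where maximal proper prime clans need not exist a priori. The union device in fact (ii) is precisely what sidesteps this: instead of invoking Zorn's lemma to extract a maximal element (whose existence is exactly what fails for a limit), it produces the candidate maximal clan explicitly as a union, and the minimality of $C$ (through $w\notin X_v$) certifies that this union stops short of $C$. Everything else---the chain property, the transfer of primality to $\sigma[C]$, and the distinctness of the blocks---is routine.
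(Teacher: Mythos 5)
Your proof is correct. Note that the paper offers no proof to compare against: Lemma~\ref{L1tilde} is one of the three lemmas whose proofs the authors declare easy and omit. Your argument is the natural way to fill that gap, and all its ingredients check out: prime clans through a fixed vertex form a chain; the union of a chain of prime clans is prime; $\mathbb{P}(\sigma[C])=\{P\in\mathbb{P}(\sigma):P\subseteq C\}$ because $C$ itself is prime; the family defining $X_v$ is nonempty since $\{v\}\in\mathbb{P}(\sigma)$; and the minimality of $C=\widetilde{\{v,w\}}$ forces $w\notin X_v$, which simultaneously makes $X_v$ proper, certifies its maximality in $\mathbb{G}(\sigma[C])$ (any strictly larger $Q\in\mathbb{P}(\sigma[C])\setminus\{\emptyset,C\}$ would belong to the defining family), and yields $X_v\neq X_w$ via $w\in X_w\setminus X_v$. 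In particular your union-of-a-chain device is exactly what is needed in the infinite case, where maximal proper prime clans below $C$ need not exist a priori, so no appeal to Zorn's lemma or to the quotient part of Theorem~\ref{Tgallai} is required.
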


\begin{lem}\label{L1nonprime}
Let $\sigma$ be a 2-structure. For $C\subseteq V(\sigma)$, 
$C\in\mathbb{C}(\sigma)\setminus\mathbb{P}(\sigma)$ if and only if 
$\widetilde{C}\not\in\mathbb{L}(\sigma)$ and 
there is a nontrivial clan of $\sigma[\widetilde{C}]/\mathbb{G}(\sigma[\widetilde{C}])$ such that 
$C=\bigcup F$. 
\end{lem}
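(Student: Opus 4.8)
The plan is to characterize exactly when a clan $C$ fails to be prime, and to pin down its shape via the quotient $\sigma[\widetilde{C}]/\mathbb{G}(\sigma[\widetilde{C}])$. First I would unwind the definition of $\widetilde{C}$: since $C$ is a clan, $\widetilde{C}=\bigcap\{D\in\mathbb{P}(\sigma):D\supseteq C\}$ is the smallest prime clan containing $C$, and it is itself prime. The key preliminary observation is that $C$ is a prime clan of the induced substructure $\sigma[\widetilde{C}]$ whenever $C$ is a clan of $\sigma$ (clans of $\sigma$ contained in $\widetilde{C}$ are exactly clans of $\sigma[\widetilde{C}]$, since no vertex outside $\widetilde{C}$ can split $C$). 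So the question reduces to the structure of $C$ as a clan sitting inside $\sigma[\widetilde{C}]$ relative to its Gallai factorization.

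For the forward direction, assume $C\in\mathbb{C}(\sigma)\setminus\mathbb{P}(\sigma)$. I would first argue $\widetilde{C}\notin\mathbb{L}(\sigma)$: if $\widetilde{C}$ were a limit, then $\mathbb{G}(\sigma[\widetilde{C}])=\emptyset$, and I would show this forces every clan of $\sigma[\widetilde{C}]$ properly containing a given one to behave rigidly enough that $C$ would have to be prime in $\sigma$, a contradiction. Granting $\widetilde{C}\notin\mathbb{L}(\sigma)$, the Gallai factorization $\mathbb{G}(\sigma[\widetilde{C}])$ exists and realizes a factorization by Theorem~\ref{Tgallai}. The crucial step is to show that $C$ is a \emph{union} of Gallai blocks: because $C$ is a clan of $\sigma[\widetilde{C}]$ but not prime in $\sigma$ (hence, by minimality of $\widetilde{C}$, not equal to $\widetilde{C}$ and not a single Gallai block), $C$ must intersect each $X\in\mathbb{G}(\sigma[\widetilde{C}])$ either trivially or fully — here one uses that each $X$ is a prime clan of $\sigma$, so $C\cap X\neq\emptyset$ yields $C\subseteq X$ or $X\subseteq C$, and $C\subseteq X$ is excluded by minimality of $\widetilde{C}$. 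Thus $F:=\{X\in\mathbb{G}(\sigma[\widetilde{C}]):X\subseteq C\}$ satisfies $C=\bigcup F$, and $F$ is a clan of the quotient $\sigma[\widetilde{C}]/\mathbb{G}(\sigma[\widetilde{C}])$ that is nontrivial precisely because $C$ is neither a single block nor all of $\widetilde{C}$ (the latter because $C\neq\widetilde{C}$, as $C$ is imprimitive while $\widetilde{C}$ is prime).

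For the converse, suppose $\widetilde{C}\notin\mathbb{L}(\sigma)$ and $C=\bigcup F$ for some nontrivial clan $F$ of the quotient $\sigma[\widetilde{C}]/\mathbb{G}(\sigma[\widetilde{C}])$. Pulling $F$ back to a union of Gallai blocks shows $C$ is a clan of $\sigma[\widetilde{C}]$, hence of $\sigma$. To see $C$ is not prime, I would exhibit a witnessing clan $D$ overlapping $C$: since $F$ is nontrivial, there is a Gallai block $X\notin F$ together with the structure of a nontrivial clan in the quotient (which is complete, linear, or primitive by Theorem~\ref{Tgallai}) producing a clan that straddles the boundary of $C$, violating the prime condition.

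The main obstacle I anticipate is the forward direction's first step — establishing $\widetilde{C}\notin\mathbb{L}(\sigma)$ — since limits are defined by the \emph{absence} of a Gallai factorization, and I must convert ``no Gallai blocks'' into ``$C$ is automatically prime,'' which requires understanding how clans behave inside a limit prime clan (effectively, that a limit admits no nontrivial quotient into which an imprimitive clan could decompose). The rest is bookkeeping with the correspondence between clans of $\sigma[\widetilde{C}]$ that are unions of blocks and clans of the quotient, combined with the primality of individual Gallai blocks as clans of the ambient $\sigma$.
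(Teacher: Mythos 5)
Your skeleton is sound, and the two correct halves are exactly the ones the omitted proof needs: in the forward direction, using $\mathbb{G}(\sigma[\widetilde{C}])\subseteq\mathbb{P}(\sigma)$ together with minimality of $\widetilde{C}$ to exclude $C\subseteq X$ and conclude that $C$ is the union of the blocks it meets is the right mechanism; and the converse is routine once one observes that the quotient cannot be primitive (it carries the nontrivial clan $F$), hence is complete or linear by Theorem~\ref{Tgallai}, so a witness clan overlapping $C$ is $X\cup Y$ with $X\in F$, $Y\notin F$ in the complete case, or the union of a final (or initial) segment of $O_{\widetilde{C}}$ crossing the boundary of $F$ in the linear case --- here $|F|\geq 2$ guarantees a block of $F$ left outside the witness. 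The genuine gap is precisely the step you flag and then leave unargued: you never prove $\widetilde{C}\notin\mathbb{L}(\sigma)$, and ``behave rigidly enough'' is not a mechanism. The missing idea is a chain argument. Fix $v\in C$. Any two prime clans of $\sigma[\widetilde{C}]$ containing $v$ are comparable, so they form a chain; the union of a chain of prime clans is again a prime clan; and if the union $P_v$ of the \emph{proper} prime clans of $\sigma[\widetilde{C}]$ containing $v$ were a proper subset of $\widetilde{C}$, it would be a maximal element of $\mathbb{P}(\sigma[\widetilde{C}])\setminus\{\emptyset,\widetilde{C}\}$, i.e.\ $\mathbb{G}(\sigma[\widetilde{C}])\neq\emptyset$. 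So if $\widetilde{C}$ were a limit, then $P_v=\widetilde{C}$. On the other hand, each proper prime clan $Q$ of $\sigma[\widetilde{C}]$ with $v\in Q$ is prime in $\sigma$ (because $\widetilde{C}\in\mathbb{P}(\sigma)$), meets $C$, and $C\subseteq Q\subsetneq\widetilde{C}$ is excluded by minimality of $\widetilde{C}$; hence $Q\subseteq C$, giving $\widetilde{C}=P_v\subseteq C$, contradicting $C\subsetneq\widetilde{C}$. Without some such argument the forward direction is simply not proved in the infinite case, which is the only case where limits exist.

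Separately, your ``key preliminary observation'' is false as written: you assert that $C$ is a \emph{prime} clan of $\sigma[\widetilde{C}]$ whenever $C$ is a clan of $\sigma$. This would contradict the lemma itself, since a prime clan of $\sigma[\widetilde{C}]$ is prime in $\sigma$ when $\widetilde{C}$ is; in fact, if $D$ is any clan of $\sigma$ overlapping $C$, then primality of $\widetilde{C}$ forces $D\subseteq\widetilde{C}$ (the alternative $\widetilde{C}\subseteq D$ would give $C\subseteq D$), so a non-prime clan $C$ is already non-prime inside $\sigma[\widetilde{C}]$. Presumably you meant ``clan,'' which is what your parenthetical justifies and what the rest of your argument actually uses; likewise ``imprimitive'' in your forward direction should read ``not prime'' --- in this paper imprimitivity is a property of a 2-structure, not of a clan. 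With the chain argument supplied and these slips corrected, your proof goes through.
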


\begin{lem}\label{L1hat}
Given a 2-structure $\sigma$, consider $C\in\mathbb{P}(\sigma)\setminus\{\emptyset\}$. 
\begin{enumerate}
\item $C=\widehat{C}$ if and only if for each $X\in\mathbb{P}(\sigma)$, with 
$X\supsetneq C$, and for every $x\in X\setminus C$, there is $Z\in\mathbb{P}(\sigma)\setminus\mathbb{L}(\sigma)$ such that 
$C\subsetneq Z\subseteq X\setminus\{x\}$.
\item $C\subsetneq\widehat{C}$ if and only if $\widehat{C}\not\in\mathbb{L}(\sigma)$ and 
$C\in\mathbb{G}(\sigma[\widehat{C}])$.
\end{enumerate}
\end{lem}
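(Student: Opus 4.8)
The plan is to establish part~(2) first, reading it as the statement that $\widehat{C}$ is the ``parent'' of $C$ in the clan tree exactly when it is a genuine (non-limit) prime clan having $C$ as one of its Gallai components, and then to deduce part~(1) from the defining formula $\widehat{C}=\bigcap\{X\in\mathbb{P}(\sigma):X\supsetneq C\}$ together with Lemma~\ref{L1tilde}. The technical backbone for part~(2) is the correspondence that, for a prime clan $P$ of $\sigma$, the prime clans of $\sigma[P]$ are precisely the prime clans of $\sigma$ contained in $P$. One inclusion is immediate, since a clan of $\sigma[P]$ is a clan of $\sigma$ (two vertices of the clan $P$ look alike from outside $P$) and primeness is inherited downward; for the other inclusion I would intersect an arbitrary clan $E$ of $\sigma$ with $P$, note that $E\cap P$ is a clan of $\sigma[P]$, invoke primeness of a prime clan $D$ of $\sigma[P]$ to compare $D$ with $E\cap P$, and finally use primeness of $P$ in $\sigma$ to compare $E$ with $P$. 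I apply this with $P=\widehat{C}$.

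For part~(2), the backward direction is a matter of unwinding definitions, since $C\in\mathbb{G}(\sigma[\widehat{C}])$ already forces $C$ to be a proper subset of $\widehat{C}$, hence $C\subsetneq\widehat{C}$. For the forward direction, assume $C\subsetneq\widehat{C}$. Then $C$ is a nonempty proper prime clan of $\sigma$ contained in $\widehat{C}$, so by the correspondence $C\in\mathbb{P}(\sigma[\widehat{C}])\setminus\{\emptyset,\widehat{C}\}$. If $C$ were not maximal there, some prime clan $D$ of $\sigma[\widehat{C}]$ would satisfy $C\subsetneq D\subsetneq\widehat{C}$; by the correspondence $D$ would be a prime clan of $\sigma$ strictly containing $C$, contradicting the minimality of $\widehat{C}$ among such clans. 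Hence $C\in\mathbb{G}(\sigma[\widehat{C}])$, and in particular $\mathbb{G}(\sigma[\widehat{C}])\neq\emptyset$, so $\widehat{C}\notin\mathbb{L}(\sigma)$.

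For part~(1), note that $C=\widehat{C}$ is the negation of $C\subsetneq\widehat{C}$. For the implication from the right-hand side to $C=\widehat{C}$, I argue the contrapositive: if $C\subsetneq\widehat{C}$, take $X=\widehat{C}$ and any $x\in\widehat{C}\setminus C$; any prime clan $Z$ with $C\subsetneq Z\subseteq\widehat{C}$ must contain $\widehat{C}$ by minimality, so $Z=\widehat{C}\not\subseteq\widehat{C}\setminus\{x\}$, and no witness $Z$ exists at all, so the right-hand side fails. For the converse, assume $C=\widehat{C}$ and fix $X\in\mathbb{P}(\sigma)$ with $X\supsetneq C$ and $x\in X\setminus C$. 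Since $x\notin C=\bigcap\{Y\in\mathbb{P}(\sigma):Y\supsetneq C\}$, there is a prime clan $Y\supsetneq C$ with $x\notin Y$; comparability of the prime clans $X$ and $Y$ (they meet in $C\neq\emptyset$) together with $x\in X\setminus Y$ forces $Y\subseteq X$, whence $Y\subseteq X\setminus\{x\}$. I then pick $c\in C$ and $y\in Y\setminus C$ and set $Z=\widetilde{\{c,y\}}$.

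It remains to check that $Z$ is the required witness, which is where the main difficulty lies. By Lemma~\ref{L1tilde}, $Z\in\mathbb{P}(\sigma)\setminus\mathbb{L}(\sigma)$, which is exactly the non-limit requirement. Since $c,y\in Y$ and $Y$ is a prime clan, $Z\subseteq Y\subseteq X\setminus\{x\}$; and since $c\in C\cap Z$ the prime clans $C$ and $Z$ are comparable, while $y\in Z\setminus C$ rules out $Z\subseteq C$, giving $C\subsetneq Z$. I expect the only genuine obstacle to be this forward direction of part~(1): producing a witness that is \emph{non-limit} rather than merely a prime clan strictly between $C$ and $X$ that avoids $x$; the device of passing to $\widetilde{\{c,y\}}$ and quoting Lemma~\ref{L1tilde} is what overcomes it. The degenerate case $C=V(\sigma)$, where the defining intersection for $\widehat{C}$ is empty and both equivalences hold under the convention $\widehat{C}=V(\sigma)$, should be noted but is immediate.
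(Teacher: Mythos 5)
Your proof is correct; note that there is nothing in the paper to compare it against, since this is one of the three lemmas the authors explicitly declare easy and omit, so your write-up is a free-standing reconstruction rather than an alternative to a published argument. The two load-bearing points are exactly where you locate them: (i) the correspondence $\mathbb{P}(\sigma[P])=\{D\in\mathbb{P}(\sigma):D\subseteq P\}$ for a \emph{prime} clan $P$, whose nontrivial half (prime in $\sigma[P]$ implies prime in $\sigma$) genuinely needs the primeness of $P$ in the final comparison of $E$ with $P$ -- for a mere clan $P$ this direction fails -- and which, applied with $P=\widehat{C}$, converts the minimality of $\widehat{C}$ among prime clans strictly containing $C$ into maximality of $C$ in $\mathbb{P}(\sigma[\widehat{C}])\setminus\{\emptyset,\widehat{C}\}$, i.e.\ $C\in\mathbb{G}(\sigma[\widehat{C}])$; and (ii) in part~(1), the passage to $Z=\widetilde{\{c,y\}}$ with $c\in C$ and $y\in Y\setminus C$, which via Lemma~\ref{L1tilde} upgrades the mere existence of a prime clan $Y\supsetneq C$ avoiding $x$ (extracted from $x\notin\widehat{C}=C$, the family being nonempty because $X$ itself belongs to it) to the required \emph{non-limit} witness. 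The remaining bookkeeping is all sound: the contrapositive in part~(1) correctly exploits $\widehat{C}\subseteq Z$ for any prime $Z\supsetneq C$; comparability of overlapping prime clans yields both $Y\subseteq X\setminus\{x\}$ and $C\subsetneq Z$; and your remark on the degenerate case $C=V(\sigma)$ is apt, since the paper defines $\widehat{W}$ only for $\emptyset\neq W\subsetneq V(\sigma)$, so both equivalences hold there only under the natural convention $\widehat{V(\sigma)}=V(\sigma)$ with part~(1) vacuous.
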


Let $\sigma$ be a 2-structure. 
Using the Axiom of Choice and Theorem~\ref{Tgallai}, we label $\mathbb{P}(\sigma)\setminus\mathbb{L}(\sigma)$ by a function 
$\lambda_\sigma:\mathbb{P}(\sigma)\setminus\mathbb{L}(\sigma)\longrightarrow E(\sigma)\cup\{\varpi\}$ satisfying: 

\noindent for each $X\in\mathbb{P}(\sigma)\setminus\mathbb{L}(\sigma)$, 
\begin{itemize}
\item if $\sigma[X]/\mathbb{G}(\sigma[X])$ is primitive, then $\lambda_\sigma(X)=\varpi$;
\item if $\sigma[X]/\mathbb{G}(\sigma[X])$ is complete, then $\lambda_\sigma(X)=(Y,Z)_\sigma$ where $Y\neq Z\in\mathbb{G}(\sigma[X])$;
\item if $\sigma[X]/\mathbb{G}(\sigma[X])$ is linear, then we choose $(Y,Z)_\sigma$ or $(Z,Y)_\sigma$ for $\lambda_\sigma(X)$, where 
$Y\neq Z\in\mathbb{G}(\sigma[X])$.
\end{itemize}

With each $X\in\mathbb{P}(\sigma)\setminus\mathbb{L}(\sigma)$ such that $\sigma[X]/\mathbb{G}(\sigma[X])$ is linear, 
associate the linear order $O_X$ defined on $\mathbb{G}(\sigma[X])$ as follows. 
For any $Y\neq Z\in\mathbb{G}(\sigma[X])$, 
\begin{equation}\label{E1linear}
Y<Z\mod O_X\ \text{if}\ \ (Y,Z)_\sigma=\lambda_\sigma(X).
\end{equation}
Observe that the clans of $\sigma[X]/\mathbb{G}(\sigma[X])$ coincide with the intervals of $O_X$. 
A clan $H$ of $\sigma[X]/\mathbb{G}(\sigma[X])$ is said to be an {\em interval of singletons} of $O_X$ if $H\subseteq\mathbb{G}_1(\sigma[X])$. 
A {\em maximal interval of singletons} of $O_X$ is an interval of singletons of $O_X$ which is maximal among the intervals of singletons of $O_X$. 
Furthermore, given $Y,Z\in\mathbb{G}(\sigma[X])$, set 
$${\rm int}_X(Y,Z)=\bigcap\{F\in\mathbb{C}(\sigma[X])/\mathbb{G}(\sigma[X])):Y,Z\in F\}.$$


\section{Clan completness and tree equivalence}

In the section, we omit the proofs because they are somewhat technical, sometimes long and they do not present  a major interest in our topics. 

Let $\sigma$ be a 2-structure. 
Given $e\in E(\sigma)$, a subset $W$ of $V(\sigma)$ is {\em $e$-complete} if $(v,w)_\sigma=e$ for any $v\neq w\in W$. 
A subset $W$ of $V(\sigma)$ is {\em linear} if $\sigma[W]$ is linear. 
Given $e\in E(\sigma)$, a subset $W$ of $V(\sigma)$ is {\em $e$-linear} if 
$(W,\{(v,w)\in (W\times W)\setminus\{(u,u):u\in W\}):(v,w)_\sigma=e\}$ is a linear order. 
We consider the following subsets of $\mathbb{C}_{\geq 2}(\sigma)$: 
\begin{itemize}
\item $\mathscr{C}(\sigma)$ denotes the family of the maximal elements under inclusion of 
$\{C\in\mathbb{C}_{\geq 2}(\sigma):C\ \text{is complete}\}$;
\item $\mathscr{L}(\sigma)$ denotes the family of the maximal elements under inclusion of 
$\{C\in\mathbb{C}_{\geq 2}(\sigma):C\ \text{is linear}\}$;
\item $\mathscr{P}(\sigma)$ denotes the family of $C\in\mathbb{C}(\sigma)$ such that $\sigma[C]$ is primitive.
\end{itemize}
When $\mathscr{C}(\sigma)\neq\emptyset$, the clan completness $c(\sigma)$ of $\sigma$ satisfies 
$$c(\sigma)=\sup(\{|C|:C\in\mathscr{C}(\sigma)\}).$$ 

\begin{lem}\label{L1clanmax}
Given a 2-structure $\sigma$, consider $C\in\mathbb{C}_{\geq 2}(\sigma)$. 
\begin{enumerate}
\item If $\sigma[C]$ is complete, then there is $C'\in\mathscr{C}(\sigma)$ such that $C'\supseteq C$. 
\item If $\sigma[C]$ is linear, then there is $L\in\mathscr{L}(\sigma)$ such that $L\supseteq C$.
\end{enumerate}
\end{lem}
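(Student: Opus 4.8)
The plan is to handle both parts by the same maximality-via-union argument, exploiting the fact that a nested union of clans with a fixed common external behavior is again a clan of the same type. First I would observe the basic closure property: if $\mathcal{D}$ is a chain (totally ordered by inclusion) of clans of $\sigma$, each of cardinality at least $2$, then $\bigcup\mathcal{D}$ is again a clan. This follows directly from the defining condition of a clan, since for any $c,d\in\bigcup\mathcal{D}$ there is a single $D\in\mathcal{D}$ containing both, and any $v$ outside $\bigcup\mathcal{D}$ lies outside that $D$, so the required equivalences $(c,v)\equiv_\sigma(d,v)$ and $(v,c)\equiv_\sigma(v,d)$ are inherited from $D$. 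This lets me invoke Zorn's Lemma on the relevant family once I check that the completeness (resp. linearity) property is preserved under such unions.

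For part (1), I would consider the family of complete clans of $\sigma$ containing the given $C$, ordered by inclusion. For any chain in this family, the union is a clan by the closure property above, and it remains complete: indeed, if $W=\bigcup\mathcal{D}$ with each member complete, then any two distinct $v,w\in W$ lie in a common member $D$ of the chain, so $(v,w)_\sigma$ equals the single class witnessing completeness of $D$; since all members share the same class on overlaps (they are nested), $W$ is $e$-complete for one fixed $e\in E(\sigma)$, hence complete. Thus every chain has an upper bound in the family, and Zorn's Lemma yields a maximal element $C'$, which by definition lies in $\mathscr{C}(\sigma)$ and satisfies $C'\supseteq C$.

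For part (2), the argument is parallel but requires the analogous verification that linearity passes to unions of chains. Here I would fix, for the given linear clan $C$, the class $e\in E(\sigma)$ such that $\sigma[C]$ is $e$-linear, and restrict attention to the family of $e$-linear clans containing $C$. Given a chain $\mathcal{D}$ in this family, the union $W=\bigcup\mathcal{D}$ is a clan, and I must check that the relation $\{(v,w):(v,w)_\sigma=e\}$ is a linear order on $W$: totality is immediate since each pair lies in some $e$-linear member, while transitivity and antisymmetry follow because any finite set of vertices witnessing a potential violation already lies in a single member $D\in\mathcal{D}$, where the order is linear by hypothesis. Hence $W$ is itself $e$-linear and belongs to the family, so again Zorn's Lemma produces a maximal $L\in\mathscr{L}(\sigma)$ with $L\supseteq C$.

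The main obstacle is ensuring that the \emph{type} (the witnessing class $e$) is genuinely constant along the chain rather than merely locally defined. In part (1) this is clean because a complete clan has a unique witnessing class determined by any two of its elements, and nesting forces agreement; but in part (2) one must be slightly careful that all members of the chain are linear with respect to the \emph{same} $e$ and orientation, which is why I restrict to the subfamily of $e$-linear clans from the outset rather than to all linear clans. Once this coherence is secured, preservation under chain-unions and the Zorn argument are routine, and no finiteness of $V(\sigma)$ is needed.
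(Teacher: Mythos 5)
Part~(1) of your proposal is sound: a chain of clans unions to a clan, nesting together with $|C|\geq 2$ pins down the single witnessing class, and Zorn's Lemma applies; since any complete clan properly containing the maximal element would itself contain $C$, maximality in your restricted family does give membership in $\mathscr{C}(\sigma)$. (The paper omits its own proof of this lemma, so there is nothing to compare line by line, but this is the expected argument.) The genuine gap is in part~(2), and it sits exactly in the move you flagged as the ``main obstacle'': you replace the family of \emph{linear} clans by the family of $e$-\emph{linear} clans. In this paper a subset $W$ is linear when $\sigma[W]$ is linear, and a linear 2-structure is by definition \emph{asymmetric}, hence in particular reversible: not only must the $e$-pairs of $W$ form a linear order, but all the reversed pairs must lie in a \emph{single} class of $\sigma$. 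When $\sigma$ is not reversible, $e$-linearity is strictly weaker, and a maximal $e$-linear clan need not be linear at all. Concretely, take $V(\sigma)=\{a,b,c\}$ with $E(\sigma)=\{e,f_1,f_2\}$ where $e=\{(a,b),(b,c),(a,c)\}$, $f_1=\{(b,a)\}$ and $f_2=\{(c,b),(c,a)\}$. Then $C=\{a,b\}$ is a clan with $\sigma[C]$ linear, and $V(\sigma)$ is an $e$-linear clan containing $C$, so your Zorn argument returns $L=V(\sigma)$; but $\sigma[V(\sigma)]=\sigma$ is not reversible (the reverse of $e$ is $f_1\cup f_2$, which is not a class), hence not linear, and in fact $\mathscr{L}(\sigma)=\{\{a,b\}\}$. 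So your proof establishes maximality in the wrong family and its conclusion ``$L\in\mathscr{L}(\sigma)$'' fails here, even though the lemma itself holds.

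The repair is to run Zorn on the family of linear clans containing $C$ after all; the coherence you were worried about is supplied by the chain structure itself. Fix $v\neq w\in C$ and set $e=(v,w)_\sigma$, $g=(w,v)_\sigma$. If $W\supseteq C$ is a linear clan, then $\sigma[W]$ has exactly two classes, namely the restrictions of $e$ and of $g$, and $(W,\{(x,y):(x,y)_\sigma=e\})$ is a linear order (up to reversal, which is harmless). Now for a chain $\mathcal{D}$ of such clans with union $W$: totality, antisymmetry and transitivity of the $e$-pairs are checked inside a single member as you say; moreover any two pairs of $W$ that are \emph{not} in $e$ lie in a common member $D\in\mathcal{D}$, where both belong to the unique second class of $\sigma[D]$, hence to the same class $g$ of $\sigma$. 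Thus $\sigma[W]$ again has exactly the two classes $e$ and $g$ restricted, so $W$ is linear (not merely $e$-linear), and Zorn produces a maximal linear clan $L\supseteq C$, which lies in $\mathscr{L}(\sigma)$ since any linear clan properly containing $L$ would again belong to the family. Note that for \emph{reversible} $\sigma$ your shortcut is actually fine, since $e$-linearity of a set with at least two elements forces $e\in E_a(\sigma)$ and then $e$-linear and linear coincide; but the lemma is stated for arbitrary 2-structures, where the distinction is real.
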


We characterize the elements of 
$\mathscr{C}(\sigma)\cup\mathscr{L}(\sigma)\cup\mathscr{P}(\sigma)$ in terms of the labelled clan tree of $\sigma$. 

\begin{prop}\label{P1clanmax}
For a 2-structure $\sigma$, the following three equivalences hold. 
\begin{enumerate}
\item Given $C\subseteq V(\sigma)$, 
$C\in\mathscr{C}(\sigma)$ if and only if $|C|\geq 2$, 
$\widetilde{C}\in\mathbb{P}(\sigma)\setminus\mathbb{L}(\sigma)$, 
$\lambda_\sigma(\widetilde{C})\in E_s(\sigma)$ and 
$C=\{c\in\widetilde{C}:\{c\}\in\mathbb{G}(\sigma[\widetilde{C}])\}$.
\item Given $L\subseteq V(\sigma)$, 
$L\in\mathscr{L}(\sigma)$ if and only if $|L|\geq 2$, 
$\widetilde{L}\in\mathbb{P}(\sigma)\setminus\mathbb{L}(\sigma)$, 
$\lambda_\sigma(\widetilde{L})\in E_a(\sigma)$ and 
$\{\{l\}:l\in L\}$ is a maximal interval of singletons of $O_{\widetilde{L}}$. 
\item Given $P\subseteq V(\sigma)$, 
$P\in\mathscr{P}(\sigma)$ if and only if $P\in\mathbb{P}(\sigma)\setminus\mathbb{L}(\sigma)$, 
$\lambda_\sigma(P)=\varpi$ and $\mathbb{G}(\sigma[P])=\mathbb{G}_1(\sigma[P])$. 
\end{enumerate}
\end{prop}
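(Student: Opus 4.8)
The plan is to derive all three equivalences from a single structural lemma that describes how a complete or linear clan sits inside its smallest prime overclan, and then to read off the tree conditions using Theorem~\ref{Tgallai} together with Lemmas~\ref{L1tilde} and \ref{L1nonprime}. I would dispatch the primitive case~(3) first, since it carries no maximality requirement. If $\sigma[P]$ is primitive then $|P|\geq 3$, and a short argument shows that any clan $D$ of $\sigma$ meeting $P$ satisfies $P\subseteq D$ or $D\subseteq P$: the set $D\cap P$ is a clan of $\sigma[P]$, hence trivial, and the only option not forcing a nontrivial clan of $\sigma[P]$ is $D\cap P=P$; thus $P\in\mathbb{P}(\sigma)$. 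The only clans of $\sigma[P]$ being trivial gives $\mathbb{G}(\sigma[P])=\mathbb{G}_1(\sigma[P])\neq\emptyset$, so $P\notin\mathbb{L}(\sigma)$ and $\lambda_\sigma(P)=\varpi$. Conversely $\lambda_\sigma(P)=\varpi$ makes $\sigma[P]/\mathbb{G}(\sigma[P])$ primitive, and $\mathbb{G}(\sigma[P])=\mathbb{G}_1(\sigma[P])$ identifies this quotient with $\sigma[P]$, which is therefore primitive.

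For (1) and (2) I would first establish the following structural lemma. Let $C$ be a clan with $|C|\geq 2$ such that $\sigma[C]$ is complete (resp.\ linear) with class $e$. Using Lemma~\ref{L1nonprime} when $C$ is not prime (so $C=\bigcup F$ for a nontrivial clan $F$ of $Q:=\sigma[\widetilde{C}]/\mathbb{G}(\sigma[\widetilde{C}])$) and $\widetilde{C}=C$ otherwise, one shows by Theorem~\ref{Tgallai} that $Q$ is complete (resp.\ linear): it is not primitive because $F$ is a nontrivial clan; in the complete case it is not linear since an $e$-complete subset of a linear order has at most one element; and in the linear case it is not complete since a symmetric class between two blocks would destroy asymmetry. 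Hence $\lambda_\sigma(\widetilde{C})=e\in E_s(\sigma)$ (resp.\ $E_a(\sigma)$) and $\widetilde{C}\in\mathbb{P}(\sigma)\setminus\mathbb{L}(\sigma)$. The crucial sub-step is that no Gallai class $X\in\mathbb{G}(\sigma[\widetilde{C}])$ meeting $C$ can have $|X|\geq 2$: since $C$ is a union of whole Gallai classes such an $X$ would satisfy $X\subseteq C$, so $\sigma[X]$ would be $e$-complete (resp.\ $e$-linear), and I would then exhibit a clan of $\sigma[\widetilde{C}]$ straddling $X$, contradicting its primality. In the complete case the straddle is $\{x,z\}$ with $x\in X$ and $z$ in any other Gallai class, which is a clan because every cross-class pair and every pair inside $X$ equals the symmetric class $e$. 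In the linear case $\sigma[C]$ is genuinely $e$-linear, so the tail $\{w\in C:w>x\}$ with $x=\min X$ straddles $X$ and is a clan of $\sigma[\widetilde{C}]$ because $C$ is itself a clan. Consequently $C$ is contained in the union of the singleton Gallai classes (resp.\ in an interval of singletons of $O_{\widetilde{C}}$), and that union (resp.\ interval) is itself a complete (resp.\ linear) clan.

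Granting the lemma, the forward implications of (1) and (2) are immediate: maximality forces $C\in\mathscr{C}(\sigma)$ to equal the full union of singleton Gallai classes, and $L\in\mathscr{L}(\sigma)$ to make $\{\{l\}:l\in L\}$ a maximal interval of singletons of $O_{\widetilde{L}}$. For the converse I would check that the displayed sets are complete (resp.\ linear) clans, and prove their maximality through the clan tree. If such a $C$ were contained in a strictly larger complete (resp.\ linear) clan $C'$, the lemma applied to $C'$ would place every element of $C$ in a singleton Gallai class of $\sigma[\widetilde{C'}]$; but $\widetilde{C}\subseteq\widetilde{C'}$, and a strict inclusion would put $\widetilde{C}$ inside a single Gallai class of $\widetilde{C'}$, forcing the $\geq 2$ elements of $C$ into one class and contradicting their being singletons. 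Hence $\widetilde{C'}=\widetilde{C}$, the two Gallai structures coincide, and maximality of the candidate set yields $C'=C$.

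I expect the main obstacle to be precisely this interaction between the straddling-clan sub-step and the reverse maximality argument: one must verify carefully, using the labelling $\lambda_\sigma$ and the fact that a Gallai class sharing the quotient's class $e$ renders the relevant union genuinely $e$-complete or $e$-linear, both that a non-singleton Gallai class always yields a primality-violating clan and that no external vertex can be adjoined without enlarging $\widetilde{C}$. The remaining points—that unions of singleton classes and intervals of singletons are clans, and the identification of $\lambda_\sigma(\widetilde{C})$ with $e$—are routine consequences of Theorem~\ref{Tgallai} and the definitions of $O_{\widetilde{C}}$ and $\mathbb{G}_1$.
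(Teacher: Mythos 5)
You should first note that the paper never actually proves Proposition~\ref{P1clanmax}: Section~3 opens by announcing that all proofs in that section are omitted as technical. So there is no official proof to compare against, and your proposal must be judged on its own merits. On those merits it is essentially sound. For~(3), your reduction is correct, though the parenthetical ``the only option not forcing a nontrivial clan of $\sigma[P]$ is $D\cap P=P$'' glosses the genuine case $|D\cap P|=1$ with $D\setminus P\neq\emptyset$: there you need the standard fact that $P\setminus D$ is then a clan of $\sigma$, hence a nontrivial clan of $\sigma[P]$ — one line, but it is the actual content of the primality of $P$. The structural lemma driving (1) and~(2) is the right engine: Lemma~\ref{L1nonprime} to write $C=\bigcup F$ when $C$ is not prime (and $\widetilde{C}=C$ otherwise), Theorem~\ref{Tgallai} plus the symmetry/asymmetry of the cross-class pairs to pin the quotient as complete resp.\ linear, the straddling clan to exclude non-singleton Gallai classes meeting $C$, and the two-case maximality argument in the converse ($\widetilde{C}=\widetilde{C'}$ versus $\widetilde{C}$ trapped inside a single Gallai class of $\sigma[\widetilde{C'}]$, the latter legitimate because by Theorem~\ref{Tgallai} the family $\mathbb{G}(\sigma[\widetilde{C'}])$ partitions $\widetilde{C'}$, so every proper prime subclan lies inside one of its members). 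All of this checks out.

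One step, as literally written, can fail: in the linear straddle you take ``the tail $\{w\in C:w>x\}$ with $x=\min X$.'' An interval $X$ of an arbitrary linear order need not have a minimum (an open interval of a dense order), and even when $\min X$ exists the upper tail fails to straddle $X$ when $X$ is terminal in the order on $C$, since then the tail is contained in $X$. The repair is immediate and stays within your plan: pick $x_1<x_2\in X$ and consider the two tails $\{w\in C:w>x_1\}$ and $\{w\in C:w<x_2\}$; each is an interval of the linear order on $C$, hence a clan of $\sigma[C]$, hence a clan of $\sigma[\widetilde{C}]$ because $C$ is a clan of $\sigma[\widetilde{C}]$. If neither straddled $X$, their union — which is all of $C$ — would be contained in $X$, forcing $X=C$; but then $C$ would be a prime clan of $\sigma$, giving $\widetilde{C}=C$ and contradicting $X\subsetneq\widetilde{C}$ (recall the sub-step is only invoked in the non-prime case). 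With that one-line fix, and the spelled-out overlap case in~(3), your reconstruction is a complete and correct proof of the proposition.
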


Let $\sigma$ be a 2-structure. 
As we will show (see Theorem~\ref{T1equivalence}), the elements of 
$\mathscr{C}(\sigma)\cup\mathscr{L}(\sigma)\cup\mathscr{P}(\sigma)$ are the nontrivial equivalence classes of the following equivalence relation. 
Given $v\neq w\in V(\sigma)$, 
$v\simeq_\sigma w$ if $\widehat{\{v\}}=\widehat{\{w\}}$ and if 
there is $F\in\mathbb{C}(\sigma[\widehat{\{v\}}]/\mathbb{G}(\sigma[\widehat{\{v\}}]))$ such that 
$\{v\},\{w\}\in F$ and $F\subseteq\mathbb{G}_1(\sigma[\widehat{\{v\}}])$. 
To make this definition clearer, observe the following. 
Let $v\neq w\in V(\sigma)$ such that $\widehat{\{v\}}=\widehat{\{w\}}$. 
Since $\{v\}\neq\widehat{\{v\}}$, it follows from Lemma~\ref{L1hat} that 
$\widehat{\{v\}}\in\mathbb{P}(\sigma)\setminus\mathbb{L}(\sigma)$ and 
$\{v\}\in\mathbb{G}_1(\sigma[\widehat{\{v\}}])$. 
Similarly $\{w\}\in\mathbb{G}_1(\sigma[\widehat{\{v\}}])$. 

\begin{lem}\label{L1equivalence}
Let $\sigma$ be a 2-structure. 
For $v\neq w\in V(\sigma)$, $v\simeq_\sigma w$ if and only if the following three assertions hold
\begin{itemize}
\item $\widehat{\{v\}}=\widehat{\{w\}}$;
\item if $\lambda_\sigma(\widehat{\{v\}})=\varpi$, then 
$\mathbb{G}(\sigma[\widehat{\{v\}}])=\mathbb{G}_1(\sigma[\widehat{\{v\}}])$;
\item if $\lambda_\sigma(\widehat{\{v\}})\in E_a(\sigma)$, then 
${\rm int}_{\widehat{\{v\}}}(\{v\},\{w\})\subseteq\mathbb{G}_1(\sigma[\widehat{\{v\}}])$. 
\end{itemize}
\end{lem}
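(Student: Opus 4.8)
The plan is to prove Lemma~\ref{L1equivalence} by unwinding both definitions and showing they match case by case, using the trichotomy provided by Theorem~\ref{Tgallai} applied to the quotient $\sigma[\widehat{\{v\}}]/\mathbb{G}(\sigma[\widehat{\{v\}}])$. First I would dispose of the common hypothesis: in both the definition of $\simeq_\sigma$ and in the stated characterization, the first requirement is $\widehat{\{v\}}=\widehat{\{w\}}$, so I assume this holds and write $X=\widehat{\{v\}}=\widehat{\{w\}}$ throughout. By the observation recorded just before the lemma (an application of Lemma~\ref{L1hat}), this already forces $X\in\mathbb{P}(\sigma)\setminus\mathbb{L}(\sigma)$ and $\{v\},\{w\}\in\mathbb{G}_1(\sigma[X])$. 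In particular $\lambda_\sigma(X)$ is defined, and by Theorem~\ref{Tgallai} the quotient $\sigma[X]/\mathbb{G}(\sigma[X])$ is complete, linear, or primitive — exactly the three values $\lambda_\sigma(X)\in E_s(\sigma)$, $\lambda_\sigma(X)\in E_a(\sigma)$, or $\lambda_\sigma(X)=\varpi$ distinguished by the labelling $\lambda_\sigma$.

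Next I would reformulate the defining condition of $v\simeq_\sigma w$: it asks for some clan $F$ of the quotient $\sigma[X]/\mathbb{G}(\sigma[X])$ with $\{v\},\{w\}\in F$ and $F\subseteq\mathbb{G}_1(\sigma[X])$. The whole argument then becomes a comparison of this condition against the two conditional clauses in the statement, split according to the type of the quotient. In the \emph{complete} case ($\lambda_\sigma(X)\in E_s(\sigma)$), every subset of the quotient is a clan, so taking $F=\{\{v\},\{w\}\}$ always works; correspondingly the statement imposes no condition beyond $\widehat{\{v\}}=\widehat{\{w\}}$ in this case, so both sides hold automatically and they agree. In the \emph{primitive} case ($\lambda_\sigma(X)=\varpi$) the only clans of the quotient are the trivial ones, so an $F$ containing two distinct singletons $\{v\},\{w\}$ must be all of $\mathbb{G}(\sigma[X])$; hence $v\simeq_\sigma w$ forces $\mathbb{G}(\sigma[X])=F\subseteq\mathbb{G}_1(\sigma[X])$, i.e.\ $\mathbb{G}(\sigma[X])=\mathbb{G}_1(\sigma[X])$, which is precisely the middle clause of the statement; conversely that equality lets us take $F=\mathbb{G}(\sigma[X])$.

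The \emph{linear} case ($\lambda_\sigma(X)\in E_a(\sigma)$) is where I expect the real work, since here the clans of the quotient are the intervals of the linear order $O_X$, and the relevant bookkeeping is whether a witnessing interval can be kept inside $\mathbb{G}_1(\sigma[X])$. The key identity to exploit is that ${\rm int}_X(\{v\},\{w\})$ is by definition the smallest clan (equivalently, the smallest interval of $O_X$) containing both $\{v\}$ and $\{w\}$. Therefore any clan $F$ of the quotient with $\{v\},\{w\}\in F$ satisfies $F\supseteq{\rm int}_X(\{v\},\{w\})$; so if some such $F$ lies inside $\mathbb{G}_1(\sigma[X])$, then in particular ${\rm int}_X(\{v\},\{w\})\subseteq\mathbb{G}_1(\sigma[X])$, giving the third clause. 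Conversely, if ${\rm int}_X(\{v\},\{w\})\subseteq\mathbb{G}_1(\sigma[X])$ then ${\rm int}_X(\{v\},\{w\})$ itself is an interval of $O_X$ (hence a clan of the quotient) containing $\{v\}$ and $\{w\}$ and contained in $\mathbb{G}_1(\sigma[X])$, so it serves as the required $F$ and $v\simeq_\sigma w$ holds. Assembling the three cases — trivially true in the complete case, equivalent to $\mathbb{G}(\sigma[X])=\mathbb{G}_1(\sigma[X])$ in the primitive case, and equivalent to ${\rm int}_X(\{v\},\{w\})\subseteq\mathbb{G}_1(\sigma[X])$ in the linear case — yields exactly the list of conditions in the statement, completing the proof. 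The main obstacle, and the step I would state most carefully, is the minimality property of ${\rm int}_X(\{v\},\{w\})$ in the linear case and the identification of intervals of $O_X$ with clans of the quotient, both of which were flagged as immediate from the definitions in Section~2 but are what make the forward and backward implications close cleanly.
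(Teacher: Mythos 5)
The paper never proves Lemma~\ref{L1equivalence}: it sits in Section~3, whose proofs the authors explicitly omit, so there is no official argument to compare yours against; judged on its own, your proof is correct and is surely the intended one. After the reduction to $X=\widehat{\{v\}}=\widehat{\{w\}}\in\mathbb{P}(\sigma)\setminus\mathbb{L}(\sigma)$ with $\{v\},\{w\}\in\mathbb{G}_1(\sigma[X])$ (the observation recorded just before the lemma, via Lemma~\ref{L1hat}), Theorem~\ref{Tgallai} splits the quotient into complete, primitive and linear types matching the three values of $\lambda_\sigma(X)$, and your treatment of each case is sound: in the complete case every subset of the quotient is a clan, so $F=\{\{v\},\{w\}\}$ always witnesses $v\simeq_\sigma w$ and the lemma's two conditional clauses are vacuous; in the primitive case a clan containing two distinct singletons must be all of $\mathbb{G}(\sigma[X])$ --- which is a legitimate witness, since the definition of $\simeq_\sigma$ does not require $F$ to be nontrivial --- so the condition collapses to $\mathbb{G}(\sigma[X])=\mathbb{G}_1(\sigma[X])$; and in the linear case the clans of the quotient are exactly the intervals of $O_X$, so ${\rm int}_X(\{v\},\{w\})$ is the smallest clan containing both singletons, whence a witness inside $\mathbb{G}_1(\sigma[X])$ exists if and only if ${\rm int}_X(\{v\},\{w\})\subseteq\mathbb{G}_1(\sigma[X])$. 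The two facts you rightly flag as the load-bearing steps --- that intervals of $O_X$ coincide with clans of the quotient, and that ${\rm int}_X(\{v\},\{w\})$ is itself such an interval (an intersection of intervals containing two fixed points is again one) --- are exactly what the paper's Section~2 sets up, so nothing is missing.
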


Given a 2-structure $\sigma$, denote by $\mathfrak{C}(\sigma)$ the family of the equivalence classes of $\simeq_\sigma$. 
Set $\mathfrak{C}_1(\sigma)=\{C\in\mathfrak{C}(\sigma):|C|=1\}$ and 
$\mathfrak{C}_{\geq 2}(\sigma)=\{C\in\mathfrak{C}(\sigma):|C|\geq 2\}$.

\begin{thm}\label{T1equivalence}
For a 2-structure $\sigma$, 
$\mathfrak{C}_{\geq 2}(\sigma)=\mathscr{C}(\sigma)\cup\mathscr{L}(\sigma)\cup\mathscr{P}(\sigma)$. 
\end{thm}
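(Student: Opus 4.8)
The plan is to prove the set equality $\mathfrak{C}_{\geq 2}(\sigma)=\mathscr{C}(\sigma)\cup\mathscr{L}(\sigma)\cup\mathscr{P}(\sigma)$ by showing two inclusions, leaning heavily on the tree-theoretic characterizations already in hand. Both $\simeq_\sigma$ (via Lemma~\ref{L1equivalence}) and the three families $\mathscr{C},\mathscr{L},\mathscr{P}$ (via Proposition~\ref{P1clanmax}) have been recast in terms of the labelled clan tree, so the proof should be a matter of matching up the two descriptions case by case according to the label $\lambda_\sigma$.

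First I would take a nontrivial equivalence class $C\in\mathfrak{C}_{\geq 2}(\sigma)$ and fix $v\neq w\in C$. By the remark preceding Lemma~\ref{L1equivalence} we know $\widehat{\{v\}}\in\mathbb{P}(\sigma)\setminus\mathbb{L}(\sigma)$ and $\{v\}\in\mathbb{G}_1(\sigma[\widehat{\{v\}}])$; set $X=\widehat{\{v\}}=\widehat{\{w\}}$. The argument then splits on the three possible values of $\lambda_\sigma(X)$. If $\lambda_\sigma(X)\in E_s(\sigma)$, I would aim to show $\widetilde{C}=X$ and that $C$ is exactly the set of vertices whose singletons are Gallai factors of $\sigma[X]$, placing $C\in\mathscr{C}(\sigma)$ through part~(1) of Proposition~\ref{P1clanmax}. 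If $\lambda_\sigma(X)\in E_a(\sigma)$, then Lemma~\ref{L1equivalence} says $v\simeq_\sigma w$ forces ${\rm int}_X(\{v\},\{w\})\subseteq\mathbb{G}_1(\sigma[X])$, so the singletons of $C$ form an interval of singletons of $O_X$; I would then argue maximality to land $C\in\mathscr{L}(\sigma)$ via part~(2). If $\lambda_\sigma(X)=\varpi$, then $\mathbb{G}(\sigma[X])=\mathbb{G}_1(\sigma[X])$, every vertex of $X$ is a singleton factor, $C=X$, and part~(3) gives $C\in\mathscr{P}(\sigma)$.

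For the reverse inclusion I would take $C$ in one of the three families and verify that any two of its elements are $\simeq_\sigma$-related by checking the three bullet conditions of Lemma~\ref{L1equivalence}, again using the corresponding part of Proposition~\ref{P1clanmax} to identify $\widehat{\{v\}}$ with the relevant prime clan. The key link in every case is that $C$ maximal complete/linear (or primitive) forces $\widehat{\{v\}}=\widetilde{C}$ (respectively $\widehat{\{v\}}=P$), which ties the local hat-closure of each point to the global prime clan governing $C$. Once $\widehat{\{v\}}=\widehat{\{w\}}$ is established the remaining bullets follow from the label of that clan and the description of $C$ as the singleton factors.

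I expect the main obstacle to be the bookkeeping around $\widehat{\{v\}}$ versus $\widetilde{C}$: one must confirm that for a maximal complete or linear clan $C$, the hat-closure $\widehat{\{v\}}$ of a single point coincides with the tilde-closure $\widetilde{C}$ of the whole clan, i.e.\ that no intermediate prime clan separates $v$ from the rest of $C$. This rests on maximality of $C$ within its family together with Lemma~\ref{L1hat}, and it is where the interval-of-singletons analysis in the linear case, and the ``all Gallai factors are singletons'' condition in the complete and primitive cases, must be deployed carefully; the symmetric/asymmetric/$\varpi$ trichotomy must be kept strictly aligned between the two characterizations so that no class is double-counted or omitted.
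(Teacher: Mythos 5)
There is no proof in the paper to compare against: Theorem~\ref{T1equivalence} belongs to Section~3, whose opening sentence states that all proofs in that section are omitted. Judged on its own merits, your plan is correct and is evidently the intended argument, namely matching the two clan-tree characterizations --- Lemma~\ref{L1equivalence} for $\simeq_\sigma$ and Proposition~\ref{P1clanmax} for $\mathscr{C}(\sigma)\cup\mathscr{L}(\sigma)\cup\mathscr{P}(\sigma)$ --- through the trichotomy on $\lambda_\sigma(\widehat{\{v\}})$. The two pieces of bookkeeping you single out do go through as you predict. First, for $X\in\mathbb{P}(\sigma)\setminus\mathbb{L}(\sigma)$ and $\{v\}\in\mathbb{G}(\sigma[X])$ one gets $\widehat{\{v\}}=X$: a prime clan of $\sigma$ strictly between $\{v\}$ and $X$ would contradict the maximality defining $\mathbb{G}(\sigma[X])$, and any prime clan containing $v$ but not contained in $X$ overlaps $X$ and hence contains it, so the intersection defining $\widehat{\{v\}}$ is exactly $X$. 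Second, $\widetilde{C}=X$ in the complete and linear cases because $|C|\geq 2$ forces $C$ to meet two distinct members of $\mathbb{G}(\sigma[X])$, whereas every prime clan properly contained in $X$ lies inside a single member; this is exactly the ``no intermediate prime clan'' point you flag, and it discharges the hypotheses of Proposition~\ref{P1clanmax}(1)--(2).

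One step you should state explicitly in the reverse inclusion: verifying that any two elements of $C$ are $\simeq_\sigma$-related only shows $C$ is \emph{contained in} an equivalence class; membership in $\mathfrak{C}_{\geq 2}(\sigma)$ requires that $C$ \emph{equal} that class. This closes quickly with the tools you already invoke: by the remark preceding Lemma~\ref{L1equivalence}, any $u$ with $u\simeq_\sigma v$ for some $v\in C$ satisfies $\widehat{\{u\}}=\widehat{\{v\}}$ and $\{u\}\in\mathbb{G}_1(\sigma[\widehat{\{v\}}])$, which in the complete and primitive cases pins $u$ inside the set of singleton Gallai factors, i.e.\ inside $C$; in the linear case the additional condition ${\rm int}_{\widehat{\{v\}}}(\{u\},\{v\})\subseteq\mathbb{G}_1(\sigma[\widehat{\{v\}}])$ places $\{u\}$ in an interval of singletons through $\{v\}$, hence in the maximal one, which is $\{\{l\}:l\in C\}$ by Proposition~\ref{P1clanmax}(2). (Symmetrically, in the forward direction the existence of the maximal interval of singletons through $\{v\}$ is the union of all intervals of singletons containing it.) With these points made explicit, your outline is a complete and sound proof.
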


Given a 2-structure $\sigma$, set 
$\Upsilon(\sigma)=\bigcup\mathfrak{C}_1(\sigma)$ and consider 
$\Upsilon_\downarrow(\sigma)=\{v\in\Upsilon(\sigma):\widehat{\{v\}}=\{v\}\}$. 
There are reversible 2-structures $\sigma$ such that $V(\sigma)=\Upsilon_\downarrow(\sigma)$. 
For such 2-structures $\sigma$, it follows from Theorems~\ref{T2bound} and \ref{T4bound} that 
$p(\sigma)=1$. 
The proofs use properties of traverses and inclusive clans introduced in the next two sections. 



\section{Traverses of a 2-structure}\label{sectiontrav}

Given a 2-structure $\sigma$, a linear order $L$ defined on $V(\sigma)$ is a {\em traverse} of 
$\sigma$ if the following two assertions hold. 
\begin{enumerate}
\item[(A1)] For each $X\in\mathbb{P}(\sigma)$, $X$ is an interval of $L$. 
\item[(A2)] Let $X\in\mathbb{P}(\sigma)\setminus\mathbb{L}(\sigma)$ such that 
$\lambda_\sigma(X)\in E_a(\sigma)$. 
By Assertion A1, $\mathbb{G}(\sigma[X])$ is a factorization of $L[X]$ and we require that 
\begin{equation*}
L[X]/\mathbb{G}(\sigma[X])=O_X\quad\text{(see \eqref{E1linear})}.
\end{equation*}
\end{enumerate}

\begin{prop}[Axiom of Choice]\label{ptraverse}
Any 2-structure admits a traverse.
\end{prop}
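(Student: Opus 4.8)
The plan is to construct a traverse recursively along the clan tree $(\mathbb{T}(\sigma),\subseteq)$, using the labelling $\lambda_\sigma$ and the quotient structure provided by Theorem~\ref{Tgallai}. The guiding principle is that a traverse is precisely a linear order in which every prime clan appears as an interval (condition A1), and which on each linear quotient agrees with the prescribed order $O_X$ (condition A2). Since the quotient $\sigma[X]/\mathbb{G}(\sigma[X])$ is complete, linear or primitive for every $X\in\mathbb{P}(\sigma)\setminus\mathbb{L}(\sigma)$, I would produce a linear order on each such quotient, and then splice these orders together compatibly with the tree.

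\medskip

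First I would apply the Axiom of Choice to well-order, for each $X\in\mathbb{P}(\sigma)\setminus\mathbb{L}(\sigma)$, the factorization $\mathbb{G}(\sigma[X])$, with the single constraint that when $\lambda_\sigma(X)\in E_a(\sigma)$ (the linear case) the chosen order on $\mathbb{G}(\sigma[X])$ is exactly $O_X$ from \eqref{E1linear}; in the complete and primitive cases any well-order is permitted. This gives a local linear order $\prec_X$ on the children $\mathbb{G}(\sigma[X])$ of each internal node $X$. The global order $L$ on $V(\sigma)$ is then defined by the standard lexicographic rule on the tree: for distinct $v,w\in V(\sigma)$, consider $\widetilde{\{v,w\}}$, which by Lemma~\ref{L1tilde} lies in $\mathbb{P}(\sigma)\setminus\mathbb{L}(\sigma)$ and splits $v,w$ into distinct blocks $X_v\neq X_w\in\mathbb{G}(\sigma[\widetilde{\{v,w\}}])$; set $v<w \bmod L$ exactly when $X_v\prec_{\widetilde{\{v,w\}}}X_w$.

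\medskip

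Next I would verify that $L$ is a genuine linear order and that it satisfies A1 and A2. Irreflexivity and totality are immediate from the definition; transitivity is the point where the tree structure does the work. For three vertices $u,v,w$, the prime clans $\widetilde{\{u,v\}}$, $\widetilde{\{v,w\}}$, $\widetilde{\{u,w\}}$ are pairwise nested or equal by primeness, so the comparison is ultimately decided at a single internal node where the local order $\prec_X$ is transitive. For A1, given any $X\in\mathbb{P}(\sigma)$ I would check that $X$ is an interval of $L$: if $v,w\in X$ and $u$ lies strictly between them in $L$, then the splitting prime clan for each relevant pair is contained in $X$ by primeness and the nesting of $\widetilde{\cdot}$, forcing $u\in X$. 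Condition A2 holds by construction, since for a linear node $X$ the induced order $L[X]/\mathbb{G}(\sigma[X])$ is read off precisely from $\prec_X=O_X$.

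\medskip

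The main obstacle I anticipate is the infinite case, where the tree may have limit nodes in $\mathbb{L}(\sigma)$ and internal nodes with infinitely many children or infinite depth. The lexicographic definition above is phrased pointwise through $\widetilde{\{v,w\}}$ precisely to sidestep a transfinite recursion that would otherwise require continuity at limit stages; the delicate verification is then transitivity and the interval property across chains of nested prime clans of arbitrary order type. I would lean on Lemma~\ref{L1tilde} and the nesting property of prime clans to guarantee that every pairwise comparison is resolved at a well-defined node, so that no limit-stage gluing is needed and the well-orders chosen by the Axiom of Choice suffice. Checking that $L[X]$ restricts correctly for limits $X\in\mathbb{L}(\sigma)$ (where $\mathbb{G}(\sigma[X])=\emptyset$) is the remaining technical point, handled by noting that such $X$ impose no A2 constraint and contribute only through finer prime clans strictly below them.
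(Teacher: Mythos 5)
Your construction coincides with the paper's own proof: both use the Axiom of Choice to fix a linear order $L_X$ on $\mathbb{G}(\sigma[X])$ for each $X\in\mathbb{P}(\sigma)\setminus\mathbb{L}(\sigma)$, constrained to equal $O_X$ in the asymmetric case, and then define the global order pointwise through $\widetilde{\{v,w\}}$ via Lemma~\ref{L1tilde}, leaving transitivity and Assertions~A1--A2 as a routine verification. One cosmetic remark: you ask for \emph{well}-orders on the children, but $O_X$ need not be a well-order and nothing in your argument uses well-foundedness, so arbitrary linear orders (as in the paper) suffice.
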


\begin{proof}
Consider a 2-structure $\sigma$. 
Using the Axiom of Choice, we associate with each 
$X\in\mathbb{P}(\sigma)\setminus\mathbb{L}(\sigma)$ a linear order $L_X$ defined on 
$\mathbb{G}(\sigma[X])$. 
Moreover, we require that 
$L_X=O_X$ whenever $\lambda_\sigma(X)\in E_a(\sigma)$. 

Now consider the digraph $\Delta$ defined on $V(\sigma)$ as follows. 
Let $v\neq w\in V(\sigma)$. 
By Lemma~\ref{L1tilde}, $\widetilde{\{v,w\}}\in\mathbb{P}(\sigma)\setminus\mathbb{L}(\sigma)$ and there are 
$X_v\neq X_w\in\mathbb{G}(\sigma[\widetilde{\{v,w\}}])$ such that $v\in X_v$ and $w\in X_w$. 
Set 
\begin{equation*}
(v,w)\in A(\Delta)\quad\text{if}\quad X_v<X_w \mod L_{\widetilde{\{v,w\}}}.
\end{equation*}
It is not difficult to verify that $\Delta$ is a linear order which satisfies Assertions~A1 and A2. 
\end{proof}

Using Assertions~A1 and A2, we obtain the following. 

\begin{lem}\label{L1trav}
Given a 2-structure $\sigma$, consider a traverse $T$ of $\sigma$. 
For each $C\in\mathbb{C}(\sigma)$, 
if $\lambda_\sigma(\widetilde{C})\in E_a(\sigma)$, then $C\in\mathbb{C}(T)$. 
\end{lem}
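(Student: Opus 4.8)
The plan is to show that the clan condition for $C$ transfers from $\sigma$ to the linear order $T$, using that a traverse respects the prime clans of $\sigma$ (Assertion A1) and faithfully records the linear orders $O_X$ on the Gallai factors with asymmetric label (Assertion A2). Recall that $C\in\mathbb{C}(T)$ means $C$ is an interval of the linear order $T$, i.e. whenever $v<x<w \mod T$ with $v,w\in C$, then $x\in C$. So the statement asserts that under the hypothesis $\lambda_\sigma(\widetilde{C})\in E_a(\sigma)$, the clan $C$ of $\sigma$ is in fact an interval of $T$.

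First I would dispose of the trivial cases: if $|C|\leq 1$ or $C=V(\sigma)$, then $C$ is automatically an interval of any linear order, so I may assume $2\leq|C|<\nu(\sigma)$. Next I would locate $C$ inside the clan tree via $\widetilde{C}=\bigcap\{X\in\mathbb{P}(\sigma):X\supseteq C\}$, the smallest prime clan containing $C$. Since $\lambda_\sigma(\widetilde{C})\in E_a(\sigma)$ is defined, we have $\widetilde{C}\in\mathbb{P}(\sigma)\setminus\mathbb{L}(\sigma)$ and $\sigma[\widetilde{C}]/\mathbb{G}(\sigma[\widetilde{C}])$ is \emph{linear} (that is the only case in which $\lambda_\sigma$ takes an asymmetric value). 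By Assertion A1 applied to the prime clan $\widetilde{C}$, the set $\widetilde{C}$ is an interval of $T$, so it suffices to prove that $C$ is an interval of the induced order $T[\widetilde{C}]$; working inside $\widetilde{C}$ lets me replace $\sigma$ by $\sigma[\widetilde{C}]$ and assume $\widetilde{C}=V(\sigma)$ with $\sigma/\mathbb{G}(\sigma)$ linear and $T/\mathbb{G}(\sigma)=O_\sigma$ by A2.

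Now the main structural input is Lemma~\ref{L1nonprime} together with the observation (stated in the excerpt) that the clans of $\sigma[X]/\mathbb{G}(\sigma[X])$ coincide with the intervals of $O_X$. Since $\widetilde{C}=V(\sigma)$ is the smallest prime clan containing $C$, the set $C$ is either a union of Gallai blocks corresponding to a nontrivial clan $F$ of $\sigma/\mathbb{G}(\sigma)$ (the imprimitive case of Lemma~\ref{L1nonprime}), or $C$ is itself one of the Gallai blocks. In the first case $F$ is a clan of the linear quotient, hence an interval of $O_\sigma$; in the second case $C$ equals a single block $X\in\mathbb{G}(\sigma)$, which is a (prime) clan of $\sigma$ and thus an interval of $T$ by A1 directly. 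So the crux is the union-of-blocks case: I must show that the $T$-order of the blocks agrees with $O_\sigma$, which is precisely what A2 guarantees, namely $T/\mathbb{G}(\sigma)=O_\sigma$.

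To finish, I would argue that because $F$ is an interval of $O_\sigma$ and $O_\sigma=T/\mathbb{G}(\sigma)$, the union $C=\bigcup F$ is an interval of $T$: given $v<x<w\mod T$ with $v,w\in C$, let $X_v,X_x,X_w$ be the blocks of $\mathbb{G}(\sigma)$ containing $v,x,w$; since each block is a $T$-interval by A1, the block $X_x$ lies between $X_v$ and $X_w$ in $T/\mathbb{G}(\sigma)$, hence between them in $O_\sigma$, hence $X_x\in F$ because $F$ is an interval, hence $x\in\bigcup F=C$. The step I expect to be the genuine obstacle is verifying the dichotomy from Lemma~\ref{L1nonprime} cleanly and confirming that the reduction to $\widetilde{C}=V(\sigma)$ is legitimate, i.e. that being an interval of the restricted traverse $T[\widetilde{C}]$ together with $\widetilde{C}$ being a $T$-interval really does upgrade to $C$ being an interval of the whole of $T$; this requires a short transitivity-of-intervals argument but is conceptually routine once A1 and A2 are in hand.
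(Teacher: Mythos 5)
Your proof is correct and takes exactly the route the paper intends: the paper states Lemma~\ref{L1trav} without a written proof, remarking only that it follows from Assertions A1 and A2, and your argument---A1 for $\widetilde{C}$ and for the Gallai blocks (which are prime clans of $\sigma$ since $\widetilde{C}\in\mathbb{P}(\sigma)$), Lemma~\ref{L1nonprime} for the dichotomy, the observation that clans of $\sigma[\widetilde{C}]/\mathbb{G}(\sigma[\widetilde{C}])$ are the intervals of $O_{\widetilde{C}}$, A2 to identify $T[\widetilde{C}]/\mathbb{G}(\sigma[\widetilde{C}])$ with $O_{\widetilde{C}}$, and the routine lifting of a quotient interval to an interval of $T$---is precisely the intended derivation. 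The only cosmetic slip is that in the prime case one has $C=\widetilde{C}$ rather than ``$C$ is a single Gallai block,'' but as you note A1 applies directly to any prime clan, so nothing in the argument is affected.
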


The next result follows from Lemma~\ref{L1trav}. 

\begin{cor}\label{C1trav}
Given a 2-structure $\sigma$, consider a traverse $T$ of $\sigma$. 
For each $C\in\mathscr{L}(\sigma)$, 
$\mathbb{C}(\sigma[C])\subseteq\mathbb{C}(T)$. 
\end{cor}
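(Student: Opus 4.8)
The plan is to derive Corollary~\ref{C1trav} directly from Lemma~\ref{L1trav}, so the main task is to verify that the hypothesis of Lemma~\ref{L1trav} is available for every clan of $\sigma[C]$ when $C\in\mathscr{L}(\sigma)$. Fix $C\in\mathscr{L}(\sigma)$ and a clan $D\in\mathbb{C}(\sigma[C])$. By Lemma~\ref{L1trav}, it suffices to show two things: first, that $D$ is actually a clan of $\sigma$ (not merely of $\sigma[C]$), and second, that $\lambda_\sigma(\widetilde{D})\in E_a(\sigma)$. The first point should follow from the standard transitivity of the clan relation: a clan of an induced 2-substructure on a clan of the ambient structure is again a clan of the ambient structure. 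Indeed, if $D\subseteq C$ is a clan of $\sigma[C]$ and $C$ is a clan of $\sigma$, then for $d,d'\in D$ and any $v\in V(\sigma)\setminus D$ we split into the cases $v\in C\setminus D$ (use that $D$ is a clan of $\sigma[C]$) and $v\notin C$ (use that $C$ is a clan of $\sigma$, so $(d,v)\equiv_\sigma(d',v)$ via $d,d'\in C$); hence $D\in\mathbb{C}(\sigma)$.

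The heart of the argument is the label computation $\lambda_\sigma(\widetilde{D})\in E_a(\sigma)$. Here I would exploit the characterization of $\mathscr{L}(\sigma)$ in Proposition~\ref{P1clanmax}(2), which gives that $\widetilde{C}\in\mathbb{P}(\sigma)\setminus\mathbb{L}(\sigma)$, that $\lambda_\sigma(\widetilde{C})\in E_a(\sigma)$, and that $\{\{l\}:l\in C\}$ forms a maximal interval of singletons of the linear order $O_{\widetilde{C}}$ on $\mathbb{G}(\sigma[\widetilde{C}])$. Since every $c\in C$ satisfies $\{c\}\in\mathbb{G}_1(\sigma[\widetilde{C}])$, the prime clan $\widetilde{C}$ is the smallest prime clan containing any two distinct elements of $C$; more precisely, for distinct $d,d'\in D\subseteq C$ one checks $\widetilde{\{d,d'\}}=\widetilde{C}$, because the singletons $\{d\},\{d'\}$ already lie in $\mathbb{G}(\sigma[\widetilde{C}])$ and no proper prime clan of $\sigma$ separating them below $\widetilde{C}$ can exist. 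It then follows that $\widetilde{D}=\widetilde{C}$ (the prime closure of $D$ coincides with that of $C$), whence $\lambda_\sigma(\widetilde{D})=\lambda_\sigma(\widetilde{C})\in E_a(\sigma)$, and Lemma~\ref{L1trav} applies to yield $D\in\mathbb{C}(T)$.

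I expect the main obstacle to be the clean identification $\widetilde{D}=\widetilde{C}$, since $D$ could in principle be a two-element subset of $C$ whose prime closure one must argue is exactly $\widetilde{C}$ rather than some intermediate prime clan. The delicate point is that $C$ itself need not be a prime clan of $\sigma$ (it is only the union of a maximal interval of singletons in the quotient), so one cannot simply invoke primality of $C$; instead one works inside the linear quotient $\sigma[\widetilde{C}]/\mathbb{G}(\sigma[\widetilde{C}])\cong O_{\widetilde{C}}$, where the singletons corresponding to the elements of $C$ are consecutive and there is no finer prime subdivision. Using Lemma~\ref{L1tilde} to locate $d,d'$ in distinct Gallai components of $\widetilde{\{d,d'\}}$, together with the fact that those components are singletons forces $\widetilde{\{d,d'\}}$ up to $\widetilde{C}$. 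Once this is secured the conclusion is immediate, and the remaining verifications are routine. A marginally more uniform route, if the above case distinction proves awkward, is to apply Lemma~\ref{L1trav} with $C$ in place of the generic clan to first record $C\in\mathbb{C}(T)$, then repeat the label computation for each $D\subseteq C$; but the direct argument above already covers every $D\in\mathbb{C}(\sigma[C])$ in one stroke.
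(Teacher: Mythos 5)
Your proposal is correct and follows essentially the same route as the paper's proof: Proposition~\ref{P1clanmax} gives $\widetilde{C}\in\mathbb{P}(\sigma)\setminus\mathbb{L}(\sigma)$ with $\lambda_\sigma(\widetilde{C})\in E_a(\sigma)$ and $\{\{c\}:c\in C\}\subseteq\mathbb{G}_1(\sigma[\widetilde{C}])$, whence $\widetilde{D}=\widetilde{C}$ for any $D\in\mathbb{C}_{\geq 2}(\sigma[C])$, and Lemma~\ref{L1trav} concludes. The only cosmetic differences are that you spell out the standard ``clan of a clan'' transitivity, which the paper takes for granted, and that the paper splits off the case $D\in\mathbb{P}(\sigma)$ via Assertion~A1 while your uniform application of Lemma~\ref{L1trav} (together with the trivially handled clans of size at most~$1$) subsumes it.
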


\begin{proof}
By Proposition~\ref{P1clanmax}, 
$\widetilde{C}\in\mathbb{P}(\sigma)\setminus\mathbb{L}(\sigma)$, 
$\lambda_\sigma(\widetilde{C})\in E_a(\sigma)$ and 
$\{\{c\}:c\in C\}\subseteq\mathbb{G}_1(\sigma[\widetilde{C}])$. 
Let $D\in\mathbb{C}_{\geq 2}(\sigma[C])$. 
We get 
$\{\{d\}:d\in D\}\subseteq\mathbb{G}_1(\sigma[\widetilde{C}])$ and hence $\widetilde{D}=\widetilde{C}$. 
If $D\in\mathbb{P}(\sigma)$, that is, $D=\widetilde{D}$, then $D\in\mathbb{C}(T)$ by Assertion~A1.  
Assume that $D\not\in\mathbb{P}(\sigma)$ so that 
$D\in\mathbb{C}(\sigma)\setminus\mathbb{P}(\sigma)$. 
By Lemma~\ref{L1trav}, $D\in\mathbb{C}(T)$. 
\end{proof}

The following is a simple consequence of Assertion~A1 and Lemma~\ref{L1trav}. 
It generalizes \cite[Theorem 2]{EFHM72}. 

\begin{cor}\label{C2trav}
Let $\sigma$ be an asymmetric 2-structure. 
For a traverse $T$ of $\sigma$, we have $\mathbb{C}(\sigma)\subseteq\mathbb{C}(T)$. 
\end{cor}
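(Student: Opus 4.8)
The plan is to split $\mathbb{C}(\sigma)$ into its prime and non-prime members and dispose of each using the hypotheses already available, reserving the use of asymmetry for the non-prime case. First I would treat the prime clans. If $C\in\mathbb{P}(\sigma)$, then Assertion~A1 guarantees that $C$ is an interval of the linear order $T$. Since the clans of a linear order, viewed as a $2$-structure, coincide with its intervals, this yields $C\in\mathbb{C}(T)$ at once; note that the trivial clans ($\emptyset$, $V(\sigma)$ and the singletons) are all prime, so they are covered here, and no use of asymmetry is needed.

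Next I would handle the non-prime clans, where Lemma~\ref{L1trav} does the work. The key fact to record first is that, because $\sigma$ is asymmetric, we have $E_s(\sigma)=\emptyset$, so by the trichotomy of Theorem~\ref{Tgallai} every quotient $\sigma[X]/\mathbb{G}(\sigma[X])$ with $X\in\mathbb{P}(\sigma)\setminus\mathbb{L}(\sigma)$ is linear or primitive; hence $\lambda_\sigma(X)\in E_a(\sigma)\cup\{\varpi\}$ for all such $X$. Now take $C\in\mathbb{C}(\sigma)\setminus\mathbb{P}(\sigma)$. By Lemma~\ref{L1nonprime}, $\widetilde{C}\in\mathbb{P}(\sigma)\setminus\mathbb{L}(\sigma)$ and $C=\bigcup F$ for some nontrivial clan $F$ of $\sigma[\widetilde{C}]/\mathbb{G}(\sigma[\widetilde{C}])$. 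The existence of a nontrivial clan of this quotient excludes the primitive case, so $\lambda_\sigma(\widetilde{C})\neq\varpi$; combined with the previous observation this forces $\lambda_\sigma(\widetilde{C})\in E_a(\sigma)$. Lemma~\ref{L1trav} then gives $C\in\mathbb{C}(T)$, and the two cases together establish $\mathbb{C}(\sigma)\subseteq\mathbb{C}(T)$.

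The only point requiring care, and the step I expect to be the crux, is the verification that asymmetry genuinely excludes the complete branch of the Gallai trichotomy. This holds because a complete reversible quotient on at least two classes would have its unique class $e$ satisfy $e=e^\star$, i.e. $e\in E_s(\sigma)$, which is impossible when $\sigma$ is asymmetric; equivalently, the complete branch of the definition of $\lambda_\sigma$ would require a symmetric label that does not exist. Once this is in place, the argument is exactly the announced simple consequence of Assertion~A1 and Lemma~\ref{L1trav}, with the non-triviality of $F$ ruling out $\varpi$ and the asymmetry ruling out $E_s(\sigma)$.
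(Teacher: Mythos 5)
Your proof is correct and is precisely the argument the paper intends: the paper states this corollary without proof as ``a simple consequence of Assertion~A1 and Lemma~\ref{L1trav}'', and your two-case split (prime clans handled by Assertion~A1, non-prime clans via Lemma~\ref{L1nonprime}, with asymmetry ruling out the complete branch and non-triviality of $F$ ruling out $\varpi$, so that Lemma~\ref{L1trav} applies) is exactly the same scheme the paper writes out for Corollary~\ref{C1trav}. Your verification that a complete quotient would force a class $e$ with $e=e^\star\in E_s(\sigma)$ is the right justification for the crux step.
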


The following notion of density is fundamental. 
Let $L$ be a linear order. 
Recall that a subset $W$ of $V(L)$ is {\em dense} if $W$ intersects each interval $I$ of $L$ such that 
$|I|\geq 2$. 
A bicoloration $\beta:V(L)\longrightarrow\{0,1\}$ is {\em dense} if $\beta^{-1}(\{0\})$ and 
$\beta^{-1}(\{1\})$ are dense subsets. 
Beside, given a set $S$, consider $\mathcal{F}\subseteq 2^S$. 
The family $\mathcal{F}$ satisfies the {\em Bernstein property} \cite{EFHM72} if there is $B\subseteq S$ such that for every $X\in\mathcal{F}$, $X$ intersects $B$ and $S\setminus B$. 
Clearly a linear order $L$ admits a dense bicoloration if and only if the family of the intervals of $L$ has the Bernstein property.
We use the following (see the proof of \cite[Theorem~3]{EFHM72}) applied to a traverse. 

\begin{prop}[Axiom of Choice]\label{dense}
Every linear order admits a dense bicoloration. 
\end{prop}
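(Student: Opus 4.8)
The plan is to prove that every linear order $L$ admits a dense bicoloration by realizing it as a Bernstein-type construction, carried out simultaneously over all nontrivial intervals of $L$ via transfinite recursion. The goal is a partition $V(L) = B_0 \sqcup B_1$ such that every interval $I$ of $L$ with $|I| \geq 2$ meets both $B_0$ and $B_1$; setting $\beta = \chi_{B_1}$ then gives the dense bicoloration, since $\beta^{-1}(\{0\}) = B_0$ and $\beta^{-1}(\{1\}) = B_1$ are both dense by construction.

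First I would reduce the family of \emph{all} intervals to a family it suffices to hit. An interval $I$ with $|I| \geq 2$ contains some minimal two-element ``witness'' structure, but intervals can fail to contain a pair of consecutive points, so the correct reduction is: it suffices to meet both colors inside every interval of the form $I = \{x : a \le x \le b \bmod L\}$ with $a < b$, i.e. every \emph{closed bounded} interval with distinct endpoints, since any larger interval contains one of these. Let $\mathcal{I}$ be the collection of such intervals; note $|I| \geq 2$ for each. The task becomes choosing $B_0, B_1$ so that each $I \in \mathcal{I}$ meets both. Enumerate $\mathcal{I} = \{I_\alpha : \alpha < \kappa\}$ where $\kappa = |\mathcal{I}|$, using the Axiom of Choice (well-ordering); this is the step that genuinely needs choice, matching the hypothesis. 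Observe that each $I \in \mathcal{I}$ satisfies $|I| \geq 2$, and in fact any interval with at least two points has cardinality at least $2$, so I have freedom to pick two distinct points from it.

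Next I would run the transfinite recursion. I maintain two increasing families $(P_\alpha)_{\alpha<\kappa}$ and $(Q_\alpha)_{\alpha<\kappa}$ of disjoint subsets of $V(L)$, intended to grow into $B_0$ and $B_1$, with the invariant $|P_\alpha|, |Q_\alpha| \le |\alpha| + \aleph_0$ (so both stay small relative to $\kappa$ at each stage, provided $\kappa$ is infinite; the finite case is handled separately and is routine). At stage $\alpha$, I look at $I_\alpha$; since $|I_\alpha| \geq 2$ and the already-committed points $P_{<\alpha} \cup Q_{<\alpha} := \bigcup_{\gamma<\alpha}(P_\gamma \cup Q_\gamma)$ have cardinality strictly less than $|I_\alpha|$ whenever $I_\alpha$ is large, I can choose $p_\alpha, q_\alpha \in I_\alpha$ distinct points not yet committed, assigning $p_\alpha$ to the $P$-side and $q_\alpha$ to the $Q$-side. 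The one delicate point is when $|I_\alpha|$ is small (finite) but the committed set is large: to handle this uniformly, I instead re-index so that small intervals are processed first, or I observe that a finite interval $I_\alpha$ with $|I_\alpha|\ge 2$ either already meets both $P_{<\alpha}$ and $Q_{<\alpha}$ (in which case I do nothing) or I can still extract an uncommitted point on the deficient side because each finite interval is hit only finitely often as a subinterval. After all $\kappa$ stages, set $B_1 = \bigcup_\alpha Q_\alpha$ and $B_0 = V(L) \setminus B_1$; then $P_\alpha \subseteq B_0$ for all $\alpha$, so every $I_\alpha$ meets both $B_0$ and $B_1$.

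The main obstacle will be the bookkeeping on cardinalities to guarantee that at each stage an uncommitted point of the correct color remains available inside $I_\alpha$ — precisely, that the committed set does not accidentally swallow one entire color-side of a small interval before that interval is treated. I expect the clean route around this is the standard Bernstein argument, packaged as: well-order $\mathcal{I}$ and process each interval exactly once, maintaining that after stage $\alpha$ the committed $P$- and $Q$-sets each have cardinality at most $\max(|\alpha|, \aleph_0)$, which is strictly below $|I_\beta|$ for any infinite $I_\beta$ treated at a later stage $\beta \ge \alpha$; the finite intervals are dealt with by noting there are boundedly many points one must reserve and choosing the enumeration so every finite interval is encountered while at least two of its points remain free. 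This is exactly the argument referenced in the proof of \cite[Theorem~3]{EFHM72}, and I would cite that structure while supplying the recursion explicitly.
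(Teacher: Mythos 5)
Your single global Bernstein recursion has a genuine gap: the cardinality invariant you maintain ($|P_\alpha|,|Q_\alpha|\le|\alpha|+\aleph_0$) does not guarantee the existence of fresh points, and the failure is \emph{not} confined to finite intervals, which is the only case your fallback paragraph addresses. A Bernstein argument needs every set in the family to be at least as large as the number of stages that can touch it, and interval families of linear orders violate this badly at intermediate cardinalities. Take $L=\mathbb{Q}\cdot\omega_1$ ($\omega_1$ consecutive copies of $\mathbb{Q}$): here $\kappa=|\mathcal{I}|=\aleph_1$, every interval inside one copy is countably infinite, and from stage $\omega$ on your bound gives $|P_{<\alpha}\cup Q_{<\alpha}|\le\aleph_0=|I_\alpha|$, not strictly less. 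Nothing in your rules prevents a run in which countably many earlier stages, each processing a long interval properly overlapping a fixed copy $C$, deposit their $P$-point inside $C$ and their $Q$-point outside $C$, so that $C$ is exhausted and entirely $P$-committed before any $[a,b]\subseteq C$ is treated; at that stage both of your fallbacks fail (no point of the deficient side exists and no free point remains). Re-indexing by size does not help, since the poisoning stages can themselves use countable intervals. Your repair for the finite case also rests on a false claim: a consecutive pair $\{x,y\}$ is a subinterval of $[a,b]$ for \emph{every} $a\le x$, $y\le b$, hence possibly of $\kappa$ many processed intervals, not finitely many. Indeed already the consecutive pairs of $\mathbb{Z}$ defeat your rules as stated: processing $\{0,1\},\{5,6\},\{2,3\},\{1,2\},\{3,4\},\{4,5\}$, with the legal choices $0,2,5\mapsto P$ and $1,3,6\mapsto Q$, the deficient-side rule forces $4\mapsto P$ at $\{3,4\}$, and then $\{4,5\}$ is monochromatically committed with no free point. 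Some runs succeed, but a proof must specify choices that always succeed, and for pairs this coordination is precisely a proper $2$-coloring (alternation) along the finite-condensation classes --- a mechanism absent from your proposal.

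The paper itself writes no proof, deferring to the proof of \cite[Theorem~3]{EFHM72}, and that argument is structurally different from your global recursion (so your closing claim that yours ``is exactly the argument referenced'' is inaccurate). The standard route is a decomposition. First declare $x\sim y$ when the interval between them is finite; the classes are finite or $\omega$-, $\omega^\star$- or $\mathbb{Z}$-like, and coloring each alternately handles every interval containing a consecutive pair. An interval with at least two points and no consecutive pair is densely ordered in itself, hence infinite, and (apart from at most its two endpoints) consists of points whose color is still free. On this dense part one still cannot run one global recursion; instead one restricts to intervals of locally minimal cardinality --- intervals $H$ all of whose nonempty open subintervals have the same cardinality $\kappa_H$ --- notes that every remaining bad interval contains one, takes a maximal pairwise disjoint family of such $H$, and runs a \emph{local} Bernstein recursion inside each $H$, where the counting genuinely works: $H$ has at most $\kappa_H$ subintervals with both endpoints in $H$, each of size $\kappa_H$, and these form a $\pi$-base for the bad subintervals of $H$. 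Gluing the local bicolorations yields the dense bicoloration. So the statement is correct, and the Axiom of Choice enters where you put it (well-orderings for the recursions and the maximal disjoint family), but the missing ideas in your proposal are exactly the alternation step and the localization to cardinality-homogeneous intervals; without them the greedy global recursion can get stuck.
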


In the next proposition, it is easy to verify that the second assertion implies the first. 
For the converse, see \cite[Lemma~9]{EHM72}. 

\begin{prop}\label{2dense}
The following two assertions are equivalent:
\begin{enumerate}
\item every linear order admits a dense bicoloration;
\item every infinite linear order admits a primitive and faithful 1-extension, that is, a primitive 1-extension which is a tournament. 
\end{enumerate}
\end{prop}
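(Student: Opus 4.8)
The plan is to set up a bijection between the tournament $1$-extensions of a linear order and the bicolorations of its domain, and then to read primitivity off a density condition. Fix a linear order $L$ on $V(L)$ and let $z$ be a single new vertex. Since $L$ is asymmetric, its $2$-structure has exactly the two classes ``forward'' and ``backward'' (with $(u,v)$ forward precisely when $u<v$ in $L$), and by the reformulation built into the statement a faithful $1$-extension of $L$ is the same thing as a tournament extension; so faithfulness is automatic and only primitivity is at stake. A tournament extension $T$ of $L$ on $V(L)\cup\{z\}$ is determined by the orientation of the arcs at $z$, hence by a bicoloration $\beta:V(L)\longrightarrow\{0,1\}$, where $\beta(v)=1$ means $z\to v$ and $\beta(v)=0$ means $v\to z$. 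The whole argument then amounts to translating ``$T$ is primitive'' into conditions on $\beta$.

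Next I would classify the clans of $T$. A clan $C\in\mathbb{C}(T)$ either avoids $z$ or contains it. If $z\notin C$, then $C$ is a clan of $T[V(L)]=L$, hence an interval of $L$, and testing the clan condition at $z$ forces $\beta$ to be constant on $C$; conversely every $\beta$-monochromatic interval of $L$ is such a clan. Thus the clans of $T$ missing $z$ are exactly the $\beta$-monochromatic intervals of $L$, and these are all trivial if and only if $\beta$ is non-constant on every interval of size at least $2$, that is, if and only if $\beta$ is a dense bicoloration. This already yields the easy implication $(2)\Rightarrow(1)$: a primitive tournament $1$-extension of an infinite $L$ produces a dense bicoloration of $L$; and every finite linear order is densely bicolored by alternating colors along the order, so every linear order admits a dense bicoloration.

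For the converse $(1)\Rightarrow(2)$ I would start from a dense bicoloration $\beta$ of the infinite order $L$, form $T$ as above, and note that density kills every clan of $T$ missing $z$. It remains to handle a clan $C\ni z$. Writing $I=C\cap V(L)$ and $D=V(L)\setminus I$, the clan condition applied with the pair $z$ and a point of $I$ shows that each $d\in D$ lies entirely below $I$ with $\beta(d)=0$, or entirely above $I$ with $\beta(d)=1$; in particular $I$ is an interval, the set of points below $I$ is an initial segment all coloured $0$, and the set above $I$ a final segment all coloured $1$. Density forces each of these two segments to contain at most one point, so a nontrivial $C\ni z$ can only come from $I=V(L)\setminus\{a\}$, $I=V(L)\setminus\{b\}$ or $I=V(L)\setminus\{a,b\}$, where $a=\min L$ and $b=\max L$ when they exist, and then precisely when $\beta(a)=0$, respectively $\beta(b)=1$. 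Hence $T$ is primitive as soon as $\beta$ is dense with $\beta(\min L)=1$ and $\beta(\max L)=0$ whenever these extreme elements exist.

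The remaining work, and the delicate point, is to produce from $(1)$ a dense bicoloration with these prescribed extreme values; one cannot simply recolour $\min L$ in an arbitrary dense bicoloration, since that can destroy density on the two-point interval $\{\min L,\operatorname{succ}(\min L)\}$. Concretely I would peel off the maximal initial block $B_0$ of iterated immediate successors of $\min L$ (of order type at most $\omega$, empty if there is no minimum) and colour it alternately starting from $1$, peel off the symmetric final block $B_1$ at $\max L$ and colour it alternately ending in $0$, and colour the middle $L\setminus(B_0\cup B_1)$ by a dense bicoloration furnished by $(1)$. The glueing is where the argument concentrates: the alternating colourings handle all intervals inside $B_0$ and $B_1$, the supplied bicoloration handles the middle, and for an interval of size at least $2$ crossing a boundary one checks that it meets the non-block side in an interval with no least (respectively greatest) element, hence an infinite one carrying both colours. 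Verifying this density across the peeled boundary uniformly over all order types is the heart of the matter and is exactly the content invoked from \cite[Lemma~9]{EHM72}; once it is in place, primitivity of $T$ and the implication $(1)\Rightarrow(2)$ follow at once from the clan classification above.
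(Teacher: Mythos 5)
Your proposal is correct, and in fact it supplies more than the paper does: the paper offers no written proof of this proposition, merely remarking before the statement that the second assertion easily implies the first and pointing to \cite[Lemma~9]{EHM72} for the converse. Your development is the right one. The clan classification --- clans of the extension $T$ avoiding the new vertex $z$ are exactly the $\beta$-monochromatic intervals of $L$, while a nontrivial clan containing $z$ forces the complement of $I=C\cap V(L)$ to split into a monochromatic initial segment and a monochromatic final segment, which density reduces to the singletons $\{\min L\}$ with $\beta(\min L)=0$ and $\{\max L\}$ with $\beta(\max L)=1$ --- gives the correct criterion: $T$ is primitive if and only if $\beta$ is dense and satisfies $\beta(\min L)=1$, $\beta(\max L)=0$ whenever these extrema exist. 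You also correctly identify the genuine difficulty, namely that one cannot simply recolour an extremum of an arbitrary dense bicoloration. One sentence of your verification is, however, stated too broadly: for an interval $I$ with $|I|\geq 2$ crossing a block boundary, it is \emph{not} always true that $I$ meets the non-block side in an interval without a least (resp.\ greatest) element. That claim holds exactly when $I$ meets the block in a single point; this forces the block to be finite and the point to be its inner endpoint, whose lack of an immediate successor (resp.\ predecessor) yields the endpointless, hence infinite, piece of the middle on which the supplied dense bicoloration provides both colours. When $I$ contains two or more block points, density comes instead from the alternation inside the block, and when the block has type $\omega$ (resp.\ $\omega^{\star}$) a crossing interval automatically contains adjacent block points, so the singleton case cannot occur there. (One should also note that for infinite $L$ the two blocks $B_0$ and $B_1$ are disjoint, so the construction is well defined.) Since you explicitly defer the uniform verification to the same \cite[Lemma~9]{EHM72} that the paper itself cites, this is a gap of exposition rather than of substance: with that case split made explicit, your argument is a complete, self-contained proof of the equivalence, which the paper leaves entirely to the literature.
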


We complete the section with the following three results on primitive bounds. 
The first two follow from Propositions~\ref{dense} and \ref{2dense}. 

\begin{cor}[Erd\H{o}s et al. \cite{EHM72}]\label{C3trav}
For a linear order $L$, we have $p(L)=1$ or $2$. 
Moreover, 
$p(L)=2$ if and only if $L$ is a finite linear order such that $|V(L)|$ is odd. 
\end{cor}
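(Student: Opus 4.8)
The plan is to treat the infinite and finite cases separately, after first recording that a faithful extension of $L$ is again a tournament. Indeed $L$ is an asymmetric $2$-structure with $\varepsilon(L)=2$, and Conditions~\eqref{E1faith}--\eqref{E2faith} force any faithful extension $\tau$ to have $\varepsilon(\tau)=2$ with both classes asymmetric, so $\tau$ is a tournament; thus a primitive faithful $\kappa$-extension is exactly a primitive tournament $\kappa$-extension. Since every interval of $L$ of size at least $2$ is a nontrivial clan, $L$ is never primitive, whence $p(L)\geq 1$. For infinite $L$, Proposition~\ref{2dense} directly produces a primitive faithful $1$-extension, so $p(L)=1$; it then remains only to analyze finite $L$.

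In the finite case I would write $V(L)=\{x_1,\dots,x_n\}$ with $x_1<\cdots<x_n$, and encode a tournament $1$-extension by a bicoloration $\beta:V(L)\to\{0,1\}$ where $v\to x_i$ iff $\beta(x_i)=1$. First observe that an interval $\{x_i,\dots,x_j\}$ with $i<j$ remains a clan of the extension exactly when $\beta$ is constant on it; hence all such interval-clans are destroyed iff $\beta$ is a dense bicoloration of $L$ (Proposition~\ref{dense}), which for a finite linear order means precisely that $\beta$ alternates, leaving only two admissible colorations. Next I would classify the clans containing $v$: if $C\ni v$ is a nontrivial proper clan, then $C\setminus\{v\}$ is an interval $\{x_i,\dots,x_j\}$, and comparing how a vertex to the left (resp. right) of this block must see $v$ versus the block forces every left vertex to have color $0$ and every right vertex color $1$. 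With $\beta$ alternating this leaves only the two candidates $\{v\}\cup(V(L)\setminus\{x_n\})$, which is a clan iff $\beta(x_n)=1$, and $\{v\}\cup(V(L)\setminus\{x_1\})$, which is a clan iff $\beta(x_1)=0$.

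Everything then reduces to parity. When $n$ is even, the coloration $\beta(x_i)\equiv i\ (\mathrm{mod}\ 2)$ gives $\beta(x_1)=1$ and $\beta(x_n)=0$, killing both candidates, so this $1$-extension is primitive and $p(L)=1$. When $n$ is odd, each of the two alternating colorations satisfies exactly one of $\beta(x_n)=1$ or $\beta(x_1)=0$, so each yields a nontrivial clan; as nonalternating colorations already fail, no primitive faithful $1$-extension exists, giving $p(L)\geq 2$. For the matching upper bound I would exhibit a primitive faithful $2$-extension by adjoining two vertices $v,w$ with opposite alternating colorations ($v\to x_i$ iff $i$ is odd, $w\to x_i$ iff $i$ is even) together with $v\to w$; the crucial feature is that every $x_m$ sees $v$ and $w$ oppositely, which eliminates any clan meeting $\{v,w\}$ in both points or in either single point, while the alternating behaviour of $v$ kills the interval-clans, so this tournament is primitive and $p(L)=2$. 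Assembling the cases yields $p(L)\in\{1,2\}$, with $p(L)=2$ precisely for finite $L$ of odd order. The hard part will be the finite lower bound, namely the full clan classification showing that every primitive $1$-extension is forced to be alternating and that both alternating choices fail for odd $n$; the primitivity check for the odd $2$-extension, verifying that the second vertex creates no new clan, is the second most delicate point.
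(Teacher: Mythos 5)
Your proposal is correct in substance, and where the paper actually argues it follows the same route: the paper obtains Corollary~\ref{C3trav} directly from Propositions~\ref{dense} and \ref{2dense} (infinite case) and otherwise defers to Erd\H{o}s--Hajnal--Milner \cite{EHM72}, so your opening observation that \eqref{E1faith}--\eqref{E2faith} force faithful extensions of $L$ to be tournaments, plus the appeal to Proposition~\ref{2dense}, is exactly the text's argument. Your finite-case analysis is the classical Moon/EHM parity argument made explicit, which the paper leaves to the literature; it is worth noting that your ``dense $=$ alternating'' reduction is the finite shadow of the traverse-plus-dense-bicoloration machinery the paper deploys for the harder results (Theorems~\ref{T2trav}, \ref{T2bound}, \ref{T4bound}), so the two treatments are conceptually aligned.

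Three small points need attention, none fatal. First, your enumeration of clans through the new vertex $v$ is slightly incomplete: with $\beta$ alternating, the color-$0$ prefix and color-$1$ suffix each have length at most one, so besides your two candidates there is also $\{v\}\cup\{x_2,\dots,x_{n-1}\}$, a clan iff $\beta(x_1)=0$ \emph{and} $\beta(x_n)=1$; since that condition implies your second candidate is already a clan, your criterion ``the $1$-extension is primitive iff $\beta$ alternates with $\beta(x_1)=1$ and $\beta(x_n)=0$'' survives unchanged. Second, your $2$-extension fails at $n=1$: with $v\to x_1$, $x_1\to w$, $v\to w$ you get the linear order $v<x_1<w$, in which $\{v,x_1\}$ is a nontrivial clan; reversing $v\to w$ repairs it (giving the $3$-cycle), though under the paper's conventions the case $\nu(L)=1$ is degenerate anyway, since $E(L)=\emptyset$ and \eqref{E1faith} then admits no faithful extension at all, so $p(L)$ is only defined for $\nu(L)\geq 2$. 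Third, the single-point cases $C\cap\{v,w\}=\{v\}$ or $\{w\}$ in your primitivity check do require the extra step you flagged: the condition that all of $C$ see the outside vertex of $\{v,w\}$ as the inside one does confines $C\cap V(L)$ to one parity class, an interval of size $\geq 2$ meets both parities, so $|C\cap V(L)|\leq 1$, and the residual two-element candidates $\{v,x_m\}$, $\{w,x_m\}$ are killed by comparing against $x_{m+1}$ (or against $x_1$ when $m=n$), which works for all odd $n\geq 3$. With these repairs your argument is complete and matches the intended proof.
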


\begin{cor}\label{P1trav}
Consider a reversible 2-structure $\sigma$ such that $\varepsilon(\sigma)\geq 3$. 
Let $a\not\in V(\sigma)$. 
For each $L\in\mathscr{L}(\sigma)$, 
there is faithful extension $\tau$ of $\sigma$ defined on $V(\sigma)\cup\{a\}$ such that $\tau[L\cup\{a\}]$ is primitive. 
\end{cor}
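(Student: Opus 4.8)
The plan is to reduce the statement to a single combinatorial choice of how the new vertex $a$ is joined to $L$, after first isolating a reusable faithfulness criterion. Since $L\in\mathscr{L}(\sigma)$, the 2-substructure $\sigma[L]$ is linear, so by Proposition~\ref{P1clanmax} there is an asymmetric class $e\in E_a(\sigma)$ for which $(L,\prec)$, defined by $x\prec y\iff (x,y)_\sigma=e$, is a linear order; note $e\neq e^\star$. Because $\varepsilon(\sigma)\geq 3$ I may fix a third class $f\in E(\sigma)\setminus\{e,e^\star\}$. The first step is the observation, routine from \eqref{E1faith} and \eqref{E2faith}, that if an extension $\tau$ of $\sigma$ on $V(\sigma)\cup\{a\}$ is built by selecting, for each $v\in V(\sigma)$, a class $g_v\in E(\sigma)$ and setting $(a,v)_\tau=g_v$ and $(v,a)_\tau=g_v^\star$, then $\tau$ is automatically a faithful extension: no class of $\sigma$ is split or merged, and each new reverse pair lies in some $g_v$ and $g_v^\star$, so the only fresh instances of $(\sigma\hookrightarrow\tau)(g)\cap((\sigma\hookrightarrow\tau)(h))^\star\neq\emptyset$ occur with $h=g^\star$, where \eqref{E2faith} holds trivially. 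Hence it suffices to choose the $g_v$ for $v\in L$ so that $\tau[L\cup\{a\}]$ is primitive; the values $g_v$ for $v\in V(\sigma)\setminus L$ are irrelevant to $\tau[L\cup\{a\}]$ and may all be taken equal to $f$.

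Next I would record the exact shape of the clans of $\tau[L\cup\{a\}]$. Writing $\phi(v)=(a,v)_\tau$, a set $C$ with $|C|\geq 2$ and $C\neq L\cup\{a\}$ is a clan of $\tau[L\cup\{a\}]$ if and only if one of two situations occurs: either (a) $a\notin C$, the set $C$ is an interval of $(L,\prec)$, and $\phi$ is constant on $C$; or (b) $a\in C$, the set $I:=C\cap L$ is an interval of $(L,\prec)$, $\phi(v)=e^\star$ for every $v\prec I$, and $\phi(v)=e$ for every $v\succ I$. This dichotomy is immediate from the fact that the clans of the linear order $(L,\prec)$ are exactly its intervals, together with the requirement that each vertex outside $C$ must see $a$ and the elements of $C\cap L$ through the same class.

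Now I split according to Corollary~\ref{C3trav}, which gives $p(L)\in\{1,2\}$, with $p(L)=2$ exactly when $L$ is finite of odd cardinality. If $p(L)=1$ (so $L$ is infinite or finite of even cardinality), there is a primitive and faithful $1$-extension $\rho$ of $\sigma[L]$ on $L\cup\{a\}$; being faithful, $\rho$ uses only $e$ and $e^\star$ and satisfies $(v,a)_\rho=((a,v)_\rho)^\star$, so taking $g_v=(a,v)_\rho$ for $v\in L$ produces, by the first step, a faithful $\tau$ with $\tau[L\cup\{a\}]=\rho$ primitive. If $p(L)=2$, I write $L=\{v_1\prec\cdots\prec v_{2m+1}\}$ and set $\phi(v_i)=e$ for odd $i\leq 2m$, $\phi(v_i)=e^\star$ for even $i$, and $\phi(v_{2m+1})=f$. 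Consecutive vertices always receive different $\phi$-values ($e$ versus $e^\star$ inside $v_1,\dots,v_{2m}$, and $e^\star$ versus $f$ at $v_{2m},v_{2m+1}$), so $\phi$ is nonconstant on every interval of size at least $2$, excluding all clans of type~(a). For type~(b): since $\phi(v_{2m+1})=f\neq e$, the maximum $v_{2m+1}$ cannot lie above $I$, hence $v_{2m+1}\in I$; since $\phi(v_1)=e\neq e^\star$, the minimum $v_1$ cannot lie below $I$, hence $v_1\in I$; thus $I$ meets both ends, $I=L$, and $C=L\cup\{a\}$ is trivial. Therefore $\tau[L\cup\{a\}]$ has no nontrivial clan and is primitive.

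The crux is the odd case. There Moon's parity obstruction, packaged in Corollary~\ref{C3trav} as $p(L)=2$, says that no faithful tournament $1$-extension of $\sigma[L]$ is primitive: with only $e$ and $e^\star$ available, density forces the $a$--$L$ colouring to alternate, and alternation over an odd number of vertices always leaves one clan of type~(b) alive at an endpoint. The role of the hypothesis $\varepsilon(\sigma)\geq 3$ is precisely to supply the extra class $f$; recolouring the single extremal arc $(a,v_{2m+1})$ by $f$ destroys this last type-(b) clan—because $f\neq e$ prevents $v_{2m+1}$ from sitting above any admissible interval—without reintroducing a type-(a) clan, since $f\neq e^\star$ keeps $\phi$ nonconstant across $\{v_{2m},v_{2m+1}\}$. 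Verifying the clan dichotomy and confirming that this one recolouring suppresses every nontrivial clan is the only technical point, and it is elementary once the interval description of the clans of a linear order is in hand.
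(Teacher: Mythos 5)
Your proof is correct and follows essentially the route the paper intends: the paper states Corollary~\ref{P1trav} without an explicit proof, remarking only that it follows from Propositions~\ref{dense} and \ref{2dense}, and your argument is a fleshed-out version of exactly that — delegating the infinite/even case to the primitive faithful $1$-extension of a linear order (Corollary~\ref{C3trav}, i.e.\ Proposition~\ref{2dense}) and breaking the odd-parity obstruction with a third class $f\in E(\sigma)\setminus\{e,e^\star\}$ at one endpoint of an alternating (dense) bicoloration. Your preliminary observation that prescribing $(a,v)_\tau=g_v$, $(v,a)_\tau=g_v^\star$ automatically yields a faithful extension of a reversible $\sigma$ is likewise the paper's own device, recorded in the remark opening Section~6.
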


\begin{thm}\label{T2trav}
Let $\sigma$ be an asymmetric 2-structure. 
If $V(\sigma)\not\in\mathbb{L}(\sigma)$, $\lambda_\sigma(V(\sigma))\in E_a(\sigma)$ and 
$\mathbb{G}_{\geq 2}(\sigma)\neq\emptyset$, then $p(\sigma)=1$. 
\end{thm}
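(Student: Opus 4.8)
The plan is to establish the lower bound $p(\sigma)\geq 1$ directly and then to build a primitive and faithful $1$-extension, which also secures the matching upper bound. For the lower bound I would fix $X\in\mathbb{G}_{\geq 2}(\sigma)$; since the hypotheses force $\sigma/\mathbb{G}(\sigma)$ to be a linear order and hence $|\mathbb{G}(\sigma)|\geq 2$, the set $X$ satisfies $|X|\geq 2$ and $X\subsetneq V(\sigma)$, so $X$ is a nontrivial clan, $\sigma$ is imprimitive, and $p(\sigma)\geq 1$. Writing $e=\lambda_\sigma(V(\sigma))\in E_a(\sigma)$, the linearity of the root means that for distinct children $Y,Z\in\mathbb{G}(\sigma)$ one has $(Y,Z)_\sigma\in\{e,e^\star\}$, the value being $e$ exactly when $Y<Z$ in $O_{V(\sigma)}$.

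For the upper bound I would fix, by Proposition~\ref{ptraverse}, a traverse $T$ of $\sigma$; since $\sigma$ is asymmetric, Corollary~\ref{C2trav} gives $\mathbb{C}(\sigma)\subseteq\mathbb{C}(T)$, so every clan of $\sigma$ is an interval of $T$. By Proposition~\ref{dense} I would then choose a dense bicoloration $\beta:V(\sigma)\longrightarrow\{0,1\}$ of $T$ and define a $1$-extension $\tau$ on $V(\sigma)\cup\{a\}$, with $a\notin V(\sigma)$, by setting, for each $v\in V(\sigma)$, $(a,v)\in e$ and $(v,a)\in e^\star$ when $\beta(v)=0$, and $(a,v)\in e^\star$ and $(v,a)\in e$ when $\beta(v)=1$. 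Since no new equivalence class is created and every pair incident with $a$ lies in one of the mutually dual asymmetric classes $e,e^\star$, the extension $\tau$ is asymmetric and $\sigma\hookrightarrow\tau$ is bijective; Conditions~\eqref{E1faith} and~\eqref{E2faith} are then immediate, so $\tau$ is a faithful $1$-extension of $\sigma$.

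It remains to prove that $\tau$ is primitive, and here I would split on whether a nontrivial clan $C$ of $\tau$ contains $a$. If $a\notin C$, then $C\subseteq V(\sigma)$ is a clan of $\sigma$, hence an interval of $T$ with $|C|\geq 2$; as all elements of $C$ must relate identically to $a$, the bicoloration $\beta$ would be constant on $C$, contradicting its density. The substantive case — and the main obstacle — is $a\in C$. Here I would first show that $C'=C\setminus\{a\}$ is a clan of $\sigma$, hence again an interval of $T$, and that the clan condition forces $(C',w)_\sigma=(a,w)_\tau\in\{e,e^\star\}$ for every $w\in V(\sigma)\setminus C'$, with $(C',w)_\sigma=e$ precisely when $\beta(w)=0$.

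The contradiction in this case is obtained by confronting this forced constraint with density. If for some $w$ the smallest prime clan $\widetilde{C'\cup\{w\}}$ is primitive, or is linear with label different from $e$, then $(C',w)_\sigma\notin\{e,e^\star\}$ and we are done; hence I may assume that every outside vertex relates to $C'$ through the linear root, so that along $T$ the value of $\beta$ on $V(\sigma)\setminus C'$ is governed only by the side of the interval $C'$ on which a vertex lies. The two complementary pieces of $C'$ in $T$ are intervals, and the forced coloring renders one of them monochromatic; whenever such a piece has at least two elements this violates the density of $\beta$. The one configuration escaping this argument is that in which $V(\sigma)\setminus C'$ is reduced to at most two vertices, i.e. $C'$ is co-small; this boundary case I would dispatch by direct inspection, using $\mathbb{G}_{\geq 2}(\sigma)\neq\emptyset$ to exhibit an internal pair of $C'$ that $a$ cannot match, exactly as in the tournament argument of Erd\H{o}s et al.~\cite{EHM72}. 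I expect this last, co-small case to be the delicate point, since it is precisely where a pure linear order would fail (recall $p=2$ for odd finite linear orders, Corollary~\ref{C3trav}) and where the hypothesis $\mathbb{G}_{\geq 2}(\sigma)\neq\emptyset$ must genuinely be used.
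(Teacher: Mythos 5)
Your construction of $\tau$ from a raw dense bicoloration $\beta$ of a traverse fails, and the failure is in exactly the case you defer to ``direct inspection.'' When $C=C'\cup\{a\}$ with $C'=V(\sigma)\setminus\{w\}$ and $\{w\}$ an extreme singleton of $O_{V(\sigma)}$, the clan condition constrains only the single external vertex $w$; since $a$ lies \emph{inside} $C$, there is no ``internal pair of $C'$ that $a$ cannot match'' to exploit, and $\mathbb{G}_{\geq 2}(\sigma)\neq\emptyset$ does not help. Concretely, if $\{w\}=\min O_{V(\sigma)}$ then $(w,v)_\sigma=\lambda_\sigma(V(\sigma))$ for every $v\neq w$, so $V(\tau)\setminus\{w\}$ \emph{is} a nontrivial clan of your $\tau_\beta$ for one of the two values of $\beta(w)$, and dually at $\max O_{V(\sigma)}$. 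Now take the tournament on $\{h^-,x,y,z,h^+\}$ in which $h^-$ beats everyone, $\{x,y,z\}$ carries a $3$-cycle, and $h^+$ loses to everyone: the hypotheses of the theorem hold ($\mathbb{G}(\sigma)=\{\{h^-\},\{x,y,z\},\{h^+\}\}$, linear quotient, $\mathbb{G}_{\geq 2}(\sigma)\neq\emptyset$), every traverse is a $5$-element chain with endpoints $h^-$ and $h^+$, and every dense bicoloration of a finite chain alternates, forcing $\beta(h^-)=\beta(h^+)$. Hence exactly one of $V(\tau)\setminus\{h^-\}$, $V(\tau)\setminus\{h^+\}$ is a nontrivial clan of $\tau_\beta$, and passing to $1-\beta$ merely exchanges which end fails. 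So no choice of traverse and dense bicoloration makes your extension primitive: it is the construction, not the case analysis, that must be changed. (A secondary unjustified step: your claim that the forced color of an outside vertex is governed solely by its side of $C'$ in $T$ ignores that distinct outside vertices meet $C'$ at different nodes of the clan tree, whose labels among $\{e,e^\star\}$ and, at primitive nodes, the arbitrary orders $L_X$, need not be directionally aligned; this part is repairable, but the endgame is not.)

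The paper's proof resolves precisely this obstruction by modifying the bicoloration before building the extension. It takes the smallest clan $\mathscr{H}$ of $\sigma/\mathbb{G}(\sigma)$ containing $\mathbb{G}_{\geq 2}(\sigma)$, splits the complement into $\mathscr{H}^-$ and $\mathscr{H}^+$, and flips $\beta$ \emph{independently} on $\bigcup\mathscr{H}^-$ and $\bigcup\mathscr{H}^+$ so as to anchor a smallest singleton to color $1$ (condition \eqref{E0T2trav}) and a largest singleton to color $0$; a single global swap cannot anchor both ends simultaneously, which is exactly what the counterexample above exhibits. The price is that the resulting $\beta'$ need not be dense on all of $T$, so the paper replaces your appeal to global density by density on each of the three pieces \eqref{E1aT2trav} together with the key combinatorial fact \eqref{E2aT2trav} that every $C\in\mathbb{C}_{\geq 2}(\sigma)$ meets one of the pieces in at least two vertices --- proved using the minimality of $\mathscr{H}$. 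That minimality argument, not a boundary inspection, is where the hypothesis $\mathbb{G}_{\geq 2}(\sigma)\neq\emptyset$ genuinely enters. Your lower bound, your faithfulness check, and your treatment of the case $a\notin C$ are all correct and agree with the paper; the missing idea is the piecewise renormalization of the bicoloration around $\mathscr{H}$.
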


\begin{proof}
Denote by $\mathscr{H}$ the smallest clan of $\sigma/\mathbb{G}(\sigma)$ containing 
$\mathbb{G}_{\geq 2}(\sigma)$. 
Consider 
\begin{equation*}
\begin{cases}
\mathscr{H}^-=
\{X\in\mathbb{G}(\sigma):X<Y\!\mod O_{V(\sigma)}\ \text{for every}\ Y\in\mathscr{H}\}\\
\text{and}\\
\mathscr{H}^+=
\{X\in\mathbb{G}(\sigma):X>Y\!\mod O_{V(\sigma)}\ \text{for every}\ Y\in\mathscr{H}\}.
\end{cases}
\end{equation*}

By Proposition~\ref{ptraverse}, $\sigma$ admits a traverse $T$. 
Furthermore $T$ admits a dense bicoloration $\beta$ by Proposition~\ref{dense}. 
We define a bicoloration $\beta^-$ of $T[\bigcup\mathscr{H}^-]$ as follows: $\beta^-=\beta_{\restriction(\bigcup\mathscr{H}^-})$ or 
$(1-\beta)_{\restriction(\bigcup\mathscr{H}^-})$ and, when $O_{V(\sigma)}$ admits a smallest element 
$\{h^-\}$ such that 
$h^-\in\bigcup\mathscr{H}^-$, we require that 
\begin{equation}\label{E0T2trav}
\beta^-(h^-)=1. 
\end{equation}
Similarly, 
$\beta^+$ is a bicoloration of $T[\bigcup\mathscr{H}^+]$ satisfying: $\beta^+=\beta_{\restriction(\bigcup\mathscr{H}^+})$ or 
$(1-\beta)_{\restriction(\bigcup\mathscr{H}^+})$ and, when $O_{V(\sigma)}$ admits a largest element 
$\{h^+\}$ such that 
$h^+\in\bigcup\mathscr{H}^+$, we require that $\beta^+(h^+)=0$. 
Now consider the bicoloration $\beta'$ of $T$ defined as follows. 
For every $v\in V(\sigma)$, 
\begin{equation*}
\beta'(v)=\ 
\begin{cases}
\beta^-(v)\ \text{if}\ v\in\bigcup\mathscr{H}^-\\
\beta(v)\ \text{if}\ v\in\bigcup\mathscr{H}\\
\beta^+(v)\ \text{if}\ v\in\bigcup\mathscr{H}^+.
\end{cases}
\end{equation*}
In general, $\beta'$ may not be a dense bicoloration of $T$. 
But, 
for $H=\bigcup\mathscr{H}^-,\bigcup\mathscr{H}$ or $\bigcup\mathscr{H}^+$, 
\begin{equation}\label{E1aT2trav}
\text{$(\beta')_{\restriction H}$ is a dense bicoloration of $T[H]$.} 
\end{equation}
We also have 
\begin{equation}\label{E2aT2trav}
\text{for each $C\in\mathbb{C}_{\geq 2}(\sigma)$, there are $c,d\in C$ such that $\beta'(c)\neq\beta'(d)$.} 
\end{equation}
Indeed consider $C\in\mathbb{C}_{\geq 2}(\sigma)$. 
For a contradiction, suppose that 
$$|C\cap(\bigcup\mathscr{H}^-)|\leq 1,\quad |C\cap(\bigcup\mathscr{H})|\leq 1\quad\text{and}\quad
|C\cap(\bigcup\mathscr{H}^+)|\leq 1.$$
By Corollary~\ref{C2trav}, $C$ is an interval of $T$. 
Thus, if $C\cap(\bigcup\mathscr{H}^-)\neq\emptyset$ and 
$C\cap(\bigcup\mathscr{H}^+)\neq\emptyset$, then we would have 
$\bigcup\mathscr{H}\subseteq C$ and hence 
$\bigcup\mathbb{G}_{\geq 2}(\sigma)\subseteq C\cap(\bigcup\mathscr{H})$. 
For instance, 
we can assume that $C\cap(\bigcup\mathscr{H}^+)=\emptyset$. 
We get $|C\cap(\bigcup\mathscr{H}^-)|=|C\cap(\bigcup\mathscr{H})|=1$. 
Set $\mathcal{F}_C=\{X\in\mathbb{G}(\sigma):X\cap C\neq\emptyset\}$. 
We obtain $|\mathcal{F}_C\cap\mathscr{H}^-|=|\mathcal{F}_C\cap\mathscr{H}|=1$ and 
$\mathcal{F}_C\cap\mathscr{H}\subseteq\mathbb{G}_1(\sigma)$. 
As $\mathcal{F}_C\in\mathbb{C}(\sigma/\mathbb{G}(\sigma))$ and as 
$\mathcal{F}_C\setminus\mathscr{H}\neq\emptyset$, 
$\mathscr{H}\setminus\mathcal{F}_C\in\mathbb{C}(\sigma/\mathbb{G}(\sigma))$. 
Since $\mathcal{F}_C\cap\mathscr{H}\subseteq\mathbb{G}_1(\sigma)$, 
$\mathbb{G}_{\geq 2}(\sigma)\subseteq\mathscr{H}\setminus\mathcal{F}_C$ which contradicts the minimality of $\mathscr{H}$. 
Consequently there is 
$H=\bigcup\mathscr{H}^-,\bigcup\mathscr{H}$ or $\bigcup\mathscr{H}^+$ such that $|C\cap H|\geq 2$. 
It follows from \eqref{E1aT2trav} that \eqref{E2aT2trav} holds. 

Set $e_0=\lambda_\sigma(V(\sigma))$ and $e_1=\lambda_\sigma(V(\sigma))^\star$. 
Let $a\not\in V(\sigma)$. 
We consider the faithful extension $\tau$ of $\sigma$ defined on 
$V(\sigma)\cup\{a\}$ satisfying 
$$(v,a)_{\tau}=e_{\beta'(v)}\ \text{for every $v\in V(\sigma)$.}$$
We prove that $\tau$ is primitive. 
Let $D\in\mathbb{C}_{\geq 2}(\tau)$. 
It follows from \eqref{E2aT2trav} that 
$a\in D$ and 
\begin{equation}\label{E3aT2trav}
\text{for each $C\in\mathbb{C}_{\geq 2}(\sigma)$, $C\cap(D\setminus\{a\})\neq\emptyset$.} 
\end{equation}
Set $\mathcal{D}=\{X\in\mathbb{G}(\sigma):X\cap(D\setminus\{a\})\neq\emptyset\}$. 
We have $\mathcal{D}\in\mathbb{C}(\sigma/\mathbb{G}(\sigma))$ and 
$\mathbb{G}_{\geq 2}(\sigma)\subseteq\mathcal{D}$ by \eqref{E3aT2trav}. 
By minimality of $\mathscr{H}$, $\mathscr{H}\subseteq\mathcal{D}$. 
Thus 
$\mathscr{H}^-\setminus\mathcal{D},\mathscr{H}^+\setminus\mathcal{D}\in
\mathbb{C}(\sigma/\mathbb{G}(\sigma))$ and hence 
$\bigcup(\mathscr{H}^-\setminus\mathcal{D}),\bigcup(\mathscr{H}^+\setminus\mathcal{D})\in
\mathbb{C}(\sigma)$. 
It follows from \eqref{E3aT2trav} that 
$|\bigcup(\mathscr{H}^-\setminus\mathcal{D})|\leq 1$ and 
$|\bigcup(\mathscr{H}^+\setminus\mathcal{D})|\leq 1$. 
For a contradiction, suppose that 
$|\bigcup(\mathscr{H}^-\setminus\mathcal{D})|=1$ and denote by $h^-$ the unique element of 
$\bigcup(\mathscr{H}^-\setminus\mathcal{D})$. 
Since $\mathscr{H}^-\setminus\mathcal{D}$ is an initial interval of $O_{V(\sigma)}$, 
$\{h^-\}$ is the smallest element of $O_{V(\sigma)}$. 
Given $w\in D\setminus\{a\}$, we have $(h^-,w)_\sigma=\lambda_\sigma(V(\sigma))=e_0$. 
Therefore $(h^-,a)_\tau=e_0$ and hence $\beta'(h^-)=0$. 
We would get $\beta^-(h^-)=0$ contradicting \eqref{E0T2trav}. 
It follows that $\bigcup(\mathscr{H}^-\setminus\mathcal{D})=\emptyset$, that is, 
$\mathscr{H}^-\subseteq\mathcal{D}$. 
Similarly $\mathscr{H}^+\subseteq\mathcal{D}$. 
Consequently $\mathcal{D}=\mathbb{G}(\sigma)$ so that 
$D\setminus\{a\}=V(\sigma)$ and $D=V(\tau)$. 
\end{proof}

\section{Inclusive clans of a 2-structure}\label{sectioninclusive}

Let $\sigma$ be a 2-structure. 
For convenience, set $C(\sigma)=\bigcup\mathscr{C}(\sigma)$, $L(\sigma)=\bigcup\mathscr{L}(\sigma)$ and 
$P(\sigma)=\bigcup\mathscr{P}(\sigma)$. 

Given a 2-structure $\sigma$, a clan $C$ of $\sigma$ is {\em inclusive} if 
$C(\sigma)\cup L(\sigma)\cup P(\sigma)\subseteq C$ and 
$C\cap X\neq\emptyset$ for each $X\in\mathbb{P}_{\geq 2}(\sigma)$. 
The family of the inclusive clans of $\sigma$ is denoted by $\mathbb{I}(\sigma)$. 
Set $\mathbb{I}_{\geq 2}(\sigma)=\{J\in\mathbb{I}(\sigma):|J|\geq 2\}$. 
Of course, $V(\sigma)\in\mathbb{I}(\sigma)$ and $J\neq\emptyset$ for every $J\in\mathbb{I}(\sigma)$. 
Furthermore, for $J\in\mathbb{I}(\sigma)$ and $C\in\mathbb{C}(\sigma)$,
\begin{equation}\label{E1inc}
C\supseteq J\ \Longrightarrow\ C\in\mathbb{I}(\sigma). 
\end{equation}

It is easy to show the next two results. 

\begin{lem}\label{L1inc}
Let $\sigma$ be a 2-structure. 
For each $J\in\mathbb{I}(\sigma)$, $\Upsilon_\downarrow(\sigma)\subseteq J$. 
\end{lem}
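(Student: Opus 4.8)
The plan is to unpack the definitions and show that every element of $\Upsilon_\downarrow(\sigma)$ is forced into any inclusive clan $J$. Recall $\Upsilon_\downarrow(\sigma)=\{v\in\Upsilon(\sigma):\widehat{\{v\}}=\{v\}\}$, where $\Upsilon(\sigma)=\bigcup\mathfrak{C}_1(\sigma)$ collects the singleton classes of $\simeq_\sigma$. So I would fix $v\in\Upsilon_\downarrow(\sigma)$ and an arbitrary $J\in\mathbb{I}(\sigma)$, and aim to prove $v\in J$. The two hypotheses on $v$ are: first, $\{v\}$ is an entire equivalence class of $\simeq_\sigma$ (equivalently $v$ lies in no nontrivial $\simeq_\sigma$-class, so by Theorem~\ref{T1equivalence} $v$ belongs to no member of $\mathscr{C}(\sigma)\cup\mathscr{L}(\sigma)\cup\mathscr{P}(\sigma)$); and second, $\widehat{\{v\}}=\{v\}$.

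First I would extract structural information from $\widehat{\{v\}}=\{v\}$ via Lemma~\ref{L1hat}(1): for each $X\in\mathbb{P}(\sigma)$ with $X\supsetneq\{v\}$ and each $x\in X\setminus\{v\}$, there is $Z\in\mathbb{P}(\sigma)\setminus\mathbb{L}(\sigma)$ with $\{v\}\subsetneq Z\subseteq X\setminus\{x\}$. In particular $\{v\}$ is itself a prime clan (being the intersection $\widehat{\{v\}}$), and $v$ is separated from every other vertex by a prime clan of size at least $2$ not containing that vertex. The natural target is the smallest prime clan $P_v\in\mathbb{P}_{\geq 2}(\sigma)$ that contains $v$; applying Lemma~\ref{L1hat}(1) repeatedly one sees $\{v\}\in\mathbb{G}(\sigma[P_v])$, so $\{v\}$ is a maximal proper prime clan inside $P_v$. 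Since $J$ is inclusive, $J\cap P_v\neq\emptyset$, and because $J\cap P_v$ is (the trace of) a clan meeting the prime clan $P_v$, I would argue that $J$ must actually contain $v$: the key point is that membership of $v$ in $\Upsilon_\downarrow(\sigma)$ rules out $v$ being $\simeq_\sigma$-equivalent to any other vertex, so $v$ cannot be "absorbed" into a complete, linear, or primitive companion class that an inclusive clan could meet at a single vertex other than $v$.

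The cleanest route is likely a minimal-counterexample (or direct chain) argument. Suppose $v\notin J$. Then $J$ is a clan of $\sigma$ with $v\in V(\sigma)\setminus J$, and I would consider $\widetilde{J\cup\{v\}}$ or the smallest prime clan $P$ containing both $J$ and $v$; inside $\sigma[P]/\mathbb{G}(\sigma[P])$ the vertex $v$ sits in some member $X_v\in\mathbb{G}(\sigma[P])$ and $J$ determines a clan $\mathcal{J}$ of the quotient with $v$'s block outside it but $\mathcal{J}\cap X\neq\emptyset$ for every $X\in\mathbb{P}_{\geq 2}$. Using the case analysis on $\lambda_\sigma(P)$ (primitive, complete, or linear, via Theorem~\ref{Tgallai}) together with Proposition~\ref{P1clanmax} and Lemma~\ref{L1equivalence}, I would derive that $v$ lies in a member of $\mathscr{C}(\sigma)\cup\mathscr{L}(\sigma)\cup\mathscr{P}(\sigma)$, i.e.\ in a nontrivial $\simeq_\sigma$-class, contradicting $v\in\Upsilon_\downarrow(\sigma)$. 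The inclusivity conditions $C(\sigma)\cup L(\sigma)\cup P(\sigma)\subseteq J$ handle exactly the complete/linear/primitive cases, and the condition $J\cap X\neq\emptyset$ for all $X\in\mathbb{P}_{\geq 2}(\sigma)$ forces the quotient-level clan to be large enough to swallow the singleton block $\{v\}$.

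The main obstacle I anticipate is the bookkeeping in the linear case. When $\lambda_\sigma(P)\in E_a(\sigma)$, a clan of the quotient is an interval of the linear order $O_P$, and an inclusive clan need only meet each prime block once, so one must use the third bullet of Lemma~\ref{L1equivalence} (that $v\notin\Upsilon_\downarrow$ would follow from $\mathrm{int}_P(\{v\},\{w\})\subseteq\mathbb{G}_1$) carefully to show that $v$ being a "lone singleton" in the order forces it into $J$ rather than letting $J$ skip over it. The complete and primitive cases should be comparatively direct from $C(\sigma)\subseteq J$, $P(\sigma)\subseteq J$, and the separating prime clans supplied by Lemma~\ref{L1hat}(1).
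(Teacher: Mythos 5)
The paper omits the proof of this lemma as easy, so your attempt must stand on its own, and it does not: its central structural claim is false. You take as your target ``the smallest prime clan $P_v\in\mathbb{P}_{\geq 2}(\sigma)$ that contains $v$'', but no such clan exists when $v\in\Upsilon_\downarrow(\sigma)$ and $\nu(\sigma)\geq 2$. The prime clans strictly containing $\{v\}$ are pairwise comparable (any two of them meet in $v$, and each is prime), so they form a chain, and the hypothesis $\widehat{\{v\}}=\{v\}$ says precisely that this chain has intersection $\{v\}$; if it had a least element $P_v$, we would get $\widehat{\{v\}}=P_v\supsetneq\{v\}$. Your follow-up claim that $\{v\}\in\mathbb{G}(\sigma[P_v])$ reads Lemma~\ref{L1hat} backwards: by its second part, $\{v\}$ is a Gallai class of $\sigma[\widehat{\{v\}}]$ exactly when $\{v\}\subsetneq\widehat{\{v\}}$, i.e.\ exactly when $v\notin\Upsilon_\downarrow(\sigma)$. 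The fallback quotient argument fails at its last step as well: it is not true that the inclusivity conditions force the quotient-level clan ``to swallow the singleton block $\{v\}$''. Inclusive clans can skip a lone singleton block: take $O_{V(\sigma)}$ with blocks $\{u\}<P<\{w\}$, $P$ primitive. Then $J=P\cup\{w\}$ is inclusive (it contains $P(\sigma)=P$, while $\mathscr{C}(\sigma)=\mathscr{L}(\sigma)=\emptyset$, and it meets $\mathbb{P}_{\geq 2}(\sigma)=\{V(\sigma),P\}$), yet it omits $u$, even though $\{u\}$ is a singleton class of $\simeq_\sigma$, i.e.\ $u\in\Upsilon(\sigma)$. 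This is exactly the configuration of Proposition~\ref{P2inc} and Theorem~\ref{T1inc}. So the property you actually lean on throughout --- that $v$ is not $\simeq_\sigma$-equivalent to another vertex --- is too weak to prove the lemma; what separates $\Upsilon_\downarrow(\sigma)$ from $\Upsilon(\sigma)$ is the condition $\widehat{\{v\}}=\{v\}$, and your sketch never brings it to bear correctly (your reduction to $\widetilde{J\cup\{v\}}$ also silently assumes this prime clan is not a limit, which need not hold).

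The hat-condition alone yields the lemma in a few lines, with no case analysis and no use of $v\in\Upsilon(\sigma)$ at all. Let $J\in\mathbb{I}(\sigma)$, let $v\in\Upsilon_\downarrow(\sigma)$, and suppose $v\notin J$. Every $X\in\mathbb{P}(\sigma)$ with $X\supsetneq\{v\}$ satisfies $|X|\geq 2$, hence $J\cap X\neq\emptyset$ by inclusivity; since $X$ is prime and $J$ is a clan, $J\subseteq X$ or $X\subseteq J$, and the latter is impossible because $v\in X\setminus J$. Thus $J\subseteq X$ for every such $X$, whence $J\subseteq\bigcap\{X\in\mathbb{P}(\sigma):X\supsetneq\{v\}\}=\widehat{\{v\}}=\{v\}$, and then $v\notin J$ forces $J=\emptyset$, contradicting the fact (noted after the definition of $\mathbb{I}(\sigma)$, and immediate since $J$ meets $V(\sigma)\in\mathbb{P}_{\geq 2}(\sigma)$ when $\nu(\sigma)\geq 2$) that inclusive clans are nonempty; the case $\nu(\sigma)\leq 1$ is vacuous. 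Note that this argument uses only the second clause of inclusivity, the comparability granted by primality, and the definition of $\widehat{\{v\}}$ --- the machinery of Theorem~\ref{T1equivalence}, Lemma~\ref{L1equivalence} and the $\lambda_\sigma$ case distinction that your proposal invokes is not needed.
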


\begin{lem}\label{L2inc}
Let $\sigma$ be a 2-structure. 
For each $C\in\mathbb{C}_{\geq 2}(\sigma)$, if $C(\sigma)\cup L(\sigma)\subseteq C$ and if 
$C\cap X\neq\emptyset$ for each $X\in\mathbb{P}_{\geq 2}(\sigma)$, then 
$C\in\mathbb{I}(\sigma)$. 
\end{lem}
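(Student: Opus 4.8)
The plan is to prove Lemma~\ref{L2inc} by showing that the hypotheses on $C$ already force $P(\sigma)\subseteq C$, which is the only missing ingredient relative to the definition of an inclusive clan. Since we are given $C(\sigma)\cup L(\sigma)\subseteq C$ and that $C$ meets every $X\in\mathbb{P}_{\geq 2}(\sigma)$, by the definition of $\mathbb{I}(\sigma)$ it suffices to establish $P(\sigma)\subseteq C$. First I would recall that $P(\sigma)=\bigcup\mathscr{P}(\sigma)$, so it is enough to show $P\subseteq C$ for each $P\in\mathscr{P}(\sigma)$. By Proposition~\ref{P1clanmax}(3), every such $P$ satisfies $P\in\mathbb{P}(\sigma)\setminus\mathbb{L}(\sigma)$ with $\sigma[P]/\mathbb{G}(\sigma[P])$ primitive and $\mathbb{G}(\sigma[P])=\mathbb{G}_1(\sigma[P])$; in particular $|P|\geq 3$, so $P$ is itself a prime clan of size at least $2$, i.e. $P\in\mathbb{P}_{\geq 2}(\sigma)$.

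The key step is to leverage that $C$ meets the prime clan $P$ together with the prime/clan intersection machinery. Since $P\in\mathbb{P}(\sigma)$ and $C\in\mathbb{C}(\sigma)$ with $C\cap P\neq\emptyset$, primeness of $P$ gives the dichotomy $P\subseteq C$ or $C\subseteq P$. In the first case we are done, so the work lies in ruling out $C\subsetneq P$ (and $C=P$ is already the desired inclusion). The plan is to show $C\subseteq P$ is impossible under our hypotheses: because $\sigma[P]/\mathbb{G}(\sigma[P])$ is primitive with all Gallai factors singletons, the image of $C$ in the quotient would be a nontrivial clan of a primitive quotient, forcing it to be a singleton, hence $C$ would be contained in a single member of $\mathbb{G}(\sigma[P])=\mathbb{G}_1(\sigma[P])$, i.e. $|C|\leq 1$, contradicting $C\in\mathbb{C}_{\geq 2}(\sigma)$. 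More carefully, if $C\subsetneq P$ then $\{X\in\mathbb{G}(\sigma[P]):X\cap C\neq\emptyset\}$ is a clan of the primitive quotient $\sigma[P]/\mathbb{G}(\sigma[P])$, so it is trivial; it cannot be everything (else $C$ would meet every singleton factor and thus equal $P$), leaving the singleton case, which collapses $|C|$ to at most $1$.

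The main obstacle I anticipate is handling the boundary cases cleanly, namely verifying that the family $\{X\in\mathbb{G}(\sigma[P]):X\cap C\neq\emptyset\}$ is genuinely a clan of the quotient (this is the standard correspondence between clans meeting a factorization and clans of the quotient, which I would cite or recall from the Gallai decomposition in Theorem~\ref{Tgallai}) and carefully excluding the case where this family is all of $\mathbb{G}(\sigma[P])$ while $C\neq P$. The latter cannot occur once $C\subseteq P$ and $C$ meets every singleton factor, since meeting every singleton $\{x\}$ means $x\in C$ for all $x\in P$, forcing $C=P$. Thus the only surviving possibility is that the family is a singleton, giving the desired contradiction with $|C|\geq 2$. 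Assembling these observations yields $P\subseteq C$ for every $P\in\mathscr{P}(\sigma)$, hence $P(\sigma)\subseteq C$, and therefore $C\in\mathbb{I}(\sigma)$ by definition.
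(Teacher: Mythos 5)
Your proof is correct. The paper states this lemma without proof (it is introduced with ``It is easy to show the next two results''), and your argument is exactly the intended one: relative to the definition of $\mathbb{I}(\sigma)$ the only missing condition is $P(\sigma)\subseteq C$, and since each $P\in\mathscr{P}(\sigma)$ is a prime clan of size at least $3$ that meets $C$ by hypothesis, the prime-clan dichotomy gives $P\subseteq C$ or $C\subseteq P$, and the case $C\subsetneq P$ collapses to $|C|\leq 1$, contradicting $C\in\mathbb{C}_{\geq 2}(\sigma)$. One simplification worth noting: your detour through Proposition~\ref{P1clanmax}(3) and the clan--quotient correspondence for $\sigma[P]/\mathbb{G}(\sigma[P])$ is unnecessary for ruling out $C\subsetneq P$, because by the very definition of $\mathscr{P}(\sigma)$ the 2-substructure $\sigma[P]$ is primitive, so any clan of $\sigma$ contained in $P$ is a clan of the primitive 2-structure $\sigma[P]$ and, having at least two elements, must equal $P$ outright (the citation of Proposition~\ref{P1clanmax}(3) remains a legitimate way to get $P\in\mathbb{P}(\sigma)$, though primeness of $P$ also follows directly from primitivity of $\sigma[P]$ by standard clan calculus).
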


\begin{lem}\label{L3inc}
Let $\sigma$ be a 2-structure. 
If there is $v\in V(\sigma)$ such that $\{v\}\in\mathbb{I}(\sigma)$, then 
$\mathfrak{C}(\sigma)=\mathfrak{C}_1(\sigma)$, 
$\Upsilon_\downarrow(\sigma)=\{v\}$ and 
$\{v\}$ is the smallest inclusive clan of $\sigma$ under inclusion. 
\end{lem}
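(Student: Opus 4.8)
The plan is to prove Lemma~\ref{L3inc} by working out the consequences of having a singleton inclusive clan, handling each of the three conclusions in turn. The hypothesis $\{v\}\in\mathbb{I}(\sigma)$ is very restrictive, so the strategy is to extract structural information from the definition of inclusive clan and combine it with the tree-equivalence machinery of Section~3.

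First I would unpack what $\{v\}\in\mathbb{I}(\sigma)$ forces. By definition of an inclusive clan, $C(\sigma)\cup L(\sigma)\cup P(\sigma)\subseteq\{v\}$, so each of $C(\sigma)$, $L(\sigma)$, $P(\sigma)$ is contained in $\{v\}$; since every element of $\mathscr{C}(\sigma)\cup\mathscr{L}(\sigma)\cup\mathscr{P}(\sigma)$ has at least two elements (for $\mathscr{C},\mathscr{L}$ this is built into $\mathbb{C}_{\geq 2}$, and for $\mathscr{P}$ primitivity requires $\nu\geq 3$), we conclude $\mathscr{C}(\sigma)=\mathscr{L}(\sigma)=\mathscr{P}(\sigma)=\emptyset$. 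By Theorem~\ref{T1equivalence}, $\mathfrak{C}_{\geq 2}(\sigma)=\mathscr{C}(\sigma)\cup\mathscr{L}(\sigma)\cup\mathscr{P}(\sigma)=\emptyset$, which is exactly the first conclusion $\mathfrak{C}(\sigma)=\mathfrak{C}_1(\sigma)$. The second part of the inclusive-clan definition gives $\{v\}\cap X\neq\emptyset$ for each $X\in\mathbb{P}_{\geq 2}(\sigma)$, i.e. $v\in X$ for every prime clan $X$ with $|X|\geq 2$.

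Next I would derive $\Upsilon_\downarrow(\sigma)=\{v\}$ and the minimality statement. For the minimality, apply the monotonicity property \eqref{E1inc}: since $\{v\}\in\mathbb{I}(\sigma)$, any clan containing $\{v\}$ is inclusive, and conversely Lemma~\ref{L1inc} gives $\Upsilon_\downarrow(\sigma)\subseteq J$ for every $J\in\mathbb{I}(\sigma)$; applied to $J=\{v\}$ this yields $\Upsilon_\downarrow(\sigma)\subseteq\{v\}$. To show $v\in\Upsilon_\downarrow(\sigma)$, recall $\Upsilon_\downarrow(\sigma)=\{u\in\Upsilon(\sigma):\widehat{\{u\}}=\{u\}\}$ and $\Upsilon(\sigma)=\bigcup\mathfrak{C}_1(\sigma)$. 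Since $\mathfrak{C}(\sigma)=\mathfrak{C}_1(\sigma)$ we have $\Upsilon(\sigma)=V(\sigma)$, so $v\in\Upsilon(\sigma)$; it remains to check $\widehat{\{v\}}=\{v\}$. If instead $\{v\}\subsetneq\widehat{\{v\}}$, then by Lemma~\ref{L1hat}(2) we would have $\widehat{\{v\}}\in\mathbb{P}(\sigma)\setminus\mathbb{L}(\sigma)$ with $\{v\}\in\mathbb{G}_1(\sigma[\widehat{\{v\}}])$; the prime clan $\widehat{\{v\}}$ lies in $\mathbb{P}_{\geq 2}(\sigma)$, but one must produce a second prime clan of size $\geq 2$ avoiding $v$ to contradict the meeting condition. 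Concretely, since $\mathbb{G}(\sigma[\widehat{\{v\}}])$ has at least two elements and realizes a factorization, pick $Y\in\mathbb{G}(\sigma[\widehat{\{v\}}])$ with $v\notin Y$; if $|Y|\geq 2$ then $Y\in\mathbb{P}_{\geq 2}(\sigma)$ disjoint from $\{v\}$, a contradiction, while if all such $Y$ are singletons one argues from the labelled quotient $\sigma[\widehat{\{v\}}]/\mathbb{G}(\sigma[\widehat{\{v\}}])$—recorded via $\lambda_\sigma$—that a nontrivial complete, linear, or primitive clan of $\sigma[\widehat{\{v\}}]$ appears, contributing a nonempty member of $\mathscr{C}\cup\mathscr{L}\cup\mathscr{P}$ and contradicting their emptiness established above.

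I expect the main obstacle to be precisely this last step: ruling out $\{v\}\subsetneq\widehat{\{v\}}$ cleanly by translating the local quotient structure at $\widehat{\{v\}}$ (complete, linear, or primitive, as classified by $\lambda_\sigma$ and Theorem~\ref{Tgallai}) into the existence of a forbidden element of $\mathscr{C}(\sigma)$, $\mathscr{L}(\sigma)$, or $\mathscr{P}(\sigma)$, or of a prime clan of size $\geq 2$ missing $v$. The characterizations in Proposition~\ref{P1clanmax} are the right tool here, since they describe membership in $\mathscr{C},\mathscr{L},\mathscr{P}$ entirely in terms of $\widetilde{\cdot}$, $\lambda_\sigma$, and the Gallai family. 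Once that case analysis is done, the three conclusions follow together, and the minimality of $\{v\}$ among inclusive clans is immediate from \eqref{E1inc} combined with $\Upsilon_\downarrow(\sigma)=\{v\}$ and Lemma~\ref{L1inc}.
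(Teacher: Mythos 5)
Your proposal is correct and follows essentially the same route as the paper's proof: Theorem~\ref{T1equivalence} gives $\mathfrak{C}(\sigma)=\mathfrak{C}_1(\sigma)$, the meeting condition on $\mathbb{P}_{\geq 2}(\sigma)$ together with Lemma~\ref{L1hat} and Proposition~\ref{P1clanmax} rules out $\{v\}\subsetneq\widehat{\{v\}}$, and Lemma~\ref{L1inc} yields $\Upsilon_\downarrow(\sigma)=\{v\}$ and the minimality. The case you flagged as the main obstacle is resolved in the paper exactly as you anticipated: since $\{v\}\in\mathbb{G}(\sigma[\widehat{\{v\}}])$ and $\{v\}$ must meet every member of $\mathbb{G}_{\geq 2}(\sigma[\widehat{\{v\}}])\subseteq\mathbb{P}_{\geq 2}(\sigma)$, one gets $\mathbb{G}(\sigma[\widehat{\{v\}}])=\mathbb{G}_1(\sigma[\widehat{\{v\}}])$, whence $\widehat{\{v\}}\in\mathscr{C}(\sigma)\cup\mathscr{L}(\sigma)\cup\mathscr{P}(\sigma)$ by Proposition~\ref{P1clanmax}, contradicting $\mathfrak{C}(\sigma)=\mathfrak{C}_1(\sigma)$.
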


\begin{proof}
By Theorem~\ref{T1equivalence}, 
$\mathfrak{C}_{\geq 2}(\sigma)=\mathscr{C}(\sigma)\cup\mathscr{L}(\sigma)\cup\mathscr{P}(\sigma)$. 
As $C(\sigma)\cup L(\sigma)\cup P(\sigma)\subseteq \{v\}$, we obtain $\mathfrak{C}(\sigma)=\mathfrak{C}_1(\sigma)$. 
Now, we prove that $v\in\Upsilon_\downarrow(\sigma)$. 
Suppose for a contradiction that $|\widehat{\{v\}}|\geq 2$. 
By Lemma~\ref{L1hat}, 
$\{v\}\in\mathbb{G}(\sigma[\widehat{\{v\}}])$. 
Moreover, as $\{v\}\cap X\neq\emptyset$ for each $X\in\mathbb{G}_{\geq 2}(\sigma[\widehat{\{v\}}])$, we obtain 
$\mathbb{G}_{\geq 2}(\sigma[\widehat{\{v\}}])=\emptyset$ that is 
$\mathbb{G}(\sigma[\widehat{\{v\}}])=\mathbb{G}_{1}(\sigma[\widehat{\{v\}}])$. 
It follows from Proposition~\ref{P1clanmax} that 
$\widehat{\{v\}}\in\mathscr{C}(\sigma)\cup\mathscr{L}(\sigma)\cup\mathscr{P}(\sigma)$. 
By Theorem~\ref{T1equivalence}, $\widehat{\{v\}}\in\mathfrak{C}_{\geq 2}(\sigma)$ contradicting $\mathfrak{C}(\sigma)=\mathfrak{C}_1(\sigma)$. 
Consequently $\widehat{\{v\}}=\{v\}$ that is $v\in\Upsilon_\downarrow(\sigma)$. 
It follows from Lemma~\ref{L1inc} that $\Upsilon_\downarrow(\sigma)=\{v\}$. 
Moreover, 
$\Upsilon_\downarrow(\sigma)\subseteq J$ for each $J\in\mathbb{I}(\sigma)$. 
Therefore $\{v\}$ is the smallest inclusive clan of $\sigma$ under inclusion. 
\end{proof}

\begin{lem}\label{L4inc}
Let $\sigma$ be a 2-structure. 
Given $J\in\mathbb{I}(\sigma)$, we have $J\cap C\neq\emptyset$ for every 
$C\in\mathbb{C}_{\geq 2}(\sigma)$. 
\end{lem}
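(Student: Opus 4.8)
The plan is to split on whether $C$ is itself a prime clan, and, when it is not, to use the Gallai decomposition of the smallest prime clan $\widetilde{C}$ enclosing $C$. If $C\in\mathbb{P}(\sigma)$, then since $|C|\geq 2$ we have $C\in\mathbb{P}_{\geq 2}(\sigma)$, and the defining condition of an inclusive clan gives $J\cap C\neq\emptyset$ at once. So I would assume $C\in\mathbb{C}(\sigma)\setminus\mathbb{P}(\sigma)$. By Lemma~\ref{L1nonprime}, $\widetilde{C}\in\mathbb{P}(\sigma)\setminus\mathbb{L}(\sigma)$ and there is a nontrivial clan $F$ of the quotient $\sigma[\widetilde{C}]/\mathbb{G}(\sigma[\widetilde{C}])$ with $C=\bigcup F$. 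By Theorem~\ref{Tgallai} this quotient is complete, linear or primitive; a primitive quotient has no nontrivial clan, so it must be complete or linear.

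First I would dispose of the case where $F$ contains a block $X\in\mathbb{G}(\sigma[\widetilde{C}])$ with $|X|\geq 2$. Since $\widetilde{C}\in\mathbb{P}(\sigma)$, every member of $\mathbb{G}(\sigma[\widetilde{C}])$ is a prime clan of $\sigma$, so $X\in\mathbb{P}_{\geq 2}(\sigma)$. Inclusiveness of $J$ forces $J\cap X\neq\emptyset$, and $X\subseteq\bigcup F=C$ then yields $J\cap C\neq\emptyset$.

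It remains to treat the case where every block occurring in $F$ is a singleton. Here I would check directly that $\sigma[C]$ inherits the type of the quotient. If $\sigma[\widetilde{C}]/\mathbb{G}(\sigma[\widetilde{C}])$ is complete, then all pairs drawn from $C$ lie in the single symmetric class $\lambda_\sigma(\widetilde{C})$, so $\sigma[C]$ is complete; if it is linear, then $F$ is an interval of $O_{\widetilde{C}}$ and the order induced on its singletons makes $\sigma[C]$ linear. In either situation $C$ is a complete (resp.\ linear) clan with $|C|=|F|\geq 2$, so Lemma~\ref{L1clanmax} supplies $C'\in\mathscr{C}(\sigma)$ (resp.\ $C'\in\mathscr{L}(\sigma)$) with $C\subseteq C'$. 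Consequently $C\subseteq C(\sigma)\cup L(\sigma)\subseteq J$, whence $J\cap C=C\neq\emptyset$.

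The main obstacle I anticipate is purely verificational: confirming in the all-singletons case that $\sigma[C]$ really is complete or linear, so that Lemma~\ref{L1clanmax} applies. This amounts to unwinding the definitions of the quotient label $\lambda_\sigma(\widetilde{C})$ and of the order $O_{\widetilde{C}}$, and noting that restricting $F$ to singleton blocks preserves the relevant class or interval structure. Everything else is a direct appeal to the inclusiveness of $J$ and to the Gallai structure already developed.
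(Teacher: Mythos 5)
Your proposal is correct, and its skeleton coincides with the paper's: the case $C\in\mathbb{P}_{\geq 2}(\sigma)$ is immediate from the definition of an inclusive clan, the non-prime case is reduced via Lemma~\ref{L1nonprime} to a nontrivial clan $F$ of $\sigma[\widetilde{C}]/\mathbb{G}(\sigma[\widetilde{C}])$ with $C=\bigcup F$, and the subcase where $F$ meets $\mathbb{G}_{\geq 2}(\sigma[\widetilde{C}])$ is settled exactly as you do. The two arguments part ways only in the all-singletons subcase. The paper observes that $F\subseteq\mathbb{G}_{1}(\sigma[\widetilde{C}])$ forces $v\simeq_\sigma w$ for all $v,w\in C$, so $C$ sits inside some $D\in\mathfrak{C}_{\geq 2}(\sigma)$, and Theorem~\ref{T1equivalence} places $D$ in $\mathscr{C}(\sigma)\cup\mathscr{L}(\sigma)\cup\mathscr{P}(\sigma)$, whence $C\subseteq J$. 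You instead rule out a primitive quotient (correctly, since $F$ is a nontrivial clan of it, and Theorem~\ref{Tgallai} leaves only complete or linear), verify directly that $\sigma[C]$ inherits the quotient's type, and invoke Lemma~\ref{L1clanmax} to embed $C$ in a maximal complete or linear clan, so that $C\subseteq C(\sigma)\cup L(\sigma)\subseteq J$. Your route bypasses the $\simeq_\sigma$ machinery entirely and is self-contained modulo Lemma~\ref{L1clanmax} (whose existence statement is nontrivial in the infinite setting, which is precisely why the paper states it as a lemma); the paper's route is a one-line finish given Theorem~\ref{T1equivalence} and matches how inclusive clans interact with $\mathfrak{C}(\sigma)$ elsewhere (e.g.\ in Lemma~\ref{L3inc}). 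One cosmetic remark: in your complete-quotient case you call $\lambda_\sigma(\widetilde{C})$ a \emph{symmetric} class, but no symmetry is needed or available in a general (possibly non-reversible) $\sigma$; completeness of $\sigma[C]$ only requires that all ordered pairs of distinct elements of $C$ lie in a single class, which is exactly what the complete quotient provides.
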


\begin{proof}
By definition of an inclusive clan, we consider $C\in\mathbb{C}_{\geq 2}(\sigma)\setminus\mathbb{P}(\sigma)$. 
By Lemma~\ref{L1nonprime}, 
there is a non trivial clan $F$ of $\sigma[\widetilde{C}]/\mathbb{G}(\sigma[\widetilde{C}])$ such that $C=\bigcup F$. 
If there exists $X\in F\cap\mathbb{G}_{\geq 2}(\sigma[\widetilde{C}])$, then $J\cap X\neq\emptyset$ and hence 
$J\cap C\neq\emptyset$. 
Assume that $F\subseteq \mathbb{G}_{1}(\sigma[\widetilde{C}])$. 
We obtain that $v\simeq_\sigma w$ for any $v,w\in C$. 
Thus there is $D\in\mathfrak{C}_{\geq 2}(\sigma)$ such that $D\supseteq \bigcup F=C$. 
By Theorem~\ref{T1equivalence}, 
$D\in\mathscr{C}(\sigma)\cup\mathscr{L}(\sigma)\cup\mathscr{P}(\sigma)$ so that $C\subseteq J$. 
\end{proof}

\begin{prop}\label{P1inc}
Let $\sigma$ be a 2-structure. 
For any $J,K\in\mathbb{I}(\sigma)$, 
$J\cap K\in\mathbb{I}(\sigma)$ and $J\cup K\in\mathbb{I}(\sigma)$. 
\end{prop}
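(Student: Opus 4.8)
The plan is to reduce the whole statement to the single assertion that $J\cap K\neq\emptyset$, from which both closure properties follow at once. I will use the two classical facts about clans: the intersection of two clans is again a clan, and if two clans meet then their union is a clan. Granting $J\cap K\neq\emptyset$, the union $J\cup K$ is then a clan containing $J$, so $J\cup K\in\mathbb{I}(\sigma)$ by \eqref{E1inc}. For the intersection, $J\cap K$ is a clan and clearly $C(\sigma)\cup L(\sigma)\cup P(\sigma)\subseteq J\cap K$; to see that it is inclusive it remains to check that $(J\cap K)\cap X\neq\emptyset$ for every $X\in\mathbb{P}_{\geq 2}(\sigma)$. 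Fixing such an $X$, primality of $X$ together with $J\cap X\neq\emptyset$ (inclusiveness of $J$) forces $X\subseteq J$ or $J\subseteq X$, and likewise for $K$. If $X\subseteq J$ then $(J\cap K)\cap X=K\cap X\neq\emptyset$ (symmetrically if $X\subseteq K$); otherwise $J\subseteq X$ and $K\subseteq X$, whence $(J\cap K)\cap X=J\cap K\neq\emptyset$. So everything rests on the key claim.

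To prove $J\cap K\neq\emptyset$, I argue by contradiction, assuming $J\cap K=\emptyset$. Since inclusive clans are nonempty, I set $X=\widetilde{J\cup K}$, the smallest prime clan containing $J\cup K$; then $J,K\subseteq X$, and in fact $J,K\subsetneq X$, since $J=X$ would give $K\subseteq X=J$ and hence $K=J\cap K=\emptyset$ (and symmetrically for $K$). I then split according to whether some pair $v\in J$, $w\in K$ (necessarily distinct) satisfies $\widetilde{\{v,w\}}\subsetneq X$. If so, put $P=\widetilde{\{v,w\}}$: as $P$ is prime and meets the clan $J$ at $v$, either $P\subseteq J$ or $J\subseteq P$, and similarly for $K$. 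Now $P\subseteq J$ gives $w\in P\subseteq J$, so $w\in J\cap K$; $P\subseteq K$ gives $v\in J\cap K$; and $J\subseteq P$ together with $K\subseteq P$ gives $J\cup K\subseteq P\subsetneq X=\widetilde{J\cup K}$, contradicting minimality. Each alternative is impossible.

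The remaining, and main, case is that $\widetilde{\{v,w\}}=X$ for all $v\in J$, $w\in K$. Here Lemma~\ref{L1tilde} shows $X\in\mathbb{P}(\sigma)\setminus\mathbb{L}(\sigma)$, so $\mathbb{G}(\sigma[X])$ is a factorization, and that for each such $v,w$ the blocks $X_v,X_w\in\mathbb{G}(\sigma[X])$ containing them are distinct. Hence the families $\{Y\in\mathbb{G}(\sigma[X]):Y\cap J\neq\emptyset\}$ and $\{Y\in\mathbb{G}(\sigma[X]):Y\cap K\neq\emptyset\}$ are disjoint (a common block $Y$ would contain some $v\in J$ and $w\in K$, forcing $X_v=X_w=Y$ against the distinctness just noted). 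But every $Y\in\mathbb{G}_{\geq 2}(\sigma[X])$ lies in $\mathbb{P}_{\geq 2}(\sigma)$, so inclusiveness makes it meet both $J$ and $K$; since the two block-families are disjoint, this yields $\mathbb{G}_{\geq 2}(\sigma[X])=\emptyset$, i.e. $\mathbb{G}(\sigma[X])=\mathbb{G}_1(\sigma[X])$. Reading off the value of $\lambda_\sigma(X)$ and applying Proposition~\ref{P1clanmax} (according as $\lambda_\sigma(X)$ equals $\varpi$, lies in $E_s(\sigma)$, or lies in $E_a(\sigma)$) now places $X$ in $\mathscr{P}(\sigma)$, $\mathscr{C}(\sigma)$ or $\mathscr{L}(\sigma)$, so $X\subseteq C(\sigma)\cup L(\sigma)\cup P(\sigma)\subseteq J$, contradicting $J\subsetneq X$. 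This contradiction establishes the key claim and, with it, the proposition.

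I expect the third paragraph to be the delicate point. The reduction to $\mathbb{G}_{\geq 2}(\sigma[X])=\emptyset$ hinges on exploiting that $\widetilde{\{v,w\}}$ is never a limit (Lemma~\ref{L1tilde}): this is exactly what simultaneously rules out the awkward case $X\in\mathbb{L}(\sigma)$ and supplies the separating block structure on $\mathbb{G}(\sigma[X])$. The final step, identifying $X$ uniformly as a maximal complete, linear or primitive clan across the three possible values of $\lambda_\sigma(X)$ via Proposition~\ref{P1clanmax}, is where the bookkeeping must be done with care.
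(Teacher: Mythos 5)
Your proof is correct, but the key step --- establishing $J\cap K\neq\emptyset$ --- is argued quite differently from the paper. The paper disposes of it in two lines: if $|J|=|K|=1$, Lemma~\ref{L3inc} forces $J=K=\Upsilon_\downarrow(\sigma)$; otherwise, say $|J|\geq 2$, Lemma~\ref{L4inc} (an inclusive clan meets every clan of cardinality at least $2$) applied to $K$ against the clan $J$ gives the claim at once. You bypass Lemmas~\ref{L3inc} and~\ref{L4inc} entirely and run a self-contained contradiction argument on $X=\widetilde{J\cup K}$: either some pair $v\in J$, $w\in K$ has $\widetilde{\{v,w\}}\subsetneq X$, which the primality of $\widetilde{\{v,w\}}$ and the minimality of $X$ rule out, or $\widetilde{\{v,w\}}=X$ for all pairs, in which case Lemma~\ref{L1tilde} separates $J$ and $K$ into disjoint families of Gallai blocks of $\sigma[X]$, forcing $\mathbb{G}_{\geq 2}(\sigma[X])=\emptyset$ and then, via the three cases of Proposition~\ref{P1clanmax}, $X\in\mathscr{C}(\sigma)\cup\mathscr{L}(\sigma)\cup\mathscr{P}(\sigma)$, whence $X\subseteq C(\sigma)\cup L(\sigma)\cup P(\sigma)\subseteq J$, contradicting $J\subsetneq X$. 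In effect you re-prove, in the configuration at hand, the content of Lemma~\ref{L4inc}, whose proof in the paper runs through Lemma~\ref{L1nonprime}, the relation $\simeq_\sigma$ and Theorem~\ref{T1equivalence}; your version trades that machinery for Lemma~\ref{L1tilde} plus Proposition~\ref{P1clanmax}, and it handles the singleton case uniformly where the paper needs Lemma~\ref{L3inc} separately (your checks there --- $\widetilde{X}=X$, the full block family as a maximal interval of singletons when $\lambda_\sigma(X)\in E_a(\sigma)$, and $\mathbb{G}(\sigma[X])\subseteq\mathbb{P}(\sigma)$ for prime $X$ --- all go through). The remaining steps coincide with the paper's: the union via \eqref{E1inc} once $J\cup K$ is known to be a clan, and the intersection via comparability of $J$ and of $K$ with each $X\in\mathbb{P}_{\geq 2}(\sigma)$. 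The paper's route is shorter given its toolkit; yours is longer but more self-contained and makes explicit the structural reason why inclusiveness forces overlap.
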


\begin{proof}
To begin, we show that $J\cap K\neq\emptyset$. 
First, assume that $|J|=|K|=1$. 
It follows from Lemma~\ref{L3inc} that $J=K=\Upsilon_\downarrow(\sigma)$. 
Second, assume for example that $|J|\geq 2$. 
By Lemma~\ref{L4inc}, $J\cap K\neq\emptyset$. 
It follows that $J\cup K\in\mathbb{C}(\sigma)$ and hence 
$J\cup K\in\mathbb{I}(\sigma)$ by \eqref{E1inc}. 
Lastly, we prove that $J\cap K\in\mathbb{I}(\sigma)$. 
As $C(\sigma)\cup L(\sigma)\cup P(\sigma)\subseteq J$ and $C(\sigma)\cup L(\sigma)\cup P(\sigma)\subseteq K$, 
we have $C(\sigma)\cup L(\sigma)\cup P(\sigma)\subseteq J\cap K$. 
Now consider $X\in\mathbb{P}_{\geq 2}$. 
We have $J\cap X\neq\emptyset$ and $K\cap X\neq\emptyset$. 
Thus $J,X$ and $K,X$ are comparable under inclusion. 
We obtain $J\cap K\subseteq X$ or $X\subseteq J\cap K$. 
\end{proof}

The next result provides a structural analysis of non prime and inclusive clans. 

\begin{prop}\label{P2inc}
Let $\sigma$ be a 2-structure. 
Given $J\in\mathbb{I}(\sigma)$, 
if $J\not\in\mathbb{P}(\sigma)$, then one of the following two assertions holds.
\begin{enumerate}
\item $\lambda_\sigma(\widetilde{J})\in E_s(\sigma)$ and we have: 
$|\mathbb{G}_{1}(\sigma[\widetilde{J}])|=1$, 
$|\mathbb{G}_{\geq 2}(\sigma[\widetilde{J}])|\geq 2$ and 
$J=\bigcup\mathbb{G}_{\geq 2}(\sigma[\widetilde{J}])$. 
\item $\lambda_\sigma(\widetilde{J})\in E_a(\sigma)$, 
$\mathbb{G}_{\geq 2}(\sigma[\widetilde{J}])\neq\emptyset$ and, by denoting by 
$\mathcal{I}$ the smallest interval of $O_{\widetilde{J}}$ containing 
$\mathbb{G}_{\geq 2}(\sigma[\widetilde{J}])$ and by considering 
\begin{equation*}
\begin{cases}
\mathcal{I}^-=\{\{j\}\in\mathbb{G}(\sigma[\widetilde{J}])\setminus\mathcal{I}:\{j\}<X\hspace{-2mm}\mod O_{\widetilde{J}}\ \ \text{for every}\ X\in\mathcal{J}\}\\
\text{and}\\
\mathcal{I}^+=\{\{j\}\in\mathbb{G}(\sigma[\widetilde{J}])\setminus\mathcal{I}:\{j\}>X\hspace{-2mm}\mod O_{\widetilde{J}}\ \ \text{for every}\ X\in\mathcal{J}\},
\end{cases}
\end{equation*}
we have: 
\begin{itemize}
\item $\bigcup\mathcal{I}\subseteq J$; 
\item if $|\mathcal{I}^-|\geq 2$, then $|\mathcal{I}^+|=1$ and 
$\bigcup\mathcal{I}^+=\widetilde{J}\setminus J=\max O_{\widetilde{J}}$;
\item if $|\mathcal{I}^+|\geq 2$, then $|\mathcal{I}^-|=1$ and 
$\bigcup\mathcal{I}^-=\widetilde{J}\setminus J=\min O_{\widetilde{J}}$;
\item for each $j\in\widetilde{J}\setminus J$, 
$\{j\}=\min O_{\widetilde{J}}$ or $\max O_{\widetilde{J}}$;
\item $|\widetilde{J}\setminus J|=1$ or $2$; 
\item if $|\mathbb{G}_{\geq 2}(\sigma[\widetilde{J}])|=1$, then 
$|\widetilde{J}\setminus J|=1$ and $|\mathbb{G}_{1}(\sigma[\widetilde{J}])|\geq 2$. 
\end{itemize}
\end{enumerate}
\end{prop}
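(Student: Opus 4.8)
The plan is to first reduce to the two stated alternatives and then to treat the linear case, which is the substantial one, by a careful interval analysis inside the quotient $\sigma[\widetilde{J}]/\mathbb{G}(\sigma[\widetilde{J}])$.

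First I would apply Lemma~\ref{L1nonprime} to $J\in\mathbb{C}(\sigma)\setminus\mathbb{P}(\sigma)$: this yields $\widetilde{J}\in\mathbb{P}(\sigma)\setminus\mathbb{L}(\sigma)$ together with a nontrivial clan $F$ of the quotient $\sigma[\widetilde{J}]/\mathbb{G}(\sigma[\widetilde{J}])$ such that $J=\bigcup F$. By Theorem~\ref{Tgallai} this quotient is complete, linear or primitive; were it primitive we would have $\lambda_\sigma(\widetilde{J})=\varpi$ and no nontrivial clan, so the existence of $F$ excludes this. Hence $\lambda_\sigma(\widetilde{J})\in E_s(\sigma)$ or $\lambda_\sigma(\widetilde{J})\in E_a(\sigma)$, which are precisely Assertions~1 and~2. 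The single observation feeding both cases is that each $X\in\mathbb{G}_{\geq 2}(\sigma[\widetilde{J}])$ is a prime clan of $\sigma$ with $|X|\geq 2$, since $\mathbb{G}(\sigma[\widetilde{J}])\subseteq\mathbb{P}(\sigma)$; inclusiveness thus forces $X\cap J\neq\emptyset$, and the blocks being pairwise disjoint this gives $X\in F$, so $\mathbb{G}_{\geq 2}(\sigma[\widetilde{J}])\subseteq F$.

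In the complete case every subset of the quotient is a clan, so the only constraint on $F$ is nontriviality. If $|\mathbb{G}_1(\sigma[\widetilde{J}])|\geq 2$, then Proposition~\ref{P1clanmax}(1) exhibits $\{c\in\widetilde{J}:\{c\}\in\mathbb{G}(\sigma[\widetilde{J}])\}$ as an element of $\mathscr{C}(\sigma)$, hence a subset of $C(\sigma)\subseteq J$; this forces $\mathbb{G}_1(\sigma[\widetilde{J}])\subseteq F$ and, with the observation above, $F=\mathbb{G}(\sigma[\widetilde{J}])$, contradicting nontriviality. If $|\mathbb{G}_1(\sigma[\widetilde{J}])|=0$, then $\mathbb{G}_{\geq 2}(\sigma[\widetilde{J}])=\mathbb{G}(\sigma[\widetilde{J}])\subseteq F\subsetneq\mathbb{G}(\sigma[\widetilde{J}])$ is impossible. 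So $|\mathbb{G}_1(\sigma[\widetilde{J}])|=1$, whence $F=\mathbb{G}_{\geq 2}(\sigma[\widetilde{J}])$, $J=\bigcup\mathbb{G}_{\geq 2}(\sigma[\widetilde{J}])$ and $|\mathbb{G}_{\geq 2}(\sigma[\widetilde{J}])|=|F|\geq 2$, giving Assertion~1.

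In the linear case the clans of the quotient are the intervals of $O_{\widetilde{J}}$, so $F$ is a nontrivial interval containing $\mathbb{G}_{\geq 2}(\sigma[\widetilde{J}])$. I would first rule out $\mathbb{G}_{\geq 2}(\sigma[\widetilde{J}])=\emptyset$: otherwise $\sigma[\widetilde{J}]$ is linear, so by Lemma~\ref{L1clanmax}(2) $\widetilde{J}$ lies inside some $L\in\mathscr{L}(\sigma)\subseteq L(\sigma)\subseteq J$, forcing $J=\widetilde{J}$ against $J=\bigcup F\subsetneq\widetilde{J}$. Let $\mathcal{I}$ be the convex hull of $\mathbb{G}_{\geq 2}(\sigma[\widetilde{J}])$, i.e. the smallest interval of $O_{\widetilde{J}}$ containing it; as $F$ is an interval with $F\supseteq\mathbb{G}_{\geq 2}(\sigma[\widetilde{J}])$ we get $\mathcal{I}\subseteq F$ and so $\bigcup\mathcal{I}\subseteq J$. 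Writing $F=F^-\cup\mathcal{I}\cup F^+$ with $F^\pm$ the traces of $F$ on $\mathcal{I}^\pm$, the blocks missing from $J$ are $A^-=\mathcal{I}^-\setminus F^-$ and $A^+=\mathcal{I}^+\setminus F^+$, an initial respectively final segment of $O_{\widetilde{J}}$ of singletons. The remaining bullets follow from two complementary mechanisms. Any interval of $O_{\widetilde{J}}$ of size $\geq 2$ is a clan of $\sigma$ of cardinality $\geq 2$ (clans compose through $\widetilde{J}$), so by Lemma~\ref{L4inc} it must meet $J$; applied to $A^-$ and $A^+$ this gives $|A^-|,|A^+|\leq 1$, hence $|\widetilde{J}\setminus J|\leq 2$, and a singleton initial (resp. final) segment can only be the minimum (resp. maximum) of $O_{\widetilde{J}}$, which yields the location bullet and, with $\widetilde{J}\setminus J\neq\emptyset$, that $|\widetilde{J}\setminus J|\in\{1,2\}$. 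Conversely, if $|\mathcal{I}^-|\geq 2$ then $\mathcal{I}^-$ is a maximal interval of singletons, since any strictly larger interval of singletons containing it would meet $\mathbb{G}_{\geq 2}(\sigma[\widetilde{J}])\subseteq\mathcal{I}$; Proposition~\ref{P1clanmax}(2) then makes $\bigcup\mathcal{I}^-$ an element of $\mathscr{L}(\sigma)\subseteq L(\sigma)\subseteq J$, forcing $A^-=\emptyset$, so the single missing block lies in the $+$ wing, and the same argument applied to $\mathcal{I}^+$ gives $|\mathcal{I}^+|=1$ with $\bigcup\mathcal{I}^+=\widetilde{J}\setminus J=\max O_{\widetilde{J}}$; this is the bullet for $|\mathcal{I}^-|\geq 2$, its mirror handling $|\mathcal{I}^+|\geq 2$. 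Finally, when $|\mathbb{G}_{\geq 2}(\sigma[\widetilde{J}])|=1$ the hull $\mathcal{I}$ is a single block, so $|\widetilde{J}\setminus J|=2$ would force $F=\mathcal{I}$ of size $1$ against nontriviality; hence $|\widetilde{J}\setminus J|=1$, and counting singletons gives $|\mathbb{G}_1(\sigma[\widetilde{J}])|=|F|\geq 2$.

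The main obstacle is the bookkeeping in the linear case: pinning down which wing is absorbed wholly into $J$ versus which may retain a single extremal block, and checking the maximal-interval-of-singletons claims uniformly for infinite orders, where $\mathbb{G}_{\geq 2}(\sigma[\widetilde{J}])$ need not possess a minimum or maximum, so that Proposition~\ref{P1clanmax}(2) still applies. The asymmetry between the two sub-bullets of Assertion~2 is exactly the tension between the two mechanisms above, since Lemma~\ref{L4inc} bounds the excluded part of a wing by only one block while a wing of size $\geq 2$ is driven entirely inside $J$ by the $\mathscr{L}(\sigma)$ argument.
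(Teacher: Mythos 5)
Your proposal is correct and follows essentially the same route as the paper's proof: Lemma~\ref{L1nonprime} to pass to the quotient $\sigma[\widetilde{J}]/\mathbb{G}(\sigma[\widetilde{J}])$, inclusiveness forcing $\mathbb{G}_{\geq 2}(\sigma[\widetilde{J}])$ into the clan $\mathcal{F}$ with $J=\bigcup\mathcal{F}$, Proposition~\ref{P1clanmax} to settle the symmetric case, and the maximal-interval-of-singletons argument (via Proposition~\ref{P1clanmax} and $\mathscr{L}(\sigma)$-pieces being absorbed into $J$) for the wings $\mathcal{I}^{\pm}$ in the linear case. The only minor variation is that you bound the missing initial and final segments $A^{\pm}$ directly through Lemma~\ref{L4inc}, whereas the paper obtains the location and cardinality bullets by reusing its implication \eqref{E2P2inc}, i.e.\ the same $\mathscr{L}(\sigma)$ mechanism; both arguments are valid and lead to identical conclusions.
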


\begin{proof}
Let $J\in\mathbb{I}(\sigma)\setminus\mathbb{P}(\sigma)$. 
By Lemma~\ref{L1nonprime}, $\widetilde{J}\in\mathbb{P}(\sigma)\setminus\mathbb{L}(\sigma)$ and there is a nontrivial clan $\mathcal{F}$ of $\sigma[\widetilde{J}]/\mathbb{G}(\sigma[\widetilde{J}])$ such that 
$J=\bigcup\mathcal{F}$. 
Let $X\in\mathbb{G}_{\geq 2}(\sigma[\widetilde{J}])$. 
As $X\in\mathbb{P}_{\geq 2}(\sigma)$, 
$J\cap X\neq\emptyset$ and hence $X\in\mathcal{F}$. 
Therefore $\mathbb{G}_{\geq 2}(\sigma[\widetilde{J}])\subseteq\mathcal{F}$ and 
$\bigcup\mathbb{G}_{\geq 2}(\sigma[\widetilde{J}])\subseteq J$. 
Since $\mathcal{F}$ is a nontrivial clan of 
$\sigma[\widetilde{C}]/\mathbb{G}(\sigma[\widetilde{C}])$, 
$\lambda_\sigma(\widetilde{J})\in E_s(\sigma)\cup E_a(\sigma)$ and we distinguish the following two cases. 

First, assume that $\lambda_\sigma(\widetilde{J})\in E_s(\sigma)$. 
Since $J\neq\widetilde{J}$ and $\bigcup\mathbb{G}_{\geq 2}(\sigma[\widetilde{J}])\subseteq J$, 
we have $\mathbb{G}_{1}(\sigma[\widetilde{J}])\neq\emptyset$. 
Suppose for a contradiction that $|\mathbb{G}_{1}(\sigma[\widetilde{J}])|\geq 2$. 
By Proposition~\ref{P1clanmax}, 
$\bigcup\mathbb{G}_{1}(\sigma[\widetilde{J}])\in\mathscr{C}(\sigma)$. 
Therefore $\bigcup\mathbb{G}_{1}(\sigma[\widetilde{J}])\subseteq J$ and we would have $J=\widetilde{J}$ because $\bigcup\mathbb{G}_{\geq 2}(\sigma[\widetilde{J}])\subseteq J$. 
It follows that $|\mathbb{G}_{1}(\sigma[\widetilde{J}])|=1$. 
Consequently 
$\mathbb{G}_{\geq 2}(\sigma[\widetilde{J}])=\mathcal{F}$. 

Second, assume that $\lambda_\sigma(\widetilde{J})\in E_a(\sigma)$. 
Suppose for a contradiction that $\mathbb{G}_{\geq 2}(\sigma[\widetilde{J}])=\emptyset$. 
It follows from Proposition~\ref{P1clanmax} that $\widetilde{J}\in\mathscr{L}(\sigma)$ which would imply that $J=\widetilde{J}$. 
Thus that $\mathbb{G}_{\geq 2}(\sigma[\widetilde{J}])\neq\emptyset$. 
As $\mathbb{G}_{\geq 2}(\sigma[\widetilde{J}])\subseteq\mathcal{F}$ and as $\mathcal{F}$ is an interval of $O_{\widetilde{J}}$, we obtain $\mathcal{I}\subseteq\mathcal{F}$ and hence 
$\bigcup\mathcal{I}\subseteq J$. 

Assume that $|\mathcal{I}^-|\geq 2$. 
Since $\mathcal{I}$ is the smallest interval of $O_{\widetilde{J}}$ containing 
$\mathbb{G}_{\geq 2}(\sigma[\widetilde{J}])$, $\mathcal{I}^-$ is a maximal interval of singletons of 
$O_{\widetilde{J}}$. 
By Proposition~\ref{P1clanmax}, $\bigcup\mathcal{I}^-\in\mathscr{L}(\sigma)$ and hence 
$\bigcup\mathcal{I}^-\subseteq J$. 
Thus 
\begin{equation}\label{E2P2inc}
\begin{cases}
|\mathcal{I}^-|\geq 2\ \Longrightarrow\ \bigcup\mathcal{I}^-\subseteq J\\
\text{and similarly}\\
|\mathcal{I}^+|\geq 2\ \Longrightarrow\ \bigcup\mathcal{I}^+\subseteq J.
\end{cases}
\end{equation}
Assume again that $|\mathcal{I}^-|\geq 2$. 
By \eqref{E2P2inc}, $\bigcup\mathcal{I}^-\subseteq J$. 
As $J\subsetneq\widetilde{J}$ and $\bigcup\mathcal{I}\subseteq J$, we get 
$\bigcup\mathcal{I}^+\not\subseteq J$. 
It follows from \eqref{E2P2inc} that $|\mathcal{I}^+|=1$. 
Clearly $\bigcup\mathcal{I}^+=\max O_{\widetilde{J}}$. 
Since $(\bigcup\mathcal{I}^-)\cup(\bigcup\mathcal{I})\subseteq J$ and $J\subsetneq\widetilde{J}$, 
we have $\bigcup\mathcal{I}^+=\widetilde{J}\setminus J$. 
Therefore 
\begin{equation}\label{E3P2inc}
\begin{cases}
|\mathcal{I}^-|\geq 2\ \Longrightarrow\ |\mathcal{I}^+|=1\ \text{and}\ 
\bigcup\mathcal{I}^+=\widetilde{J}\setminus J=\max O_{\widetilde{J}}\\
\text{and similarly}\\
|\mathcal{I}^+|\geq 2\ \Longrightarrow\ |\mathcal{I}^-|=1\ \text{and}\ 
\bigcup\mathcal{I}^-=\widetilde{J}\setminus J=\min O_{\widetilde{J}}. 
\end{cases}
\end{equation} 

Now consider $j\in\widetilde{J}\setminus J$. 
As $\bigcup\mathcal{I}\subseteq J$, $j\in(\bigcup\mathcal{I}^-)\cup(\bigcup\mathcal{I}^+)$. 
For instance, assume that $j\in\bigcup\mathcal{I}^-$. 
It follows from \eqref{E2P2inc} that $|\mathcal{I}^-|\leq 1$ so that 
$\mathcal{I}^-=\{\{j\}\}$. 
Clearly $\{j\}=\min O_{\widetilde{J}}$. 
In the same manner, if $j\in\bigcup\mathcal{I}^+$, then $\{j\}=\max O_{\widetilde{J}}$. 

In particular, it follows that 
\begin{equation}\label{E4P2inc}
\text{$|\widetilde{J}\setminus J|=1$ or $2$.}
\end{equation}
Assume that $|\widetilde{J}\setminus J|=2$. 
By what precedes, there are $j^-,j^+\in\widetilde{J}\setminus J$ such that 
$\{j^-\}=\min O_{\widetilde{J}}$, $\{j^+\}=\max O_{\widetilde{J}}$ and 
$\widetilde{J}\setminus J=\{j^-,j^+\}$. 
We get $(\bigcup\mathcal{I}^-)\cup(\bigcup\mathcal{I}^+)=\widetilde{J}\setminus J$ so that 
$\bigcup\mathcal{I}=J$. 
Since $J\not\in\mathbb{P}(\sigma)$, $|\mathcal{I}|\geq 2$ and hence 
$|\mathbb{G}_{\geq 2}(\sigma[\widetilde{J}])|\geq 2$. 
Thus 
\begin{equation}\label{E5P2inc}
|\widetilde{J}\setminus J|=2\ \Longrightarrow\ |\mathbb{G}_{\geq 2}(\sigma[\widetilde{J}])|\geq 2.
\end{equation}

Lastly, assume that $|\mathbb{G}_{\geq 2}(\sigma[\widetilde{J}])|=1$. 
It follows from \eqref{E4P2inc} and \eqref{E5P2inc} that $|\widetilde{J}\setminus J|=1$. 
Furthermore we clearly have $\mathbb{G}_{\geq 2}(\sigma[\widetilde{J}])=\mathcal{I}$. 
As $J\not\in\mathbb{P}(\sigma)$, we get 
$\bigcup\mathcal{I}\subsetneq J\subsetneq\widetilde{J}$. 
Therefore $|\mathbb{G}_1(\sigma[\widetilde{J}])|\geq 2$. 
\end{proof}

\begin{thm}\label{T1inc}
Let $\sigma$ be a 2-structure. 
Given $J,K\in\mathbb{I}(\sigma)$, if $J\setminus K\neq\emptyset$ and $K\setminus J\neq\emptyset$, then 
the following holds
\begin{itemize}
\item $\widetilde{J}=\widetilde{K}$, $\widetilde{J}\in\mathbb{P}(\sigma)\setminus\mathbb{L}(\sigma)$ and $\lambda_\sigma(\widetilde{J})\in E_a(\sigma)$; 
\item the linear order $O_{\widetilde{J}}$ admits a smallest element $\min O_{\widetilde{J}}$ and a largest element 
$\max O_{\widetilde{J}}$ which belong to $\mathbb{G}_{1}(\sigma[\widetilde{J}])$;
\item $\mathbb{G}_{\geq 2}(\sigma[\widetilde{J}])\neq\emptyset$, $\mathbb{G}(\sigma[\widetilde{J}])\setminus\{\min O_{\widetilde{J}},\max O_{\widetilde{J}}\}$ is the smallest interval of 
$O_{\widetilde{J}}$ containing $\mathbb{G}_{\geq 2}(\sigma[\widetilde{J}])$ and 
$\{J,K\}=\{\widetilde{J}\setminus\min O_{\widetilde{J}},\widetilde{J}\setminus\max O_{\widetilde{J}}\}$;
\item 
for each $H\in\mathbb{I}(\sigma)\setminus\{J,K\}$, either 
$H\subseteq J\cap K$ or $J\cup K\subseteq H$. 
\end{itemize}
\end{thm}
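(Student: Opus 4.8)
The plan is to exploit the machinery already developed, especially Proposition~\ref{P2inc} and Proposition~\ref{P1inc}, and to argue that two inclusive clans that are incomparable (each has an element outside the other) are forced into a very rigid configuration inside a common prime clan. First I would observe that since $J\setminus K\neq\emptyset$ and $K\setminus J\neq\emptyset$, neither $J\subseteq K$ nor $K\subseteq J$ holds, so by Proposition~\ref{P1inc} we have $J\cap K\in\mathbb{I}(\sigma)$ with $J\cap K\subsetneq J$ and $J\cap K\subsetneq K$; in particular $J\cap K\neq J$ shows $J\notin\mathbb{P}(\sigma)$ (otherwise $J$ would be the smallest prime clan containing itself and comparability would fail), and similarly $K\notin\mathbb{P}(\sigma)$. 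Applying Proposition~\ref{P2inc} to both $J$ and $K$ then gives the coarse structure of $\widetilde{J}$ and $\widetilde{K}$. The crucial point is to rule out the symmetric case (Assertion~1 of Proposition~\ref{P2inc}): there $J=\bigcup\mathbb{G}_{\geq 2}(\sigma[\widetilde{J}])$ is \emph{determined} by $\widetilde{J}$, leaving no room for a second incomparable inclusive clan with the same $\widetilde{\phantom{J}}$. So I expect to show $\lambda_\sigma(\widetilde{J})\in E_a(\sigma)$ and likewise for $K$, landing both in Assertion~2.

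Next I would prove $\widetilde{J}=\widetilde{K}$. Since $J\cap K\in\mathbb{I}(\sigma)$ and $J\cap K\subsetneq J$, the prime clan $\widetilde{J}$ contains $J\cap K$, and by the nesting behaviour of the $\widetilde{\phantom{J}}$ operator I would argue $\widetilde{J\cap K}\subseteq\widetilde{J}$ and $\widetilde{J\cap K}\subseteq\widetilde{K}$; conversely, because $J$ and $K$ overlap (their intersection is nonempty by Lemma~\ref{L4inc} or Proposition~\ref{P1inc}) and each is contained in its own smallest prime clan, the prime clans $\widetilde{J}$ and $\widetilde{K}$ intersect, hence are comparable. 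A size/containment argument using Proposition~\ref{P2inc}(2)—where $|\widetilde{J}\setminus J|\le 2$ and the extremal singletons sit at $\min$ and $\max$ of $O_{\widetilde{J}}$—then forces $\widetilde{J}=\widetilde{K}$, and simultaneously $\widetilde{J}\in\mathbb{P}(\sigma)\setminus\mathbb{L}(\sigma)$ with $\lambda_\sigma(\widetilde{J})\in E_a(\sigma)$. Writing $W=\widetilde{J}=\widetilde{K}$, both $J$ and $K$ are now described by the second assertion of Proposition~\ref{P2inc} relative to the \emph{same} linear order $O_W$ on $\mathbb{G}(\sigma[W])$.

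Now I would extract the precise shape. Because $J\setminus K$ and $K\setminus J$ are both nonempty while $\bigcup\mathcal{I}\subseteq J\cap K$ (the block $\mathcal{I}$ containing all of $\mathbb{G}_{\geq2}(\sigma[W])$ lies in both), the differences $W\setminus J$ and $W\setminus K$ must be disjoint nonempty subsets of the extremal singletons $\{\min O_W,\max O_W\}$. Proposition~\ref{P2inc}(2) caps $|W\setminus J|$ and $|W\setminus K|$ at $2$ and locates each removed singleton at $\min$ or $\max$; the only way to have $J\setminus K\neq\emptyset$ and $K\setminus J\neq\emptyset$ is $W\setminus J=\{\min O_W\}$ and $W\setminus K=\{\max O_W\}$ (or vice versa), both extremal singletons lying in $\mathbb{G}_1(\sigma[W])$. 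This yields $\{J,K\}=\{W\setminus\min O_W,\ W\setminus\max O_W\}$ and identifies $\mathbb{G}(\sigma[W])\setminus\{\min O_W,\max O_W\}$ as the smallest interval of $O_W$ containing $\mathbb{G}_{\geq2}(\sigma[W])$, forcing $\mathbb{G}_{\geq2}(\sigma[W])\neq\emptyset$. Finally, for the dichotomy on an arbitrary $H\in\mathbb{I}(\sigma)\setminus\{J,K\}$: by Proposition~\ref{P1inc}, $H\cap J$ and $H\cap K$ are inclusive, and the extremal singletons $\min O_W,\max O_W$ being in $\mathbb{G}_1(\sigma[W])$ means $H$ either contains both (so $J\cup K=W\subseteq H$) or misses at least one in the same extremal way as $J$ or $K$ does, collapsing $H$ into $J\cap K=W\setminus\{\min O_W,\max O_W\}$; since $H\neq J,K$ there is no intermediate option.

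The main obstacle I anticipate is establishing $\widetilde{J}=\widetilde{K}$ cleanly—coordinating the $\widetilde{\phantom{J}}$-operator's nesting with the overlap of $J$ and $K$ to force equality of the two smallest prime clans, rather than merely comparability. Once both incomparable inclusive clans are pinned to the \emph{same} asymmetric prime clan $W$ and the same linear order $O_W$, the rest is a bookkeeping argument driven entirely by Proposition~\ref{P2inc}(2): everything reduces to which extremal singletons are deleted. The final dichotomy for $H$ should follow formally from the fact that $J\cup K=W$ together with the comparability of inclusive clans with prime clans of size $\ge 2$.
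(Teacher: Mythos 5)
Your proposal follows essentially the same route as the paper's proof: Proposition~\ref{P1inc} gives $J\cap K\neq\emptyset$, whence $J,K\notin\mathbb{P}(\sigma)$; Proposition~\ref{P2inc} supplies the structure, the symmetric label is excluded because it would force $J=K=\bigcup\mathbb{G}_{\geq 2}(\sigma[\widetilde{J}])$, the case analysis on deleted extremal singletons yields $\{J,K\}=\{\widetilde{J}\setminus\min O_{\widetilde{J}},\widetilde{J}\setminus\max O_{\widetilde{J}}\}$ together with $\mathcal{I}=\mathbb{G}(\sigma[\widetilde{J}])\setminus\{\min O_{\widetilde{J}},\max O_{\widetilde{J}}\}$, and the dichotomy for $H$ comes from comparability of $H$ with the prime clan $\widetilde{J}$ plus the interval argument in the quotient, with $H\neq J,K$ eliminating the intermediate cases exactly as you say. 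The one step you leave compressed---forcing $\widetilde{J}=\widetilde{K}$---is completed in the paper precisely along the line you indicate: if $\widetilde{J}\subsetneq\widetilde{K}$, then $\widetilde{J}$ lies inside a single Gallai block $X\in\mathbb{G}_{\geq 2}(\sigma[\widetilde{K}])$ by maximality, and Proposition~\ref{P2inc} gives $X\subseteq K$, so $J\subseteq K$, contradicting $J\setminus K\neq\emptyset$.
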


\begin{proof}
It follows from Proposition~\ref{P1inc} that $J\cap K\neq\emptyset$. 
Therefore $J,K\not\in\mathbb{P}(\sigma)$ and 
we can apply Proposition~\ref{P2inc} to $J$ and $K$ as follows. 

As $J\cap K\neq\emptyset$, we have $\widetilde{J}\cap\widetilde{K}\neq\emptyset$ and hence 
$\widetilde{J}\subseteq\widetilde{K}$ or $\widetilde{K}\subseteq\widetilde{J}$. 
Assume that $\widetilde{J}\subseteq\widetilde{K}$. 
For a contradiction, suppose that $\widetilde{J}\subsetneq\widetilde{K}$. 
Since $K\in\mathbb{C}(\sigma)\setminus\mathbb{P}(\sigma)$, 
$\widetilde{K}\in\mathbb{P}(\sigma)\setminus\mathbb{L}(\sigma)$ by Lemma~\ref{L1nonprime}. 
By maximality of the elements of $\mathbb{G}(\sigma[\widetilde{K}])$, there exists $X\in\mathbb{G}(\sigma[\widetilde{K}])$ such that $X\supseteq\widetilde{J}$. 
Since $J\not\in\mathbb{P}(\sigma)$, 
$|J|\geq 2$ and hence 
$X\in\mathbb{G}_{\geq 2}(\sigma[\widetilde{K}])$. 
It follows from Proposition~\ref{P2inc} that $X\subseteq K$ so that we would obtain $J\subseteq\widetilde{J}\subseteq X\subseteq K$ contradicting $J\setminus K\neq\emptyset$. 
Therefore $\widetilde{J}=\widetilde{K}$. 

If $\lambda_\sigma(\widetilde{J})\in E_s(\sigma)$, then we would obtain by Proposition~\ref{P2inc} that 
$J=K=\bigcup\mathbb{G}_{\geq 2}(\sigma[\widetilde{J}])$. 
Thus $\lambda_\sigma(\widetilde{J})\in E_a(\sigma)$. 
It follows from Proposition~\ref{P2inc} that 
for each $j\in\widetilde{J}\setminus (J\cap K)$, 
$\{j\}=\min O_{\widetilde{J}}$ or $\max O_{\widetilde{J}}$. 
Consequently 
$O_{\widetilde{J}}$ admits a smallest element $\min O_{\widetilde{J}}$ and a largest element 
$\max O_{\widetilde{J}}$ which belong to $\mathbb{G}_{1}(\sigma[\widetilde{J}])$ 
and, by interchanging $J$ and $K$ if necessary, we have 
$J=\widetilde{J}\setminus\min O_{\widetilde{J}}$ and $K=\widetilde{J}\setminus\max O_{\widetilde{J}}$. 

By Proposition~\ref{P2inc}, $\mathbb{G}_{\geq 2}(\sigma[\widetilde{J}])\neq\emptyset$. 
As in Proposition~\ref{P2inc}, denote by 
$\mathcal{I}$ the smallest interval of $O_{\widetilde{J}}$ containing 
$\mathbb{G}_{\geq 2}(\sigma[\widetilde{J}])$ and consider 
\begin{equation*}
\begin{cases}
\mathcal{I}^-=\{\{j\}\in\mathbb{G}(\sigma[\widetilde{J}])\setminus\mathcal{I}:\{j\}<X\hspace{-2mm}\mod O_{\widetilde{J}}\ \ \text{for every}\ X\in\mathcal{J}\}\\
\text{and}\\
\mathcal{I}^+=\{\{j\}\in\mathbb{G}(\sigma[\widetilde{J}])\setminus\mathcal{I}:\{j\}>X\hspace{-2mm}\mod O_{\widetilde{J}}\ \ \text{for every}\ X\in\mathcal{J}\}.
\end{cases}
\end{equation*}
As $J=\widetilde{J}\setminus\min O_{\widetilde{J}}$, it follows from 
Proposition~\ref{P2inc} that $|\mathcal{I}^-|\leq 1$. 
Thus $\mathcal{I}^-=\{\min O_{\widetilde{J}}\}$. 
Similarly we get $\mathcal{I}^+=\{\max O_{\widetilde{J}}\}$. 
Therefore 
$\mathcal{I}=\mathbb{G}(\sigma[\widetilde{J}])\setminus\{\min O_{\widetilde{J}},$ $\max O_{\widetilde{J}}\}$. 

Lastly, consider $H\in\mathbb{I}(\sigma)\setminus\{J,K\}$. 
By Lemma~\ref{L4inc}, $H\cap J\neq\emptyset$. 
Thus $H\cap\widetilde{J}\neq\emptyset$ and hence either $H\subsetneq\widetilde{J}$ or $\widetilde{J}\subseteq H$. 
In the second instance, we have $J\cup K\subseteq H$. 
In the first, 
by considering $\mathcal{H}=\{X\in\mathbb{G}(\sigma[\widetilde{J}]):H\cap X\neq\emptyset\}$, we have 
$\mathcal{H}\in\mathcal{C}(\sigma[\widetilde{J}]/\mathbb{G}(\sigma[\widetilde{J}]))$. 
As $\mathcal{H}\neq\mathbb{G}(\sigma[\widetilde{J}])$, 
$\min O_{\widetilde{J}}\not\in\mathcal{H}$ or 
$\max O_{\widetilde{J}}\not\in\mathcal{H}$. 
For instance, assume that $\min O_{\widetilde{J}}\not\in\mathcal{H}$ so that $H\subsetneq J$. 
Since $H\in\mathbb{I}(\sigma)$, we have $\mathbb{G}_{\geq 2}(\sigma[\widetilde{J}])\subseteq\mathcal{H}$. 
As $\mathcal{H}\in\mathcal{C}(\sigma[\widetilde{J}]/\mathbb{G}(\sigma[\widetilde{J}]))$, that is, $\mathcal{H}$ is an interval of 
$O_{\widetilde{J}}$, we obtain $\mathcal{I}\subseteq\mathcal{H}$ and hence 
$$\mathbb{G}(\sigma[\widetilde{J}])\setminus\{\min O_{\widetilde{J}},\max O_{\widetilde{J}}\}
\subseteq\mathcal{H}\subseteq\mathbb{G}(\sigma[\widetilde{J}])\setminus\{\min O_{\widetilde{J}}\}.$$ 
Suppose for a contradiction that $\max O_{\widetilde{J}}\in\mathcal{H}$. 
We get $\mathcal{H}=\mathbb{G}(\sigma[\widetilde{J}])\setminus\{\min O_{\widetilde{J}}\}$. 
Thus $|\mathcal{H}|\geq 2$ and 
we would have $H=\bigcup\mathcal{H}=J$. 
Consequently $\max O_{\widetilde{J}}\not\in\mathcal{H}$ and 
$$\mathcal{H}=
\mathbb{G}(\sigma[\widetilde{J}])\setminus\{\min O_{\widetilde{J}},\max O_{\widetilde{J}}\}.$$
Therefore $H\subseteq\bigcup\mathcal{H}=\widetilde{J}\setminus(\min O_{\widetilde{J}}\cup\max O_{\widetilde{J}})=J\cap K$. 
\end{proof}

The next is an immediate consequence of Theorem~\ref{T1inc}. 

\begin{cor}\label{C1inc}
If $\sigma$ is a symmetric 2-structure, then $(\mathbb{I}(\sigma),\subseteq)$ is a linear order.
\end{cor}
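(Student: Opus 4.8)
The plan is to derive Corollary~\ref{C1inc} directly from Theorem~\ref{T1inc} by showing that the crossing configuration described there cannot occur when $\sigma$ is symmetric. Recall that a linear order on $(\mathbb{I}(\sigma),\subseteq)$ amounts to showing that any two inclusive clans are comparable under inclusion, i.e.\ that there do \emph{not} exist $J,K\in\mathbb{I}(\sigma)$ with $J\setminus K\neq\emptyset$ and $K\setminus J\neq\emptyset$. So the whole statement reduces to ruling out such a crossing pair.

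Suppose for contradiction that such $J,K$ exist. Then Theorem~\ref{T1inc} applies and yields in its first bullet that $\widetilde{J}=\widetilde{K}\in\mathbb{P}(\sigma)\setminus\mathbb{L}(\sigma)$ and, crucially, that $\lambda_\sigma(\widetilde{J})\in E_a(\sigma)$. The key observation is that $E_a(\sigma)$ is the family of \emph{asymmetric} equivalence classes, and a symmetric 2-structure has $E(\sigma)=E_s(\sigma)$ by definition, so $E_a(\sigma)=\emptyset$. Hence the conclusion $\lambda_\sigma(\widetilde{J})\in E_a(\sigma)$ is vacuously impossible, giving the desired contradiction. Therefore no crossing pair exists and every two inclusive clans are comparable.

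The one point to verify carefully is that the label $\lambda_\sigma(\widetilde{J})$ genuinely lands in $E(\sigma)$ rather than taking the value $\varpi$: the labelling function has codomain $E(\sigma)\cup\{\varpi\}$, and Theorem~\ref{T1inc} already asserts $\lambda_\sigma(\widetilde{J})\in E_a(\sigma)\subseteq E(\sigma)$, so this is handled by the theorem itself and no separate argument is needed. I would simply remark that since $\sigma$ is symmetric, $E_a(\sigma)=\emptyset$, so the first bullet of Theorem~\ref{T1inc} cannot be satisfied, whence no incomparable pair $J,K\in\mathbb{I}(\sigma)$ exists.

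I do not expect any genuine obstacle here: the corollary is labelled ``an immediate consequence'' precisely because the heavy structural work was already carried out in Theorem~\ref{T1inc}, which shows that crossing inclusive clans force an asymmetric label on their common hull. The only thing worth stating explicitly is the defining fact that symmetric means $E(\sigma)=E_s(\sigma)$, hence $E_a(\sigma)=\emptyset$. With that in hand the proof is one line, and the result follows because comparability of all pairs of a family under inclusion is exactly the assertion that $(\mathbb{I}(\sigma),\subseteq)$ is a linear order (totality of the partial order).
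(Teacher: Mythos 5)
Your proof is correct and is exactly the argument the paper intends: Corollary~\ref{C1inc} is stated as an immediate consequence of Theorem~\ref{T1inc}, whose first conclusion forces $\lambda_\sigma(\widetilde{J})\in E_a(\sigma)$ for any incomparable pair $J,K\in\mathbb{I}(\sigma)$, which is impossible for a symmetric 2-structure since then $E(\sigma)=E_s(\sigma)$ and hence $E_a(\sigma)=\emptyset$. Your reduction of linearity of $(\mathbb{I}(\sigma),\subseteq)$ to pairwise comparability is also the right framing, so nothing further is needed.
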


We complete the section with a result on primitive bounds of asymmetric 2-structures.

\begin{thm}\label{T2inc}
Given an asymmetric 2-structure $\sigma$, if 
$\mathbb{I}(\sigma)\setminus\{V(\sigma)\}\neq\emptyset$, then $p(\sigma)=1$. 
\end{thm}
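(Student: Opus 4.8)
The plan is to add a single vertex $a\notin V(\sigma)$ and to prescribe its incidences with $V(\sigma)$ by a bicoloration, so that the resulting faithful $1$-extension $\tau$ is primitive; this mirrors the strategy of Theorem~\ref{T2trav}. First I would fix a proper inclusive clan $J$. By Proposition~\ref{P1inc} the inclusive clans are stable under intersection, and Theorem~\ref{T1inc} describes how they are arranged, so I may assume $J$ lies as low in this family as the inclusive constraints allow. I would then examine $\widetilde{J}$ and $\lambda_\sigma(\widetilde{J})$. Since $\sigma$ is asymmetric, $E_s(\sigma)=\emptyset$, so the symmetric alternative of Proposition~\ref{P2inc} cannot occur; hence when $J\notin\mathbb{P}(\sigma)$ we are in its second case, giving $\widetilde{J}\in\mathbb{P}(\sigma)\setminus\mathbb{L}(\sigma)$, $\lambda_\sigma(\widetilde{J})\in E_a(\sigma)$, $\mathbb{G}_{\geq 2}(\sigma[\widetilde{J}])\neq\emptyset$, and the precise description of $\widetilde{J}\setminus J$ as one or two extremal singletons of $O_{\widetilde{J}}$. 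When $J\in\mathbb{P}_{\geq 2}(\sigma)$ I would instead read the local structure of $\sigma[\widetilde{J}]=\sigma[J]$ off Proposition~\ref{P1clanmax}.

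For the construction I would set $e_0=\lambda_\sigma(\widetilde{J})$ (an asymmetric class in the principal case) and $e_1=e_0^\star$, take a traverse $T$ of $\sigma$ — so that by Corollary~\ref{C2trav} every clan of $\sigma$ is an interval of $T$ — and a dense bicoloration $\beta$ of $T$ from Proposition~\ref{dense}. I would then alter $\beta$ into $\beta'$ only at the boundary dictated by $J$, pinning the colors of $\min O_{\widetilde{J}}$ and $\max O_{\widetilde{J}}$ (the vertices of $\widetilde{J}\setminus J$) exactly as $\beta^-(h^-)$ and $\beta^+(h^+)$ are pinned in the proof of Theorem~\ref{T2trav}, and define the faithful extension by $(v,a)_\tau=e_{\beta'(v)}$ for every $v\in V(\sigma)$. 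Here $J$ supplies precisely the split point of $O_{\widetilde{J}}$ that plays the role of the interval $\mathscr{H}$ in Theorem~\ref{T2trav}.

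To verify primitivity I would take $D\in\mathbb{C}_{\geq 2}(\tau)$ and split into two steps. If $a\notin D$ then $D\in\mathbb{C}_{\geq 2}(\sigma)$ is an interval of $T$ of size at least $2$; as in \eqref{E1aT2trav}--\eqref{E2aT2trav}, density of $\beta'$ on each piece cut out by $J$ makes $\beta'$ nonconstant on $D$, so $D$ is not a clan of $\tau$ and therefore $a\in D$. Writing $D'=D\setminus\{a\}$, the clan condition forces $(w,D')_\sigma=e_{\beta'(w)}\in\{e_0,e_1\}$ for every $w\in V(\sigma)\setminus D'$; that is, $D'$ is a clan all of whose outside vertices see it through $e_0$ or $e_1$. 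I would then combine the fact that any $w\notin\widetilde{J}$ meets $D'\subseteq\widetilde{J}$ through the single class $(w,\widetilde{J})_\sigma$ with Lemma~\ref{L4inc} (the inclusive $J$ meets every clan of size $\geq 2$) and the pinned boundary colors to force $D'=V(\sigma)$, so that $D=V(\tau)$ is the only possibility.

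The hard part is the analysis of the clans containing $a$ precisely when $\widetilde{J}\neq V(\sigma)$ or when $\lambda_\sigma(\widetilde{J})=\varpi$, i.e.\ $\widetilde{J}$ is a primitive node (the smallest genuinely new case, in which $J$ lives inside the unique non-singleton block and the top quotient contributes no linear order). There the clause ``$(w,\widetilde{J})_\sigma\in\{e_0,e_1\}$'' need not fail automatically, so I must exploit the inclusiveness of $J$ and the exact boundary prescription to exclude every proper clan $D'$ whose outside vertices all see it through $\{e_0,e_1\}$, rather than quoting Theorem~\ref{T2trav} verbatim. I expect the degenerate sub-cases to be dispatched separately: $\widetilde{J}\in\mathbb{L}(\sigma)$; $\mathbb{G}(\sigma[\widetilde{J}])=\mathbb{G}_1(\sigma[\widetilde{J}])$, where $\sigma[\widetilde{J}]$ is a bare linear order and Corollary~\ref{C3trav} applies; and a singleton inclusive clan, handled through Lemma~\ref{L3inc}. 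Finally, Theorem~\ref{T1inc} should guarantee that, when two incomparable inclusive clans are present, the two boundary colors can be pinned coherently at both $\min O_{\widetilde{J}}$ and $\max O_{\widetilde{J}}$.
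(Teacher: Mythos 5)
There is a genuine gap, and you half-acknowledge it yourself. Your construction attaches $a$ to \emph{every} vertex by $(v,a)_\tau=e_{\beta'(v)}$ with $e_0=\lambda_\sigma(\widetilde{J})$, modeled on Theorem~\ref{T2trav}; but this is undefined or unusable in exactly the cases that make Theorem~\ref{T2inc} stronger than Theorem~\ref{T2trav}: when $\widetilde{J}$ is a primitive node ($\lambda_\sigma(\widetilde{J})=\varpi$) or $\widetilde{J}\in\mathbb{L}(\sigma)$, there is no $O_{\widetilde{J}}$, no boundary to pin, and no canonical pair $e_0,e_1$. This case is unavoidable, not degenerate: take a tournament whose quotient $\sigma/\mathbb{G}(\sigma)$ is a primitive tournament with exactly one non-singleton Gallai class $B$, where $\sigma[B]$ is itself primitive (e.g.\ a $3$-cycle of classes, one class a $3$-cycle). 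Then $\mathbb{I}(\sigma)=\{B,V(\sigma)\}$, so the minimal inclusive clan you chose is $J=B$, with $\widetilde{J}=B$ and $\lambda_\sigma(B)=\varpi$; moreover $\lambda_\sigma(V(\sigma))=\varpi$, so Theorem~\ref{T2trav} does not apply and your ``hard part'' is the whole theorem, for which you offer no construction. Even in your principal case the plan does not close: in a tournament, $\{e_0,e_1\}=E(\sigma)$, so every $w\notin\widetilde{J}$ automatically sees any clan $D'\subseteq\widetilde{J}$ through a class in $\{e_0,e_1\}$, and the mismatch you hope for outside $\widetilde{J}$ need not occur. Pinning the colors of $\min O_{\widetilde{J}}$ and $\max O_{\widetilde{J}}$ is also problematic: recoloring single vertices can destroy density, which is precisely why Theorem~\ref{T2trav} flips entire segments ($\beta^-$, $\beta^+$) and then needs the $\mathscr{H}$-minimality argument to recover \eqref{E2aT2trav}; that argument has no analogue for a general inclusive $J$.

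The paper's proof rests on a device your proposal is missing. The bicoloration is used only on $J$ (via \eqref{E1T2inc1}); outside $J$ the paper sets $(v,a)_{\tau_\beta}=((v,J)_\sigma)^\star$ (\eqref{E1T2inc2}), so that $a$ looks to each outside vertex like a \emph{dualized} member of $J$. Asymmetry then bites decisively: after showing $a\in D_\beta$ for any nontrivial clan $D_\beta$ (using Lemma~\ref{L4inc}, density, and $L(\sigma)\subseteq J$), any $v\notin J\cup(D_\beta\setminus\{a\})$ would have to satisfy $((v,J)_\sigma)^\star=(v,J)_\sigma$, which is impossible in an asymmetric $\sigma$; hence $J\cup(D_\beta\setminus\{a\})=V(\sigma)$, and Lemma~\ref{L4inc} then forces $J=V(\sigma)\setminus\{v\}$ with $\{v\}$ extremal in a top linear node, leaving the single residual obstruction \eqref{E2T2inc}, namely $e_{\beta(u)}=(u,v)_\sigma$. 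Note that no pinning occurs at all: the endgame is that \eqref{E2T2inc} cannot hold simultaneously for $\beta$ and $1-\beta$, so one of $\tau_\beta,\tau_{1-\beta}$ is primitive. If you want to salvage your route, replace your uniform coloring by this split definition and the two-bicoloration dichotomy; as written, your argument covers at best configurations already handled by Theorem~\ref{T2trav}.
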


\begin{proof}
As $\mathbb{I}(\sigma)\setminus\{V(\sigma)\}\neq\emptyset$, we have $\nu(\sigma)\geq 3$. 
It follows from Lemma~\ref{L3inc} and \eqref{E1inc} that 
there is $J\in\mathbb{I}(\sigma)\setminus\{V(\sigma)\}$ such that $|J|\geq 2$. 
In particular, $\sigma$ is imprimitive and hence 
\begin{equation}\label{E0T2inc}
p(\sigma)\geq 1.
\end{equation}
By Proposition~\ref{ptraverse}, $\sigma$ admits a traverse $T$. 
Furthermore $T$ admits a dense bicoloration $\beta$ by Proposition~\ref{dense}. 
Given $e\in E(\sigma)$, set $e_0=e$ and $e_1=e^\star$. 
Let $a\not\in V(\sigma)$. 
We associate with $\beta$ the faithful extension $\tau_{\beta}$ of $\sigma$ defined on 
$V(\sigma)\cup\{a\}$ satisfying 
\begin{subequations}\label{E1T2inc}
\begin{gather}
(v,a)_{\tau_{\beta}}=e_{\beta(v)}\ \text{for every $v\in J$}\label{E1T2inc1};\\
(v,a)_{\tau_{\beta}}=((v,J)_\sigma)^\star\ \text{for every $v\in V(\sigma)\setminus J$.}\label{E1T2inc2}
\end{gather}
\end{subequations}
We establish the following. 
If $\tau_{\beta}$ is imprimitive, then 
\begin{itemize}
\item $V(\sigma)\in\mathbb{P}(\sigma)\setminus\mathbb{L}(\sigma)$ and 
$\lambda_\sigma(V(\sigma))\in E_a(\sigma)$;
\item $O_{V(\sigma)}$ has a smallest element $\min O_{V(\sigma)}$ and a largest 
$\max O_{V(\sigma)}$;
\item There are $v_{{\rm min}},v_{{\rm max}}\in V(\sigma)$ such that 
$\min O_{V(\sigma)}=\{v_{{\rm min}}\}$ and 
$\max O_{V(\sigma)}=\{v_{{\rm max}}\}$;
\item $J=V(\sigma)\setminus\{v\}$ where $v\in\{v_{{\rm min}},v_{{\rm max}}\}$;
\item $V(\tau)\setminus\{u\}$ is the unique nontrivial clan of $\tau_\beta$ where 
$u\in\{v_{{\rm min}},v_{{\rm max}}\}\setminus\{v\}$ and 
\begin{equation}\label{E2T2inc}
e_{\beta(u)}=(u,v)_{\sigma}. 
\end{equation}
\end{itemize}

Assume that $\tau_{\beta}$ is imprimitive and consider a nontrivial clan $D_{\beta}$ of $\tau_{\beta}$. 
Suppose for a contradiction that $a\not\in D_{\beta}$. 
We get $D_{\beta}\in\mathbb{C}_{\geq 2}(\sigma)$ and hence 
$J\cap D_{\beta}\in\mathbb{C}(\sigma)$. 
By Corollary~\ref{C2trav}, $J\cap D_{\beta}$ is an interval of $T$. 
For $u,v\in J\cap D_{\beta}$, we have $(u,a)_{\tau_{\beta}}=(v,a)_{\tau_{\beta}}$. 
It follows from \eqref{E1T2inc1} that $e_{\beta(u)}=e_{\beta(v)}$ and hence $\beta(u)=\beta(v)$. 
Therefore $\beta_{\restriction J\cap D_{\beta}}$ is constant. 
Since $D_{\beta}\in\mathbb{C}_{\geq 2}(\sigma)$, it follows from 
Lemma~\ref{L4inc} that $J\cap D_{\beta}\neq\emptyset$. 
By density of $\beta$, 
$|J\cap D_{\beta}|=1$. 
Denote by $x$ the unique element of $J\cap D_{\beta}$. 
We show that $D_{\beta}\setminus\{x\}\in\mathbb{C}(\sigma)$. 
It suffices to verify that 
$D_{\beta}\setminus\{x\}\in\mathbb{C}(\sigma[D_{\beta}])$, that is, $x\longleftrightarrow_\sigma D_{\beta}\setminus\{x\}$. 
Let $u,v\in D_{\beta}\setminus\{x\}$. 
Since $D_{\beta}\in\mathbb{C}(\tau_{\beta})$, we have $(u,a)_{\tau_{\beta}}=(v,a)_{\tau_{\beta}}$. 
It follows from \eqref{E1T2inc2} that 
$(u,x)_\sigma=(v,x)_\sigma$. 
Consequently $D_{\beta}\setminus\{x\}\in\mathbb{C}(\sigma)$. 
It follows from Lemma~\ref{L4inc} that $|D_{\beta}\setminus\{x\}|=1$. 
Hence $|D_{\beta}|=2$ and $D_{\beta}$ is linear. 
By Lemma~\ref{L1clanmax}, we obtain that $D_{\beta}\subseteq L(\sigma)$ and $J$ would not be an inclusive clan of $\sigma$. 
It follows that 
\begin{equation}\label{E3T2inc}
a\in D_{\beta}.
\end{equation}
We get 
$D_{\beta}\setminus\{a\}\in\mathbb{C}(\sigma)$. 
Suppose for a contradiction that $J\cap(D_{\beta}\setminus\{a\})=\emptyset$. 
We have 
$(D_{\beta}\setminus\{a\})\longleftrightarrow_\sigma J$. 
It would follow that $a\longleftrightarrow_{\tau_{\beta}} J$ which contradicts the density of $\beta$. 
Thus $J\cap(D_{\beta}\setminus\{a\})\neq\emptyset$ and it follows from \eqref{E1T2inc2} that 
\begin{equation}\label{E4T2inc}
J\cup(D_{\beta}\setminus\{a\})=V(\sigma).
\end{equation}
Since $J\neq V(\sigma)$, 
it follows from \eqref{E4T2inc} that 
\begin{equation}\label{E5T2inc}
(D_{\beta}\setminus\{a\})\setminus J\neq\emptyset. 
\end{equation}
Furthermore, as $D_{\beta}$ is a nontrivial clan of $\tau_{\beta}$, it follows from 
\eqref{E3T2inc} and \eqref{E4T2inc} that 
\begin{equation}\label{E6T2inc}
J\setminus(D_{\beta}\setminus\{a\})\neq\emptyset. 
\end{equation}
Thus 
$(D_{\beta}\setminus\{a\})\setminus J\in\mathbb{C}(\sigma)$. 
It follows from Lemma~\ref{L4inc} that 
$|(D_{\beta}\setminus\{a\})\setminus J|\leq 1$. 
Hence $|(D_{\beta}\setminus\{a\})\setminus J|=1$ by \eqref{E5T2inc}. 
Denote by $v$ the unique element of $(D_{\beta}\setminus\{a\})\setminus J$. 
It follows from \eqref{E4T2inc} that $$J=V(\sigma)\setminus\{v\}.$$
Let $C\in\mathbb{C}_{\geq 2}(\sigma)\setminus\{V(\sigma)\}$ such that $C\ni v$. 
Clearly $J\setminus C\neq\emptyset$ and 
$C\setminus J\neq\emptyset$. 
Furthermore $J\cap C\neq\emptyset$ by Lemma~\ref{L4inc}. 
Therefore $C\not\in\mathbb{P}(\sigma)$. 
It follows that $\{v\}\in\mathbb{G}(\sigma)$ so that 
$V(\sigma)\in\mathbb{P}(\sigma)\setminus\mathbb{L}(\sigma)$. 
We get 
$\mathbb{G}(\sigma)\setminus\{\{v\}\}=\{X\in\mathbb{G}(\sigma):J\cap X\neq\emptyset\}\in\mathbb{C}(\sigma/\mathbb{G}(\sigma))$. 
Therefore $\lambda_\sigma(V(\sigma))\neq\varpi$. 
As $\sigma$ is asymmetric, $\lambda_\sigma(V(\sigma))\in E_a(\sigma)$ and 
$\{v\}=\min O_{V(\sigma)}$ or $\max O_{V(\sigma)}$. 
For convenience, assume that 
$\{v\}=\min O_{V(\sigma)}$. 
By \eqref{E5T2inc}, 
$J\setminus(D_{\beta}\setminus\{a\})\in\mathbb{C}(\sigma)$ so that 
$(D_{\beta}\setminus\{a\})\longleftrightarrow_\sigma J\setminus(D_{\beta}\setminus\{a\})$. 
As $D_\beta\in\mathbb{C}(\tau_\beta)$, we get 
$a\longleftrightarrow_\sigma J\setminus(D_{\beta}\setminus\{a\})$. 
By density of $\beta$, $|J\setminus(D_{\beta}\setminus\{a\})|\leq 1$. 
Thus $|J\setminus(D_{\beta}\setminus\{a\})|=1$ by \eqref{E6T2inc}. 
Denote by $u$ the unique element of $J\setminus(D_{\beta}\setminus\{a\})$. 
It follows from \eqref{E3T2inc} and \eqref{E4T2inc} that $$D_\beta=V(\tau_\beta)\setminus\{u\}.$$ 
Hence $D_\beta\setminus\{a\}\in\mathbb{C}(\sigma)$ and 
$V(\sigma)\setminus(D_\beta\setminus\{a\})=\{u\}$. 
As shown for $v$, we obtain that 
$\{u\}\in\mathbb{G}(\sigma)$ and 
$\{u\}=\min O_{V(\sigma)}$ or $\max O_{V(\sigma)}$. 
Since $u\in J$, $u\neq v$ and hence 
$\{u\}=\max O_{V(\sigma)}$. 
As $D_\beta\in\mathbb{C}(\tau_\beta)$, $(u,a)_{\tau_\beta}=(u,v)_\sigma$. 
Thus \eqref{E2T2inc} follows from \eqref{E1T2inc1}. 

We conclude as follows. 
Clearly  \eqref{E2T2inc} cannot hold for both dense bicolorations $\beta$ and $1-\beta$ of $T$. 
Consequently $\tau_\beta$ or $\tau_{1-\beta}$ is primitive. 
Thus $p(\sigma)\leq 1$. 
By \eqref{E0T2inc}, $p(\sigma)=1$. 
\end{proof}

\section{Primitive bounds of non reversible 2-structures}\label{per2s}

Given $n>0$, $L_n$ denotes the usual linear order on $\{0,\ldots,n\}$. 
When $n$ is even, there does not exist a primitive tournament which is a 1-extension of $L_n$. 
On the other hand, there is a primitive tournament which is a 2-extension of $L_n$ (see \cite{M72}). 

The 2-structure $\sigma(L_n)$ is defined on $\{0,\ldots,n\}$ by $E(\sigma(L_n))=\{e,e^\star\}$ where 
$e=\{(p,q):0\leq p<q\leq n\}$. 
Let $\tau$ be an extension of $\sigma(L_n)$ to $\{0,\ldots,n+1\}$ satisfying
\begin{itemize}
\item for $0\leq i\leq n-1$ such that $i$ is even, $(n+1,i)\equiv_\tau (0,1)$ and $(i,n+1)\equiv_\tau (1,0)$;
\item for $0\leq i\leq n-1$ such that $i$ is odd, $(i,n+1)\equiv_\tau (0,1)$ and $(n+1,i)\equiv_\tau (1,0)$;
\item $(n,n+1)\equiv_\tau (n+1,n)$. 
\end{itemize}
It is simple to verify that $\tau$ is primitive. 
Moreover, $\tau$ is not identifiable with a tournament because $(n,n+1)\equiv_\tau (n+1,n)$. 
If $(n,n+1)\not\equiv_\tau (0,1)$ and $(n,n+1)\not\equiv_\tau (1,0)$, then $\varepsilon(\tau)=3$. 
Since $\varepsilon(\sigma(L_n))=2$, $\sigma(L_n)\hookrightarrow\tau$ is not bijective, that is, \eqref{E1faith} does not hold. 
Now assume for instance that $(n,n+1)\equiv_\tau (0,1)$. 
We obtain $(n,n+1)\in(\sigma(L_n)\hookrightarrow\tau)(e)$. 
We have also $(n+1,n)\equiv_\tau (0,1)$ and hence $(n+1,n)\in(\sigma(L_n)\hookrightarrow\tau)(e)$. 
Thus $$(n,n+1)\in(\sigma(L_n)\hookrightarrow\tau)(e)\cap ((\sigma(L_n)\hookrightarrow\tau)(e))^\star.$$
But $e\cap e^\star=\emptyset$. 
Therefore \eqref{E2faith} does not hold. 

Conditions \eqref{E1faith} and \eqref{E2faith} ensure that the faithful extensions of a 2-structure $\sigma$ are 2-structures of the 
``same type'' as $\sigma$. 
Also, they ensure that the faithful extensions of a reversible 2-structure are reversible as well. 

\begin{lem}\label{L1faith}
Let $\sigma$ be a 2-structure with $\varepsilon(\sigma)\geq 2$. Given a faithful extension $\tau$ of $\sigma$, $\sigma$ is reversible if and only if $\tau$ is. 
\end{lem}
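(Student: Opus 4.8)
The plan is to recast reversibility in terms of equivalence classes and then transport it across the extension using the two faithfulness conditions. Recall that a 2-structure $\rho$ is reversible precisely when $e^\star\in E(\rho)$ for every $e\in E(\rho)$. Throughout I write $\iota$ for the injection $\sigma\hookrightarrow\tau$, so that $\iota(e)$ is the unique class of $\tau$ containing $e\in E(\sigma)$; since $\tau[V(\sigma)]=\sigma$, the class $\iota(e)$ meets the set of ordered pairs of distinct elements of $V(\sigma)$ in exactly $e$. By \eqref{E1faith} the map $\iota\colon E(\sigma)\to E(\tau)$ is a bijection.

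For the implication ``$\tau$ reversible $\Longrightarrow$ $\sigma$ reversible'', I would fix $e\in E(\sigma)$ and use that $\iota(e)^\star\in E(\tau)$; by surjectivity of $\iota$ this class equals $\iota(g)$ for a unique $g\in E(\sigma)$. Intersecting $\iota(e)^\star$ with the pairs coming from $V(\sigma)$ and unwinding the definition of $\star$ shows that a $\sigma$-pair $(x,y)$ lies in $\iota(e)^\star$ iff $(y,x)\in e$, i.e. $g=e^\star$; hence $e^\star\in E(\sigma)$. This direction needs only \eqref{E1faith}.

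For the converse, assume $\sigma$ reversible. The crucial preliminary observation is that for reversible $\sigma$ and $e,f\in E(\sigma)$ one has $e\cap f^\star\neq\emptyset$ if and only if $f=e^\star$, since two classes sharing a pair must coincide. Fixing $e\in E(\sigma)$, I would then determine which classes of $\tau$ meet $\iota(e)^\star$: if $\iota(f)\cap\iota(e)^\star\neq\emptyset$ then, reversing a witnessing pair, $\iota(e)\cap\iota(f)^\star\neq\emptyset$, so \eqref{E2faith} gives $e\cap f^\star\neq\emptyset$, and the observation forces $f=e^\star$. Thus $\iota(e)^\star$ meets only the class $\iota(e^\star)$, whence $\iota(e)^\star\subseteq\iota(e^\star)$. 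Applying this inclusion with $e^\star$ in place of $e$ gives $\iota(e^\star)^\star\subseteq\iota(e)$, and taking $\star$ of both sides (using $(\cdot)^{\star\star}=\mathrm{id}$) yields $\iota(e^\star)\subseteq\iota(e)^\star$. Hence $\iota(e)^\star=\iota(e^\star)\in E(\tau)$, and since every class of $\tau$ is some $\iota(e)$, the structure $\tau$ is reversible.

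The main obstacle is the converse direction: one must show that the reversal $\iota(e)^\star$ of a $\tau$-class is again a single $\tau$-class, rather than being split among several classes or sitting strictly inside one. This is exactly what \eqref{E2faith} secures, preventing the reversal from spilling into unrelated classes, after which the self-inverse trick upgrades the inclusion to an equality. The hypothesis $\varepsilon(\sigma)\geq 2$ plays no essential role in the argument (for $\varepsilon(\sigma)=1$ both $\sigma$ and $\tau$ are complete, hence trivially reversible); the content lies entirely in \eqref{E1faith} and \eqref{E2faith}.
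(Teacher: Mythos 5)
Your proposal is correct and follows essentially the same approach as the paper's proof: the direction ``$\tau$ reversible $\Rightarrow$ $\sigma$ reversible'' restricts $\tau$-classes to the ordered pairs over $V(\sigma)$ exactly as the paper does, and the forward direction derives $((\sigma\hookrightarrow\tau)(e))^\star\subseteq(\sigma\hookrightarrow\tau)(e^\star)$ from \eqref{E1faith} and \eqref{E2faith} and then upgrades it to equality by substituting $e^\star$ for $e$, which is precisely the paper's argument. Your closing observation that the hypothesis $\varepsilon(\sigma)\geq 2$ is not actually used is also accurate.
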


\begin{proof}
To begin, assume that $\sigma$ is reversible. 
Consider $e\in E(\sigma)$. 
Let $(u,v)\in((\sigma\hookrightarrow\tau)(e))^\star$. 
By \eqref{E1faith}, there is $f_{(u,v)}\in E(\sigma)$ such that $(u,v)_\tau=(\sigma\hookrightarrow\tau)(f_{(u,v)})$. 
We obtain $(u,v)\in(\sigma\hookrightarrow\tau)(f_{(u,v)})\cap ((\sigma\hookrightarrow\tau)(e))^\star$. 
By \eqref{E2faith}, $f_{(u,v)}\cap e^\star\neq\emptyset$. 
As $\sigma$ is reversible, $e^\star\in E(\sigma)$ and hence $f_{(u,v)}=e^\star$. 
It follows that $(u,v)_\tau=(\sigma\hookrightarrow\tau)(e^\star)$ for every $(u,v)\in((\sigma\hookrightarrow\tau)(e))^\star$. 
Thus 
\begin{equation}\label{E1L1faith}
((\sigma\hookrightarrow\tau)(e))^\star\subseteq(\sigma\hookrightarrow\tau)(e^\star).
\end{equation}
By applying \eqref{E1L1faith} to $e^\star\in E(\sigma)$, we obtain 
$((\sigma\hookrightarrow\tau)(e^\star))^\star\subseteq(\sigma\hookrightarrow\tau)(e)$ and hence 
$(\sigma\hookrightarrow\tau)(e^\star)\subseteq((\sigma\hookrightarrow\tau)(e))^\star$. 
Therefore 
\begin{equation}\label{E2L1faith}
(\sigma\hookrightarrow\tau)(e^\star)=((\sigma\hookrightarrow\tau)(e))^\star
\end{equation} 
for every $e\in E(\sigma)$. 
Now consider $e_\tau\in E(\tau)$. 
By \eqref{E1faith}, there is $e\in E(\sigma)$ such that $(\sigma\hookrightarrow\tau)(e)=e_\tau$. 
By \eqref{E2L1faith}, $(e_\tau)^\star=(\sigma\hookrightarrow\tau)(e^\star)$. 
Consequently, 
$(e_\tau)^\star\in E(\tau)$ for each $e_\tau\in E(\tau)$. 
It follows that $\tau$ is reversible. 

Conversely, assume that $\tau$ is reversible. 
We observe the following. 
It follows from \eqref{E1faith} that  
\begin{equation}\label{E3L1faith}
(\sigma\hookrightarrow\tau)(e)\cap((V(\sigma)\times V(\sigma))\setminus\{(v,v):v\in V(\sigma)\})=e
\end{equation} for each $e\in E(\sigma)$. 
Consider $e\in E(\sigma)$. It suffices to show that $e^\star\in E(\sigma)$. 
Since $\tau$ is reversible and since $(\sigma\hookrightarrow\tau)(e)\in E(\tau)$, we have 
$((\sigma\hookrightarrow\tau)(e))^\star\in E(\tau)$. 
By \eqref{E1faith}, there is $f\in E(\sigma)$ such that $((\sigma\hookrightarrow\tau)(e))^\star=(\sigma\hookrightarrow\tau)(f)$. 
Hence 
\begin{equation*}
\begin{array}{rl}
((\sigma\hookrightarrow\tau)(e))^\star\cap&\hspace{-3mm}((V(\sigma)\times V(\sigma))\setminus\{(v,v):v\in V(\sigma)\})\\
=&\hspace{-3mm}(\sigma\hookrightarrow\tau)(f)\cap((V(\sigma)\times V(\sigma))\setminus\{(v,v):v\in V(\sigma)\}). 
\end{array}
\end{equation*}
By \eqref{E3L1faith}, $(\sigma\hookrightarrow\tau)(f)\cap((V(\sigma)\times V(\sigma))\setminus\{(v,v):v\in V(\sigma)\})=f$. 
Furthermore 
\begin{equation*}
\begin{array}{rl}
&\hspace{-3mm}((\sigma\hookrightarrow\tau)(e))^\star\cap((V(\sigma)\times V(\sigma))\setminus\{(v,v):v\in V(\sigma)\})\\
=&\hspace{-3mm}((\sigma\hookrightarrow\tau)(e)\cap((V(\sigma)\times V(\sigma))\setminus\{(v,v):v\in V(\sigma)\}))^\star\\ 
=&\hspace{-3mm}e^\star\hspace{5mm}\text{by \eqref{E3L1faith}}.
\end{array}
\end{equation*}
Therefore $e^\star=f$ and hence $e^\star\in E(\sigma)$. 
\end{proof}

\begin{lem}\label{L2faith}
Consider 2-structures $\sigma$ and $\tau$ such that $\tau$ is an extension of $\sigma$. 
If $\sigma$ and $\tau$ are reversible, then 
\eqref{E2faith} holds. 
\end{lem}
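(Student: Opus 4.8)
The plan is to prove, in fact, the slightly stronger statement that under the hypotheses the premise of \eqref{E2faith} forces $e=f^\star$. First I would record a star-compatibility property of the map $\sigma\hookrightarrow\tau$. Fix $g\in E(\sigma)$. Since $\sigma$ is reversible, $g^\star\in E(\sigma)$, so $(\sigma\hookrightarrow\tau)(g^\star)$ is defined and, by the defining property of the map, contains $g^\star$. On the other hand, $(\sigma\hookrightarrow\tau)(g)\supseteq g$ gives $((\sigma\hookrightarrow\tau)(g))^\star\supseteq g^\star$, and since $\tau$ is reversible the set $((\sigma\hookrightarrow\tau)(g))^\star$ is itself a member of $E(\tau)$. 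Two members of $E(\tau)$ both containing the nonempty set $g^\star$ must coincide, whence
\begin{equation*}
((\sigma\hookrightarrow\tau)(g))^\star=(\sigma\hookrightarrow\tau)(g^\star)\quad\text{for every }g\in E(\sigma).
\end{equation*}

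Next I would exploit that distinct equivalence classes are disjoint. Suppose $e,f\in E(\sigma)$ satisfy $(\sigma\hookrightarrow\tau)(e)\cap((\sigma\hookrightarrow\tau)(f))^\star\neq\emptyset$. Applying the displayed identity with $g=f$ rewrites this as $(\sigma\hookrightarrow\tau)(e)\cap(\sigma\hookrightarrow\tau)(f^\star)\neq\emptyset$. But $(\sigma\hookrightarrow\tau)(e)$ and $(\sigma\hookrightarrow\tau)(f^\star)$ both belong to $E(\tau)$, so a nonempty intersection forces $(\sigma\hookrightarrow\tau)(e)=(\sigma\hookrightarrow\tau)(f^\star)$.

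Finally I would descend back to $V(\sigma)$ using the fact, valid for any extension, that a class of $\tau$ meets the off-diagonal pairs of $V(\sigma)\times V(\sigma)$ in exactly the class of $\sigma$ from which it arises; this is \eqref{E3L1faith}, and it follows directly from $\tau[V(\sigma)]=\sigma$ without appeal to \eqref{E1faith}. Intersecting the equality $(\sigma\hookrightarrow\tau)(e)=(\sigma\hookrightarrow\tau)(f^\star)$ with $(V(\sigma)\times V(\sigma))\setminus\{(v,v):v\in V(\sigma)\}$ therefore yields $e=f^\star$, so in particular $e\cap f^\star=e\neq\emptyset$, which is precisely the conclusion of \eqref{E2faith}.

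As for difficulty, no step is computationally heavy; the one point requiring care is the star-compatibility identity, and specifically the observation that here it comes for free from the reversibility of $\tau$ together with $(\sigma\hookrightarrow\tau)(g)\supseteq g$, rather than having to be extracted from faithfulness as in the proof of Lemma~\ref{L1faith}. Once that identity is in hand, the disjointness of distinct classes of $\tau$ and the restriction identity \eqref{E3L1faith} make the remainder immediate.
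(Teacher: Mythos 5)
Your proof is correct and follows essentially the same route as the paper's: both arguments hinge on establishing $((\sigma\hookrightarrow\tau)(f))^\star=(\sigma\hookrightarrow\tau)(f^\star)$ by observing that these are two classes of $E(\tau)$ (using reversibility of $\tau$ and of $\sigma$, respectively) that both contain the nonempty set $f^\star$, and then on the disjointness of distinct classes of $E(\tau)$ to force $(\sigma\hookrightarrow\tau)(e)=(\sigma\hookrightarrow\tau)(f^\star)$. Your only deviation is cosmetic: where the paper invokes the (automatic) injectivity of $\sigma\hookrightarrow\tau$ to conclude $e=f^\star$, you intersect with $(V(\sigma)\times V(\sigma))\setminus\{(v,v):v\in V(\sigma)\}$ --- and you are right that this restriction identity needs only $\tau[V(\sigma)]=\sigma$, not \eqref{E1faith} --- but these are two formulations of the same fact about extensions.
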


\begin{proof}
Consider $e,f\in E(\sigma)$ such that $(\sigma\hookrightarrow\tau)(e)\cap((\sigma\hookrightarrow\tau)(f))^\star\neq\emptyset$. 
As $(\sigma\hookrightarrow\tau)(f)\supseteq f$, $((\sigma\hookrightarrow\tau)(f))^\star\supseteq f^\star$. 
Since $\sigma$ is reversible, $f^\star\in E(\sigma)$ and we have $(\sigma\hookrightarrow\tau)(f^\star)\supseteq f^\star$. 
Since $\tau$ is reversible, $((\sigma\hookrightarrow\tau)(f))^\star\in E(\tau)$. 
It follows that $((\sigma\hookrightarrow\tau)(f))^\star=(\sigma\hookrightarrow\tau)(f^\star)$. 
As $\sigma\hookrightarrow\tau$ is injective, $(\sigma\hookrightarrow\tau)(e)\cap((\sigma\hookrightarrow\tau)(f))^\star\neq\emptyset$ implies that $e=f^\star$. 
\end{proof}

Given a 2-structure $\sigma$, recall that $\sigma\land\sigma^\star$ is reversible, 
$E(\sigma\land\sigma^\star)=\{e\cap f^\star:e,f\in E(\sigma),e\cap f^\star\neq\emptyset\}$  and 
$\mathbb{C}(\sigma\land\sigma^\star)=\mathbb{C}(\sigma)$. 

\begin{prop}\label{P1nonrev}
Let $\sigma$ be a 2-structure.
\begin{enumerate}
\item If $\tau$ is a faithful extension of $\sigma$, then $\tau\land\tau^\star$ is a faithful extension of $\sigma\land\sigma^\star$.
\item If $\rho$ is a faithful extension of $\sigma\land\sigma^\star$, then there is a faithful extension $\tau$ of $\sigma$ such that 
$\rho=\tau\land\tau^\star$. 
\end{enumerate}
\end{prop}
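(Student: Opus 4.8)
The plan is to treat the two implications separately, relying throughout on the recalled facts that $\sigma\land\sigma^\star$ and every $\rho\land\rho^\star$ are reversible, and on the remark that a class $c$ of $\sigma\land\sigma^\star$ is the intersection $c=e\cap f^\star$ of the unique class $e\in E(\sigma)$ containing it with the unique class $f^\star\in E(\sigma^\star)$ containing it; in particular $c$ is completely determined by the pair $(e,f^\star)$. This last determination is the combinatorial engine of the whole argument.

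For the first assertion I would first check that $\tau\land\tau^\star$ is an \emph{extension} of $\sigma\land\sigma^\star$. The vertex sets agree with those of $\tau$ and $\sigma$, and, using \eqref{E1faith} for $\tau$ over $\sigma$ in the form that each $\tau$-class $(\sigma\hookrightarrow\tau)(e)$ meets the pairs over $V(\sigma)$ exactly in $e$ (cf.\ \eqref{E3L1faith}), the class $(\sigma\hookrightarrow\tau)(e)\cap((\sigma\hookrightarrow\tau)(f))^\star$ of $\tau\land\tau^\star$ meets $(V(\sigma)\times V(\sigma))\setminus\{(v,v):v\in V(\sigma)\}$ exactly in $e\cap f^\star$, so its restriction to $V(\sigma)$ is the class $e\cap f^\star$ of $\sigma\land\sigma^\star$. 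Since both $\sigma\land\sigma^\star$ and $\tau\land\tau^\star$ are reversible, Condition~\eqref{E2faith} for $\tau\land\tau^\star$ over $\sigma\land\sigma^\star$ is then automatic by Lemma~\ref{L2faith}. It remains only to establish \eqref{E1faith}, that $\sigma\land\sigma^\star\hookrightarrow\tau\land\tau^\star$ is bijective: injectivity is automatic, and for surjectivity I would write an arbitrary class of $\tau\land\tau^\star$ as $(\sigma\hookrightarrow\tau)(e)\cap((\sigma\hookrightarrow\tau)(f))^\star$ via \eqref{E1faith} for $\tau$, note it is nonempty, and apply \eqref{E2faith} for $\tau$ over $\sigma$ to obtain $e\cap f^\star\neq\emptyset$, exhibiting a class of $\sigma\land\sigma^\star$ inside it.

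The second assertion requires an explicit construction and is the main obstacle. Given $\rho$, which is reversible (by Lemma~\ref{L1faith}, or trivially when $\varepsilon(\sigma\land\sigma^\star)=1$), write $g_c$ for the $\rho$-class containing a class $c$ of $\sigma\land\sigma^\star$; by \eqref{E1faith} for $\rho$ the map $c\mapsto g_c$ is a bijection onto $E(\rho)$, and reversibility gives $g_c^\star=g_{c^\star}$. I would then define $\tau$ on $V(\rho)$ by declaring its classes to be, for each $e\in E(\sigma)$, the union $E_e=\bigcup\{g_c:c\in E(\sigma\land\sigma^\star),\ c\subseteq e\}$. These partition the pairs of $V(\rho)$ because the $g_c$ do and because each $c$ lies in a unique $e$. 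Restriction to $V(\sigma)$ recovers $e$, so $\tau$ extends $\sigma$ and $\sigma\hookrightarrow\tau$ is the bijection $e\mapsto E_e$, giving \eqref{E1faith}.

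Finally I would verify $\tau\land\tau^\star=\rho$, which simultaneously yields \eqref{E2faith} for $\tau$. Using $g_c^\star=g_{c^\star}$ together with the equivalence $c\subseteq e\iff c^\star\subseteq e^\star$, the class $E_e^\star$ of $\tau^\star$ equals $\bigcup\{g_d:d\subseteq e^\star\}$, that is, it groups the $g_d$ by the $\sigma^\star$-class $e^\star$ containing $d$. Hence a class $E_e\cap E_f^\star$ of $\tau\land\tau^\star$ is the union of those $g_c$ whose $\sigma$-class is $e$ and whose $\sigma^\star$-class is $f^\star$; since the pair $(e,f^\star)$ determines $c=e\cap f^\star$, this intersection is empty unless $e\cap f^\star\in E(\sigma\land\sigma^\star)$, in which case it is the single class $g_{e\cap f^\star}$. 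Thus the classes of $\tau\land\tau^\star$ are exactly the $g_c$, so $\tau\land\tau^\star=\rho$; and the same computation shows $E_e\cap E_f^\star\neq\emptyset$ forces $e\cap f^\star\neq\emptyset$, which is \eqref{E2faith}. The delicate point throughout is keeping the possible asymmetry of $\sigma$ straight: $e^\star$ need not be a class of $\sigma$, so the grouping defining $\tau^\star$ must be phrased through $\sigma^\star$-classes, and it is precisely the determination of $c$ by $(e,f^\star)$ that makes the common refinement of $\tau$ and $\tau^\star$ collapse back onto $\rho$.
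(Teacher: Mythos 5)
Your proposal is correct and follows essentially the same path as the paper: for the first assertion you use \eqref{E1faith} and \eqref{E2faith} for $\tau$ over $\sigma$ to obtain surjectivity of $(\sigma\land\sigma^\star)\hookrightarrow(\tau\land\tau^\star)$ and invoke Lemma~\ref{L2faith} for \eqref{E2faith}, and for the second your classes $E_e$ coincide exactly with the paper's $\underline{e}$ of \eqref{E2P1nonrev}. If anything you are slightly more complete: the paper's written proof only verifies that $\tau$ is a faithful extension of $\sigma$ and leaves the required identity $\rho=\tau\land\tau^\star$ implicit, whereas your computation of $E_e\cap E_f^\star$ via $g_c^\star=g_{c^\star}$ (with the case $\varepsilon(\sigma\land\sigma^\star)=1$ correctly set aside before applying Lemma~\ref{L1faith}) establishes it explicitly and yields \eqref{E2faith} for $\tau$ as a by-product.
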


\begin{proof}
First, assume that $\tau$ is a faithful extension of $\sigma$. 
Clearly $\tau\land\tau^\star$ is an extension of $(\sigma\land\sigma^\star$. 
Consider an element $e\cap f^\star$ of $E(\sigma\land\sigma^\star)$ where $e,f\in E(\sigma)$ such that 
$e\cap f^\star\neq\emptyset$. 
As $(\sigma\hookrightarrow\tau)(e)\supseteq e$ and $((\sigma\hookrightarrow\tau)(f))^\star\supseteq f^\star$, we have 
$(\sigma\hookrightarrow\tau)(e)\cap ((\sigma\hookrightarrow\tau)(f))^\star\supseteq e\cap f^\star$. 
Furthermore $(\sigma\hookrightarrow\tau)(e)\cap ((\sigma\hookrightarrow\tau)(f))^\star\in E(\tau\land\tau^\star)$. 
Thus 
\begin{equation}\label{E1P1nonrev}
((\sigma\land\sigma^\star)\hookrightarrow(\tau\land\tau^\star))(e\cap f^\star)=
(\sigma\hookrightarrow\tau)(e)\cap ((\sigma\hookrightarrow\tau)(f))^\star
\end{equation} 
for any $e,f\in E(\sigma)$ such that $e\cap f^\star\neq\emptyset$. 

Now we prove that \eqref{E1faith} holds for the extension $\tau\land\tau^\star$ of 
$\sigma\land\sigma^\star$. 
It suffices to show that $(\sigma\land\sigma^\star)\hookrightarrow(\tau\land\tau^\star)$ is surjective. 
Consider $e_{\tau\land\tau^{\star}}\in E(\tau\land\tau^{\star})$. 
There are $e_\tau,f_\tau\in E(\tau)$ such that $e_{\tau\land\tau^{\star}}=e_\tau\cap(f_\tau)^{\star}$. 
As $\sigma\hookrightarrow\tau$ is bijective, there exist 
$e_\sigma,f_\sigma\in E(\sigma)$ such that $e_\tau=(\sigma\hookrightarrow\tau)(e_\sigma)$ and 
$f_\tau=(\sigma\hookrightarrow\tau)(f_\sigma)$. 
Thus $e_{\tau\land\tau^{\star}}=
(\sigma\hookrightarrow\tau)(e_\sigma)\cap((\sigma\hookrightarrow\tau)(f_\sigma))^{\star}$. 
Since (1.2) is satisfied by the extension $\tau$ of $\sigma$, we obtain 
$e_\sigma\cap (f_\sigma)^{\star}\neq\emptyset$. 
It follows from \eqref{E1P1nonrev} that 
$e_{\tau\land\tau^{\star}}=
((\sigma\land\sigma^\star)\hookrightarrow(\tau\land\tau^\star))(e_\sigma\cap (f_\sigma)^\star)$. 

Lastly, 
since $\sigma\land\sigma^\star$ and $\tau\land\tau^\star$ are reversible, \eqref{E2faith} holds by Lemma~\ref{L2faith}. 

Second, assume that $\rho$ is a faithful extension of $\sigma\land\sigma^\star$. 
For each $e\in E(\sigma)$, set 
\begin{equation}\label{E2P1nonrev}
\underline{e}=\bigcup_{\{f\in E(\sigma):e\cap f^\star\neq\emptyset\}}((\sigma\land\sigma^\star)\hookrightarrow\rho)(e\cap f^\star).
\end{equation}
Since $E(\rho)=\{((\sigma\land\sigma^\star)\hookrightarrow\rho)(e\cap f^\star):e,f\in E(\sigma),e\cap f^\star\neq\emptyset\}$ is a partition of $(V(\rho)\times V(\rho))\setminus\{(v,v):v\in V(\rho)\}$, 
$\{\underline{e}:e\in E(\sigma)\}$ is also. 
Denote by $\tau$ the unique 2-structure defined on $V(\tau)=V(\rho)$ by 
$E(\tau)=\{\underline{e}:e\in E(\sigma)\}$. 

Let $e\in E(\sigma)$. 
By \eqref{E2P1nonrev}, 
$$\underline{e}\cap(V(\sigma)\times V(\sigma))=\bigcup_{\{f\in E(\sigma):e\cap f^\star\neq\emptyset\}}((\sigma\land\sigma^\star)\hookrightarrow\rho)(e\cap f^\star)\cap(V(\sigma)\times V(\sigma)).$$
As $\rho$ is an extension of $\sigma\land\sigma^\star$, we get 
$((\sigma\land\sigma^\star)\hookrightarrow\rho)(e\cap f^\star)\cap(V(\sigma)\times V(\sigma))=
e\cap f^\star$ for each $f\in E(\sigma)$ such that $e\cap f^\star\neq\emptyset$. 
Thus 
$$\underline{e}\cap(V(\sigma)\times V(\sigma))=\bigcup_{\{f\in E(\sigma):e\cap f^\star\neq\emptyset\}}
e\cap f^\star=e.$$
Therefore $\tau$ is an extension of $\sigma$ such that 
$(\sigma\hookrightarrow\tau)(e)=\underline{e}$. 
It follows that \eqref{E1faith} holds for the extension $\tau$ of $\sigma$. 
For \eqref{E2faith}, consider $e,g\in E(\sigma)$ such that 
$(\sigma\hookrightarrow\tau)(e)\cap ((\sigma\hookrightarrow\tau)(g))^\star\neq\emptyset$, that is, 
$\underline{e}\cap(\underline{g})^\star\neq\emptyset$. 
By \eqref{E2P1nonrev}, there exist $f\in E(\sigma)$ such that $e\cap f^\star\neq\emptyset$ and 
$h\in E(\sigma)$ such that $g\cap h^\star\neq\emptyset$ satisfying 
$$((\sigma\land\sigma^\star)\hookrightarrow\rho)(e\cap f^\star)\cap(((\sigma\land\sigma^\star)\hookrightarrow\rho)(g\cap h^\star))^\star\neq\emptyset.$$
As \eqref{E2faith} holds for the extension $\rho$ of $\sigma\land\sigma^\star$, we obtain 
$(e\cap f^\star)\cap(g\cap h^\star)^\star\neq\emptyset$ and hence $e\cap g^\star\neq\emptyset$. 
\end{proof}

The next is an immediate consequence of Proposition~\ref{P1nonrev} and of the fact that 
$\mathbb{C}(\sigma\land\sigma^\star)=\mathbb{C}(\sigma)$ for a 2-structure $\sigma$. 

\begin{cor}\label{C1nonrev}
Let $\sigma$ be a 2-structure.
\begin{enumerate}
\item If $\tau$ is a primitive faithful extension of $\sigma$, then $\tau\land\tau^\star$ is a primitive faithful extension of $\sigma\land\sigma^\star$.
\item If $\rho$ is a primitive faithful extension of $\sigma\land\sigma^\star$, then there is a primitive faithful extension $\tau$ of $\sigma$ such that $\rho=\tau\land\tau^\star$. 
\end{enumerate}
\end{cor}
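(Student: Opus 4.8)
The plan is to reduce the entire statement to Proposition~\ref{P1nonrev} together with the two identities recalled immediately above, namely $V(\sigma\land\sigma^\star)=V(\sigma)$ and $\mathbb{C}(\sigma\land\sigma^\star)=\mathbb{C}(\sigma)$. The observation I would isolate at the outset is that primitivity of a 2-structure depends only on its domain and its family of clans: a 2-structure $\pi$ is primitive exactly when $\nu(\pi)\geq 3$ and every clan of $\pi$ is trivial. Applying the recalled identities with $\tau$ in place of $\sigma$ therefore shows that $\tau$ is primitive if and only if $\tau\land\tau^\star$ is, since the two share the same vertex set and the same family of clans. This single equivalence is the only ingredient that must be added to Proposition~\ref{P1nonrev}.

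For part~(1), I would start from a primitive faithful extension $\tau$ of $\sigma$. Proposition~\ref{P1nonrev}(1) already yields that $\tau\land\tau^\star$ is a faithful extension of $\sigma\land\sigma^\star$, so only primitivity remains to be checked. Since $V(\tau\land\tau^\star)=V(\tau)$ and $\mathbb{C}(\tau\land\tau^\star)=\mathbb{C}(\tau)$, the hypothesis that $\tau$ is primitive transfers directly: $\nu(\tau\land\tau^\star)=\nu(\tau)\geq 3$ and every clan of $\tau\land\tau^\star$ is trivial. Hence $\tau\land\tau^\star$ is a primitive faithful extension of $\sigma\land\sigma^\star$.

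For part~(2), I would begin with a primitive faithful extension $\rho$ of $\sigma\land\sigma^\star$ and invoke Proposition~\ref{P1nonrev}(2) to obtain a faithful extension $\tau$ of $\sigma$ with $\rho=\tau\land\tau^\star$. It then suffices to verify that this $\tau$ is primitive. Because $\rho=\tau\land\tau^\star$, the identities give $V(\tau)=V(\rho)$ and $\mathbb{C}(\tau)=\mathbb{C}(\rho)$; as $\rho$ is primitive, $\tau$ inherits $\nu(\tau)\geq 3$ and the absence of nontrivial clans, so $\tau$ is the desired primitive faithful extension.

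I do not expect any genuine obstacle here: the whole content of the corollary is the transfer of the primitivity condition across the operation $\sigma\mapsto\sigma\land\sigma^\star$, which is immediate once one notes that primitivity is a function of the domain and the clan family alone, both of which are preserved by this operation. The faithfulness half of each implication is supplied verbatim by Proposition~\ref{P1nonrev}, so no new computation is required.
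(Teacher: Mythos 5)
Your proposal is correct and follows exactly the route the paper intends: the paper itself states the corollary as an immediate consequence of Proposition~\ref{P1nonrev} together with the identity $\mathbb{C}(\sigma\land\sigma^\star)=\mathbb{C}(\sigma)$ (and the shared vertex set), which is precisely the transfer-of-primitivity observation you isolate. Your write-up merely makes explicit the detail the authors leave implicit, so there is nothing to add or correct.
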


We conclude with the following. 

\begin{thm}\label{T1nonrev}
For every 2-structure $\sigma$, $p(\sigma)=p(\sigma\land\sigma^\star)$. 
\end{thm}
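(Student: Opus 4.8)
The plan is to prove the equality $p(\sigma)=p(\sigma\land\sigma^\star)$ by establishing the two inequalities via the correspondence between primitive faithful extensions of $\sigma$ and those of $\sigma\land\sigma^\star$ that is already packaged in Corollary~\ref{C1nonrev}. The key observation is that both parts of that corollary are cardinality-preserving on the added vertices: in each direction the extension $\tau$ and the extension $\tau\land\tau^\star$ share the same vertex set, since $V(\tau\land\tau^\star)=V(\tau)$ and, in the reverse construction of Proposition~\ref{P1nonrev}(2), $\tau$ is built on $V(\tau)=V(\rho)$. Thus the bijection on extensions respects the number of new vertices, which is exactly what a statement about $p$ requires.

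First I would verify that $\sigma$ admits a primitive faithful extension if and only if $\sigma\land\sigma^\star$ does. This is immediate from Corollary~\ref{C1nonrev}: part~(1) sends a primitive faithful extension of $\sigma$ to one of $\sigma\land\sigma^\star$, and part~(2) produces one of $\sigma$ from any of $\sigma\land\sigma^\star$. Hence $p(\sigma)$ is defined precisely when $p(\sigma\land\sigma^\star)$ is, and it suffices to compare the two cardinals under the assumption that both are defined.

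Next I would carry out the two inequalities. For $p(\sigma\land\sigma^\star)\leq p(\sigma)$: let $\kappa=p(\sigma)$ and pick a primitive faithful $\kappa$-extension $\tau$ of $\sigma$, so $|V(\tau)\setminus V(\sigma)|=\kappa$. By Corollary~\ref{C1nonrev}(1), $\tau\land\tau^\star$ is a primitive faithful extension of $\sigma\land\sigma^\star$, and since $V(\tau\land\tau^\star)=V(\tau)$ while $V(\sigma\land\sigma^\star)=V(\sigma)$, we have $|V(\tau\land\tau^\star)\setminus V(\sigma\land\sigma^\star)|=\kappa$; hence $p(\sigma\land\sigma^\star)\leq\kappa$. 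For the reverse inequality $p(\sigma)\leq p(\sigma\land\sigma^\star)$: set $\kappa'=p(\sigma\land\sigma^\star)$ and choose a primitive faithful $\kappa'$-extension $\rho$ of $\sigma\land\sigma^\star$. By Corollary~\ref{C1nonrev}(2), there is a primitive faithful extension $\tau$ of $\sigma$ with $\rho=\tau\land\tau^\star$; since $V(\tau)=V(\tau\land\tau^\star)=V(\rho)$ and $V(\sigma)=V(\sigma\land\sigma^\star)$, the number of added vertices is again $\kappa'$, whence $p(\sigma)\leq\kappa'$. Combining the two inequalities yields $p(\sigma)=p(\sigma\land\sigma^\star)$.

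The proof is short and essentially bookkeeping, so I do not anticipate a genuine obstacle; the only point demanding care is confirming that the vertex sets coincide in both directions so that the $\kappa$-extension status transfers. This rests on the definitional fact that $\land$ does not alter the domain, together with the explicit construction in Proposition~\ref{P1nonrev}(2) placing $\tau$ on $V(\rho)$, and I would state these identifications explicitly rather than leave them implicit.
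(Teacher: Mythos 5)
Your proof is correct and follows exactly the route the paper intends: Theorem~\ref{T1nonrev} is stated as an immediate consequence of Corollary~\ref{C1nonrev}, and your argument simply makes explicit the bookkeeping that $V(\tau\land\tau^\star)=V(\tau)$ and that the construction in Proposition~\ref{P1nonrev}(2) places $\tau$ on $V(\rho)$, so the number of added vertices is preserved in both directions. Nothing is missing.
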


\section{Primitive bounds of reversible 2-structures}

We begin with a remark on the construction of faithful extensions of reversible 2-structures. 
Consider reversible 2-structures $\sigma$ and $\sigma'$ such that $V(\sigma)\cap V(\sigma')=\emptyset$ with 
$\varepsilon_a(\sigma')\leq\varepsilon_a(\sigma)$ and 
$\varepsilon_s(\sigma')\leq\varepsilon_s(\sigma)$. 
There exists an injection $\iota:E(\sigma')\longrightarrow E(\sigma)$ such that 
$\iota(E_a(\sigma'))\subseteq E_a(\sigma)$ and $\iota(E_s(\sigma'))\subseteq E_s(\sigma)$. 
Consider also a function $\varphi:V(\sigma)\times V(\sigma')\longrightarrow E(\sigma)$. 
It is easy to verify that there is a unique extension $\tau$ of $\sigma$ and $\sigma'$ defined on 
$V(\sigma)\cup V(\sigma')$ which is a faithful extension of $\sigma$ such that 
$\sigma'\hookrightarrow\tau=(\sigma\hookrightarrow\tau)\circ\iota$ and 
$(v,v')_\tau=((\sigma\hookrightarrow\tau)\circ\varphi)(v,v')$ for every $(v,v')\in V(\sigma)\times V(\sigma')$. 

\subsection{Preliminary results}

\begin{lem}\label{L2ext}
Consider a primitive and reversible 2-structure $\sigma$. 
Let $a\not\in V(\sigma)$. 
If $V(\sigma)$ is finite, then 
$\sigma$ admits 
$\varepsilon(\sigma)^{\nu(\sigma)}-\varepsilon(\sigma)\nu(\sigma)-\varepsilon(\sigma)$ 
primitive and faithful extensions of $\sigma$ to $V(\sigma)\cup\{a\}$. 
If $V(\sigma)$ is infinite, then 
$\sigma$ admits 
$\varepsilon(\sigma)^{\nu(\sigma)}$ such extensions. 
\end{lem}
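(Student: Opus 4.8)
The plan is to parameterize all faithful $1$-extensions of $\sigma$ by functions $f\colon V(\sigma)\longrightarrow E(\sigma)$, then to determine exactly which of these are imprimitive and subtract. First I would note that, since $\sigma$ is primitive, we have $\nu(\sigma)\geq 3$ and $\varepsilon(\sigma)\geq 2$ (a complete $2$-structure on at least three vertices is imprimitive), so Lemma~\ref{L1faith} applies and every faithful extension $\tau$ of $\sigma$ is reversible. Reversibility forces $(a,v)_\tau=((v,a)_\tau)^\star$ for each $v\in V(\sigma)$, so $\tau$ is completely determined by $f_\tau\colon v\longmapsto (v,a)_\tau\in E(\sigma)$. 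Conversely, given any $f\colon V(\sigma)\longrightarrow E(\sigma)$, setting $(v,a)_{\tau_f}=f(v)$ and $(a,v)_{\tau_f}=(f(v))^\star$ yields a reversible extension $\tau_f$ for which \eqref{E1faith} holds (no new class is created since $\nu(\sigma)\geq 2$) and \eqref{E2faith} holds by Lemma~\ref{L2faith}. Thus $f\longmapsto\tau_f$ is a bijection between $E(\sigma)^{V(\sigma)}$ and the faithful $1$-extensions of $\sigma$, so there are exactly $\varepsilon(\sigma)^{\nu(\sigma)}$ of them.

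Next I would classify the nontrivial clans of $\tau_f$ using the primitivity of $\sigma$. If $C\in\mathbb{C}(\tau_f)$ with $a\notin C$, then $C$ is a clan of $\sigma$, and testing $c,d\in C$ against $a$ shows $f$ is constant on $C$; primitivity leaves $C=V(\sigma)$, which is a clan of $\tau_f$ exactly when $f$ is constant. If $C\in\mathbb{C}(\tau_f)$ with $a\in C$, $|C|\geq 2$ and $C\neq V(\tau_f)$, then, writing $C'=C\setminus\{a\}$, one checks that $C'$ is a clan of $\sigma$ and that $f(w)=(w,c)_\sigma$ for all $c\in C'$ and all $w\in V(\sigma)\setminus C'$; primitivity forces $C'=\{c_0\}$ a singleton. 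Hence $\{c_0,a\}\in\mathbb{C}(\tau_f)$ exactly when $f(w)=(w,c_0)_\sigma$ for every $w\neq c_0$, with $f(c_0)$ unconstrained. Consequently $\tau_f$ is imprimitive if and only if $f$ is constant or $f$ is of this \emph{twin} type for some $c_0\in V(\sigma)$.

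It then remains to count the imprimitive $f$. There are $\varepsilon(\sigma)$ constant functions, and for each fixed $c_0$ there are $\varepsilon(\sigma)$ twin functions (the value $f(c_0)$ being free). The three families are pairwise disjoint: if $f$ were twin for distinct $c_0,c_1$, then $\{c_0,a\}$ and $\{c_1,a\}$ would be clans of $\tau_f$ meeting in $a$, their union would be a clan, and $\{c_0,c_1\}$ a nontrivial clan of $\sigma$, which is impossible; and if a constant $f\equiv e$ were twin for $c_0$, then $(w,c_0)_\sigma=e$ for all $w\neq c_0$ would make $V(\sigma)\setminus\{c_0\}$ a nontrivial clan of $\sigma$, again impossible. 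Therefore the number of imprimitive faithful extensions is $\varepsilon(\sigma)+\varepsilon(\sigma)\nu(\sigma)$, and subtracting from $\varepsilon(\sigma)^{\nu(\sigma)}$ gives the finite formula.

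For the infinite case I would invoke cardinal arithmetic. Since there are at most $\nu(\sigma)$ equivalence classes we have $\varepsilon(\sigma)\leq\nu(\sigma)$, so when $\nu(\sigma)$ is infinite the count of imprimitive extensions is $\varepsilon(\sigma)+\varepsilon(\sigma)\nu(\sigma)=\nu(\sigma)$, whereas $\varepsilon(\sigma)^{\nu(\sigma)}\geq 2^{\nu(\sigma)}>\nu(\sigma)$ by Cantor's theorem; removing a subset of strictly smaller cardinality from a set of size $\varepsilon(\sigma)^{\nu(\sigma)}$ leaves $\varepsilon(\sigma)^{\nu(\sigma)}$. I expect the main obstacle to be the second paragraph: the complete and correct enumeration of the nontrivial clans of $\tau_f$, together with the disjointness verifications, which is precisely where the primitivity of $\sigma$ is used essentially. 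The cardinal bookkeeping in the infinite case is then routine.
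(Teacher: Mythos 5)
Your proof is correct and takes essentially the same route as the paper: classify the nontrivial clans of a faithful $1$-extension via primitivity of $\sigma$ (either $V(\sigma)$, forcing $\overrightarrow{\tau}(a)$ constant, or a pair $\{a,v\}$, forcing the twin condition), count the imprimitive extensions as $\varepsilon(\sigma)\nu(\sigma)+\varepsilon(\sigma)$, and subtract, with the same cardinal arithmetic in the infinite case. Your explicit parameterization of the faithful extensions by $E(\sigma)^{V(\sigma)}$ (via Lemmas~\ref{L1faith} and \ref{L2faith}) and your verification that the constant and twin families are pairwise disjoint simply make precise what the paper's proof leaves implicit.
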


\begin{proof}
Consider a faithful extension $\tau$ of $\sigma$ to $V(\sigma)\cup\{a\}$. 
Assume that $\tau$ is imprimitive and consider a nontrivial clan $C$ of $\tau$. 
As $\sigma$ is primitive, we have either $|C\cap V(\sigma)|\leq 1$ or 
$C\cap V(\sigma)=V(\sigma)$. 
Since $C$ is a nontrivial clan of $\tau$, $|C|\geq 2$ and hence $|C\cap V(\sigma)|\geq 1$. 
By the same, $C\subsetneq V(\tau)$. 
Therefore we obtain that either there is $v\in V(\sigma)$ such that $C=\{a,v\}$ or 
$C=V(\sigma)$. 
In the second instance, $\overrightarrow{\tau}(a)$ is constant. 
In the first, $\overrightarrow{\tau}(a)_{\restriction V(\sigma)\setminus\{v\}}=
\overrightarrow{\sigma}(v)$. 
Conversely, if $\overrightarrow{\tau}(a)$ is constant, then $V(\sigma)$ is a nontrivial clan of $\tau$. 
Furthermore, if there is $v\in V(\sigma)$ such that 
$\overrightarrow{\tau}(a)_{\restriction V(\sigma)\setminus\{v\}}=
\overrightarrow{\sigma}(v)$, then $\{a,v\}$ is a nontrivial clan of $\tau$. 

Consequently, we have: 
given a faithful extension $\tau$ of $\sigma$ to $V(\sigma)\cup\{a\}$, 
$\tau$ is imprimitive if and only if either $\overrightarrow{\tau}(a)$ is constant or 
there is $v\in V(\sigma)$ such that 
$\overrightarrow{\tau}(a)_{\restriction V(\sigma)\setminus\{v\}}=
\overrightarrow{\sigma}(v)$. 
Therefore $\sigma$ admits $\varepsilon(\sigma)\nu(\sigma)+\varepsilon(\sigma)$ 
imprimitive and faithful extensions to $V(\sigma)\cup\{a\}$. 
If $V(\sigma)$ is finite, then $\varepsilon(\sigma)$ is also and 
$\sigma$ admits $\varepsilon(\sigma)^{\nu(\sigma)}-\varepsilon(\sigma)\nu(\sigma)-\varepsilon(\sigma)$ primitive and faithful extensions of $\sigma$ to $V(\sigma)\cup\{a\}$. 
Assume that $V(\sigma)$ is infinite. 
We get $\varepsilon(\sigma)\leq\nu(\sigma)$ and hence 
$\varepsilon(\sigma)\nu(\sigma)+\varepsilon(\sigma)=\nu(\sigma)<
\varepsilon(\sigma)^{\nu(\sigma)}$. 
Thus 
$\sigma$ admits $\varepsilon(\sigma)^{\nu(\sigma)}$ primitive and faithful extensions of $\sigma$ to $V(\sigma)\cup\{a\}$. 
\end{proof}

\begin{lem}\label{L1ext}
Let $\sigma$ be a reversible 2-structure such that $\varepsilon(\sigma)\geq 2$. 
Consider $S\subseteq V(\sigma)$ such that $2\leq|S|<\aleph_0$ and $S$ is $e$-complete where $e\in E_s(\sigma)$. 
Let $S'$ be a set such that $S'\cap V(\sigma)=\emptyset$ and 
$|S'|=\mathfrak{log}_{\varepsilon(\sigma)}(|S|+1)$. 
There exists a faithful extension $\tau$ of $\sigma$ defined on $V(\sigma)\cup S'$ satisfying
\begin{enumerate}
\item for every $s'\in S'$, there is $s\in S$ such that $(\overrightarrow{\tau}(s))^{-1}(\{e\})\cap S'=\{s'\}$;
\item $\tau[S\cup S']$ is primitive. 
\end{enumerate}
\end{lem}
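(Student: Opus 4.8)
The plan is to imitate Sumner's encoding, using the $\varepsilon(\sigma)$ classes of $\sigma$ as an alphabet. Write $n=|S|$ and $k=|S'|=\mathfrak{log}_{\varepsilon(\sigma)}(n+1)$, so that $\varepsilon(\sigma)^{k}\geq n+1$, and fix a class $f\in E(\sigma)\setminus\{e\}$, which exists since $\varepsilon(\sigma)\geq 2$. I would assign to each $s\in S$ a \emph{code} $c_s\colon S'\longrightarrow E(\sigma)$ and realise these codes inside a faithful extension: using the construction recalled at the start of this section I take $\tau[S']$ to be $e$-complete, set $(s,s')_\tau=c_s(s')$ for $s\in S$ and $s'\in S'$, and define the connections between $S'$ and $V(\sigma)\setminus S$ by any fixed rule (say the constant class $f$). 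Since every class used lies in $E(\sigma)$ and $\sigma$ is reversible, the resulting reversible extension $\tau$ is automatically faithful by Lemmas~\ref{L1faith} and \ref{L2faith}; thus no work is needed for faithfulness, and everything reduces to a good choice of the codes.

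The codes are chosen subject to three requirements. First, they are pairwise distinct. Second, none of them is the code with value $e$ on all of $S'$; this reservation is exactly what the ``$+1$'' buys us, since there remain $\varepsilon(\sigma)^{k}-1\geq n$ other codes, enough to inject $S$. Third, for each $s'_j\in S'$ I include a \emph{witness} code $\kappa_j$ taking the value $e$ at $s'_j$ and the value $f$ elsewhere on $S'$; the vertex carrying $\kappa_j$ then satisfies $(\overrightarrow{\tau}(s))^{-1}(\{e\})\cap S'=\{s'_j\}$, which is Requirement~(1). That such an injective assignment exists is a counting point: the witnesses are distinct, so one needs $k\leq n$ (which holds because $k=\mathfrak{log}_{\varepsilon(\sigma)}(n+1)\leq\lceil\log_2(n+1)\rceil\leq n$ for $n\geq 2$) together with $n\leq\varepsilon(\sigma)^{k}-1$, the defining inequality for $k$.

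For primitivity I would rule out nontrivial clans $C$ of $\rho:=\tau[S\cup S']$, noting first that $\nu(\rho)=n+k\geq 3$. Writing $C_S=C\cap S$ and $C_{S'}=C\cap S'$, a clan with $C\subseteq S$ is excluded because two vertices of $S$ are separated, through some $s'\in S'$, by distinctness of codes, while a clan with $C\subseteq S'$ is excluded because the witnesses make all ``columns'' $s'\mapsto(c_s(s'))_{s\in S}$ distinct. Hence a nontrivial $C\neq S\cup S'$ must meet both $S$ and $S'$, and spelling out the clan condition against external vertices yields two constraints: every vertex of $S\setminus C_S$ carries the class $e$ on all of $C_{S'}$, and every vertex of $C_S$ carries $e$ on every column of $S'\setminus C_{S'}$. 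If $C_{S'}=S'$ the first constraint forces an external $S$-vertex to have the reserved all-$e$ code, a contradiction; if $C_S=S$ the second forces some external column of $S'$ to be identically $e$, impossible because each column carries a non-$e$ entry coming from a witness.

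The remaining, and genuinely delicate, case is $C_S\subsetneq S$ with $C_{S'}\subsetneq S'$: here I must combine the two constraints with the explicit shape of the witness codes to reach a contradiction, and this mixed configuration is where I expect the bulk of the care to lie. I also note that the two column arguments above require $|S'|\geq 2$; the boundary value $k=1$, which forces $\varepsilon(\sigma)\geq n+1$ and makes a witness coincide with the all-$e$ code, degenerates and must be treated under the regime in which the lemma is applied (where $|S|\geq\varepsilon(\sigma)$, so that $k\geq 2$).
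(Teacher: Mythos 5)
Your overall scheme is indeed the paper's own: the same reserved all-$e$ code, the same witness codes ($e$ at one coordinate of $S'$, $f$ elsewhere), the same counting $|S|+1\leq\varepsilon(\sigma)^{|S'|}$ and $|S'|\leq|S|$ (the paper even places the witnesses along an injection $\varphi:S'\longrightarrow S$, exactly as you do). But you deviate from the paper in one construction detail, and it is fatal: you take $\tau[S']$ to be $e$-complete, whereas the paper stipulates $(s',t')_\tau\in\{f,f^\star\}$ for all $s'\neq t'\in S'$. With your choice, the ``mixed'' case you defer is not merely delicate — it is false. Concretely, let $\varepsilon(\sigma)=2$ with $E(\sigma)=\{e,f\}$ and $|S|=2$, so $|S'|=\mathfrak{log}_2(3)=2$. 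Assertion (1) then forces the two codes to be exactly the two witnesses (for $\varepsilon(\sigma)=2$, a function $S'\longrightarrow E(\sigma)$ whose $e$-preimage is a prescribed singleton is unique). Writing $S=\{s_1,s_2\}$ and $S'=\{s'_1,s'_2\}$ with $c_{s_i}$ the witness for $s'_i$, the pair $\{s_1,s'_2\}$ is a nontrivial clan of $\tau[S\cup S']$: the external vertex $s_2$ sees $s_1$ and $s'_2$ both through $e$ (since $c_{s_2}(s'_2)=e$), and so does the external vertex $s'_1$, since $(s'_1,s_1)_\tau=(c_{s_1}(s'_1))^\star=e^\star=e$ while $(s'_1,s'_2)_\tau=e$ by your $e$-completeness of $S'$. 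In graph language, your four-vertex extension is a $C_4$ where the paper's is a $P_4$. More generally, whenever $|S'|=2$ and every chosen code has an $e$-coordinate, the set $\{s\in S:c_s(s'_2)=e\}\cup\{s'_1\}$ is a clan, so your three conditions on the codes (injectivity, avoidance of $\overline{e}_{\restriction S'}$, presence of the witnesses) do not suffice.

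The repair is precisely the paper's device. With only $f$ and $f^\star$ inside $S'$, the mixed case closes in one line: if a clan $C$ meets both $S$ and $S'$ properly, pick $s'_j\in S'\setminus C$ and its witness vertex $w=\varphi(s'_j)$; if $w\notin C$, then $w$ sees $C\cap S$ through $e$ but $C\cap S'$ through $f$, while if $w\in C$, then the external vertex $s'_j$ sees $w$ through $(c_w(s'_j))^\star=e^\star=e$ but sees $C\cap S'$ through $f$ or $f^\star$, and $e\notin\{f,f^\star\}$ because $e$ is symmetric and $e\neq f$; either way $C$ is not a clan. Your remaining arguments — distinct codes against clans inside $S$, witness columns against clans inside $S'$, the reserved code against $C\supseteq S'$, non-constant columns against $C\supseteq S$ — are sound and match the paper's intended verification, as does your treatment of faithfulness via the section's construction remark and Lemmas~\ref{L1faith} and \ref{L2faith}. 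Finally, your observation about $k=1$ is correct and is in fact a defect of the statement itself: for $|S'|=\{a\}$ of size one, assertion (1) forces $(s,a)_\tau=e$ for some $s\in S$, and then $(S\setminus\{s\})\cup\{a\}$ is a nontrivial clan of $\tau[S\cup\{a\}]$, so (1) and (2) are incompatible; the paper's proof breaks at the same point (its witness $A(\varphi(s'))$ equals the excluded code $\overline{e}_{\restriction S'}$). That boundary defect is harmless because Lemma~\ref{L1ext} is invoked only in Theorem~\ref{T3bound}, where $c(\sigma)\geq\varepsilon(\sigma)$ yields $|S'|\geq 2$ — but it does not excuse the $e$-complete choice, which fails already at $|S'|=2$.
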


\begin{proof}
Since $S$ is finite, $S'$ is also and $\varepsilon(\sigma)^{|S'|-1}\leq|S|<\varepsilon(\sigma)^{|S'|}$. 
As $\varepsilon(\sigma)^{|S'|-1}\geq|S'|$, we get $|S'|\leq|S|$. 
Thus there exists an injection $\varphi:S'\longrightarrow S$. 
Since $\varepsilon(\sigma)\geq 2$, there exists $f\in E(\sigma)\setminus\{e\}$. 
Consider $A:\varphi(S')\longrightarrow E(\sigma)^{S'}\setminus\{\overline{e}_{\restriction S'}\}$ defined as follows. 
 For each $s'\in S'$, 
\begin{equation*}
\begin{array}{rccl}
A(\varphi(s')):&S'&\longrightarrow&E(\sigma)\\
&s'&\longmapsto&e\\
&t'\in S'\setminus\{s'\}&\longmapsto&f.
\end{array}
\end{equation*}
The function $A$ is injective. 
Since $|S|+1\leq\varepsilon(\sigma)^{|S'|}$, 
there exists an injection $B:S\longrightarrow E(\sigma)^{S'}\setminus\{\overline{e}_{\restriction S'}\}$ such that $B_{\restriction \varphi(S')}=A$. 
We consider a faithfull extension $\tau$ of $\sigma$ defined on $V(\sigma)\cup S'$ satisfying 
\begin{equation*}
\begin{cases}
\text{for any $s'\neq t'\in S'$, $(s',t')_\tau=f$ or $f^\star$,}\\
\text{and}\\
\text{for each $s\in S$, $\overrightarrow{\tau}(s)_{\restriction S'}=B(s)$.}
\end{cases}
\end{equation*}
For every $s'\in S'$, $(\overrightarrow{\tau}(\varphi(s')))^{-1}(\{e\})\cap S'=\{s'\}$, and 
it is simple to verify that $\tau[S\cup S']$ is primitive. 
\end{proof}

\begin{prop}\label{P1ext}
Let $\sigma$ be a reversible 2-structure such that $\varepsilon(\sigma)\geq 2$. 
Consider a primitive, reversible and infinite 2-structure $\sigma'$ such that 
$V(\sigma)\cap V(\sigma')=\emptyset$, 
$\varepsilon_a(\sigma')\leq\varepsilon_a(\sigma)$, $\varepsilon_s(\sigma')\leq\varepsilon_s(\sigma)$ 
and $\varepsilon(\sigma)<\varepsilon(\sigma)^{\nu(\sigma')}$.

For each $S\subseteq V(\sigma)$ such that 
$|S|\leq\varepsilon(\sigma)^{\nu(\sigma')}$, there exists an extension $\tau$ of $\sigma$ and $\sigma'$ to $V(\sigma)\cup V(\sigma')$ such that $\tau$ is a faithful extension of $\sigma$ and $\tau[S\cup V(\sigma')]$ is primitive. 
\end{prop}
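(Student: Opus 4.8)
The plan is to realize $\tau$ through the construction of faithful extensions of reversible $2$-structures recalled at the beginning of this section. I fix an injection $\iota\colon E(\sigma')\to E(\sigma)$ respecting the symmetric/asymmetric dichotomy and commuting with $\star$ (possible because $\varepsilon_a(\sigma')\le\varepsilon_a(\sigma)$ and $\varepsilon_s(\sigma')\le\varepsilon_s(\sigma)$, the asymmetric classes coming in pairs $\{e,e^{\star}\}$). The only remaining freedom is the cross function $\varphi\colon V(\sigma)\times V(\sigma')\to E(\sigma)$; for $s\in V(\sigma)$ write $g_s=\varphi(s,\cdot)\colon V(\sigma')\to E(\sigma)$, so that $(s,v')_\tau=g_s(v')$ and, $\tau$ being reversible by Lemma~\ref{L1faith}, $(v',s)_\tau=(g_s(v'))^{\star}$. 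On $(V(\sigma)\setminus S)\times V(\sigma')$ the values of $\varphi$ play no role and may be taken constant. Since $\tau$ is an extension of $\sigma'$, we have $\tau[V(\sigma')]=\sigma'$, which is primitive; if $S=\emptyset$ the conclusion is immediate, so I assume $S\neq\emptyset$.

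I claim that $\tau[S\cup V(\sigma')]$ is primitive as soon as the family $(g_s)_{s\in S}$ satisfies: (1) $s\mapsto g_s$ is injective; (2) each $g_s$ is non-constant; and (3) for every $s\in S$ and every $v'\in V(\sigma')$, the restriction of $g_s$ to $V(\sigma')\setminus\{v'\}$ differs from the function $w'\mapsto(\iota((w',v')_{\sigma'}))^{\star}$. To verify the claim, let $C$ be a clan of $\tau[S\cup V(\sigma')]$ with $|C|\ge 2$. Using that the trace of a clan on a subset is a clan of the induced substructure and that $\sigma'$ is primitive, $C\cap V(\sigma')$ is empty, a singleton, or all of $V(\sigma')$, while $C\cap S$ is a clan of $\tau[S]=\sigma[S]$. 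If $C\subseteq V(\sigma')$ then $C=V(\sigma')$, excluded by (2). If $C\subseteq S$, then reading homogeneity of $C$ from each $v'\in V(\sigma')$ yields $g_c(v')=g_d(v')$ for all $c,d\in C$, so $g_c=g_d$, contradicting (1). In the mixed case $C\cap S\neq\emptyset\neq C\cap V(\sigma')$: when $V(\sigma')\subseteq C$ but $C\neq S\cup V(\sigma')$, homogeneity from any $s\in S\setminus C$ forces $g_s$ to be constant on $V(\sigma')$, contradicting (2); when $C\cap V(\sigma')=\{v'_0\}$, homogeneity from the vertices $w'\in V(\sigma')\setminus\{v'_0\}$ forces $g_c(w')=(\iota((w',v'_0)_{\sigma'}))^{\star}$ for every $c\in C\cap S$, contradicting (3). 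Hence the only clan with $|C|\ge 2$ is $S\cup V(\sigma')$, so $\tau[S\cup V(\sigma')]$ is primitive.

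It remains to produce a family $(g_s)_{s\in S}$ enjoying (1)--(3). Properties (2) and (3) each forbid only few functions $V(\sigma')\to E(\sigma)$: the $\varepsilon(\sigma)$ constant ones, and, for each of the $\nu(\sigma')$ vertices $v'$, the $\varepsilon(\sigma)$ functions that extend its germ. Thus the set of forbidden functions has cardinality at most $\varepsilon(\sigma)+\varepsilon(\sigma)\nu(\sigma')=\varepsilon(\sigma)\nu(\sigma')=\max(\varepsilon(\sigma),\nu(\sigma'))$, the last equalities holding because $\nu(\sigma')$ is infinite. By Cantor's theorem $\varepsilon(\sigma)^{\nu(\sigma')}\ge 2^{\nu(\sigma')}>\nu(\sigma')$, while $\varepsilon(\sigma)^{\nu(\sigma')}>\varepsilon(\sigma)$ is exactly the standing hypothesis; hence $\varepsilon(\sigma)^{\nu(\sigma')}>\max(\varepsilon(\sigma),\nu(\sigma'))$, so the admissible functions still number $\varepsilon(\sigma)^{\nu(\sigma')}\ge|S|$. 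Choosing $s\mapsto g_s$ to be any injection of $S$ into the admissible functions secures (1)--(3) at once and finishes the construction.

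The delicate point is the mixed clan with $|C\cap V(\sigma')|=1$: it is precisely what dictates condition (3), and the ensuing cardinal count is where the hypothesis $\varepsilon(\sigma)<\varepsilon(\sigma)^{\nu(\sigma')}$ is used in an essential way, guaranteeing that enough germ-avoiding functions survive. The infiniteness of $\sigma'$ enters both here (so that $V(\sigma')\setminus\{v'_0\}\neq\emptyset$) and in the arithmetic identity $\varepsilon(\sigma)\nu(\sigma')=\max(\varepsilon(\sigma),\nu(\sigma'))$, explaining why the finite case must be treated separately.
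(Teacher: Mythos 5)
Your proof is correct and takes essentially the same route as the paper's: fix the injection $\iota$ respecting $E_a$, $E_s$ and $\star$, forbid the $\varepsilon(\sigma)$ constant functions and, for each $v'\in V(\sigma')$, the $\varepsilon(\sigma)$ extensions of $\overrightarrow{\sigma'}(v')$ (your condition (3) is exactly this set, since $(\iota((w',v')_{\sigma'}))^\star=\iota((v',w')_{\sigma'})$), and then the identical cardinal count $\varepsilon(\sigma)^{\nu(\sigma')}>\max(\varepsilon(\sigma),\nu(\sigma'))$ provides the injection from $S$ into the admissible functions. The only difference is that you write out the clan analysis behind the paper's closing remark that primitivity ``is not difficult to verify'', and that analysis is sound.
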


\begin{proof}
Since $\varepsilon_a(\sigma')\leq\varepsilon_a(\sigma)$ and 
$\varepsilon_s(\sigma')\leq\varepsilon_s(\sigma)$, there exists an injection 
$\iota:E(\sigma')\longrightarrow E(\sigma)$ such that 
$\iota(E_a(\sigma'))\subseteq E_a(\sigma)$, 
$\iota(E_s(\sigma'))\subseteq E_s(\sigma)$ and $\iota(e^\star)=\iota(e)^\star$ 
for every $e\in E_a(\sigma')$. 
In what follows, we identify $e'\in E(\sigma')$ with $\iota(e')\in E(\sigma)$. 

Let $v'\in V(\sigma')$. 
We have 
$\overrightarrow{\sigma'}(v'):V(\sigma')\setminus\{v'\}\longrightarrow E(\sigma)$. 
Denote by $\mathcal{F}_{v'}$ the family of the extensions of 
$\overrightarrow{\sigma'}(v')$ to $V(\sigma')$. 
Also set $\overline{e}:V(\sigma')\longrightarrow\{e\}$ for each $e\in E(\sigma)$. 
For each $v'\in V(\sigma')$, we have $|\mathcal{F}_{v'}|=\varepsilon(\sigma)$. 
Thus 
$$|(\bigcup_{v'\in V(\sigma')}\mathcal{F}_{v'})\cup\{\overline{e}:e\in E(\sigma)\}|\leq
\nu(\sigma')\varepsilon(\sigma)+\varepsilon(\sigma)=\max(\varepsilon(\sigma),\nu(\sigma')).$$
Furthermore, as $\varepsilon(\sigma)\geq 2$, $\varepsilon(\sigma)^{\nu(\sigma')}>\nu(\sigma')$. 
Since $\varepsilon(\sigma)^{\nu(\sigma')}>\varepsilon(\sigma)$ by hypothesis, we get 
$\varepsilon(\sigma)^{\nu(\sigma')}>\max(\varepsilon(\sigma),\nu(\sigma')).$ 
Therefore 
$$|E(\sigma)^{V(\sigma')}\setminus
((\bigcup_{v'\in V(\sigma')}\mathcal{F}_{v'})\cup\{\overline{e}:e\in E(\sigma)\})|=\varepsilon(\sigma)^{\nu(\sigma')}.$$
As $|S|\leq\varepsilon(\sigma)^{\nu(\sigma')}$, 
there exists an injection 
$$A:S\longrightarrow E(\sigma)^{V(\sigma')}\setminus 
((\bigcup_{v'\in V(\sigma')}\mathcal{F}_{v'})\cup\{\overline{e}:e\in E(\sigma)\}).$$
Consider the faithful extension $\tau$ of $\sigma$ defined on $V(\sigma)\cup V(\sigma')$ as follows. 
Let $e\in E(\sigma)$. 
Given $u\neq v\in V(\sigma)\cup V(\sigma')$, 
\begin{equation*}
(u,v)_\tau=\ 
\begin{cases}
\text{$(u,v)_\sigma$ if $u,v\in V(\sigma)$,}\\
\text{$(u,v)_{\sigma'}$ if $u,v\in V(\sigma')$,}\\
\text{$A(u)(v)$ if $u\in S$ and $v\in V(\sigma')$,}\\
\text{$e$ if $u\in V(\sigma)\setminus S$ and $v\in V(\sigma')$.}
\end{cases}
\end{equation*}
It is not difficult to verify that $\tau[S\cup V(\sigma')]$ is primitive. 
\end{proof}

The rather technical appearing conditions of the following two results permit their use in different ways throughout the next subsection. 
The usefulness of dense bicolorations of traverses appears in the first one.

\begin{lem}\label{L0bound}
Let $\sigma$ be a reversible 2-structure such that $\varepsilon(\sigma)\geq 2$. 
Consider a traverse $T$ of $\sigma$ and a dense bicoloration $\beta$ of $T$. 
Consider a set $S'$ such that $S'\neq\emptyset$ and $S'\cap V(\sigma)=\emptyset$. 
Let $A_0\neq A_1\in E(\sigma)^{S'}$. 
Also let $\mathscr{F}\subseteq\mathscr{L}(\sigma)\cup\mathscr{P}(\sigma)$. 
Consider a faithful extension $\tau$ of $\sigma$ to $V(\sigma)\cup S'$ satisfying
\begin{subequations}\label{E0L0bound}
\begin{align}
&\text{for each $C\in\mathscr{C}(\sigma)$, 
$\mathbb{C}_{\geq 2}(\sigma[C])\cap\mathbb{C}(\tau)=\emptyset$;}\label{E1L0bound}\\
&\text{for each $X\in\mathscr{F}$, 
$\tau[X\cup S']$ is primitive;}\label{E2L0bound}\\
&\text{and}\nonumber\\
&\text{for each $v\in V(\sigma)\setminus(C(\sigma)\cup (\bigcup\mathscr{F}))$, 
$\overrightarrow{\tau}(v)_{\restriction S'}=A_{\beta(v)}$.}\label{E3L0bound}
\end{align}
\end{subequations}
Then, we have 
\begin{equation*}
\mathbb{C}_{\geq 2}(\sigma)\cap\mathbb{C}(\tau)=\emptyset.
\end{equation*}
\end{lem}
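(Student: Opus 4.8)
The plan is to reformulate membership in $\mathbb{C}(\tau)$ as constancy of a ``profile'' and then argue by contradiction. Since $\sigma$ is reversible and $\tau$ is faithful, $\tau$ is reversible by Lemma~\ref{L1faith}; hence for $C\subseteq V(\sigma)$ with $|C|\geq 2$ the class $(s',c)_\tau$ is recovered from $(c,s')_\tau$, and because $C$ is already a clan of $\sigma=\tau[V(\sigma)]$ the only constraints come from $S'$. Thus $C\in\mathbb{C}(\tau)$ if and only if the profile map $v\longmapsto\overrightarrow{\tau}(v)_{\restriction S'}$ is constant on $C$. So it suffices to show that every $C\in\mathbb{C}_{\geq 2}(\sigma)$ carries two elements with distinct profiles. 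I fix, for contradiction, some $C\in\mathbb{C}_{\geq 2}(\sigma)\cap\mathbb{C}(\tau)$, so that the profile takes a single value $p$ on $C$.

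The first real step is a pruning step using \eqref{E1L0bound} and \eqref{E2L0bound}. For $M\in\mathscr{C}(\sigma)$ the set $M\cap C$ is a clan of $\sigma$ contained in $C$, hence has constant profile, so $M\cap C\in\mathbb{C}(\tau)$; if $|M\cap C|\geq 2$ this is a nontrivial element of $\mathbb{C}_{\geq 2}(\sigma[M])\cap\mathbb{C}(\tau)$, contradicting \eqref{E1L0bound}. Likewise, for $X\in\mathscr{F}$ the clan $X\cap C$ would be a nontrivial clan of the primitive $\tau[X\cup S']$ furnished by \eqref{E2L0bound}. Hence every maximal complete clan and every member of $\mathscr{F}$ meets $C$ in at most one point. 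In particular, if $\sigma[C]$ were complete then Lemma~\ref{L1clanmax}(1) would place $C$ inside some $M\in\mathscr{C}(\sigma)$ with $|M\cap C|=|C|\geq 2$, already a contradiction; so $\sigma[C]$ is not complete.

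The core of the argument handles the ``atomic'' situation in which $C$ lies in a single nontrivial $\simeq_\sigma$-class $D$, i.e.\ $D\in\mathscr{C}(\sigma)\cup\mathscr{L}(\sigma)\cup\mathscr{P}(\sigma)$ by Theorem~\ref{T1equivalence}. The $\mathscr{C}$-case and the case $D\in\mathscr{F}$ are disposed of by the pruning step and by \eqref{E2L0bound} respectively. So assume $D\in(\mathscr{L}(\sigma)\cup\mathscr{P}(\sigma))\setminus\mathscr{F}$. Since the families $\mathscr{C},\mathscr{L},\mathscr{P}$ are pairwise disjoint (they are $\simeq_\sigma$-classes) and $\mathscr{F}\subseteq\mathscr{L}\cup\mathscr{P}$, the class $D$ is disjoint from $C(\sigma)\cup(\bigcup\mathscr{F})$, so every vertex of $C$ is generic and \eqref{E3L0bound} gives $\overrightarrow{\tau}(v)_{\restriction S'}=A_{\beta(v)}$ on $C$. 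Constancy of the profile together with $A_0\neq A_1$ then forces $\beta$ to be constant on $C$. But $C$ is an interval of $T$ of size $\geq 2$: if $D\in\mathscr{P}(\sigma)$ then $\sigma[D]$ is primitive (Proposition~\ref{P1clanmax}(3)), so the clan $C\subseteq D$ must equal $D$, an interval by Assertion~A1; if $D\in\mathscr{L}(\sigma)$ then $C\in\mathbb{C}(\sigma[D])\subseteq\mathbb{C}(T)$ by Corollary~\ref{C1trav}. Density of $\beta$ then makes both colours occur on $C$, contradicting constancy of $\beta$.

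To reduce an arbitrary $C$ to the atomic case I would use Lemma~\ref{L1nonprime} and the Gallai decomposition (Theorem~\ref{Tgallai}) of $\widetilde{C}$: either $C$ lies in a single nontrivial $\simeq_\sigma$-class, which is atomic, or the decomposition of $\widetilde{C}$ has a block $X\in\mathbb{G}_{\geq 2}(\sigma[\widetilde{C}])\subseteq\mathbb{P}_{\geq 2}(\sigma)$ with $X\subseteq C$; then $X$ again lies in $\mathbb{C}_{\geq 2}(\sigma)\cap\mathbb{C}(\tau)$ and sits strictly lower in the clan tree, so the process repeats. In the finite case this descent terminates at an atomic clan and yields the contradiction. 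The genuine obstacle is the infinite case, where the descent can be an infinite chain of prime clans whose ``bottom'' is a limit $C$ with $\mathbb{G}(\sigma[C])=\emptyset$; I would treat such a limit directly, showing $\sigma[C]$ is linear or primitive (it cannot be complete, since completeness yields singleton Gallai blocks), and then applying the density argument of the previous paragraph to the $T$-interval $C$ (Assertion~A1). The delicate point is that, after the pruning step, the non-generic points of $C$ are isolated, and one must invoke maximality of the complete and linear clans to rule out their filling a whole $T$-subinterval; this interplay between pruning, maximality, and density of $\beta$ is where the real work of the proof lies.
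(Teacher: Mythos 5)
Your first two steps are sound and coincide with the paper's opening move: the profile reformulation of membership in $\mathbb{C}(\tau)$ (with Lemma~\ref{L1faith} supplying reversibility of $\tau$), the pruning of $\mathscr{C}(\sigma)$ and $\mathscr{F}$ via \eqref{E1L0bound} and \eqref{E2L0bound}, and the atomic case --- a clan contained in a single class of $\mathscr{C}(\sigma)\cup\mathscr{L}(\sigma)\cup\mathscr{P}(\sigma)$ --- which you handle exactly as the paper does, with Corollary~\ref{C1trav} or Assertion~A1 plus density of $\beta$. The genuine gap is the reduction of an arbitrary $C\in\mathbb{C}_{\geq 2}(\sigma)$ to that atomic case. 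Your descent down the clan tree terminates only when $V(\sigma)$ is finite, as you concede, and your fallback for the infinite case rests on a false claim: a limit prime clan $C$ (one with $\mathbb{G}(\sigma[C])=\emptyset$) need not induce a linear or primitive 2-substructure --- excluding completeness excludes only one of many possibilities, and one can build, say, a symmetric limit by stacking an infinite strictly increasing chain of prime clans with non-complete primitive quotients, which is neither complete, linear, nor primitive. You then flag the residual ``interplay between pruning, maximality, and density'' as where the real work lies, i.e., you leave that work undone.

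The paper closes this gap without any descent, by interposing one statement your proposal never proves: $\mathbb{P}_{\geq 2}(\sigma)\cap\mathbb{C}(\tau)=\emptyset$, established by a dichotomy on $X\in\mathbb{P}_{\geq 2}(\sigma)$. Either $X$ meets some $C\in\mathscr{C}(\sigma)\cup\mathscr{L}(\sigma)\cup\mathscr{P}(\sigma)$, in which case primality of $X$ forces $X\subseteq C$ or $C\subseteq X$, so $X\cap C\in\mathbb{C}_{\geq 2}(\sigma[C])$ and the atomic case supplies a witness $s'\in S'$ separating two points of $X$; or $X\cap(C(\sigma)\cup L(\sigma)\cup P(\sigma))=\emptyset$, in which case every vertex of $X$ falls under \eqref{E3L0bound} and $X$ is an interval of $T$ by Assertion~A1 --- this covers limits uniformly, with no structural analysis of $\sigma[X]$ whatsoever --- so density of $\beta$ separates two points of $X$. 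With this in hand, a single application of Lemma~\ref{L1nonprime} finishes the general case: writing $C=\bigcup\mathcal{F}$ for a nontrivial clan $\mathcal{F}$ of $\sigma[\widetilde{C}]/\mathbb{G}(\sigma[\widetilde{C}])$, either some $X\in\mathcal{F}\cap\mathbb{G}_{\geq 2}(\sigma[\widetilde{C}])$ lies in $C$ and is a nontrivial prime clan already separated by some $s'\in S'$, or $\mathcal{F}\subseteq\mathbb{G}_{1}(\sigma[\widetilde{C}])$, whence $v\simeq_\sigma w$ for all $v,w\in C$ and Theorem~\ref{T1equivalence} places $C$ inside an atomic class. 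So the missing idea is the direct treatment of prime clans --- not transfinite descent or a classification of limits --- and your ``delicate point'' about non-generic points filling a $T$-subinterval never arises, because density is only ever applied to intervals that avoid $C(\sigma)\cup L(\sigma)\cup P(\sigma)$ entirely.
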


\begin{proof}
To begin, given 
$C\in\mathscr{C}(\sigma)\cup\mathscr{L}(\sigma)\cup\mathscr{P}(\sigma)$, 
we prove that 
\begin{equation}\label{E4L0bound}
\mathbb{C}_{\geq 2}(\sigma[C])\cap\mathbb{C}(\tau)=\emptyset. 
\end{equation}
Clearly \eqref{E4L0bound} follows from \eqref{E1L0bound} when 
$C\in\mathscr{C}(\sigma)$. 
Assume that 
$C\in\mathscr{L}(\sigma)\cup\mathscr{P}(\sigma)$ and consider 
$D\in\mathbb{C}_{\geq 2}(\sigma[C])$. 
We have to prove that $D\not\in\mathbb{C}(\tau)$. 
If $C\in\mathscr{F}$, then 
$\tau[C\cup S']$ is primitive by \eqref{E2L0bound}. 
Thus $D\not\in\mathbb{C}(\tau[C\cup S'])$ and hence $D\not\in\mathbb{C}(\tau)$. 

Assume that $C\in(\mathscr{L}(\sigma)\cup\mathscr{P}(\sigma))\setminus\mathscr{F}$. 
We show that $D$ is an interval of $T$. 
By Corollary~\ref{C1trav}, if $C\in\mathscr{L}(\sigma)$, then $D$ is an interval of 
$T$. 
If $C\in\mathscr{P}(\sigma)$, then $D=C$ and $C\in\mathbb{P}(\sigma)$ because $\sigma[C]$ is primitive. 
By Assertion~A1, $D$ is an interval of $T$. 

By \eqref{E3L0bound}, since $D\subseteq V(\sigma)\setminus(C(\sigma)\cup (\bigcup\mathscr{F}))$, we have 
$\overrightarrow{\tau}(v)_{\restriction S'}=A_{\beta(v)}$ for every $v\in D$. 
As $\beta$ is a dense bicoloraton of $T$, 
there exist $u\neq v\in D$ such that $\beta(u)\neq\beta(v)$. 
Therefore $A_{\beta(u)}\neq A_{\beta(v)}$ and 
$\overrightarrow{\tau}(u)_{\restriction S'}\neq
\overrightarrow{\tau}(v)_{\restriction S'}$ by \eqref{E3L0bound}. 
Consequently there is $s'\in S'$ such that 
$(u,s')_\tau\neq(v,s')_\tau$ so that $D\not\in\mathbb{C}(\tau)$. 
It follows that \eqref{E4L0bound} holds. 

To continue we show that 
\begin{equation}\label{E5L0bound}
\mathbb{P}_{\geq 2}(\sigma)\cap\mathbb{C}(\tau)=\emptyset. 
\end{equation}
Given $X\in\mathbb{P}_{\geq 2}(\sigma)$, we distinguish the following two cases to show that $X\not\in\mathbb{C}(\tau)$. 

First, assume that there exists $C\in\mathscr{C}(\sigma)\cup\mathscr{L}(\sigma)\cup\mathscr{P}(\sigma)$ such that $X\cap C\neq\emptyset$. 
As $X\in\mathbb{P}(\sigma)$, $X\subseteq C$ or $C\subseteq X$. 
Thus $X\cap C=X$ or $C$ and hence $X\cap C\in\mathbb{C}_{\geq 2}(\sigma[C])$. 
It follows from \eqref{E4L0bound} that there is $s'\in S'$ such that $s'\not\longleftrightarrow_\sigma X\cap C$. 
Therefore $s'\not\longleftrightarrow_\sigma X$ and $X\not\in\mathbb{C}(\tau)$. 

Second, assume that $X\cap(C(\sigma)\cup L(\sigma)\cup P(\sigma))=\emptyset$. 
By Assertion~A1, $X$ is an interval of $T$. 
Since $\beta$ is a dense bicoloration of $T$, there are $x\neq y\in X$ such that $\beta(x)\neq\beta(y)$. 
Thus $A_{\beta(x)}\neq A_{\beta(y)}$ and 
$\overrightarrow{\tau}(x)_{\restriction S'}\neq\overrightarrow{\tau}(y)_{\restriction S'}$. 
by \eqref{E3L0bound}. 
Consequently there is $s'\in S'$ such that 
$(x,s')_\tau\neq(y,s')_\tau$ so that 
$X\not\in\mathbb{C}(\tau)$. 
It follows that \eqref{E5L0bound} holds. 

To conclude, consider $C\in\mathbb{C}_{\geq 2}(\sigma)$. 
We must show that $C\not\in\mathbb{C}(\tau)$. 
By \eqref{E5L0bound}, assume that 
$C\not\in\mathbb{P}(\sigma)$. 
By Lemma~\ref{L1nonprime}, $\widetilde{C}\in\mathbb{P}(\sigma)\setminus\mathbb{L}(\sigma)$ and 
$C=\bigcup\mathcal{F}$ where $\mathcal{F}$ is a nontrivial clan of $\sigma[\widetilde{C}]/\mathbb{G}(\sigma[\widetilde{C}])$. 
We distinguish the following two cases. 

First, assume that there exists $X\in\mathcal{F}\cap\mathbb{G}_{\geq 2}(\sigma[\widetilde{C}])$. 
We have $X\in\mathbb{P}_{\geq 2}(\sigma)$ and 
$X\not\in\mathbb{C}(\tau)$ by \eqref{E5L0bound}. 
As $X\subseteq C$, we obtain $C\not\in\mathbb{C}(\tau)$. 

Second, assume that $\mathcal{F}\subseteq\mathbb{G}_{1}(\sigma[\widetilde{C}])$. 
Hence $\{v\}\in\mathbb{G}(\sigma[\widetilde{C}])$ for every $v\in C$. 
By maximality of elements of $\mathbb{G}(\sigma[\widetilde{C}])$, $\widehat{\{v\}}=\widetilde{C}$ for every $v\in C$. 
We obtain that $v\simeq_\sigma w$ for any $v,w\in C$. 
Thus there is $D\in\mathfrak{C}_{\geq 2}(\sigma)$ such that $D\supseteq C$. 
By Theorem~\ref{T1equivalence}, $D\in\mathscr{C}(\sigma)\cup\mathscr{L}(\sigma)\cup\mathscr{P}(\sigma)$. 
By \eqref{E4L0bound}, $C\not\in\mathbb{C}(\tau)$. 
\end{proof}

The following is a simple consequence of Lemma~\ref{L0bound} when only 1-extensions are considered. 
The notion of an inclusive clans follows from it. 

\begin{cor}\label{C0bound}
Let $\sigma$ be a reversible 2-structure such that $\varepsilon(\sigma)\geq 2$. 
Consider a traverse $T$ of $\sigma$ and a dense bicoloration $\beta$ of $T$. 
Let $a\not\in V(\sigma)$. 
Consider $e_0\neq e_1\in E(\sigma)$. 
Also let $\mathscr{F}\subseteq\mathscr{L}(\sigma)\cup\mathscr{P}(\sigma)$. 
Consider a faithful extension $\tau$ of $\sigma$ to $V(\sigma)\cup\{a\}$ satisfying
\begin{itemize}
\item for each $C\in\mathscr{C}(\sigma)$, 
$\mathbb{C}_{\geq 2}(\sigma[C])\cap\mathbb{C}(\tau)=\emptyset$;
\item for each $X\in\mathscr{F}$, 
$\tau[X\cup\{a\}]$ is primitive;
\item for each $v\in V(\sigma)\setminus(C(\sigma)\cup (\bigcup\mathscr{F}))$, 
$(v,a)_{\tau}=e_{\beta(v)}$.
\end{itemize}
Then, the following holds for each $D_\tau\in\mathbb{C}_{\geq 2}(\tau)$
\begin{enumerate}
\item $a\in D_\tau$;
\item for every $C\in\mathbb{C}_{\geq 2}(\sigma)$, $C\cap(D_\tau\setminus\{a\})\neq\emptyset$; 
\item for every 
$C\in\mathbb{C}_{\geq 2}(\sigma)$ such that $\tau[C\cup\{a\}]$ is primitive, 
$C\subseteq D_\tau\setminus\{a\}$; 
in particular 
$C\subseteq D_\tau\setminus\{a\}$ 
for each $C\in\mathscr{F}$. 
\end{enumerate}
\end{cor}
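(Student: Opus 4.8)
The plan is to deduce everything from Lemma~\ref{L0bound} by specializing it to the single added vertex, i.e.\ by taking $S'=\{a\}$. Identifying $E(\sigma)^{\{a\}}$ with $E(\sigma)$ through evaluation at $a$, I would set $A_0=e_0$ and $A_1=e_1$; then the three displayed hypotheses \eqref{E1L0bound}, \eqref{E2L0bound} and \eqref{E3L0bound} of the lemma become, respectively, the three bullets of the corollary (in particular $\overrightarrow{\tau}(v)_{\restriction S'}=A_{\beta(v)}$ reads $(v,a)_{\tau}=e_{\beta(v)}$). Lemma~\ref{L0bound} then yields $\mathbb{C}_{\geq 2}(\sigma)\cap\mathbb{C}(\tau)=\emptyset$, and this single fact drives the three conclusions.

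For the first assertion I would argue by contradiction. If $a\notin D_\tau$, then $D_\tau\subseteq V(\sigma)$, so $D_\tau$ is a clan of $\tau[V(\sigma)]=\sigma$ of cardinality at least $2$, that is $D_\tau\in\mathbb{C}_{\geq 2}(\sigma)\cap\mathbb{C}(\tau)$, contradicting the displayed emptiness. Hence $a\in D_\tau$, and I set $D'=D_\tau\setminus\{a\}=D_\tau\cap V(\sigma)$, which is a clan of $\sigma$ (the trace of a clan of $\tau$ on an induced $2$-substructure is again a clan) with $|D'|\geq 1$.

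For the second assertion, fix $C\in\mathbb{C}_{\geq 2}(\sigma)$. Since $C$ is a clan of $\sigma$ but not of $\tau$, the clan condition can fail only at $a$; as $\tau$ is reversible by Lemma~\ref{L1faith}, the two directions collapse to one and there are $c,d\in C$ with $(c,a)_\tau\neq(d,a)_\tau$. Suppose for contradiction that $C\cap D'=\emptyset$ and pick $z\in D'$. Because $D_\tau$ is a clan of $\tau$ while $c,d\notin D_\tau$ and $a,z\in D_\tau$, I get $(c,a)_\tau=(c,z)_\tau=(c,z)_\sigma$ and $(d,a)_\tau=(d,z)_\tau=(d,z)_\sigma$; because $C$ is a clan of $\sigma$ and $z\notin C$, I get $(c,z)_\sigma=(d,z)_\sigma$. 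Chaining these equalities contradicts $(c,a)_\tau\neq(d,a)_\tau$, so $C\cap D'\neq\emptyset$. For the third assertion, take $C\in\mathbb{C}_{\geq 2}(\sigma)$ with $\tau[C\cup\{a\}]$ primitive; then $D_\tau\cap(C\cup\{a\})=(C\cap D')\cup\{a\}$ is a clan of $\tau[C\cup\{a\}]$, and by the second assertion it properly contains $a$, hence is nontrivial, so primitivity forces it to equal $C\cup\{a\}$, giving $C\subseteq D'$. The ``in particular'' clause follows at once, since $\mathscr{F}\subseteq\mathscr{L}(\sigma)\cup\mathscr{P}(\sigma)\subseteq\mathbb{C}_{\geq 2}(\sigma)$ and the second bullet makes $\tau[C\cup\{a\}]$ primitive for each $C\in\mathscr{F}$.

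The only genuinely delicate point is the reduction in the first paragraph: one must verify that the specialization $S'=\{a\}$ matches hypotheses \eqref{E1L0bound}--\eqref{E3L0bound} exactly, after which the remainder is a routine unwinding of the definitions of a clan and of primitivity. Beyond that, the sole bookkeeping subtlety is the passage between $(c,a)_\tau$ and $(a,c)_\tau$, which I expect to resolve immediately using the reversibility of $\tau$; I anticipate no serious obstacle.
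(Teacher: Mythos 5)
Your proposal is correct and matches the paper's intent exactly: the paper gives no separate proof, presenting Corollary~\ref{C0bound} as ``a simple consequence of Lemma~\ref{L0bound} when only 1-extensions are considered,'' which is precisely your specialization $S'=\{a\}$, $A_i=\bar{e_i}_{\restriction\{a\}}$, followed by the routine unwinding. Your derivations of the three conclusions from $\mathbb{C}_{\geq 2}(\sigma)\cap\mathbb{C}(\tau)=\emptyset$ (including the reversibility of $\tau$ via Lemma~\ref{L1faith} to collapse the two directions at $a$) are all sound.
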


\subsection{Main results}

Let $\sigma$ be a reversible 2-structure such that $\nu(\sigma)\geq 2$. 
Assume that $\varepsilon(\sigma)=1$, that is, $\sigma$ is complete. 
We consider extensions of $\sigma$ which are identifiable with graphs. 
When $\nu(\sigma)<\aleph_0$, it follows from~\cite[Theorem~2.45]{S71} that there exists a primitive extension $\tau$ of $\sigma$ such that $\varepsilon(\tau)=\varepsilon_s(\tau)=2$ if 
$|V(\tau)\setminus V(\sigma)|\geq\lceil\log_2(\nu(\sigma)+1)\rceil$. 
This result is easily adaptable when $\nu(\sigma)\geq\aleph_0$ by replacing 
$\lceil\log_2(\nu(\sigma)+1)\rceil$ by $\mathfrak{log}_2(\nu(\sigma))$. 

Now assume that $\varepsilon(\sigma)\geq 2$. 
We obtain the following lower bound. 

\begin{prop}\label{P1bound}
For a reversible 2-structure $\sigma$ such that $\varepsilon(\sigma)\geq 2$, we have 
$p(\sigma)\geq\mathfrak{log}_{\varepsilon(\sigma)}(c(\sigma))$.
\end{prop}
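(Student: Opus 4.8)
The plan is to fix a primitive and faithful extension $\tau$ of $\sigma$ realizing $\kappa=p(\sigma)$, write $S'=V(\tau)\setminus V(\sigma)$ so that $|S'|=\kappa$, and prove that every complete clan $C$ of $\sigma$ satisfies $|C|\leq\varepsilon(\sigma)^{\kappa}$. Taking the supremum over all complete clans then gives $c(\sigma)\leq\varepsilon(\sigma)^{\kappa}$, and unwinding the definition $\mathfrak{log}_{\varepsilon(\sigma)}(\nu)=\min(\{\lambda:\varepsilon(\sigma)^{\lambda}\geq\nu\})$ forces $\kappa\geq\mathfrak{log}_{\varepsilon(\sigma)}(c(\sigma))$, which is exactly the asserted lower bound.

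First I would record two standard consequences of faithfulness. By \eqref{E1faith} the map $\sigma\hookrightarrow\tau$ is bijective, so $\varepsilon(\tau)=\varepsilon(\sigma)$; and by Lemma~\ref{L1faith}, since $\sigma$ is reversible and $\varepsilon(\sigma)\geq 2$, the extension $\tau$ is reversible as well. Reversibility of $\tau$ means that for $c\in V(\sigma)$ and $s'\in S'$ the backward class $(s',c)_\tau$ is the $\star$ of the forward class $(c,s')_\tau$, so each vertex of $C$ is completely described, as far as $S'$ is concerned, by the single function $\overrightarrow{\tau}(c)_{\restriction S'}\in E(\tau)^{S'}$.

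The heart of the argument, which I expect to be the only genuine obstacle, is the following claim. Fix a complete clan $C$ of $\sigma$ with $|C|\geq 2$ and declare $c,d\in C$ equivalent when $\overrightarrow{\tau}(c)_{\restriction S'}=\overrightarrow{\tau}(d)_{\restriction S'}$; I claim every equivalence class $K$ is a clan of $\tau$. To check this one verifies, for $c,d\in K$ and $v\in V(\tau)\setminus K$, the three cases $v\in C\setminus K$, $v\in V(\sigma)\setminus C$ and $v\in S'$: in the first, completeness of $\sigma[C]$ gives $(c,v)_\sigma=(d,v)_\sigma$ and, symmetrically, $(v,c)_\sigma=(v,d)_\sigma$; in the second, the fact that $C$ is a clan of $\sigma$ yields the same equalities; in the third, equality holds by the defining relation together with the reversibility of $\tau$. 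Since $\tau$ is primitive and $C\subsetneq V(\tau)$ (a complete clan equal to $V(\tau)$ would make $\sigma$ complete, contradicting $\varepsilon(\sigma)\geq 2$), each such $K$ must be a trivial clan, hence a singleton. Therefore $c\mapsto\overrightarrow{\tau}(c)_{\restriction S'}$ is injective on $C$, giving $|C|\leq|E(\tau)^{S'}|=\varepsilon(\tau)^{|S'|}=\varepsilon(\sigma)^{\kappa}$.

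The remaining steps are routine cardinal arithmetic: the bound $|C|\leq\varepsilon(\sigma)^{\kappa}$ holds uniformly over all complete clans (and trivially for singletons), so $c(\sigma)=\sup_{C}|C|\leq\varepsilon(\sigma)^{\kappa}$, whence $p(\sigma)=\kappa\geq\mathfrak{log}_{\varepsilon(\sigma)}(c(\sigma))$ by definition of the cardinal logarithm. The degenerate case $c(\sigma)=1$ needs only a one-line remark, since $\mathfrak{log}_{\varepsilon(\sigma)}(1)=0$ makes the inequality vacuous; all the real content is packed into the clan verification of the previous paragraph, while everything around it is bookkeeping with $\varepsilon(\tau)=\varepsilon(\sigma)$ and the reversibility of $\tau$.
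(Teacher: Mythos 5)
Your proof is correct; the paper in fact states Proposition~\ref{P1bound} without proof, and your argument is exactly the counting technique implicit in the paper's toolkit (compare the map $A:\odot_e(\sigma)\longrightarrow E(\sigma)^{S'}$, $v\longmapsto\overrightarrow{\tau}(v)_{\restriction S'}$ in the proof of Lemma~\ref{L1bound}): each fiber of $c\longmapsto\overrightarrow{\tau}(c)_{\restriction S'}$ on a complete clan is a clan of the primitive extension $\tau$, hence a singleton, giving $|C|\leq\varepsilon(\sigma)^{p(\sigma)}$. You also correctly invoke Lemma~\ref{L1faith} to get reversibility of $\tau$ (handling the backward classes $(s',c)_\tau$) and \eqref{E1faith} to get $\varepsilon(\tau)=\varepsilon(\sigma)$, so nothing further is needed.
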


\begin{thm}\label{T1bound}
Consider a reversible 2-structure $\sigma$ such that $\varepsilon(\sigma)\geq 2$. If 
$\mathfrak{log}_{\varepsilon(\sigma)}(c(\sigma))\geq\aleph_0$, then 
$p(\sigma)=\mathfrak{log}_{\varepsilon(\sigma)}(c(\sigma))$. 
\end{thm}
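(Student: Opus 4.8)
The plan is to prove the two inequalities separately, the lower one being free. Indeed $p(\sigma)\geq\mathfrak{log}_{\varepsilon(\sigma)}(c(\sigma))$ is precisely Proposition~\ref{P1bound}, which uses only $\varepsilon(\sigma)\geq 2$; so the entire content is the reverse inequality $p(\sigma)\leq\kappa$, where I abbreviate $\kappa=\mathfrak{log}_{\varepsilon(\sigma)}(c(\sigma))$. First I would record the cardinal arithmetic that the hypothesis $\kappa\geq\aleph_0$ provides. By definition of the cardinal logarithm $\varepsilon(\sigma)^{\kappa}\geq c(\sigma)$, and $\kappa\geq\aleph_0$ forces $c(\sigma)$ to be infinite; moreover $\varepsilon(\sigma)<c(\sigma)$, since otherwise $\kappa\leq 1$. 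Hence $\varepsilon(\sigma)<c(\sigma)\leq\varepsilon(\sigma)^{\kappa}$, and also $\kappa<2^{\kappa}\leq\varepsilon(\sigma)^{\kappa}$, so that $\max(\varepsilon(\sigma),\kappa)<\varepsilon(\sigma)^{\kappa}$. This last strict inequality is exactly what will make the counting below succeed.

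To construct a primitive and faithful $\kappa$-extension I would fix a set $S'$ disjoint from $V(\sigma)$ with $|S'|=\kappa$ and place on it a primitive, reversible, infinite 2-structure $\sigma'$ with $\varepsilon_a(\sigma')\leq\varepsilon_a(\sigma)$ and $\varepsilon_s(\sigma')\leq\varepsilon_s(\sigma)$; such a $\sigma'$ exists since $\varepsilon(\sigma)\geq 2$ (an infinite primitive tournament when $\varepsilon_a(\sigma)\geq 1$, an infinite primitive graph otherwise). Next I choose a traverse $T$ of $\sigma$ by Proposition~\ref{ptraverse} and a dense bicoloration $\beta$ of $T$ by Proposition~\ref{dense}. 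Imitating the device of Proposition~\ref{P1ext}, I set aside the forbidden signatures, namely the constant maps together with, for each $v'\in S'$, the family $\mathcal{F}_{v'}$ of all extensions of $\overrightarrow{\sigma'}(v')$ to $S'$; there are at most $\varepsilon(\sigma)+\kappa\cdot\varepsilon(\sigma)=\max(\varepsilon(\sigma),\kappa)<\varepsilon(\sigma)^{\kappa}$ of these, so the set $\mathcal{A}\subseteq E(\sigma)^{S'}$ of admissible (hence non-constant) signatures still has cardinality $\varepsilon(\sigma)^{\kappa}$. I then prescribe $\tau$ on $V(\sigma)\cup S'$ through its cross-signatures: on each maximal complete clan $C\in\mathscr{C}(\sigma)$ (these being pairwise disjoint) I fix an injection $C\hookrightarrow\mathcal{A}$, possible because $|C|\leq c(\sigma)\leq\varepsilon(\sigma)^{\kappa}=|\mathcal{A}|$, and to every $v\in V(\sigma)\setminus C(\sigma)$ I assign $\overrightarrow{\tau}(v)_{\restriction S'}=A_{\beta(v)}$ for a fixed pair $A_0\neq A_1$ in $\mathcal{A}$. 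The opening construction of the present section turns this data into a genuine faithful extension $\tau$ of $\sigma$.

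The verification then proceeds in two stages. The displayed data satisfies the hypotheses of Lemma~\ref{L0bound} with $\mathscr{F}=\emptyset$: the injectivity on each complete clan is \eqref{E1L0bound}, condition \eqref{E2L0bound} is vacuous, and the bicoloration rule is \eqref{E3L0bound}; hence $\mathbb{C}_{\geq 2}(\sigma)\cap\mathbb{C}(\tau)=\emptyset$, so no nontrivial clan of $\sigma$ survives in $\tau$. It remains to exclude a nontrivial clan $D$ of $\tau$ meeting $S'$. Since $\tau[S']=\sigma'$ is primitive, $D\cap S'$ is a trivial clan of $\sigma'$, whence $|D\cap S'|\leq 1$ or $D\supseteq S'$. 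If $D\cap S'=\emptyset$ then $D\in\mathbb{C}_{\geq 2}(\sigma)$, which is impossible. If $D\cap S'=\{s'\}$, then picking $b\in D\cap V(\sigma)$ and testing the clan condition against the points of $S'\setminus\{s'\}$ forces $\overrightarrow{\tau}(b)$ to agree with $\overrightarrow{\sigma'}(s')$ on $S'\setminus\{s'\}$, i.e. $\overrightarrow{\tau}(b)\in\mathcal{F}_{s'}$, contradicting $\overrightarrow{\tau}(b)\in\mathcal{A}$. If $D\supseteq S'$ but $D\neq V(\tau)$, any $w\in V(\sigma)\setminus D$ is, by the clan condition, forced to see all of $S'$ through a single class, so that $\overrightarrow{\tau}(w)_{\restriction S'}$ is constant, again contradicting $\overrightarrow{\tau}(w)\in\mathcal{A}$. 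Thus $\tau$ is primitive, $p(\sigma)\leq\kappa$, and with Proposition~\ref{P1bound} equality follows.

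The two places where I expect genuine work are as follows. The first is the bookkeeping around $\sigma'$: producing $\sigma'$ with the prescribed numbers of symmetric and asymmetric classes, and checking that removing every $\mathcal{F}_{v'}$ together with the constants still leaves $\varepsilon(\sigma)^{\kappa}$ admissible signatures; this is exactly where $\mathfrak{log}_{\varepsilon(\sigma)}(c(\sigma))\geq\aleph_0$ enters, through $\max(\varepsilon(\sigma),\kappa)<\varepsilon(\sigma)^{\kappa}$. The second, and the real crux, is the case $|D\cap S'|=1$: this is the infinite incarnation of the ``$+1$'' phenomenon that in the finite theory of Boussa\"{\i}ri and Ille separates $\lceil\log(c)\rceil$ from $\lceil\log(c+1)\rceil$, and it collapses here precisely because an infinite $\kappa$ leaves enough room to forbid all of the $\mathcal{F}_{v'}$ while still coding every complete clan injectively. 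I would also stress that, unlike the finite argument, no vertex outside the complete clans is coded individually: the traverse and its dense bicoloration break all linear, primitive and generic clans inside Lemma~\ref{L0bound}, so the construction never needs $\nu(\sigma)\leq\varepsilon(\sigma)^{\kappa}$ and works even when $V(\sigma)$ is much larger than $\varepsilon(\sigma)^{\kappa}$.
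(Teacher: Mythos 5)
Your proposal is correct and follows the paper's proof in all essentials: the lower bound via Proposition~\ref{P1bound}, the cardinal inequality $\max(\varepsilon(\sigma),\kappa)<\varepsilon(\sigma)^{\kappa}$, a primitive reversible $\sigma'$ on $S'$, injective coding of each $C\in\mathscr{C}(\sigma)$ by non-forbidden signatures, a dense bicoloration of a traverse feeding Lemma~\ref{L0bound} with $\mathscr{F}=\emptyset$, and the final case analysis on $|D\cap S'|$. The only (harmless) deviation is that you inline the forbidden-signature counting of Proposition~\ref{P1ext} and Lemma~\ref{L2ext} into an admissible set $\mathcal{A}$ and derive the contradictions in the cases $|D\cap S'|=1$ and $S'\subseteq D\subsetneq V(\tau)$ directly from membership in $\mathcal{A}$, where the paper instead invokes the primitivity of the induced substructures $\tau[X\cup S']$ -- which is proved by exactly this counting.
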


\begin{proof}
By Propositon~\ref{P1bound}, it suffices to construct a primitive and faithful extension $\tau$ of $\sigma$ such that 
$|V(\tau)\setminus V(\sigma)|=\mathfrak{log}_{\varepsilon(\sigma)}(c(\sigma))$. 
Let $S'$ be a set such that $S'\cap V(\sigma)=\emptyset$ and 
$|S'|=\mathfrak{log}_{\varepsilon(\sigma)}(c(\sigma))$. 
Thus $\varepsilon(\sigma)^{|S'|}\geq c(\sigma)$. 
We use Proposition~\ref{P1ext} as follows. 
It is easy to construct a primitive and reversible 2-structure $\sigma'$ defined on $S'$ such that 
$\varepsilon(\sigma')=2$ and either $\varepsilon_a(\sigma')=0$ or $\varepsilon_s(\sigma')=0$. 
Hence we can assume that 
$\varepsilon_a(\sigma')\leq\varepsilon_a(\sigma)$ and 
$\varepsilon_s(\sigma')\leq\varepsilon_s(\sigma)$. 
Moreover, since $|S'|=\mathfrak{log}_{\varepsilon(\sigma)}(c(\sigma))\geq\aleph_0$, 
we have $\varepsilon(\sigma)<c(\sigma)$ and hence 
$\varepsilon(\sigma)^{|S'|}>\varepsilon(\sigma)$. 
Finally, consider $C\in\mathscr{C}(\sigma)$. 
We have $|C|\leq c(\sigma)\leq\varepsilon(\sigma)^{|S'|}$.  
By Proposition~\ref{P1ext} applied with $S=C$, 
there exists an extension $\tau_C$ of $\sigma$ and $\sigma'$ to $V(\sigma)\cup V(\sigma')$ such that $\tau_C$ is a faithful extension of $\sigma$ and $\tau_C[C\cup V(\sigma')]$ is primitive. 

Let $a\not\in V(\sigma')$. 
By Lemma~\ref{L2ext}, there exist distinct, primitive and faithful extensions $\sigma''_0$ and $\sigma''_1$ of $\sigma'$ to $V(\sigma')\cup\{a\}$. 
Set $A_0=\overrightarrow{\sigma''_0}(a)$ and $A_1=\overrightarrow{\sigma''_1}(a)$.  
We get $A_0,A_1:S'\longrightarrow E(\sigma')$ and $A_0\neq A_1$. 
As in the proof of Proposition~\ref{P1ext}, we identify the elements of $E(\sigma')$ to elements of $E(\sigma)$. 
In this way, $A_0,A_1\in E(\sigma)^{S'}$. 
By Proposition~\ref{ptraverse}, $\sigma$ admits a traverse $T$. 
Furthermore, $T$ admits a dense bicoloration $\beta$ by Proposition~\ref{dense}. 
Consider the faithful extension $\tau$ of $\sigma$ defined on $V(\sigma)\cup S'$ satisfying
\begin{itemize}
\item for each $C\in\mathscr{C}(\sigma)$, $\tau[C\cup S']=\tau_C[C\cup V(\sigma')]$;
\item for each $v\in V(\sigma)\setminus C(\sigma)$, $\overrightarrow{\tau}(v)_{\restriction S'}=A_{\beta(v)}$. 
\end{itemize} 
For each $C\in\mathscr{C}(\sigma)$, $\tau[C\cup S']=\tau_C[C\cup V(\sigma')]$ is primitive. 
Moreover, consider $v\in V(\sigma)\setminus C(\sigma)$. 
Since $\overrightarrow{\tau}(v)_{\restriction S'}=A_{\beta(v)}=\overrightarrow{\sigma''_{\beta(v)}}(a)$, 
$\tau[S'\cup\{v\}]$ is primitive. 
Consequently 
$\tau[X\cup S']$ is primitive for each $X\in\mathscr{C}(\sigma)\cup\{\{v\}:v\in V(\sigma)\setminus C(\sigma)\}$.

We prove that $\tau$ is primitive. 
Let $D_\tau\in\mathbb{C}_{\geq 2}(\tau)$. 
As $\tau[C\cup S']$ is primitive for each $C\in\mathscr{C}(\sigma)$, \eqref{E1L0bound} 
holds (see Lemma~\ref{L0bound}). 
By applying Lemma~\ref{L0bound} with $\mathscr{F}=\emptyset$, we obtain $D_\tau\cap S'\neq\emptyset$. 
Since $\tau[S']=\sigma'$ is primitive, we have either $|D_\tau\cap S'|=1$ or $D_\tau\cap S'=S'$. 

Suppose for a contradiction that the first instance holds and denote by $s'$ the unique element of $D_\tau\cap S'$. 
Since $|D_\tau|\geq 2$, there is $X\in\mathscr{C}(\sigma)\cup\{\{v\}:v\in V(\sigma)\setminus C(\sigma)\}$ such that 
$D_\tau\cap X\neq\emptyset$. 
As $\tau[X\cup S']$ is imprimitive, 
it would follow that $D_\tau\supseteq X\cup S'$. 

Consequently, we have $D_\tau\cap S'=S'$. 
Let $X\in\mathscr{C}(\sigma)\cup\{\{v\}:v\in V(\sigma)\setminus C(\sigma)\}$. 
Since $\tau[X\cup S']$ is primitive, we obtain 
$D_\tau\cap (X\cup S')=(X\cup S')$. 
Therefore $X\subseteq D_\tau$ for every $X\in\mathscr{C}(\sigma)\cup\{\{v\}:v\in V(\sigma)\setminus C(\sigma)\}$. 
It follows that $D_\tau=V(\sigma)\cup S'$. 
\end{proof}

\begin{thm}\label{T2bound}
Let $\sigma$ be a reversible 2-structure. 
\begin{enumerate} 
\item Assume that $c(\sigma)\geq 2$. 
If $c(\sigma)<\varepsilon(\sigma)$ or if $c(\sigma)=\varepsilon(\sigma)\sigma)\geq\aleph_0$, then 
$p(\sigma)=\mathfrak{log}_{\varepsilon(\sigma)}(c(\sigma))=1$. 
\item Assume that $c(\sigma)=1$. 
If $\varepsilon(\sigma)\geq 3$ or if $\varepsilon(\sigma)=\varepsilon_s(\sigma)=2$, then 
$p(\sigma)\leq 1$. 
\end{enumerate}
\end{thm}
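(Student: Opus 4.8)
The plan is to dispose of the arithmetic first. In Part~1, if $c(\sigma)<\varepsilon(\sigma)$ then $\varepsilon(\sigma)^1\ge c(\sigma)$ while $\varepsilon(\sigma)^0=1<c(\sigma)$ since $c(\sigma)\ge 2$; and if $c(\sigma)=\varepsilon(\sigma)\ge\aleph_0$ the same two inequalities hold. Hence $\mathfrak{log}_{\varepsilon(\sigma)}(c(\sigma))=1$, and in either instance $\varepsilon(\sigma)\ge 3$. A complete clan of size $\ge 2$ is a nontrivial clan, so $\sigma$ is imprimitive and Proposition~\ref{P1bound} gives $p(\sigma)\ge\mathfrak{log}_{\varepsilon(\sigma)}(c(\sigma))=1$. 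Thus in Part~1 it remains to produce a primitive and faithful $1$-extension, and in Part~2 the same construction will yield $p(\sigma)\le 1$. Throughout I fix $a\notin V(\sigma)$, a traverse $T$ of $\sigma$ (Proposition~\ref{ptraverse}), a dense bicoloration $\beta$ of $T$ (Proposition~\ref{dense}), and $e_0\neq e_1\in E(\sigma)$.

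Next I would assemble a faithful $1$-extension $\tau_\beta$ of $\sigma$ fitting the hypotheses of Corollary~\ref{C0bound} with $\mathscr{F}=\mathscr{L}(\sigma)\cup\mathscr{P}(\sigma)$. For each maximal complete clan $C\in\mathscr{C}(\sigma)$ I attach $a$ so that $\tau_\beta[C\cup\{a\}]$ is primitive: here $\lambda_\sigma(\widetilde{C})\in E_s(\sigma)$, so $C$ is $e$-complete for a symmetric $e$, and since $\mathfrak{log}_{\varepsilon(\sigma)}(c(\sigma))=1$ forces $\varepsilon(\sigma)\ge|C|$, Lemma~\ref{L1ext} (for $|C|<\aleph_0$) or the analogous direct injective attachment $C\to E(\sigma)$ (for $|C|$ infinite) applies; in Part~2 one has $\mathscr{C}(\sigma)=\emptyset$. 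For each $X\in\mathscr{L}(\sigma)$ I invoke Corollary~\ref{P1trav}, legitimate since $\varepsilon(\sigma)\ge 3$ in Part~1 and Part~2 with $\varepsilon(\sigma)\ge 3$, while $\mathscr{L}(\sigma)=\emptyset$ in the symmetric case $\varepsilon(\sigma)=\varepsilon_s(\sigma)=2$. For each $X\in\mathscr{P}(\sigma)$, $\sigma[X]$ is already primitive and Lemma~\ref{L2ext} furnishes a primitive faithful attachment, the relevant count being positive in every case arising here (minimal primitive symmetric $2$-structures have at least four vertices). Finally I set $(v,a)_{\tau_\beta}=e_{\beta(v)}$ for every free vertex $v\in V(\sigma)\setminus(C(\sigma)\cup L(\sigma)\cup P(\sigma))$. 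Corollary~\ref{C0bound} then shows that each $D\in\mathbb{C}_{\ge 2}(\tau_\beta)$ contains $a$ and that $J:=D\setminus\{a\}$ is an inclusive clan of $\sigma$; when $D\neq V(\tau_\beta)$ this $J$ is proper, so $J\in\mathbb{I}(\sigma)\setminus\{V(\sigma)\}$.

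The heart of the argument is then to arrange that no proper inclusive clan $J$ makes $\{a\}\cup J$ a clan of $\tau_\beta$. I would use the structure of $\mathbb{I}(\sigma)$: by Theorem~\ref{T1inc} any two inclusive clans are comparable unless $\lambda_\sigma(\widetilde{J})\in E_a(\sigma)$, in which case the unique incomparable pair is $\{\widetilde{J}\setminus\min O_{\widetilde{J}},\widetilde{J}\setminus\max O_{\widetilde{J}}\}$, each omitting a single vertex of $\widetilde{J}$; in the symmetric case $\mathbb{I}(\sigma)$ is a chain outright by Corollary~\ref{C1inc}. For proper $J$ with $V(\sigma)\setminus J$ containing an interval of $T$ of size $\ge 2$ uniform towards $J$, density of $\beta$ supplies a free vertex $v$ with $e_{\beta(v)}\neq(v,J)_\sigma$, breaking $\{a\}\cup J$. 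Proposition~\ref{P2inc} confines the surviving dangers to co-singleton inclusive clans $V(\sigma)\setminus\{v\}$ with $v\in\{\min O_{\widetilde{J}},\max O_{\widetilde{J}}\}$, for which $\{a\}\cup J$ is a clan precisely when the single equation $e_{\beta(v)}=(v,J)_\sigma$ holds.

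The main obstacle, and the final step, is to annihilate these rigid co-singleton clans simultaneously. Mirroring the proof of Theorem~\ref{T2inc} (and the bookkeeping of Theorem~\ref{T2trav}), I would show that imprimitivity of $\tau_\beta$ forces an equation of the form $e_{\beta(v)}=(v,J)_\sigma$ at a distinguished vertex, and that, because $e_0\neq e_1$, this cannot hold for both $\beta$ and $1-\beta$; since $\beta\mapsto 1-\beta$ preserves density, all non-rigid clans stay broken under either choice. The delicate point is the asymmetric-top case, where two incomparable co-singleton clans coexist: one must verify that the two rigid equations collapse to a single condition violated by the switch, exactly as in Theorem~\ref{T2inc}. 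Consequently one of $\tau_\beta,\tau_{1-\beta}$ is primitive, yielding the desired primitive and faithful $1$-extension and hence $p(\sigma)=1$ in Part~1 and $p(\sigma)\le 1$ in Part~2.
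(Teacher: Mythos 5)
Your outline reproduces the paper's own proof almost step for step: the same three primitive attachments of $a$ (an injection $C\longrightarrow E(\sigma)\setminus\{e\}$ for $C\in\mathscr{C}(\sigma)$, Corollary~\ref{P1trav} for $\mathscr{L}(\sigma)$, Lemma~\ref{L2ext} for $\mathscr{P}(\sigma)$), the bicolored values $e_{\beta(v)}$ on the free vertices, Corollary~\ref{C0bound} with $\mathscr{F}=\mathscr{L}(\sigma)\cup\mathscr{P}(\sigma)$ to force every nontrivial clan of $\tau_\beta$ to be $\{a\}\cup J$ with $J\in\mathbb{I}(\sigma)$, and the switch $\beta\mapsto 1-\beta$ combined with Theorem~\ref{T1inc}. (Your parenthetical care about the $3$-vertex primitive case is sound, since the attachment may use all of $E(\sigma)$.) But there is one genuine gap, and it sits exactly at the step you flag as delicate: you fix merely $e_0\neq e_1$ at the outset, and your final claim that ``because $e_0\neq e_1$, this cannot hold for both $\beta$ and $1-\beta$'' is false in the asymmetric-top case.

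Concretely, when both $\tau_\beta$ and $\tau_{1-\beta}$ are imprimitive, Theorem~\ref{T1inc} yields $\widetilde{(C_\beta\setminus\{a\})}=V(\sigma)$, $\lambda_\sigma(V(\sigma))\in E_a(\sigma)$, and, with $\{u\}=\min O_{V(\sigma)}$ and $\{v\}=\max O_{V(\sigma)}$, the two clans are $V(\sigma)\setminus\{v\}$ and $V(\sigma)\setminus\{u\}$. Computing across them gives \emph{two} rigid equations at the \emph{two distinct} endpoints: $e_{\beta(v)}=(\lambda_\sigma(V(\sigma)))^\star$ and $e_{1-\beta(u)}=\lambda_\sigma(V(\sigma))$. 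These do not collapse to a single condition; they only force $\{e_0,e_1\}=\{\lambda_\sigma(V(\sigma)),(\lambda_\sigma(V(\sigma)))^\star\}$, which is entirely consistent with $e_0\neq e_1$. This failure mode is real: for a finite odd linear order one necessarily has $\{e_0,e_1\}=\{\lambda,\lambda^\star\}$, both $\tau_\beta$ and $\tau_{1-\beta}$ are imprimitive, and indeed $p=2$ — which is why $\varepsilon(\sigma)=\varepsilon_a(\sigma)=2$ is excluded from the statement and deferred to Theorem~\ref{T4bound}. The contrast with Theorem~\ref{T2inc}, which you invoke as a model, is that there the rigid equation \eqref{E2T2inc} sits at a \emph{single} vertex $u$ common to both bicolorations, so $e_{\beta(u)}\neq e_{1-\beta(u)}$ suffices; here it does not. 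The paper closes the gap by choosing $e_1\in E(\sigma)\setminus\{e_0,(e_0)^\star\}$, which is possible precisely under the stated hypotheses ($\varepsilon(\sigma)\geq 3$, or $\varepsilon(\sigma)=\varepsilon_s(\sigma)=2$, where $e_0^\star=e_0$), and then $\{e_0,e_1\}=\{\lambda_\sigma(V(\sigma)),(\lambda_\sigma(V(\sigma)))^\star\}$ contradicts that choice. Your proof is repaired by replacing ``fix $e_0\neq e_1$'' with this stronger choice — note that this is the one place where the theorem's hypotheses are actually consumed, so omitting it removes the content of the case distinction.
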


\begin{proof}
Let $a\not\in V(\sigma)$. 
We construct a primitive and faithful extension of $\sigma$ to $V(\sigma)\cup\{a\}$. 

First, assume that $\mathscr{C}(\sigma)\neq\emptyset$. 
Given $C\in\mathscr{C}(\sigma)$, 
$C$ is $e$-complete where $e\in E_s(\sigma)$. 
As $\mathscr{C}(\sigma)\neq\emptyset$, $c(\sigma)\geq 2$. 
Thus $\varepsilon(\sigma)> c(\sigma)$ or $\varepsilon(\sigma)\geq c(\sigma)$ and 
$\varepsilon(\sigma)\geq\aleph_0$, 
Since $c(\sigma)\geq |C|$, 
there exists an injection 
$A:C\longrightarrow E(\sigma)\setminus\{e\}$. 
We consider a faithful extension $\tau_C$ of $\sigma$ defined on $V(\sigma)\cup\{a\}$ satisfying 
$\overrightarrow{\tau_C}(a)_{\restriction C}=A$. 
It is easy to verify that $\tau_C[C\cup\{a\}]$ is primitive. 

Second, assume that $\mathscr{L}(\sigma)\neq\emptyset$ and consider 
$L\in\mathscr{L}(\sigma)$. 
We do not have $\varepsilon(\sigma)=\varepsilon_s(\sigma)=2$. 
Therefore $\varepsilon(\sigma)\geq 3$. 
By Corollary~\ref{P1trav}, there exists a faithful extension $\tau_L$ of $\sigma$ defined on $V(\sigma)\cup\{a\}$ such that $\tau_L[L\cup\{a\}]$ is primitive. 

Third, assume that $\mathscr{P}(\sigma)\neq\emptyset$ and consider 
$P\in\mathscr{P}(\sigma)$. 
By Lemma~\ref{L2ext}, there exists a faithful extension $\tau_P$ of $\sigma$ defined on $V(\sigma)\cup\{a\}$ such that 
$\tau_P[P\cup\{a\}]$ is primitive. 

By Proposition~\ref{ptraverse}, $\sigma$ admits a traverse $T$. 
Let $e_0\in E(\sigma)$. 
As $\varepsilon(\sigma)\geq 3$ or $\varepsilon(\sigma)=\varepsilon_s(\sigma)=2$, consider 
$e_1\in E(\sigma)\setminus\{e_0,(e_0)^\star\}$. 
With each dense bicoloration $\beta$ of $T$, we associate the faithful extension $\tau_\beta$ of $\sigma$ defined on $V(\sigma)\cup\{a\}$ satisfying 
\begin{itemize}
\item for every $C\in\mathscr{C}(\sigma)\cup\mathscr{L}(\sigma)\cup\mathscr{P}(\sigma)$, 
$\tau_\beta[C\cup\{a\}]=\tau_C[C\cup\{a\}]$;
\item for each $v\in V(\sigma)\setminus(C(\sigma)\cup L(\sigma)\cup P(\sigma))$, 
$(v,a)_{\tau_\beta}=e_{\beta(v)}$. 
\end{itemize}

We establish the following for each dense bicoloration $\beta$ of $T$. 
If $\tau_\beta$ is imprimitive, then for every nontrivial clan $C_\beta$ of $\tau_\beta$, we have $a\in C_\beta$ and $C_\beta\setminus\{a\}$ is an inclusive clan of $\sigma$. 
Assume that $\tau_\beta$ is imprimitive and consider a 
nontrivial clan $C_\beta$ of $\tau_\beta$. 

Given $C\in\mathscr{C}(\sigma)$, we have 
$\tau_\beta[C\cup\{a\}]=\tau_C[C\cup\{a\}]$ is primitive. 
Therefore, for every $D\in\mathbb{C}_{\geq 2}(\sigma[C])$, $D$ is not a clan of 
$\tau_\beta[C\cup\{a\}]$ and hence of $\tau_\beta$. 
Thus 
$\mathbb{C}_{\geq 2}(\sigma[C])\cap\mathbb{C}(\tau)=\emptyset$. 
By Corollary~\ref{C0bound} applied with $\mathscr{F}=\mathscr{L}(\sigma)\cup\mathscr{P}(\sigma)$, 
we obtain that $\mathbb{C}_{\geq 2}(\sigma)\cap\mathbb{C}(\tau_\beta)=\emptyset$. 
It follows that $a\in C_\beta$. 

Now we show that $C_\beta\setminus\{a\}\in\mathbb{I}(\sigma)$. 
By Corollary~\ref{C0bound}, 
$X\cap (C_\beta\setminus\{a\})\neq\emptyset$ for every 
$X\in\mathbb{P}_{\geq 2}(\sigma)$. 
Furthermore $C(\sigma)\cup L(\sigma)\cup P(\sigma)\subseteq C_\beta\setminus\{a\}$ 
by Corollary~\ref{C0bound}. 
Therefore 
$C_\beta\setminus\{a\}\in\mathbb{I}(\sigma)$. 

We conclude as follows. 
By Proposition~\ref{dense}, $T$ admits a dense bicoloration $\beta$. 
Assume that $\tau_\beta$ is imprimitive and consider a nontrivial clan $C_\beta$ of 
$\tau_\beta$. 
We have $a\in C_\beta$ and $C_\beta\setminus\{a\}\in\mathbb{I}(\sigma)$. 
Since $C(\sigma)\cup L(\sigma)\cup P(\sigma)\subseteq C_\beta\setminus\{a\}$, $(v,a)_{\tau_\beta}=e_{\beta(v)}$ for every $v\in V(\sigma)\setminus(C_\beta\setminus\{a\})$. 
Clearly $1-\beta$ is also a dense bicoloration of $T$. 
Similarly, assume that $\tau_{1-\beta}$ is imprimitive and consider a nontrivial clan 
$C_{1-\beta}$ of $\tau_{1-\beta}$. 
We have $a\in C_{1-\beta}$, $C_{1-\beta}\setminus\{a\}\in\mathbb{I}(\sigma)$ and 
$(v,a)_{\tau_{1-\beta}}=e_{1-\beta(v)}$ for every 
$v\in V(\sigma)\setminus(C_{1-\beta}\setminus\{a\})$. 
It follows that 
$(C_{\beta}\setminus\{a\})\cup(C_{1-\beta}\setminus\{a\})=V(\sigma)$. 
By Theorem~\ref{T1inc}, 
$\widetilde{(C_{\beta}\setminus\{a\})}=\widetilde{(C_{1-\beta}\setminus\{a\})}$. 
Hence $\widetilde{(C_{\beta}\setminus\{a\})}=V(\sigma)$. 
Furthermore, by Theorem~\ref{T1inc}, we have:
\begin{itemize}
\item $V(\sigma)\in\mathbb{P}(\sigma)\setminus\mathbb{L}(\sigma)$ and $\lambda_\sigma(V(\sigma))\in E_a(\sigma)$;
\item the linear order $O_{V(\sigma)}$ admits a smallest element $\{u\}$ and a largest element 
$\{v\}$ where $\{u\},\{v\}\in\mathbb{G}_{1}(\sigma[V(\sigma)])$;
\item by interchanging $C_{\beta}\setminus\{a\}$ and $C_{1-\beta}\setminus\{a\}$, we have 
$C_{\beta}\setminus\{a\}=V(\sigma)\setminus\{v\}$ and 
$C_{1-\beta}\setminus\{a\}=V(\sigma)\setminus\{u\}$. 
\end{itemize}
As $C_{\beta}\setminus\{a\}=V(\sigma)\setminus\{v\}$, we get 
$(u,v)_{\sigma}=(u,v)_{\tau_\beta}=(a,v)_{\tau_\beta}=((v,a)_{\tau_\beta})^\star=(e_{\beta(v)})^\star$. 
Similarly, as $C_{1-\beta}\setminus\{a\}=V(\sigma)\setminus\{u\}$, we get 
$(u,v)_{\sigma}=(u,v)_{\tau_{1-\beta}}=(u,a)_{\tau_{1-\beta}}=e_{1-\beta(u)}$. 
Since $\{u\}=\min O_{V(\sigma)}$ and $\{v\}=\max O_{V(\sigma)}$, we have 
$(u,v)_\sigma=\lambda_\sigma(V(\sigma))$. 
It would follow that $e_{\beta(v)}=(\lambda_\sigma(V(\sigma)))^\star$ and 
$e_{1-\beta(u)}=\lambda_\sigma(V(\sigma))$ so that 
$\{e_0,e_1\}=\{\lambda_\sigma(V(\sigma)),(\lambda_\sigma(V(\sigma)))^\star\}$ 
which contradicts  $e_1\in E(\sigma)\setminus\{e_0,(e_0)^\star\}$. 
Consequently $\tau_\beta$ or $\tau_{1-\beta}$ is primitive. 
\end{proof}

The following generalizes \cite[Theorem~1.4]{BI10} for finite graphs. 
In spite of the length of its proof, it is simpler and shorter than the original proof for finite graphs. 

\begin{thm}\label{T3bound}
Given a reversible 2-structure $\sigma$, if 
$2\leq\varepsilon(\sigma)\leq c(\sigma)<\aleph_0$, then 
$p(\sigma)\leq\lceil\log_{\varepsilon(\sigma)}(c(\sigma)+1)\rceil$.
\end{thm}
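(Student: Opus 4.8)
The plan is to produce a primitive and faithful extension $\tau$ of $\sigma$ that adds exactly $k:=\lceil\log_{\varepsilon(\sigma)}(c(\sigma)+1)\rceil=\mathfrak{log}_{\varepsilon(\sigma)}(c(\sigma)+1)$ vertices, which gives $p(\sigma)\le k$. First I record the arithmetic that pins down $k$. Since $\varepsilon(\sigma)\le c(\sigma)$ we have $\varepsilon(\sigma)^{1}\le c(\sigma)<c(\sigma)+1$, so $k\ge 2$; and by minimality of $k$, $\varepsilon(\sigma)^{k-1}\le c(\sigma)<\varepsilon(\sigma)^{k}$, whence $c(\sigma)<\varepsilon(\sigma)^{k}=|E(\sigma)^{S'}|$ and, using $\varepsilon(\sigma)\ge 2$ (so $\varepsilon(\sigma)^{k-1}\ge k$), also $k\le c(\sigma)$. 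Because $c(\sigma)<\aleph_0$ the supremum defining $c(\sigma)$ is attained: I fix a maximal complete clan $C_0\in\mathscr{C}(\sigma)$ with $|C_0|=c(\sigma)$, say $e_0$-complete with $e_0\in E_s(\sigma)$, and a set $S'$ disjoint from $V(\sigma)$ with $|S'|=k$. Note that $|S'|=\mathfrak{log}_{\varepsilon(\sigma)}(|C_0|+1)$, so $C_0$ is exactly the extremal clan for which Lemma~\ref{L1ext} applies with this $S'$.

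Next I build the attachment of $V(\sigma)$ to $S'$ piece by piece, always reusing the same $S'$. I choose a traverse $T$ of $\sigma$ (Proposition~\ref{ptraverse}) and a dense bicoloration $\beta$ of $T$ (Proposition~\ref{dense}). For the extremal clan I use Lemma~\ref{L1ext} verbatim to fix $\tau$ on $C_0\cup S'$: this makes $\tau[C_0\cup S']$ primitive and yields the separating property that each $s'\in S'$ is singled out by some $c\in C_0$ via $(c,s')_\tau=e_0$ (here $k=|S'|\le|C_0|$ is what makes the required injection $S'\hookrightarrow C_0$ exist). For every other $C\in\mathscr{C}(\sigma)$ I attach the vertices of $C$ to $S'$ by an injection $C\to E(\sigma)^{S'}$ avoiding constant patterns; this is possible since $|C|\le c(\sigma)<\varepsilon(\sigma)^{k}$ (enlarging the clan to $C_0$'s treatment when $|C|=c(\sigma)$), and it delivers hypothesis~\eqref{E1L0bound} of Lemma~\ref{L0bound} without requiring $\tau[C\cup S']$ to be primitive. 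For each $L\in\mathscr{L}(\sigma)$ (here $\varepsilon(\sigma)\ge 3$, since $c(\sigma)\ge\varepsilon(\sigma)\ge2$ forces a symmetric class while $\mathscr{L}(\sigma)\ne\emptyset$ forces an asymmetric pair) I use Corollary~\ref{P1trav}, and for each $P\in\mathscr{P}(\sigma)$ I use Lemma~\ref{L2ext}; transplanting these single-vertex primitivizations into $S'$ through two functions $A_0\ne A_1\in E(\sigma)^{S'}$, taken to be non-constant, I arrange $\tau[X\cup S']$ primitive for every $X\in\mathscr{F}:=\mathscr{L}(\sigma)\cup\mathscr{P}(\sigma)$, yielding~\eqref{E2L0bound}. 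Finally I set $\overrightarrow{\tau}(v)_{\restriction S'}=A_{\beta(v)}$ for every loose vertex $v\in V(\sigma)\setminus(C(\sigma)\cup L(\sigma)\cup P(\sigma))$, which is~\eqref{E3L0bound}. The whole construction is arranged so that, in addition, \emph{every} vertex of $V(\sigma)$ is non-constantly attached to $S'$.

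With \eqref{E1L0bound}--\eqref{E3L0bound} in place, Lemma~\ref{L0bound} gives $\mathbb{C}_{\ge 2}(\sigma)\cap\mathbb{C}(\tau)=\emptyset$, so no nontrivial clan of $\tau$ lies inside $V(\sigma)$ (clans restrict). It remains to show $\tau$ is primitive. Let $D$ be a nontrivial clan of $\tau$; then $D\not\subseteq V(\sigma)$, so $D\cap S'\ne\emptyset$. The endgame is propagation. Using the primitivity of the anchored piece $\tau[C_0\cup S']$ together with the separating property of $C_0$, I force $|D\cap(C_0\cup S')|\ge 2$, whence $D\cap(C_0\cup S')=C_0\cup S'$; in particular $S'\subseteq D$ and $C_0\subseteq D$. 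Once $S'\subseteq D$ (and $|S'|=k\ge 2$), any $w\in V(\sigma)\setminus D$ would have to see all of $D$, hence all of $S'$, uniformly — contradicting that $w$ is non-constantly attached to $S'$. Therefore $V(\sigma)\subseteq D$ and $D=V(\tau)$, so $\tau$ is primitive and $p(\sigma)\le k$.

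The main obstacle is precisely the uniform use of a single $S'$ of size $k$ across complete, linear and primitive clans of widely varying cardinality. Lemma~\ref{L1ext} is calibrated so that its separating property requires $|S'|\le|S|$, which holds only for the extremal clan $C_0$; for smaller complete clans $\tau[C\cup S']$ need not be primitive on its own, and (e.g.\ in the symmetric, graph-like regime) cannot be. The real work is thus to show that the rigidity supplied \emph{globally} by $C_0$ — not locally by each clan — both kills every clan inside $V(\sigma)$ via Lemma~\ref{L0bound} and, in the propagation step, forces a clan meeting $S'$ to swallow all of $S'$; the delicate case is $|D\cap S'|=1$ with the remaining elements of $D$ outside $C_0$, where the separating functions of $C_0$ and the non-constant attachment of every vertex are exactly what is needed. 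This mirrors the concluding argument of Theorem~\ref{T1bound}, but without the convenience of a primitive $\tau[S']$ (impossible when $k=2$), so the rigidity that was there free from a primitive structure on $S'$ must here be extracted from $C_0$.
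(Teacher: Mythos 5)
Your construction follows the paper's skeleton closely: anchor the extremal complete clan $C_{0}$ (the paper's $C_{\rm max}$) to $S'$ via Lemma~\ref{L1ext}, attach every other $C\in\mathscr{C}(\sigma)$ by an injection into $E(\sigma)^{S'}$, colour the remaining vertices by two patterns $A_{0}\neq A_{1}$ indexed by a dense bicoloration of a traverse, and invoke Lemma~\ref{L0bound} to kill all clans inside $V(\sigma)$. But your endgame rests on a claim that is false in general: that the construction can be arranged so that \emph{every} vertex of $V(\sigma)$ is non-constantly attached to $S'$. The cardinality bound $|C|\leq c(\sigma)<\varepsilon(\sigma)^{k}$ only lets each complete clan avoid \emph{one} constant pattern, namely $(\overline{e_{C}})_{\restriction S'}$; avoiding all $\varepsilon(\sigma)$ constants requires $c(\sigma)\leq\varepsilon(\sigma)^{k}-\varepsilon(\sigma)$, which fails in tight cases. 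Concretely, take $\varepsilon(\sigma)=2$ and $c(\sigma)=3$, so $k=2$ and $E(\sigma)^{S'}$ has four patterns, two of them constant: any injection of a $3$-element complete clan $C$ into $E(\sigma)^{S'}\setminus\{(\overline{e_{C}})_{\restriction S'}\}$ must use the other constant pattern, so some vertex of $C$ \emph{is} constantly attached to $S'$. Consequently your propagation step --- ``once $S'\subseteq D$, any $w\in V(\sigma)\setminus D$ sees $S'$ uniformly, contradiction'' --- does not go through for such a vertex. The paper's proof handles exactly this point differently: for $v\in V(\sigma)\setminus D_\tau$ it first pins down the forced constant value as $e_{v}=(v,C_{\rm max})_\sigma$ (its \eqref{E2T3bound}); non-constancy of $A_{0},A_{1}$ then absorbs the loose vertices, while for a complete clan $C$ it first shows $C\cap D_\tau\neq\emptyset$ (using injectivity of $B_{C}$), picks $u\in C\cap D_\tau$, and uses $e_{C}$-completeness of $C$ to force the constant value to be $e_{C}$, contradicting only the single avoided pattern $(\overline{e_{C}})_{\restriction S'}$. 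This finer, value-pinning argument is the missing idea in your proposal.

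A second, smaller gap: you take $\mathscr{F}=\mathscr{L}(\sigma)\cup\mathscr{P}(\sigma)$ in Lemma~\ref{L0bound} and assert that $\tau[X\cup S']$ can be made primitive for every such $X$ by ``transplanting'' the one-vertex primitivizations of Corollary~\ref{P1trav} and Lemma~\ref{L2ext} into $S'$. Those results produce $1$-extensions only; with $|S'|=k\geq 2$, and with $\tau[S']$ already constrained by the $C_{0}$-construction of Lemma~\ref{L1ext}, primitivity of $\tau[X\cup S']$ is a genuine additional claim that you never establish. It is also unnecessary: the paper applies Lemma~\ref{L0bound} with $\mathscr{F}=\emptyset$, so that hypothesis \eqref{E3L0bound} covers all of $V(\sigma)\setminus C(\sigma)$, and the dense bicoloration alone destroys the clans contained in linear and primitive pieces (via the interval/Assertion~A1 arguments inside the lemma). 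Your arithmetic preliminaries ($k\geq 2$, $c(\sigma)<\varepsilon(\sigma)^{k}$, $k\leq c(\sigma)$ so that Lemma~\ref{L1ext} applies to $C_{0}$) and your use of the separating property \eqref{E1T3bound} to force $C_{0}\cup S'\subseteq D$ are sound and match the paper; the proof fails only at the two points above, of which the first is fatal as written.
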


\begin{proof}
Let $C_{{\rm max}}\in\mathscr{C}(\sigma)$ such that $|C_{{\rm max}}|=c(\sigma)$. 
Denote by $e_{{\rm max}}$ the element of $E_s(\sigma)$ such that $C_{{\rm max}}$ is $e_{{\rm max}}$-complete. 
Also consider a set $S'$ such that $S'\cap V(\sigma)=\emptyset$ and $|S'|=\lceil\log_{\varepsilon(\sigma)}(c(\sigma)+1)\rceil$. 

By Lemma~\ref{L1ext}, there exists a faithful extension $\tau_{{\rm max}}$ of $\sigma$ defined on $V(\sigma)\cup S'$ satisfying
\begin{itemize}
\item $\tau_{{\rm max}}[C_{{\rm max}}\cup S']$ is primitive; 
\item for every $s'\in S'$, there is $s\in C_{{\rm max}}$ such that 
\begin{equation}\label{E1T3bound}
(\overrightarrow{\tau_{{\rm max}}}(s))^{-1}(\{e_{{\rm max}}\})\cap S'=\{s'\}.
\end{equation}
\end{itemize}

Now consider $C\in\mathscr{C}(\sigma)\setminus\{C_{{\rm max}}\}$. 
Denote by $e_C$ the element of $E_s(\sigma)$ such that $C$ is $e_C$-complete. 
As $|C|\leq c(\sigma)<\varepsilon(\sigma)^{|S'|}$, there is  
\begin{equation}\label{E1bisT3bound}
\text{an injection}\ \  B_C:C\longrightarrow E(\sigma)^{|S'|}\setminus\{(\overline{e_C})_{\restriction S'}\}.
 \end{equation}
There is a faithful extension $\tau_C$ of $\sigma$ defined on $V(\sigma)\cup S'$ such that 
$(\overrightarrow{\tau_C}(v))_{\restriction S'}=B_C(v)\ \text{for each $v\in C$}$. 
We construct a primitive and faithful extension of $\sigma$ to $V(\sigma)\cup S'$ as follows. 
By Proposition~\ref{ptraverse}, $\sigma$ admits a traverse $T$. 
Furthermore $T$ admits a dense bicoloration $\beta$ by Proposition~\ref{dense}. 
There are 
$A_0\neq A_1\in E(\sigma)^{|S'|}\setminus\{(\overline{e})_{\restriction S'}:e\in E(\sigma)\}$. 
We consider the faithful extension $\tau$ of $\sigma$ defined on $V(\sigma)\cup S'$ satisfying
\begin{itemize}
\item $\tau[C_{{\rm max}}\cup S']=\tau_{{\rm max}}[C_{{\rm max}}\cup S']$;
\item for each $C\in\mathscr{C}(\sigma)\setminus\{C_{{\rm max}}\}$, 
$\tau[C\cup S']=\tau_C[C_\cup S']$;
\item for each $v\in V(\sigma)\setminus C(\sigma)$, $(\overrightarrow{\tau}(v))_{\restriction S'}=A_{\beta(v)}$. 
\end{itemize}

To conclude, we prove that $\tau$ is primitive. 
Let $D_\tau\in\mathbb{C}_{\geq 2}(\tau)$. 
It follows from Lemma~\ref{L0bound} applied with $\mathscr{F}=\emptyset$ that 
$D_\tau\cap S'\neq\emptyset$. 

For a contradiction suppose that $C_{{\rm max}}\cap D_\tau=\emptyset$. 
As $\tau[C_{{\rm max}}\cup S']$ is primitive, there is $s'\in S'$ such that $(C_{{\rm max}}\cup S')\cap D_\tau=\{s'\}$. 
Consider $v\in D_\tau\cap V(\sigma)$. 
Since $C_{{\rm max}}$ is a clan of $\sigma$, there is $e\in E(\sigma)$ such that $(v,C_{{\rm max}})_\sigma=e$. 
As $D_\tau$ is a clan of $\tau$, we get $(s',C_{{\rm max}})_\tau=e$. 
It would follow that $e^\star=\overrightarrow{\tau}(x)(s')=\overrightarrow{\tau_{{\rm max}}}(x)(s')$ for every $x\in C_{{\rm max}}$, which contradicts \eqref{E1T3bound}. 
Consequently $C_{{\rm max}}\cap D_\tau\neq\emptyset$. 
Since $S'\cap D_\tau\neq\emptyset$ and since $\tau[C_{{\rm max}}\cup S']$ is primitive, we obtain that 
$C_{{\rm max}}\cup S'\subseteq D_\tau$. 

Let $v\in V(\sigma)\setminus D_\tau$. 
As $C_{{\rm max}}\subseteq D_\tau$, there is $e_v\in E(\sigma)$ such that $(v,C_{{\rm max}})_\sigma=e_v$. 
Since $D_\tau$ is a clan of $\tau$ such that $D_\tau\supseteq C_{{\rm max}}\cup S'$, we get $(v,S')_\tau=e_v$. 
Therefore, for each $v\in V(\sigma)\setminus D_\tau$, there is $e_v\in E(\sigma)$ such that 
\begin{equation}\label{E2T3bound}
(\overrightarrow{\tau}(v))_{\restriction S'}=(\overline{e_v})_{\restriction S'}. 
\end{equation}
Since $A_0\neq A_1\in E(\sigma)^{|S'|}\setminus\{(\overline{e})_{\restriction S'}:e\in E(\sigma)\}$, it follows from 
\eqref{E2T3bound} that $V(\sigma)\setminus C(\sigma)\subseteq D_\tau$. 

Lastly, consider $C\in\mathscr{C}(\sigma)\setminus\{C_{{\rm max}}\}$. 
For a first contradiction suppose that $C\cap D_\tau=\emptyset$. 
There is $e\in E(\sigma)$ such that $(C,C_{{\rm max}})_\sigma=e$. 
Since $D_\tau$ is a clan of $\tau$ such that $D_\tau\supseteq C_{{\rm max}}\cup S'$, we would get $(C,S')_\tau=e$, that is, 
$(\overrightarrow{\tau}(v))_{\restriction S'}=(\overline{e})_{\restriction S'}$ for every $v\in C$, which contradicts \eqref{E1bisT3bound}. 
Thus $C\cap D_\tau\neq\emptyset$. 
Let $u\in C\cap D_\tau$. 
For every $v\in C\setminus D_\tau$, we obtain $v\longleftrightarrow_\tau\{u\}\cup S'$. 
As $C$ is $e_C$-complete, we have $(v,\{u\}\cup S')_\tau=e_C$. 
It would follow from \eqref{E2T3bound} that 
$(\overrightarrow{\tau}(v))_{\restriction S'}=(\overline{e_C})_{\restriction S'}$, that is, $B_C(v)=(\overline{e_C})_{\restriction S'}$ which contradicts \eqref{E1bisT3bound}. 
Therefore $C\subseteq D_\tau$. 

Consequently $C(\sigma)\subseteq D_\tau$ and hence $D_\tau=V(\sigma)\cup S'$. 
\end{proof}

The next follows from Proposition~\ref{P1bound} and Theorem~\ref{T3bound}. 

\begin{cor}\label{C1bound}
For a reversible 2-structure $\sigma$ such that $2\leq\varepsilon(\sigma)\leq c(\sigma)<\aleph_0$, we have 
$\lceil\log_{\varepsilon(\sigma)}(c(\sigma))\rceil\leq p(\sigma)\leq\lceil\log_{\varepsilon(\sigma)}(c(\sigma)+1)\rceil$. 
Consequently, if $c(\sigma)\not\in\{\varepsilon(\sigma)^{k}:k\geq 1\}$, then 
$p(\sigma)=
\lceil\log_{\varepsilon(\sigma)}(c(\sigma))\rceil$. 
\end{cor}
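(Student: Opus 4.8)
The plan is to obtain the two-sided bound by simply pairing the lower bound of Proposition~\ref{P1bound} with the upper bound of Theorem~\ref{T3bound}, and then to extract the exact value in the non-power case from an elementary identity about ceilings of logarithms. Since the hypotheses $2\leq\varepsilon(\sigma)\leq c(\sigma)<\aleph_0$ guarantee that both $\varepsilon(\sigma)$ and $c(\sigma)$ are finite, I would first invoke the remark recorded in the introduction that for finite cardinals $\mu,\nu$ one has $\mathfrak{log}_\mu(\nu)=\lceil\log_\mu(\nu)\rceil$. Applying this to Proposition~\ref{P1bound} rewrites its conclusion as
$$
p(\sigma)\geq\mathfrak{log}_{\varepsilon(\sigma)}(c(\sigma))=\lceil\log_{\varepsilon(\sigma)}(c(\sigma))\rceil,
$$
which is exactly the desired left-hand inequality. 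The right-hand inequality $p(\sigma)\leq\lceil\log_{\varepsilon(\sigma)}(c(\sigma)+1)\rceil$ is the verbatim statement of Theorem~\ref{T3bound} under the same hypotheses, so concatenating the two displays already yields the sandwich asserted in the first sentence of the corollary.

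For the ``Consequently'' clause I would argue as follows. Write $\varepsilon=\varepsilon(\sigma)$ and $c=c(\sigma)$, and set $k=\lceil\log_\varepsilon(c)\rceil$, so that by definition of the ceiling $\varepsilon^{k-1}<c\leq\varepsilon^{k}$. The hypothesis $c\notin\{\varepsilon^{j}:j\geq 1\}$ forces $c\neq\varepsilon^{k}$, hence the upper inequality is strict: $\varepsilon^{k-1}<c<\varepsilon^{k}$. Because $c$ and $\varepsilon^{k}$ are integers, $c<\varepsilon^{k}$ gives $c+1\leq\varepsilon^{k}$, while $c>\varepsilon^{k-1}$ trivially yields $c+1>\varepsilon^{k-1}$. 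Therefore $\varepsilon^{k-1}<c+1\leq\varepsilon^{k}$, which says precisely that $\lceil\log_\varepsilon(c+1)\rceil=k$. Consequently the two outer terms of the sandwich coincide,
$$
\lceil\log_{\varepsilon(\sigma)}(c(\sigma))\rceil=k=\lceil\log_{\varepsilon(\sigma)}(c(\sigma)+1)\rceil,
$$
and the inequality chain collapses to the equality $p(\sigma)=\lceil\log_{\varepsilon(\sigma)}(c(\sigma))\rceil$.

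I do not anticipate a genuine obstacle here: the corollary is a bookkeeping consequence of two already-established theorems together with the finiteness identity for $\mathfrak{log}$. The only point requiring any care is the integer arithmetic that turns the strict inequality $c<\varepsilon^{k}$ into $c+1\leq\varepsilon^{k}$ and thereby equates the two ceilings; this is where the assumption that $c(\sigma)$ is not a power of $\varepsilon(\sigma)$ is used, and it is the sole place where one must check that incrementing $c$ by one does not push the logarithm into the next integer. Everything else is a direct citation of Proposition~\ref{P1bound} and Theorem~\ref{T3bound}.
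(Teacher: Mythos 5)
Your proof is correct and follows exactly the paper's route: the paper states Corollary~\ref{C1bound} as an immediate consequence of Proposition~\ref{P1bound} (via the finite-cardinal identity $\mathfrak{log}_\mu(\nu)=\lceil\log_\mu(\nu)\rceil$) and Theorem~\ref{T3bound}, which is precisely your sandwich. Your explicit integer arithmetic showing $\lceil\log_{\varepsilon(\sigma)}(c(\sigma))\rceil=\lceil\log_{\varepsilon(\sigma)}(c(\sigma)+1)\rceil$ when $c(\sigma)$ is not a power of $\varepsilon(\sigma)$ merely spells out the step the paper leaves implicit, and it is sound.
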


Let $\sigma$ be a reversible 2-structure such that $2\leq\varepsilon(\sigma)<\aleph_0$ and $c(\sigma)=\varepsilon(\sigma)^{k}$ where 
$k\geq 1$. 
By Corollary~\ref{C1bound}, $p(\sigma)=k$ or $k+1$. 
We prove in Theorem~\ref{T3Abound} below that 
\begin{equation}\label{E1C1bound}
p(\sigma)=k+1\ \iff\ \text{there is $e\in E_s(\sigma)$ such that $|\odot_e(\sigma)|=\varepsilon(\sigma)^{k}$.}
\end{equation}
The proofs of the next three results are adapted from that for finite graphs \cite{BI10}. 
We begin by proving \eqref{E1C1bound} from right to left. 

\begin{lem}\label{L1bound}
Let $\sigma$ be a reversible 2-structure such that $2\leq\varepsilon(\sigma)<\aleph_0$ and 
$c(\sigma)=\varepsilon(\sigma)^{k}$ where $k\geq 1$. 
If there is $e\in E_s(\sigma)$ such that $|\odot_e(\sigma)|=\varepsilon(\sigma)^{k}$, then $p(\sigma)=k+1$. 
\end{lem}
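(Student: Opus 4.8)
The plan is to combine the upper bound already in hand with a matching lower bound argument ruling out a primitive faithful $k$-extension. By Corollary~\ref{C1bound} we have $\lceil\log_{\varepsilon(\sigma)}(c(\sigma))\rceil\leq p(\sigma)\leq\lceil\log_{\varepsilon(\sigma)}(c(\sigma)+1)\rceil$; since $c(\sigma)=\varepsilon(\sigma)^{k}$ this reads $k\leq p(\sigma)\leq k+1$, so it suffices to prove $p(\sigma)\geq k+1$, i.e.\ that $\sigma$ admits no primitive and faithful $k$-extension. First I would fix $W=\odot_e(\sigma)$, noting $|W|=\varepsilon(\sigma)^{k}$ and that $W$ is in fact an $e$-complete clan realizing $c(\sigma)$. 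Suppose toward a contradiction that $\tau$ is a primitive and faithful $k$-extension and set $S'=V(\tau)\setminus V(\sigma)$, so $|S'|=k$. By Lemma~\ref{L1faith} the extension $\tau$ is again reversible, and by the identity $(\sigma\hookrightarrow\tau)(e^\star)=((\sigma\hookrightarrow\tau)(e))^\star$ established in the proof of that lemma the class $e$ remains symmetric in $\tau$; throughout I identify $E(\tau)$ with $E(\sigma)$ via faithfulness.

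The heart of the argument is a counting step. Consider the map $\Phi\colon W\longrightarrow E(\sigma)^{S'}$ defined by $v\longmapsto\overrightarrow{\tau}(v)_{\restriction S'}$. I would first show $\Phi$ is injective: if $v\neq w\in W$ had $\overrightarrow{\tau}(v)_{\restriction S'}=\overrightarrow{\tau}(w)_{\restriction S'}$, then, using that $v,w$ are both $e$-isolated in $\sigma$ (so $(v,x)_\sigma=(w,x)_\sigma=e$ for every $x\in V(\sigma)\setminus\{v,w\}$ and $(v,w)_\sigma=e$) together with the reversibility of $\tau$ to control the reversed pairs $(s',v)_\tau$ and $(s',w)_\tau$, the pair $\{v,w\}$ would be a nontrivial clan of $\tau$, contradicting primitivity. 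Since $\varepsilon(\sigma)$ and $k$ are finite, $|E(\sigma)^{S'}|=\varepsilon(\sigma)^{k}=|W|$, so an injection between finite sets of equal cardinality is a bijection; hence $\Phi$ is onto.

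It then remains to extract a clan from surjectivity. Because $\Phi$ is onto, the constant function $\overline{e}_{\restriction S'}$ is realized, so there is $v_0\in W$ with $(v_0,s')_\tau=e$ for all $s'\in S'$. As $v_0$ is also $e$-isolated in $\sigma$, it is $e$-isolated in the whole of $\tau$, i.e.\ $(v_0,x)_\tau=e$ for every $x\neq v_0$, and by symmetry of $e$ and reversibility of $\tau$ also $(x,v_0)_\tau=e$. I would then invoke the elementary observation that an $e$-isolated vertex for a symmetric $e$ forces $V(\tau)\setminus\{v_0\}$ to be a clan, since every vertex of that set looks identical from $v_0$. As $\nu(\tau)=\nu(\sigma)+k\geq\varepsilon(\sigma)^{k}+k\geq 3$, this clan is nontrivial, contradicting the primitivity of $\tau$. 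Therefore no primitive faithful $k$-extension exists, whence $p(\sigma)\geq k+1$, and with the upper bound $p(\sigma)=k+1$.

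The main obstacle I anticipate is the injectivity step: one must verify carefully that $\{v,w\}$ is a clan for the full reversible relation, checking both the forward pairs $(\,\cdot\,,s')_\tau$ and the reversed pairs $(s',\,\cdot\,)_\tau$ as well as the pairs inside $V(\sigma)$, which is precisely where reversibility of $\tau$ and the symmetry of $e$ enter. Once injectivity is secured, the remainder is a clean cardinality count followed by the co-singleton clan observation.
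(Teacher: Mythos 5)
Your proof is correct and follows essentially the same route as the paper: the same pigeonhole map $v\longmapsto\overrightarrow{\tau}(v)_{\restriction S'}$ on $\odot_e(\sigma)$, with the same dichotomy (non-injectivity yields the clan $\{v,w\}$; otherwise bijectivity realizes $\bar{e}_{\restriction S'}$ and yields the clan $V(\tau)\setminus\{v\}$), combined with Corollary~\ref{C1bound}. The only cosmetic difference is that you phrase it as a contradiction against a hypothetical primitive faithful $k$-extension, while the paper shows directly that every faithful $k$-extension is imprimitive.
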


\begin{proof}
Let $S'$ be a set such that $S'\cap V(\sigma)=\emptyset$ and 
$|S'|=k$. 
Let $\tau$ be a faithful extension of $\sigma$ defined on $V(\sigma)\cup S'$. 
Consider the function 
\begin{equation*}
\begin{array}{rccl}
A:&\odot_e(\sigma)&\longrightarrow&E(\sigma)^{S'}\\
&v&\longmapsto&\overrightarrow{\tau}(v)_{\restriction S'}.
\end{array}
\end{equation*}
As $|\odot_e(\sigma)|=\varepsilon(\sigma)^{k}$, that is, $|\odot_e(\sigma)|=|E(\sigma)^{S'}|$, we have either $A$ is not injective or 
there exists $v\in\odot_e(\sigma)$ such that $\overrightarrow{\tau}(v)_{\restriction S'}=\bar{e}_{\restriction S'}$. 
In the second instance, 
$V(\tau)\setminus\{v\}$ is a clan of $\tau$. 
In the first, 
there are $v\neq w\in\odot_e(\sigma)$ such that $\overrightarrow{\tau}(v)_{\restriction S'}=\overrightarrow{\tau}(w)_{\restriction S'}$ so that $\{v,w\}$ is a clan of $\tau$. 
In both instances, $\tau$ is imprimitive. 
Therefore $p(\sigma)\neq k$. 
By Corollary~\ref{C1bound}, $p(\sigma)=k+1$. 
\end{proof}

Now we show \eqref{E1C1bound} from from left to right when $\varepsilon(\sigma)=2$ and $k=1$. 

\begin{prop}\label{P2bound}
Let $\sigma$ be a reversible 2-structure such that $c(\sigma)=\varepsilon(\sigma)=2$. 
If $p(\sigma)=2$, then there is $e\in E_s(\sigma)$ such that $|\odot_e(\sigma)|=2$. 
\end{prop}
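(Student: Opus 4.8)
The plan is to establish the contrapositive: assuming $|\odot_e(\sigma)|\neq 2$ for every $e\in E_s(\sigma)$, I will build a primitive and faithful $1$-extension of $\sigma$, so that $p(\sigma)\leq 1$, contradicting $p(\sigma)=2$. I begin with reductions. Since $c(\sigma)=2$ we have $\mathscr{C}(\sigma)\neq\emptyset$, so $E_s(\sigma)\neq\emptyset$; as $\varepsilon(\sigma)=2$ and $\sigma$ is reversible, both classes are then symmetric, $E(\sigma)=E_s(\sigma)=\{e,f\}$, the structure $\sigma$ is symmetric and $\mathscr{L}(\sigma)=\emptyset$. I note that for each $g\in E_s(\sigma)$ the set $\odot_g(\sigma)$ is itself a $g$-complete clan, whence $|\odot_g(\sigma)|\leq c(\sigma)=2$; the hypothesis therefore reads $|\odot_e(\sigma)|\leq 1$ and $|\odot_f(\sigma)|\leq 1$. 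Finally, by Corollary~\ref{C1inc} the family $(\mathbb{I}(\sigma),\subseteq)$ is a chain, which will be the principal structural lever, and every $P\in\mathscr{P}(\sigma)$ satisfies $|P|\geq 4$ since a three-vertex symmetric $2$-structure is imprimitive.

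Next I set up the extension. Fix a traverse $T$ of $\sigma$ (Proposition~\ref{ptraverse}) and a dense bicoloration $\beta$ of $T$ (Proposition~\ref{dense}), and put $e_0=e$, $e_1=f$. Using Lemma~\ref{L2ext} on each $P\in\mathscr{P}(\sigma)$ (possible as $|P|\geq 4$), I define a faithful extension $\tau_\beta$ on $V(\sigma)\cup\{a\}$ that makes $\tau_\beta[P\cup\{a\}]$ primitive for every $P\in\mathscr{P}(\sigma)$, that merely distinguishes the two vertices of each maximal complete pair $C=\{c,d\}\in\mathscr{C}(\sigma)$ (by requiring $(c,a)_{\tau_\beta}\neq(d,a)_{\tau_\beta}$, the best possible since $C\cup\{a\}$ has only three vertices), and that sets $(v,a)_{\tau_\beta}=e_{\beta(v)}$ for every remaining $v$. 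Applying Corollary~\ref{C0bound} with $\mathscr{F}=\mathscr{P}(\sigma)$, any nontrivial clan $D$ of $\tau_\beta$ contains $a$, and $J_\beta:=D\setminus\{a\}$ is a proper clan of $\sigma$ that meets every member of $\mathbb{C}_{\geq 2}(\sigma)$ and contains $P(\sigma)$.

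The difficulty is that $J_\beta$ need not be inclusive: Corollary~\ref{C0bound} forces $J_\beta$ only to meet, not to contain, each complete pair, precisely because $C\cup\{a\}$ cannot be primitivized. I would repair this by taking the closure $\overline{J_\beta}$, adjoining to $J_\beta$ the missing partner of each complete pair that $J_\beta$ meets in a single vertex; since each complete pair is a module, $\overline{J_\beta}$ is again a clan, and it now contains $C(\sigma)\cup P(\sigma)$ and meets every $X\in\mathbb{P}_{\geq 2}(\sigma)$, so $\overline{J_\beta}\in\mathbb{I}(\sigma)$. Running the construction for both $\beta$ and $1-\beta$ and observing that a (necessarily free) vertex lying outside both closures would be forced to relate to the smaller of the two comparable closures through both classes $e$ and $f$, I obtain $\overline{J_\beta}\cup\overline{J_{1-\beta}}=V(\sigma)$; the chain property then gives that the larger closure equals $V(\sigma)$, say $\overline{J_{1-\beta}}=V(\sigma)$, so $J_{1-\beta}=V(\sigma)\setminus B$ with $B\neq\emptyset$ a set of split-off partners. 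The clean case is $B=\{d\}$: then $d$ relates to all of $V(\sigma)\setminus\{d\}$ through the class $g$ of its complete pair $\{c,d\}$, so $d\in\odot_g(\sigma)$; and since $\{c,d\}$ is a $g$-complete clan with $d$ being $g$-isolated, the partner $c$ relates to every vertex outside $\{c,d\}$ exactly as $d$ does, hence $c\in\odot_g(\sigma)$ as well, giving $|\odot_g(\sigma)|=2$, the sought contradiction.

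I expect the main obstacle to be exactly the control of $B$ in general, i.e. ruling out, or reducing to the previous case, a multi-vertex set of split-off partners of mixed classes. The mechanism to exploit is that if all partners in $B$ share one class then $B$ is itself a clan disjoint from $J_{1-\beta}$, contradicting that $J_{1-\beta}$ meets every member of $\mathbb{C}_{\geq 2}(\sigma)$; the remaining mixed configurations must be excluded by refining the choice of the per-pair orientation together with the bicoloration and by invoking the structural description of incomparable inclusive clans in Theorem~\ref{T1inc}. This step is delicate precisely because the size-two complete clans cannot be destroyed locally and must be governed globally through the chain $\mathbb{I}(\sigma)$ and the labelled clan tree; once it is settled, one of $\tau_\beta$, $\tau_{1-\beta}$ is primitive, yielding $p(\sigma)\leq 1$ and the contradiction that proves the proposition.
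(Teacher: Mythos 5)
Up to the last step your construction coincides with the paper's own proof: the reduction to a symmetric (graph-like) structure with $\mathscr{L}(\sigma)=\emptyset$ and all members of $\mathscr{C}(\sigma)$ of size $2$, the extension $\tau_\beta$ that primitivizes each $P\in\mathscr{P}(\sigma)$, splits each complete pair at $a$, and colours the free vertices by $e_{\beta(v)}$, the use of Corollary~\ref{C0bound} with $\mathscr{F}=\mathscr{P}(\sigma)$, the passage to the inclusive clans $(D_\beta\setminus\{a\})\cup C(\sigma)$ and $(D_{1-\beta}\setminus\{a\})\cup C(\sigma)$, the chain property of Corollary~\ref{C1inc}, and the conclusion that the larger closure is $V(\sigma)$ --- all of this is exactly the paper's argument, and your clean case $B=\{d\}$ is its ending restricted to $|B|=1$. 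The genuine gap is the case $|B|\geq 2$, which you explicitly leave unresolved. Your uniform-class observation is correct as far as it goes: if every $b\in B$ sees $J_{1-\beta}=D_{1-\beta}\setminus\{a\}$ through one and the same class $g$, then $B$ is a clan of $\sigma$ of size at least $2$ disjoint from $J_{1-\beta}$, contradicting assertion (2) of Corollary~\ref{C0bound}. But the mixed-class configuration is precisely where your plan stops, and the proposed repair --- re-choosing the per-pair orientation and the bicoloration and invoking Theorem~\ref{T1inc} --- is not a proof and is unlikely to become one: the nontrivial clan $D_{1-\beta}$ is produced only after all your choices are fixed, so you cannot adjust the choices against it without restarting, and nothing guarantees a single choice defeats every potential clan simultaneously.

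The missing step is in fact short and requires no exclusion of cases, because the mixed configuration yields the isolated pair directly. Fix $c\in B$, let $\{c,d\}\in\mathscr{C}(\sigma)$ be its pair and $e$ its class. Since $d\in D_{1-\beta}$ and $c\not\in D_{1-\beta}$, the vertex $c$ sees the clan $D_{1-\beta}$ constantly, and the constant is forced by its mate: $(c,D_{1-\beta})_{\tau_{1-\beta}}=(c,d)_\sigma=e$. Now take any other $v\in B$ and let $C'=\{v,d'\}$ be its pair; by construction $d'\in D_{1-\beta}\setminus\{a\}$, so $(c,d')_\sigma=e$, and since $C'$ is a clan of $\sigma$ not containing $c$, also $(c,v)_\sigma=e$ --- \emph{whatever} the class of $C'$ is. Hence $(c,w)_\sigma=e$ for every $w\in V(\sigma)\setminus\{c\}$, that is, $c\in\odot_e(\sigma)$; as $\{c,d\}$ is a clan, $d\in\odot_e(\sigma)$ as well, and since $\odot_e(\sigma)$ is an $e$-complete clan and $c(\sigma)=2$, we get $|\odot_e(\sigma)|=2$. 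This is exactly how the paper closes (there stated directly rather than by contraposition), and with it your argument is complete; without it, the central case of the proposition is unproved.
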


\begin{proof}
Since $c(\sigma)=\varepsilon(\sigma)=2$, $\sigma$ is associated with a graph. 
We obtain $\mathscr{L}(\sigma)=\emptyset$ and $|C|=2$ for each $C\in\mathscr{C}(\sigma)$. 
Denote the elements of $E(\sigma)=E_s(\sigma)$ by $e_0$ and $e_1$. 
By Proposition~\ref{ptraverse}, $\sigma$ admits a traverse $T$ and 
$T$ admits a dense bicoloration $\beta$ by Proposition~\ref{dense}. 

Consider $a\not\in V(\sigma)$. 
Given $P\in\mathscr{P}(\sigma)$, it follows from Lemma~\ref{L2ext} that there is 
a primitive and faithful extension $\tau_P$ of $\sigma[P]$ defined on $P\cup\{a\}$. 
We associate with $\beta$ a faithful extension $\tau_\beta$ of $\sigma$ to $V(\sigma)\cup\{a\}$ satisfying 
\begin{itemize}
\item for each $C\in\mathscr{C}(\sigma)$, $a\not\longleftrightarrow_{\tau_\beta} C$;
\item for each $P\in\mathscr{P}(\sigma)$, $\tau[P]=\tau_P$;
\item for each $v\in V(\sigma)\setminus(C(\sigma)\cup P(\sigma))$, $(v,a)_{\tau_\beta}=e_{\beta(v)}$. 
\end{itemize}
As $p(\sigma)=2$, $\tau_\beta$ admits a nontrivial clan $D_\beta$. 
It follows from Corollary~\ref{C0bound} applied with $\mathscr{F}=\mathscr{P}(\sigma)$ that 
$a\in D_\beta$, $P(\sigma)\subseteq D_\beta\setminus\{a\}$  and 
$C\cap(D_\beta\setminus\{a\})\neq\emptyset$ for every $C\in\mathscr{C}(\sigma)$. 
Thus $(D_\beta\setminus\{a\})\cup C(\sigma)\in\mathbb{C}(\sigma)$. 
It follows that $(D_\beta\setminus\{a\})\cup C(\sigma)\in\mathbb{I}(\sigma)$. 
Similarly, since $1-\beta$ is also a dense bicoloration of $T$, 
$\tau_{1-\beta}$ admits a nontrivial clan $D_{1-\beta}$ and 
$(D_{1-\beta}\setminus\{a\})\cup C(\sigma)\in\mathbb{I}(\sigma)$. 
By Corollary~\ref{C1inc}, $(D_\beta\setminus\{a\})\cup C(\sigma)$ and $(D_{1-\beta}\setminus\{a\})\cup C(\sigma)$ are comparable under inclusion. 
For instance, assume that $(D_{1-\beta}\setminus\{a\})\cup C(\sigma)\subseteq(D_\beta\setminus\{a\})\cup C(\sigma)$. 
For a contradiction, suppose that there is $v\in V(\sigma)\setminus ((D_\beta\setminus\{a\})\cup C(\sigma))$. 
We get 
$v\not\in C(\sigma)\cup P(\sigma)$ so that 
$(v,a)_{\tau_\beta}=e_{\beta(v)}$ and $(v,a)_{\tau_{1-\beta}}=e_{1-\beta(v)}$. 
It would follow that $(v,D_\beta\setminus\{a\})_\sigma=e_{\beta(v)}$ and 
$(v,D_\beta\setminus\{a\})_\sigma=e_{1-\beta(v)}$. 
Consequently $(D_\beta\setminus\{a\})\cup C(\sigma)=V(\sigma)$. 
As $D_\beta$ is a nontrivial clan of $\tau_\beta$, 
there is $C\in\mathscr{C}(\sigma)$ such that $C\setminus(D_\beta\setminus\{a\})\neq\emptyset$. 
Since $C\cap(D_\beta\setminus\{a\})\neq\emptyset$ and $|C|=2$, there is $c\in C$ such that $C\setminus(D_\beta\setminus\{a\})=\{c\}$. 
Denote by $e$ the element of $E(\sigma)$ such that $C$ is $e$-complete. 
We get $(c,D_\beta)_{\tau_\beta}=e$. 
Finally, consider $v\in V(\sigma)\setminus((D_\beta\setminus\{a\})$ such that $v\neq c$. 
As $C\setminus(D_\beta\setminus\{a\})=\{c\}$, there is $C'\in\mathscr{C}(\sigma)\setminus\{C\}$ such that 
$C'\setminus(D_\beta\setminus\{a\})=\{v\}$. 
Since $(c,D_\beta)_{\tau_\beta}=e$, we obtain $(c,C'\cap(D_\beta\setminus\{a\}))_{\sigma}=e$ and hence 
$(c,v)_{\sigma}=e$ because $C'\in\mathbb{C}(\sigma)$. 
It follows that $c\in\odot_e(\sigma)$. 
As $C\in\mathbb{C}(\sigma)$, we get $C\subseteq\odot_e(\sigma)$. 
In particular $|\odot_e(\sigma)|\geq 2$. 
Since $c(\sigma)=2$ and since $\odot_e(\sigma)\in\mathbb{C}(\sigma)$ such that $\odot_e(\sigma)$ is $e$-complete, 
we obtain $C=\odot_e(\sigma)$. 
\end{proof}

\begin{thm}\label{T3Abound}
For a reversible 2-structure $\sigma$ such that $2\leq\varepsilon(\sigma)<\aleph_0$ and 
$c(\sigma)=\varepsilon(\sigma)^{k}$ where $k\geq 1$, 
$p(\sigma)=k+1$ if and only if there is $e\in E_s(\sigma)$ such that 
$|\odot_e(\sigma)|=\varepsilon(\sigma)^{k}$. 
\end{thm}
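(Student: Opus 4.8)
The plan is to prove the two implications separately, the backward one being already in hand. The implication from right to left is exactly Lemma~\ref{L1bound}, so the work is to show that $p(\sigma)=k+1$ forces a symmetric class $e$ with $|\odot_e(\sigma)|=\varepsilon(\sigma)^k$. I would argue by contraposition. Assume that $|\odot_e(\sigma)|\neq\varepsilon(\sigma)^k$ for every $e\in E_s(\sigma)$; since each $\odot_e(\sigma)$ is an $e$-complete clan we have $|\odot_e(\sigma)|\leq c(\sigma)=\varepsilon(\sigma)^k$, so the failure of equality means $|\odot_e(\sigma)|<\varepsilon(\sigma)^k$ for every $e\in E_s(\sigma)$. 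Under this hypothesis I would construct a primitive and faithful $k$-extension of $\sigma$. Combined with the lower bound $p(\sigma)\geq\lceil\log_{\varepsilon(\sigma)}(c(\sigma))\rceil=k$ of Corollary~\ref{C1bound}, this gives $p(\sigma)=k\neq k+1$. The base case $\varepsilon(\sigma)=c(\sigma)=2$, that is $k=1$, is precisely Proposition~\ref{P2bound}, and I would invoke it directly; the remaining effort is the construction in the other cases.

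For the construction I would take a set $S'$ disjoint from $V(\sigma)$ with $|S'|=k$, so that $|E(\sigma)^{S'}|=\varepsilon(\sigma)^k=c(\sigma)$, and fix a traverse $T$ of $\sigma$ by Proposition~\ref{ptraverse} together with a dense bicoloration $\beta$ of $T$ by Proposition~\ref{dense}. The faithful extension $\tau$ on $V(\sigma)\cup S'$ would be built exactly along the lines of the proof of Theorem~\ref{T3bound}: for each maximal complete clan $C\in\mathscr{C}(\sigma)$, say $e_C$-complete, I would choose an injection $B_C\colon C\longrightarrow E(\sigma)^{S'}$ and set $\overrightarrow{\tau}(v)_{\restriction S'}=B_C(v)$; for the linear clans $L\in\mathscr{L}(\sigma)$ and the primitive clans $P\in\mathscr{P}(\sigma)$ I would make $\tau[L\cup S']$ and $\tau[P\cup S']$ primitive using Corollary~\ref{P1trav} and Lemma~\ref{L2ext}; and for the remaining vertices $v\in V(\sigma)\setminus(C(\sigma)\cup L(\sigma)\cup P(\sigma))$ I would set $\overrightarrow{\tau}(v)_{\restriction S'}=A_{\beta(v)}$ for two suitable maps $A_0\neq A_1$ of $E(\sigma)^{S'}$.

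The decisive difference from Theorem~\ref{T3bound} is the tight count $|E(\sigma)^{S'}|=c(\sigma)$, which removes all the slack exploited there. When $|C|=\varepsilon(\sigma)^k$ the injection $B_C$ is forced to be a bijection, so its image necessarily contains the constant map $(\overline{e_C})_{\restriction S'}$; a vertex $v$ receiving this value satisfies $(v,s')_\tau=e_C$ for all $s'\in S'$, and if moreover $v\in\odot_{e_C}(\sigma)$ then $v$ is related by $e_C$ to every other vertex of $\tau$, making $V(\tau)\setminus\{v\}$ a nontrivial clan. The hypothesis $|\odot_{e_C}(\sigma)|<\varepsilon(\sigma)^k=|C|$ is exactly what rescues the construction: it leaves a vertex of $C$ outside $\odot_{e_C}(\sigma)$, to which the forced constant value can be assigned harmlessly. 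I would choose each $B_C$ to be injective, to route $(\overline{e_C})_{\restriction S'}$ to such a non-isolated vertex, and, for the maximal clan, to realize the uniqueness property of Lemma~\ref{L1ext} (some vertex hits $e_C$ on a single coordinate of $S'$); all three requirements are simultaneously arrangeable because $B_C$ is merely a bijection to be permuted.

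Finally I would verify that $\tau$ is primitive by the standard clan analysis: Lemma~\ref{L0bound} and Corollary~\ref{C0bound} show that any $D_\tau\in\mathbb{C}_{\geq 2}(\tau)$ meets $S'$ and absorbs $C(\sigma)\cup L(\sigma)\cup P(\sigma)$, after which the argument of Theorem~\ref{T3bound} forces $D_\tau$ to contain $V(\sigma)\setminus C(\sigma)$ and each complete clan, so that $D_\tau$ can fail to be $V(\tau)$ only by omitting a single vertex, i.e. $D_\tau=V(\tau)\setminus\{v\}$; but such a cofinite set is a clan only when $v$ carries the constant value and lies in $\odot_{e}(\sigma)$, which the placement above has excluded (for $k=1$ one instead runs the one-vertex technique of Proposition~\ref{P2bound} and Theorem~\ref{T2bound}, comparing the inclusive clans produced by $\beta$ and $1-\beta$ through Theorem~\ref{T1inc}). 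I expect the main obstacle to be precisely this boundary behaviour: coordinating the forced constant assignments across all maximal complete clans at once, since these may share vertices and interact through the common set $S'$, and reconciling them with the linear and primitive pieces so that the only surviving candidates are the cofinite clans, which the $\odot_e$ hypothesis annihilates.
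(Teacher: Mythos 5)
Your backward direction (Lemma~\ref{L1bound}) and the reduction of the forward direction to a contrapositive construction are sound in outline, and your plan is genuinely different from the paper's: rather than constructing a primitive faithful $k$-extension under the hypothesis that every $|\odot_e(\sigma)|<\varepsilon(\sigma)^k$, the paper deletes one vertex $w_C$ from each $C\in\mathscr{C}_{\rm max}(\sigma)$, proves $c(\sigma-W)=\varepsilon(\sigma)^k-1$, gets a primitive faithful $k$-extension $\tau'$ of $\sigma-W$ from Corollary~\ref{C1bound}, re-attaches each $w_C$ along the unique vector of $E(\tau')^{V(\tau')\setminus(V(\sigma)\setminus W)}$ missed by $A_C$, and then reads the conclusion $C=\odot_e(\sigma)$ off the nontrivial clan that the assumption $p(\sigma)=k+1$ forces this extension to possess. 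That trick lets the paper never verify primitivity of a construction at the tight count; your direct route must, and this is where it has two genuine gaps.

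First, the case $k=1$ with $\varepsilon(\sigma)=c(\sigma)\geq 3$ is not covered. Your fallback of ``the one-vertex technique of Proposition~\ref{P2bound} and Theorem~\ref{T2bound}'' does not apply: Proposition~\ref{P2bound} is proved only for $c(\sigma)=\varepsilon(\sigma)=2$ (its proof uses $\mathscr{L}(\sigma)=\emptyset$, $|C|=2$ for each $C\in\mathscr{C}(\sigma)$, and Corollary~\ref{C1inc}, which requires $\sigma$ symmetric), while Theorem~\ref{T2bound} explicitly excludes $c(\sigma)=\varepsilon(\sigma)<\aleph_0$. Moreover with $|S'|=1$ every element of $E(\sigma)^{S'}$ is constant, so the non-constant maps $A_0\neq A_1$ on which your appeal to Lemma~\ref{L0bound} rests do not exist; this case would require a new argument generalizing Proposition~\ref{P2bound}, which is exactly what the paper's deletion trick avoids (its hypothesis $c(\sigma)\geq 3$ includes $k=1$, $\varepsilon(\sigma)\geq 3$). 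Second, your primitivity verification invokes ``the argument of Theorem~\ref{T3bound}'', but its pivotal step --- from $D_\tau\cap S'\neq\emptyset$ and $D_\tau\cap C_{\rm max}\neq\emptyset$ conclude $C_{\rm max}\cup S'\subseteq D_\tau$ --- uses that $\tau[C_{\rm max}\cup S']$ is primitive, which fails in your setting no matter how you route the constant vector: since $B_C$ is a bijection onto $E(\sigma)^{S'}$, the vertex $v_0$ with $B_C(v_0)=(\overline{e_C})_{\restriction S'}$ makes $(C\setminus\{v_0\})\cup S'$ a nontrivial clan of $\tau[C\cup S']$. Choosing $v_0\notin\odot_{e_C}(\sigma)$ kills only the global cofinite clan $V(\tau)\setminus\{v_0\}$; the intermediate clan analysis must be redone with the weaker dichotomy that any $v\in C\setminus D_\tau$ with $S'\subseteq D_\tau$ is forced to carry the constant row --- you name this as the ``main obstacle'' but do not resolve it, so the proof is incomplete precisely at its crux. (Two smaller slips: distinct maximal complete clans cannot share vertices, being classes of $\simeq_\sigma$ by Theorem~\ref{T1equivalence}; and Lemma~\ref{L2ext} and Corollary~\ref{P1trav} produce $1$-extensions, not the $k$-extension pieces $\tau[P\cup S']$, $\tau[L\cup S']$ you invoke --- taking $\mathscr{F}=\emptyset$ as in Theorem~\ref{T3bound} sidesteps this and the problem of coordinating the shared structure $\tau[S']$ across pieces.)
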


\begin{proof}
By Lemma~\ref{L1bound} and Proposition~\ref{P2bound}, it suffices to establish 
\eqref{E1C1bound} from from left to right when $c(\sigma)\geq 3$. 
Hence consider a reversible 2-structure $\sigma$ such that $2\leq\varepsilon(\sigma)<\aleph_0$, 
$c(\sigma)=\varepsilon(\sigma)^{k}$ (where $k\geq 1$), $c(\sigma)\geq 3$ and 
$p(\sigma)=k+1$. 
For convenience set 
$\mathscr{C}_{{\rm max}}(\sigma)=\{C\in\mathscr{C}(\sigma):|C|=c(\sigma)\}$. 
With each $C\in\mathscr{C}_{{\rm max}}(\sigma)$ associate 
$w_C\in C$. 
Set $W=\{w_C:C\in\mathscr{C}_{{\rm max}}(\sigma)\}$. 
We prove that 
\begin{equation}\label{E1T3Abound}
c(\sigma-W)=2^k-1.
\end{equation}
Let $C\in\mathscr{C}_{{\rm max}}(\sigma)$. 
By Theorem~\ref{T1equivalence}, 
the elements of $\mathscr{C}_{{\rm max}}(\sigma)$ are pairwise disjoint. 
Thus 
\begin{equation}\label{E1aT3Abound}
C\setminus W=C\setminus\{w_C\}. 
\end{equation}
Clearly $C\setminus\{w_C\}$ is complete in $\sigma-W$. 
Furthermore $C\setminus\{w_C\}\in\mathbb{C}(\sigma-W)$. 
Therefore $2^k-1=|C\setminus\{w_C\}|\leq c(\sigma-W)$. 

Now we demonstrate that $c(\sigma-W)<c(\sigma)$. 
Consider $C'\in\mathscr{C}(\sigma-W)$ such that $|C'|=c(\sigma-W)$. 
We show that $C'\in\mathbb{C}(\sigma)$. 
We have to verify that 
$w_C\longleftrightarrow_\sigma C'$ for each $C\in\mathscr{C}_{{\rm max}}(\sigma)$. 
Given $C\in\mathscr{C}_{{\rm max}}(\sigma)$, we distinguish the following two cases. 
\begin{itemize}
\item First, assume that there is $v\in(C\setminus\{w_C\})\setminus C'$. 
We have $v\longleftrightarrow_\sigma C'$. 
As $C$ is complete in $\sigma$, $\{v,w_C\}\in\mathbb{C}(\sigma[C])$. 
Thus $\{v,w_C\}\in\mathbb{C}(\sigma)$ and hence 
$w_C\longleftrightarrow_\sigma C'$. 
\item Second, assume that $C\setminus\{w_C\}\subseteq C'$. 
Clearly $w_C\longleftrightarrow_\sigma C'$ when $C\setminus\{w_C\}=C'$. 
Assume that $C'\setminus (C\setminus\{w_C\})\neq\emptyset$. 
Consider $e_{C'}\in E(\sigma)$ such that $C'$ is $e_{C'}$-complete. 
We have $(C\setminus\{w_C\},C'\setminus C)_{\sigma-W}=e_{C'}$. 
Since $C\in\mathbb{C}(\sigma)$, we get $(w_C,C'\setminus C)_\sigma=e_{C'}$. 
Furthermore, 
we have $|C\setminus\{w_C\}|\geq 2$ because $|C|=c(\sigma)$ and $c(\sigma)\geq 3$. 
As $C\setminus\{w_C\}\subseteq C'$, we obtain that $C$ is $e_{C'}$-complete as well. 
Therefore $(w_C,C\setminus\{w_C\})_\sigma=e_{C'}$. 
It follows that $(w_C,C')_\sigma=e_{C'}$. 
\end{itemize}
Consequently $C'\in\mathbb{C}(\sigma)$. 
As $C'$ is complete, it follows from Lemma~\ref{L1clanmax} that there is $D\in\mathscr{C}(\sigma)$ such that $D\supseteq C'$. 
If $D\not\in\mathscr{C}_{{\rm max}}(\sigma)$, then $|C'|\leq|D|<c(\sigma)$. 
If $D\in\mathscr{C}_{{\rm max}}(\sigma)$, then $C'\subseteq D\setminus\{w_D\}$ and hence 
$|C'|<|D|=c(\sigma)$. 
In both instances, we obtain that $|C'|<c(\sigma)$. 
It follows that $c(\sigma-W)<c(\sigma)$ because $|C'|=c(\sigma-W)$. 
Consequently \eqref{E1T3Abound} holds. 

By Corollary~\ref{C1bound}, $p(\sigma-W)=k$. 
Thus there exists a primitive and faithful extension $\tau'$ of $\sigma-W$ such that 
$|V(\tau')\setminus(V(\sigma)\setminus W)|=k$. 
We extend $\tau'$ to $V(\tau')\cup W$ as follows. 
Let $C\in\mathscr{C}_{{\rm max}}(\sigma)$. 
Consider the function 
\begin{equation*}
\begin{array}{rccl}
A_C:&C\setminus\{w_C\}&\longrightarrow&E(\tau')^{V(\tau')\setminus(V(\sigma)\setminus W)}\\
&v&\longmapsto&\overrightarrow{\tau'}(v)_{\restriction V(\tau')\setminus(V(\sigma)\setminus W)}.
\end{array}
\end{equation*}
Since $\tau'$ is primitive, $A_C$ is injective. 
Furthermore, as $|C\setminus\{w_C\}|=\varepsilon(\sigma)^{k}-1$ and 
$|E(\tau')^{V(\tau')\setminus(V(\sigma)\setminus W)}|=\varepsilon(\sigma)^{k}$, there is a unique 
$a_C\in E(\tau')^{V(\tau')\setminus(V(\sigma)\setminus W)}$ such that 
$a_C\neq A_C(v)$ for each $v\in C\setminus\{w_C\}$. 
Consider the faithful extension $\tau$ of $\tau'$ defined on $V(\tau')\cup W$ by 
$\overrightarrow{\tau}(w_C)_{\restriction V(\tau')\setminus(V(\sigma)\setminus W)}=a_C$ for each $C\in\mathscr{C}_{{\rm max}}(\sigma)$. 
As $p(\sigma)=k+1$, $\tau$ is not primitive. 
Consider a nontrivial clan $D_\tau$ of $\tau$. 

Next we show the following. 
Given $C\neq C'\in\mathscr{C}_{{\rm max}}(\sigma)$,
\begin{equation}\label{E1th2}
\text{if $C\cap D_\tau\neq\emptyset$ and $C'\cap D_\tau\neq\emptyset$, 
then $V(\tau')\subseteq D_\tau$.}
\end{equation}
Indeed $D_\tau\cap V(\sigma)\in\mathbb{C}(\sigma)$. 
Since $\widetilde{C},\widetilde{C'}\in\mathbb{P}(\sigma)$ and since 
$(D_\tau\cap V(\sigma))\cap\widetilde{C}\neq\emptyset$ and 
$(D_\tau\cap V(\sigma))\cap\widetilde{C'}\neq\emptyset$, $D_\tau\cap V(\sigma)$ is comparable to 
$\widetilde{C}$ and $\widetilde{C'}$ under inclusion. 
Suppose for a contradiction that $D_\tau\cap V(\sigma)\subsetneq\widetilde{C}$ and 
$D_\tau\cap V(\sigma)\subsetneq\widetilde{C'}$. 
It follows that $C\cap\widetilde{C'}\neq\emptyset$. 
As $\widetilde{C'}\in\mathbb{P}(\sigma)$, $\widetilde{C'}\subsetneq C$ or $C\subseteq\widetilde{C'}$. 
In the first instance, 
$\widetilde{C'}$ would be a nontrivial prime clan of $\sigma[C]$. 
Thus $C\subseteq\widetilde{C'}$ and hence $\widetilde{C}\subseteq\widetilde{C'}$. 
Similarly we get $\widetilde{C'}\subseteq\widetilde{C}$. 
Therefore $\widetilde{C'}=\widetilde{C}$ and it would follow from Proposition~\ref{P1clanmax} that $C=C'$. 
Consequently $\widetilde{C}\subseteq(D_\tau\cap V(\sigma))$ or 
$\widetilde{C'}\subseteq(D_\tau\cap V(\sigma))$. 
For instance, assume that $\widetilde{C}\subseteq(D_\tau\cap V(\sigma))$.  
We get $(D_\tau\cap V(\tau'))\supseteq (\widetilde{C}\setminus W)\supseteq (C\setminus W)$ and 
$C\setminus W=C\setminus\{w_C\}$ by \eqref{E1aT3Abound}. 
Since $\tau'$ is primitive and $|C\setminus\{w_C\}|\geq 2$, we obtain $V(\tau')\subseteq D_\tau$. 
It follows that \eqref{E1th2} holds. 

As $\tau'$ is primitive and $D_\tau\cap V(\tau')\in\mathbb{C}(\tau')$, we have 
either $|D_\tau\cap V(\tau')|\leq 1$ or $D_\tau\supseteq V(\tau')$. 
For a contradiction, suppose that $|D_\tau\cap V(\tau')|\leq 1$. 
Since $D_\tau$ is a nontrivial clan of $\tau$, there is 
$C\in\mathscr{C}_{{\rm max}}(\sigma)$ such that $w_C\in D_\tau$. 
It follows from \eqref{E1th2} that 
\begin{equation}\label{E2th2}
C'\cap D_\tau=\emptyset\ \text{for each 
$C'\in\mathscr{C}_{{\rm max}}(\sigma)\setminus\{C\}$.}
\end{equation} 
Thus $D_\tau\cap W=\{w_C\}$ and there is $v\in V(\tau')$ such that $D_\tau\cap V(\tau')=\{v\}$. 
Clearly $D_\tau=\{v,w_C\}$ and we distinguish the following two cases to obtain a contradiction.
\begin{itemize}
\item Suppose that $v\in V(\sigma-W)$. 
We have $\{v,w_C\}\in\mathbb{C}(\sigma)$. 
As $\{v,w_C\}$ is complete or linear, it follows from Lemma~\ref{L1clanmax} that there is 
$C'\in\mathscr{C}(\sigma)\cup\mathscr{L}(\sigma)$ such that $C'\supseteq\{v,w_C\}$. 
Since $C,C'\in\mathfrak{C}_{\geq 2}(\sigma)$ by Theorem~\ref{T1equivalence}, we get $C=C'$. 
Consequently we would obtain 
$\overrightarrow{\tau}(w_C)_{\restriction V(\tau')\setminus(V(\sigma)\setminus W)}=
\overrightarrow{\tau}(v)_{\restriction V(\tau')\setminus(V(\sigma)\setminus W)}$, that is, 
$A_C(v)=a_C$. 
\item Suppose that $v\in V(\tau')\setminus V(\sigma-W)$. 
Consider $e_C\in E_s(\sigma)$ such that $C$ is $e_C$-complete. 
We have $(w_C,C\setminus\{w_C\})_\sigma=e_C$. 
As $\{v,w_C\}\in\mathbb{C}(\tau)$, we get $(v,C\setminus\{w_C\})_{\tau'}=e_C$. 
Since $A_C$ is injective, the function 
\begin{equation*}
\begin{array}{rcl}
C\setminus\{w_C\}&\longrightarrow&
E(\tau')^{((V(\tau')\setminus(V(\sigma)\setminus W))\setminus\{v\})}\\
u&\longmapsto&
\overrightarrow{\tau'}(u)_{\restriction (V(\tau')\setminus(V(\sigma)\setminus W))\setminus\{v\}}
\end{array}
\end{equation*}
would be injective also and we would have 
$\varepsilon(\sigma)^{k}-1\leq\varepsilon(\sigma)^{k-1}$ which does not hold when 
$\varepsilon(\sigma)^{k}\geq 3$. 
\end{itemize}
Consequently $V(\tau')\subseteq D_\tau$. 
As $D_\tau$ is a nontrivial module of $\tau$, there exists $C\in\mathscr{C}_{{\rm max}}(\sigma)$ such that $w_C\not\in D_\tau$. 
Consider $e\in E_s(\sigma)$ such that $C$ is $e$ complete. 
We have $(w_C,C\setminus\{w_C\})_\sigma=e$ and hence 
$(w_C,V(\tau'))_\tau=e$. 
In particular $(w_C,V(\sigma-W))_\sigma=e$. 
Given $C'\in\mathscr{C}_{{\rm max}}(\sigma)\setminus\{C\}$, we obtain 
$(w_C,C'\setminus\{w_{C'}\})_\sigma=e$. 
Since $C'\in\mathbb{C}(\sigma)$, we get $(w_C,w_{C'})_\sigma=e$. 
It follows that $w_C\in\odot_{e}(\sigma)$. 
As at the end of the proof of Proposition~\ref{P2bound}, we conclude by 
$C=\odot_{e}(\sigma)$. 
\end{proof}

By Theorem~\ref{T2bound}, if $\sigma$ is an asymmetric 2-structure 
such that $\varepsilon(\sigma)\geq 3$, then $p(\sigma)\leq 1$. 
We complete the section with tournaments. 

\begin{thm}\label{T4bound}
Given a 2-structure $\sigma$ such that 
$\varepsilon(\sigma)=\varepsilon_a(\sigma)=2$, we have 
$p(\sigma)\leq 2$. 
Moreover, 
$p(\sigma)=2$ if and only if 
$\sigma$ is a finite linear order such that $\nu(\sigma)$ is odd. 
\end{thm}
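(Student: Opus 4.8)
The plan is first to recognize that the hypothesis $\varepsilon(\sigma)=\varepsilon_a(\sigma)=2$ forces $\sigma$ to be reversible and asymmetric with a single conjugate pair of classes $\{e,e^\star\}$, so that $\sigma$ is (identified with) a tournament. I would then split according to whether $\sigma$ is linear. If $\sigma$ is linear, I would invoke Corollary~\ref{C3trav} directly: it yields $p(\sigma)\in\{1,2\}$ together with the exact statement that $p(\sigma)=2$ precisely when $\sigma$ is a finite linear order of odd cardinality. This single citation settles both the bound and the whole characterization on the linear side, since every candidate for $p(\sigma)=2$ is a linear order.

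The heart of the argument is then to prove that $p(\sigma)\le 1$ whenever $\sigma$ is not linear; combined with the linear case this gives $p(\sigma)\le 2$ in general and shows that no non-linear tournament reaches $2$. If $\sigma$ is primitive, then $p(\sigma)=0$ and we are done, so assume $\sigma$ is imprimitive and $V(\sigma)\notin\mathbb{L}(\sigma)$ (the limit case is treated together with the construction below). By Theorem~\ref{Tgallai} the quotient $\sigma/\mathbb{G}(\sigma)$ is complete, linear, or primitive; since $\sigma$ is asymmetric, a complete quotient on at least two vertices is impossible (the classes of $(X,Y)$ and $(Y,X)$ would have to coincide although $e\ne e^\star$), so the quotient is linear or primitive. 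If it is linear then $\lambda_\sigma(V(\sigma))\in E_a(\sigma)$, and as $\sigma$ is not linear we must have $\mathbb{G}_{\ge 2}(\sigma)\ne\emptyset$; Theorem~\ref{T2trav} then gives $p(\sigma)=1$. If the quotient is primitive, then $\mathbb{G}_{\ge 2}(\sigma)\ne\emptyset$ as well, since otherwise $\sigma$ itself would be primitive.

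In the primitive-quotient case I would further distinguish the number of non-trivial blocks. Using that every prime clan is comparable to each Gallai component it meets, one checks that when there is exactly one block $X_0\in\mathbb{G}_{\ge 2}(\sigma)$, the set $X_0$ contains $L(\sigma)\cup P(\sigma)$ (note $C(\sigma)=\emptyset$ for asymmetric $\sigma$, as complete clans of size $\ge 2$ cannot occur) and meets every element of $\mathbb{P}_{\ge 2}(\sigma)$, while $X_0\subsetneq V(\sigma)$ because the primitive quotient has at least three vertices; hence $X_0$ is a proper inclusive clan and Theorem~\ref{T2inc} yields $p(\sigma)=1$. The genuinely delicate situation is a primitive quotient carrying at least two non-trivial blocks, and in parallel the limit case $V(\sigma)\in\mathbb{L}(\sigma)$: here one finds $\mathbb{I}(\sigma)=\{V(\sigma)\}$, so neither Theorem~\ref{T2trav} nor Theorem~\ref{T2inc} applies and a direct construction is needed. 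The plan is to fix a traverse $T$ of $\sigma$, so that by Corollary~\ref{C2trav} every clan of $\sigma$ is an interval of $T$, together with a dense bicoloration $\beta$ of $T$ from Proposition~\ref{dense}, and to extend by a new vertex $a$ via $(v,a)_\tau=e_{\beta(v)}$, adjusting $\beta$ on each block so that $\tau[X_i\cup\{a\}]$ is primitive. Two structural facts make this work: primitivity of the quotient forces every proper clan of $\sigma$ to lie inside a single block, and the absence of a global source or sink (which imprimitivity together with $\mathbb{G}(\sigma)=\emptyset$, resp. a primitive quotient on at least three vertices, excludes) rules out co-singleton clans $V(\sigma)\setminus\{v\}$. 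Consequently every candidate nontrivial clan of $\tau$ is of the form $C\cup\{a\}$ with $C$ a within-block clan having at least two exterior vertices, and density of $\beta$ lets us break all of them at once.

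I expect the main obstacle to be exactly this last construction: verifying that a single dense bicoloration can be chosen to annihilate every within-block clan simultaneously while keeping each $\tau[X_i\cup\{a\}]$ primitive, and controlling the endpoints of $T$ where density leaves no freedom. This is precisely the point at which the parity phenomenon of Corollary~\ref{C3trav} is isolated and then shown not to arise once $\sigma$ is non-linear. Finally I would assemble the characterization: if $p(\sigma)=2$, then $\sigma$ escapes all of the $p(\sigma)\le 1$ regimes above and must therefore be linear, whence Corollary~\ref{C3trav} pins it down to a finite linear order of odd cardinality; conversely, every such order has $p(\sigma)=2$ by the same corollary.
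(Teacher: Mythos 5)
Your reduction is sound in outline: identifying $\sigma$ with a tournament, disposing of the linear case by Corollary~\ref{C3trav}, of the linear-quotient case by Theorem~\ref{T2trav}, and of the single-nontrivial-block case by exhibiting a proper inclusive clan and citing Theorem~\ref{T2inc} all work (and your observations that $C(\sigma)=\emptyset$ for asymmetric $\sigma$ and that a complete quotient is impossible are correct). But the step you yourself flag as the crux is where the proposal breaks down, and the break is not a technicality: you cannot, in general, ``adjust $\beta$ on each block so that $\tau[X_i\cup\{a\}]$ is primitive.'' If a block $X_i$ induces a $3$-cycle, then $\tau[X_i\cup\{a\}]$ is a tournament on $4$ vertices, and no primitive tournament on $4$ vertices exists --- this is visible in the paper's own Lemma~\ref{L2ext}, whose count $\varepsilon(\sigma)^{\nu(\sigma)}-\varepsilon(\sigma)\nu(\sigma)-\varepsilon(\sigma)=2^3-6-2=0$ shows a $3$-element primitive tournament has no primitive faithful $1$-extension. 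Likewise, if $\sigma[X_i]$ is (or its relevant maximal linear clans are) a finite odd linear order, Corollary~\ref{C3trav} itself says no primitive tournament $1$-extension exists. These are exactly the obstructions the theorem is about, and they live inside blocks just as well as globally, so your dichotomy ``every candidate nontrivial clan of $\tau$ is of the form $C\cup\{a\}$ with $C$ a within-block clan, and density breaks all of them'' does not follow from what you prove. (Incidentally, in the case of at least two nontrivial blocks your construction can be repaired \emph{without} any adjustment: for a surviving clan $C\cup\{a\}$ with $C$ inside $X_i$, the forced exterior coloring $\beta_C$ is constant on any other block $X_j$, and density of $\beta$ on the interval $X_j$ kills it --- but this argument is not in your text.)

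The second gap is the limit case $V(\sigma)\in\mathbb{L}(\sigma)$, which you dismiss with the claim $\mathbb{I}(\sigma)=\{V(\sigma)\}$. That claim is unjustified and false in general: take an increasing chain $C_0\subsetneq C_1\subsetneq\cdots$ of prime clans with union $V(\sigma)$, $\sigma[C_0]$ primitive, and each $\sigma[C_{n+1}]$ having Gallai family $\{C_n\}\cup$ singletons with primitive quotient; then $\mathbb{G}(\sigma)=\emptyset$ yet $C_0$ is a proper inclusive clan. (When a proper inclusive clan exists Theorem~\ref{T2inc} rescues you, but your case analysis as stated does not see this, and when $\mathbb{I}(\sigma)=\{V(\sigma)\}$ genuinely holds you have no blocks to ``adjust'' and no argument at all.) The paper avoids both problems with a single uniform construction: it sets $\mathscr{F}=(\mathscr{L}(\sigma)\setminus\mathscr{L}_{\rm odd}(\sigma))\cup(\mathscr{P}(\sigma)\setminus\mathscr{P}_3(\sigma))$, primitizes only the members of $\mathscr{F}$ (via Lemma~\ref{L2ext} and Corollary~\ref{C3trav}), colors everything else by a dense bicoloration of a traverse, and then, assuming $p(\sigma)\geq 2$, uses Corollary~\ref{C0bound} to show a surviving nontrivial clan $D$ satisfies $\bigcup\mathscr{F}\subseteq D\setminus\{a\}$ while $D\setminus\{a\}\notin\mathbb{I}(\sigma)$; the failure of inclusivity must then localize in $\mathscr{L}_{\rm odd}(\sigma)\cup\mathscr{P}_3(\sigma)$, and a short analysis via Theorem~\ref{T1equivalence} and Theorem~\ref{T2inc} forces $\widetilde{L}=V(\sigma)$ for some $L\in\mathscr{L}_{\rm odd}(\sigma)$, whence Theorem~\ref{T2trav} yields $\mathbb{G}_{\geq 2}(\sigma)=\emptyset$ and $\sigma$ linear --- no case split on the quotient, and the limit case is absorbed automatically. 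To complete your proof you would need to replace the per-block primitization by this (or an equivalent) mechanism that isolates precisely the odd linear orders and the $3$-cycles.
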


\begin{proof}
By Corollary~\ref{C3trav}, it suffices to prove that if $p(\sigma)\geq 2$, then 
$\sigma$ is a linear order. 
Assume that $p(\sigma)\geq 2$. 
Denote by 
$\mathscr{L}_{{\rm odd}}(\sigma)$ the family of $L\in\mathscr{L}(\sigma)$ such that 
$|L|<\aleph_0$ and $|L|$ is odd, and denote by 
$\mathscr{P}_3(\sigma)$ the family of $P\in\mathscr{P}(\sigma)$ such that $|P|=3$. 

Let $a\not\in V(\sigma)$. 
Given $P\in\mathscr{P}(\sigma)\setminus\mathscr{P}_3(\sigma)$, it follows from Lemma~\ref{L2ext} that $\sigma$ admits a primitive and faithful extension $\tau_P$ defined on 
$V(\sigma)\cup\{a\}$ such that $\tau_P[P\cup\{a\}]$ is primitive. 
Given $L\in\mathscr{L}(\sigma)\setminus\mathscr{L}_{{\rm odd}}(\sigma)$, it follows from Corollary~\ref{C3trav} that $\sigma$ admits a primitive and faithful extension $\tau_L$ defined on $V(\sigma)\cup\{a\}$ such that $\tau_L[L\cup\{a\}]$ is primitive. 

By Proposition~\ref{ptraverse}, $\sigma$ admits a traverse $T$. 
Furthermore $T$ admits a dense bicoloration $\beta$ by Proposition~\ref{dense}. 
Set $E(\sigma)=\{e_0,e_1\}$ and 
$\mathscr{F}=(\mathscr{L}(\sigma)\setminus\mathscr{L}_{{\rm odd}}(\sigma))\cup
(\mathscr{P}(\sigma)\setminus\mathscr{P}_3(\sigma))$. 
We consider the faithful extension $\tau$ of $\sigma$ defined on $V(\sigma)\cup\{a\}$ satisfying
\begin{itemize}
\item for each $X\in\mathscr{F}$, 
$\tau[X\cup\{a\}]=\tau_X[C_\cup\{a\}]$;
\item for each $v\in V(\sigma)\setminus(\bigcup\mathscr{F})$, 
$(v,a)_\tau=e_{\beta(v)}$. 
\end{itemize}
Since $p(\sigma)\geq 2$, $\tau$ admits a nontrivial clan $D$. 
By Corollary~\ref{C0bound}, $a\in D$ and $C\cap(D\setminus\{a\})\neq\emptyset$ for every 
$C\in\mathbb{C}_{\geq 2}(\sigma)$. 
Moreover $\bigcup\mathscr{F}\subseteq D\setminus\{a\}$ 
by Corollary~\ref{C0bound}. 
As $p(\sigma)\geq 2$, $D\setminus\{a\}\not\in\mathbb{I}(\sigma)$ by Theorem~\ref{T2inc}. 
It follows that 
$\bigcup(\mathscr{L}_{{\rm odd}}(\sigma)\cup\mathscr{P}_3(\sigma))\not\subseteq D\setminus\{a\}$. 

For a contradiction, suppose that there exists $P\in\mathscr{P}_3(\sigma)$ such that 
$P\not\subseteq D\setminus\{a\}$. 
By Corollary~\ref{C0bound}, $P\cap(D\setminus\{a\})\neq\emptyset$. 
Moreover $P\in\mathbb{P}(\sigma)$because $\sigma[P]$ is primitive. 
Since $P\not\subseteq D\setminus\{a\}$, we get $D\setminus\{a\}\subsetneq P$. 
As $\sigma[P]$ is primitive, $|D\setminus\{a\}|=1$. 
In particular $\mathscr{F}=\emptyset$. 
Furthermore consider 
$C\in(\mathscr{L}_{{\rm odd}}(\sigma)\cup\mathscr{P}_3(\sigma))\setminus\{P\}$. 
By Corollary~\ref{C0bound}, $C\cap(D\setminus\{a\})\neq\emptyset$. 
Therefore $C\cap P\neq\emptyset$ and it would follow from Theorem~\ref{T1equivalence} that $C=P$. 
Consequently $\mathscr{L}(\sigma)\cup\mathscr{P}(\sigma)=\{P\}$. 
By Corollary~\ref{C0bound}, $X\cap(D\setminus\{a\})\neq\emptyset$ for every 
$X\in\mathbb{P}_{\geq 2}(\sigma)$. 
We obtain that $P\in\mathbb{I}(\sigma)$. 
Since $p(\sigma)\geq 2$, it follows from Theorem~\ref{T2inc} that $P=V(\sigma)$. 
Thus $\sigma$ is primitive which would imply that $p(\sigma)=0$. 
It follows that 
$\bigcup\mathscr{P}_3(\sigma)\subseteq D\setminus\{a\}$. 

Consequently there exists $L\in\mathscr{L}_{{\rm odd}}(\sigma)$ such that 
$L\not\subseteq D\setminus\{a\}$. 
By Corollary~\ref{C0bound}, $L\cap(D\setminus\{a\})\neq\emptyset$. 
Clearly $\widetilde{L}\cap(D\setminus\{a\})\neq\emptyset$ and necessarily 
$D\setminus\{a\}\subseteq\widetilde{L}$. 
Lastly, consider $L'\in\mathscr{L}_{{\rm odd}}(\sigma)\setminus\{L\}$. 
By Corollary~\ref{C0bound}, $L'\cap(D\setminus\{a\})\neq\emptyset$ and hence 
$L'\cap\widetilde{L}\neq\emptyset$. 
If $\widetilde{L}\subseteq L'$, then $L\subseteq L'$ and we would get $L=L'$  by 
Theorem~\ref{T1equivalence}. 
Therefore $L'\subseteq\widetilde{L}$. 
It follows that 
$\bigcup\mathscr{L}_{{\rm odd}}(\sigma)\subseteq\widetilde{L}$. 
We obtain that $\widetilde{L}\in\mathbb{I}(\sigma)$. 
As $p(\sigma)\geq 2$, it follows from Theorem~\ref{T2inc} that $\widetilde{L}=V(\sigma)$. 
By Proposition~\ref{P1clanmax}, $V(\sigma)\not\in\mathbb{L}(\sigma)$ and 
$\lambda_\sigma(V(\sigma))\in E_a(\sigma)$. 
Since $p(\sigma)\geq 2$, it follows from Theorem~\ref{T2trav} that 
$\mathbb{G}_{2}(\sigma)=\emptyset$. 
Consequently $\sigma$ is a linear order. 
\end{proof}

\begin{rem}
Let $\sigma$ be an asymmetric 2-structure such that $\varepsilon(\sigma)\geq 3$. 
By Theorem~\ref{T2bound}, $p(\sigma)\leq 1$. 
Assume that $V(\sigma)$ is infinite (or finite and $|V(\sigma)|$ is even). 
We can also obtain $p(\sigma)\leq 1$ by using Theorem~\ref{T4bound} in the following manner. 
A {\em choice class} of $\sigma$ is a subset $c$ of 
$(V(\sigma)\times V(\sigma))\setminus\{(v,v):v\in V(\sigma)\}$ satisfying: for each $e\in E(\sigma)$, 
either $e\subseteq c$ and $e^\star\cap c=\emptyset$ or 
$e^\star\subseteq c$ and $e\cap c=\emptyset$. 
With each choice class $c$ of $\sigma$, associate the asymmetric 2-structure $\sigma_c$ defined on 
$V(\sigma_c)=V(\sigma)$ by $E(\sigma_c)=\{c,c^\star\}.$  
Consider $x\not\in V(\sigma)$. 
By  Theorem~\ref{T4bound}, there exists a primitive and faithful extension $\tau_c$ of $\sigma_c$ defined on $V(\sigma)\cup\{x\}$. 
Let $e\in E(\sigma)$. 
By interchanging $e$ and $e^\star$, assume that 
$e\subseteq c$ and $e^\star\cap c=\emptyset$. 
Consider the 2-structure $\rho_e$ defined on 
$V(\rho_e)=V(\sigma)\cup\{x\}$ by 
$E(\rho_e)=(E(\sigma)\setminus\{e,e^\star\})\cup
\{e\cup((\sigma_c\hookrightarrow\tau_c)(c)\setminus c),e^\star\cup((\sigma_c\hookrightarrow\tau_c)(c^\star)\setminus c^\star)\}$. 
It is easy to verify that $\rho_e$ is a primitive and faithful extension of $\sigma$. 
Thus $p(\sigma)\leq 1$. 
\end{rem}

\end{document}